\setlist{leftmargin=1.6em}
\newtheorem{theorem}{Theorem}
\newtheorem{proposition}{Proposition}
\newtheorem{lemma}{Lemma} 
\newtheorem{corollary}{Corollary}
\theoremstyle{definition}
\newtheorem{definition}{Definition}
\newtheorem{example}{Example}
\theoremstyle{remark}
\newtheorem{remark}{Remark}
\def\subsubsection{\@startsection{subsubsection}{3}%
  \z@{.5\linespacing\@plus.7\linespacing}{-.5em}%
  {\normalfont\bfseries}}
\newcommand{\Cov}[0]{\mathrm{Cov}}
\newcommand{\Var}[0]{\mathrm{Var}}
\newcommand{\ind}[0]{\mathbbm{1}}
\newcommand{\calA}[0]{\mathcal{A}}
\newcommand{\calC}[0]{\mathcal{C}}
\newcommand{\calF}[0]{\mathcal{F}}
\newcommand{\calH}[0]{\mathcal{H}}
\newcommand{\calM}[0]{\mathcal{M}}
\newcommand{\calP}[0]{\mathcal{P}}
\newcommand{\calT}[0]{\mathcal{T}}
\newcommand{\calX}[0]{\mathcal{X}}
\newcommand{\supp}{\mathrm{spt}}
\newcommand{\E}[0]{\mathbb{E}}
\newcommand{\R}[0]{\mathbb{R}}
\newcommand{\D}{\mathbb{D}}
\newcommand{\Prob}[0]{\mathbb{P}}
\newcommand{\frakP}{\mathcal P}
\newcommand{\vertiii}[1]{{\left\vert\kern-0.25ex\left\vert\kern-0.25ex\left\vert #1 
    \right\vert\kern-0.25ex\right\vert\kern-0.25ex\right\vert}}
\newcommand{\sS}{\mathsf{S}}
\newcommand{\BB}{\mathbb{B}}
\newcommand{\DD}{\D}
\newcommand{\GG}{\mathbb{G}}
\newcommand{\LL}{\mathbb{L}}
\newcommand{\NN}{\mathbb{N}}
\newcommand{\RR}{\mathbb{R}}
\newcommand{\vasti}{\bBigg@{3.5 }}
\newcommand{\vast}{\bBigg@{4}}
\newcommand{\Vast}{\bBigg@{5}}
\newcommand{\Vastt}{\bBigg@{7}}
\newcommand{\be}{\begin{equation}}
\newcommand{\ee}{\end{equation}}
\newcommand{\ba}{\begin{align}}
\newcommand{\ea}{\end{align}}
\newcommand{\baa}{\begin{align*}}
\newcommand{\eaa}{\end{align*}}
\newif\ifedit
\DeclareMathOperator{\interior}{int}
\DeclareMathOperator{\inte}{int}
\DeclareMathOperator{\lin}{lin}
\begin{document}

\title[Limit Theorems for Entropic Maps 
and the Sinkhorn Divergence]{Limit Theorems for Entropic Optimal Transport Maps
and the Sinkhorn Divergence}

\thanks{
Z. Goldfeld is partially supported by NSF grants CCF-1947801,  CCF-2046018, and DMS-2210368, and the 2020 IBM Academic Award.
K. Kato is partially supported by NSF grants DMS-1952306, DMS-2014636, and DMS-2210368.
G. Rioux is partially supported by the NSERC postgraduate fellowship PGSD-567921-2022.}

\date{First version: July, 12 2022. This version: \today}

\author[Z. Goldfeld]{Ziv Goldfeld}
\address[Z. Goldfeld]{
School of Electrical and Computer Engineering, Cornell University.
}
\email{goldfeld@cornell.edu}

\author[K. Kato]{Kengo Kato}
\address[K. Kato]{
Department of Statistics and Data Science, Cornell University.
}
\email{kk976@cornell.edu}

\author[G. Rioux]{Gabriel Rioux}

\address[G. Rioux]{
Center for Applied Mathematics, Cornell University.}
\email{ger84@cornell.edu}

\author[R. Sadhu]{Ritwik Sadhu}
\address[R. Sadhu]{
Department of Statistics and Data Science, Cornell University.
}
\email{rs2526@cornell.edu}

\begin{abstract}
We study limit theorems for entropic optimal transport (EOT) maps, dual potentials, and the Sinkhorn divergence. The key technical tool we use is a first and second-order Hadamard differentiability analysis of EOT potentials with respect to the marginal distributions, which may be of independent interest. Given the differentiability results, the functional delta method is used to obtain central limit theorems for empirical EOT potentials and maps. The second-order functional delta method is leveraged to establish the limit distribution of the empirical Sinkhorn divergence under the null.
Building on the latter result, we further derive the null limit distribution of the Sinkhorn independence test statistic and characterize the correct order. Since our limit theorems follow from Hadamard differentiability of the relevant maps, as a byproduct, we also obtain bootstrap consistency and asymptotic efficiency of the empirical EOT map, potentials, and Sinkhorn divergence.
\end{abstract}

\keywords{entropic map, entropic optimal transport, functional delta method, Hadamard differentiability, Sinkhorn divergence}

\maketitle

\section{Introduction}

\subsection{Overview}

Optimal Transport (OT) \cite{kantorovich1942translocation} quantifies the discrepancy between  Borel probability measures $\mu^1,\mu^2$ on $\RR^d$ as 
\begin{equation}
    \mathsf{OT}_c(\mu^1,\mu^2):=\inf_{\pi\in\Pi(\mu^1,\mu^2)}\int c\,\, d\pi,\label{EQ:Kantorovich_OT}
\end{equation}
where $c:\RR^d\times\RR^d\to \R_{+} = [0,\infty)$ is the cost function and $\Pi(\mu^1,\mu^2)$ is the set~of couplings (or plans) between $\mu^1$ and $\mu^2$. Under certain conditions on the distributions and the cost function, the OT plan $\pi$ that achieves the infimum in \eqref{EQ:Kantorovich_OT} concentrates on the graph of a deterministic map $T$, called the OT map or the  Brenier map when $c$ is quadratic \cite{brenier1991polar,gangbo1996}. 
OT tools have been successfully employed for various applications, encompassing machine learning \cite{arjovsky_wgan_2017,gulrajani2017improved,tolstikhin2018wasserstein,bernton2019parameter,chen2021inferential,blanchet2019quantifying,wong2019wasserstein,solomon2015convolutional,courty2016optimal,rubner2000earth,sandler2011nonnegative,li2013novel}, statistics \cite{carlier2016vector,chernozhukov2017monge,panaretos2019statistical,bernton2019approximate,panaretos2020invitation,bigot2020statistical,hallin2021quantile,torous2021optimal,chen2021wasserstein,zhang2021wasserstein,ghosal2019multivariate}, and applied mathematics \cite{jordan1998variational,santambrogio2017}. We refer the reader to \cite{villani2003,villani2008optimal,ambrosio2005,santambrogio15} as standard references on OT theory.

Statistical OT seeks to estimate and carry out inference for OT and related objects thereof based on data. Two central objects of interest are the OT cost, which has natural applications to minimum distance estimation and testing, and the OT map, which is useful for transfer learning and domain adaptation tasks.
Alas, despite its widespread applicability, the OT problem suffers from computational and statistical scalability issues. In general, the OT cost is difficult to compute and its plug-in empirical estimator converges towards the ground truth at the rate $n^{-1/d}$ \cite{dudley1969speed,fournier2015rate,weed2019}, which is known to be minimax optimal without further assumptions \cite{niles2019estimation}. 
Estimation of the Brenier map encounters similar difficulties, as the results of \cite{hutter2021minimax} suggest that the minimax rate would be $n^{-1/d}$ in general (though formally a conjecture). Imposing smoothness on the marginal distributions or the Brenier map can speed up minimax rates, but verification of such assumptions is nontrivial and computations of the minimax optimal estimators tend to be burdensome \cite{hutter2021minimax,deb2021rates,manole2021plugin}.

Entropic OT (EOT) has emerged as an appealing alternative to the classic Kantorovich formulation that circumvents these statistical and computational difficulties.
EOT regularizes the transportation cost by the Kullback-Leibler (KL) divergence as \cite{schrodinger1931uber,leonard2014survey}
\begin{equation}
    \label{eq:EOTProblem}
    \mathsf S_{c,\varepsilon}(\mu^1,\mu^2):=\inf_{\pi\in \Pi(\mu^1,\mu^2)}\int c\,\, d\pi+\varepsilon \mathsf D_{\mathsf{KL}}(\pi||\mu^1\otimes \mu^2),
\end{equation}
where $\varepsilon>0$ is a regularization parameter. As $\varepsilon\to 0$, the EOT problem converges towards OT, not only in terms of the transportation cost but also in optimal plans and dual  potentials \cite{mikami2004monge,mikami2008optimal,leonard2012schrodinger,carlier2017convergence,conforti2021formula,chizat2020faster,pal2019difference,carlier2022convergence,bernton2021entropic,nutz2021entropic,altschuler2022asymptotics}. For fixed $\varepsilon>0$, EOT alleviates the computational and statistical challenges associated with classic OT. Indeed,~EOT between discrete distributions (e.g., empirical distributions) can be efficiently solved via the Sinkhorn algorithm \cite{cuturi2013sinkhorn,altschuler2017near}, whose time complexity scales quadratically in the number of support points. Regarding empirical estimation, EOT and its centered version $\bar{\mathsf{S}}_{c,\varepsilon}(\mu^1,\mu^2)=\mathsf S_{c,\varepsilon}(\mu^1,\mu^2)-(\mathsf S_{c,\varepsilon}(\mu^1,\mu^1)+\mathsf S_{c,\varepsilon}(\mu^2,\mu^2))/2$ (known as the Sinkhorn divergence) enjoy the parametric $n^{-1/2}$ convergence rate under several settings \cite{genevay2019sample, mena2019statistical,delbarrio22EOT,rigollet2022sample}. 
Recent work has further established central limit theorems (CLTs) for the EOT cost in certain cases \cite{mena2019statistical, bigot2019central, klatt2020empirical,delbarrio22EOT,goldfeld2022statistical} as well as estimation rate results for EOT maps and plans \cite{pooladian2021entropic,pooladian2022debiaser,rigollet2022sample}; see a literature review below. Still, much is left to be desired on deeper understanding of limit behaviors of EOT and related objects, such as EOT potentials, maps, and Sinkhorn divergences, whose analysis poses a significant challenge from a probabilistic perspective.

\subsection{Contributions} The present paper contributes to the growing literature on statistical OT  by establishing limit theorems of the aforementioned objects: EOT potentials, maps, and the Sinkhorn divergence. 
The key ingredient of our derivations is a first and second-order Hadamard differentiability analysis of the EOT potentials with respect to (w.r.t.) the marginal distributions. 
Importantly, we establish Hadamard differentiability of the EOT potentials as maps into H\"{o}lder function spaces. The derivation first establishes Hadamard differentiability as maps into the space of continuous functions ($\calC$-space) using the Schr\"{o}dinger system, which characterizes EOT potentials, and a version of the implicit function theorem.  We then lift the Hadamard differentiability to H\"{o}lder spaces  by showing that derivatives of the EOT potentials are again Hadamard differentiable as maps into the $\calC$-space. Having this result, the functional delta method \cite{romisch2004} yields a CLT for the empirical EOT potentials, which, in turn, implies a CLT for the EOT map under the quadratic cost via the continuous mapping theorem. Both limit variables are characterized as Gaussians with values in the appropriate function spaces.

Hadamard differentiability results for the EOT potentials further enable developing a limit distribution theory for the Sinkhorn divergence.
While asymptotic normality under the alternative (when $\mu^1 \ne \mu^2$) is a straightforward consequence of existing EOT limit theorems \cite{delbarrio22EOT,goldfeld2022statistical}, the null case (when $\mu^1=\mu^2$) for general distributions is significantly more challenging and is a subject of this paper. 
The difficulty originates from the first-order Hadamard derivative of the Sinkhorn divergence nullifying when $\mu^1=\mu^2$ (as the functional achieves its global minimum there), which implies that the variance of the empirical Sinkhorn divergence vanishes under the null and the limit degenerates. To overcome this, we employ the second-order functional delta method \cite{romisch2004}, which requires second-order Hadamard derivatives of the EOT potentials and the Sinkhorn divergence. The Hadamard differentiability result of EOT potentials in H\"{o}lder spaces is key to finding such higher-order derivatives. Application of the second-order functional delta method then yields a distributional limit for the empirical Sinkhorn divergence at the rate of $n^{-1}$. To the best of our knowledge, the null limit distribution for the Sinkhorn divergence beyond the discrete case has been an open problem, and our result closes this gap. 

The Sinkhorn divergence was applied to independence testing in \cite{liu2022entropy}, although the null limit distribution of the test statistic was not obtained in that work. \cite{liu2022entropy} proposed critical values of  order $n^{-1/2}$ derived from concentration inequalities, which is at odds with the $n^{-1}$ order fluctuations under the null implied by our limit~theorem. Building on the second-order Hadamard differentiability result for the Sinkhorn divergence and techniques for analysis of $U$-processes \cite{PeGi1999,chen2020jackknife}, we establish the null limit distribution of the Sinkhorn independence test statistic and the correct $n^{-1}$ order. Finally, the Hadamard differentiability results automatically yield bootstrap consistency and asymptotic efficiency of the empirical EOT potentials, map, and Sinkhorn divergence (under the alternative), which is another virtue of our approach.

\subsection{Literature review}\label{subsec: lit review}

Statistical and probabilistic analyses of EOT and related objects have seen active research in the past couple of years. Regarding limit distribution theory, \cite{bigot2019central,klatt2020empirical} derived CLTs for the EOT cost with $c(x_1,y_2)=\|x_1-x_2\|^p$ ($p \in [1,\infty)$) for finitely discrete distributions. \cite{bigot2019central} also derived the null limit of the Sinkhorn divergence in the discrete case by parameterizing it by finite-dimensional simplex vectors and directly finding the Hessian w.r.t. the simplex vectors. This approach does not directly extend to the general case. For general sub-Gaussian distributions,
\cite{mena2019statistical} showed asymptotic normality of $\sqrt{n}\big(\mathsf{S}_{\|\cdot\|^2,\varepsilon}(\hat{\mu}_n^1,\mu^2)-\E\big[\mathsf{S}_{\|\cdot\|^2,\varepsilon}(\hat{\mu}_n^1,\mu^2)\big]\big)$ and its two-sample analog. The main limitation of this result is that the centering term is the expected empirical EOT cost, which precludes performing inference for $\sS_{\|\cdot\|^2,\varepsilon}(\mu^1,\mu^2)$ itself. This limitation was addressed in \cite{delbarrio22EOT}, who derived a limit theorem for the population centering by combining the CLT from \cite{mena2019statistical} with a bias bound of the form $\E\big[\mathsf{S}_{\|\cdot\|^2,\varepsilon}(\hat{\mu}_n^1,\mu^2)\big] - \mathsf{S}_{\|\cdot\|^2,\varepsilon}(\mu^1,\mu^2) =o(n^{-1/2})$. The recent work by the present authors  \cite{goldfeld2022statistical} generalized this result to dependent data and further complemented it with asymptotic efficiency of the empirical EOT cost and consistency of the bootstrap estimate. It is worth noting that \cite{delbarrio22EOT}  derived the $n^{-1}$ rate for the empirical Sinkhorn divergence under the null but did not derive its limit distribution.

Estimation of the EOT plan and map were also studied as a means to obtain computationally efficient proxies of the OT plan and the Brenier map, respectively. \cite{pooladian2021entropic} considered estimation of the Brenier map under the quadratic cost via an entropic approximation; see also \cite{pooladian2022debiaser}. They analyzed the empirical EOT map and established a rate toward the Brenier map by taking $\varepsilon = \varepsilon_n \to 0$, which is however sub-optimal.  \cite{rigollet2022sample} established the parametric rate toward the EOT map with  $\varepsilon > 0$ fixed. CLTs for the empirical EOT plan were studied in \cite{klatt2020empirical} for discrete distributions and \cite{gunsilius2021matching} for more general cases. The latter work also obtained a limit theorem for the EOT potentials in the $\calC$-space, which is derived via a significantly different proof technique than ours (not relying on Hadamard differentiability) and is weaker than the convergence in H\"{o}lder spaces established herein. Hadamard differentiability in H\"{o}lder spaces is crucial for obtaining the null limit distribution of the empirical Sinkhorn divergence, which is one of our main contributions. Furthermore, our argument based on Hadamard differentiability of EOT potentials yields not only limit distributions but also asymptotic efficiency and consistency of the bootstrap. Finally, \cite{harchaoui2020asymptotics} derived limit theorems for a \textit{different} entropic regularization that makes the optimal solution explicit and thus rely on analysis techniques that significantly differ from~ours.

Our Hadamard differentiability results also contribute to the study of stability of EOT, which has attracted growing interest in the mathematics literature \cite{mikami2019regularity,carlier2020differential,mikami2021regularity,ghosal2022stability,eckstein2022quantitative,nutz2023stability}, and hence would be of independent interest beyond statistical applications. Those references study stability of EOT-related objects w.r.t. varying marginals (e.g., weak convergence of marginals) under general settings, but do not contain differentiability results like ours. The recent preprint \cite{rioux2023entropic} studies stability of the cost and dual potentials of the quadratic Gromov-Wasserstein distance with entropic panelty, leveraging the variational form from \cite{Zhang22Gromov} that represents it as an infimum of a sequence of EOT problems.

\subsection{Concurrent work}
The concurrent and independent work \cite{gonzalez2022weak} establishes similar results concerning limit distributions of EOT potentials and the Sinkhorn divergence, but via a markedly different proof technique that does not involve Hadamard derivatives or the functional delta method. They also do not discuss bootstrapping or asymptotic efficiency. Our overlapping results---a mean-zero Gaussian limit for the EOT potentials in the H\"{o}lder space and a non-Gaussian limit for the Sinkhorn divergence under the null (with a scaling factor of $n$)---are consistent with each other, although they derive explicit forms of the  limit distributions.
Compared with \cite{gonzalez2022weak}, our contribution is to formulate and derive Hadamard differentiability for the EOT potentials and the Sinkhorn divergence, including higher-order ones, from which the limit distributions, consistency of resampling methods, and asymptotic efficiency of the empirical estimators automatically follow. The resampling methods enable performing inference without knowing the explicit limits, and our semiparametric efficiency result implies that, though the limit is not explicit, it is the best one can hope for. Furthermore, the Hadamard differentiability results enable deriving limit distributions beyond the empirical estimators, such as for the Sinkhorn independence statistic. As such, we view the contributions of our work and that of \cite{gonzalez2022weak} as complementary to each other.

Another related work is \cite{gonzalez2023weak}, which appeared after the initial version of this paper was posted on arXiv. That work derived several limit theorems for EOT-related objects for nonsmooth costs, combining the approaches of \cite{gonzalez2022weak} and \cite{rigollet2022sample}, but their scope and proof techniques are substantially different. 

\subsection{Organization}

The rest of the paper is organized as follows. In \cref{sec: prelminiaries}, we collect background material on the EOT problem, potentials, map, and Sinkhorn divergence. 
In \cref{sec: main results}, we derive limit distributions of these objectives. We also derive the null limit distribution of the Sinkhorn independence test statistic in \cite{liu2022entropy}. In \cref{sec: EOT potential}, we collect Hadamard differentiability results for the relevant maps, including higher-order ones.  In \cref{sec: efficiency}, we discuss bootstrap consistency and asymptotic efficiency of the empirical estimators. \cref{sec: proofs} contains proofs for Sections \ref{sec: main results} and \ref{sec: EOT potential}. \cref{sec: conclusion} leaves some concluding remarks and discussions on the extensions to the unbounded support case and multimarginal EOT. The Appendix contains  additional results concerning the $m$-out-of-$n$ bootstrap for the Sinkhorn null limit (including small-scale numerical experiments), proofs that are omitted from the main text, technical tools used in the proofs, and other auxiliary results.  

\subsection{Notation}\label{subsec: notation}
 For a subset $A$ of a topological space~$S$, let $\overline{A}^S$ denote the closure of $A$.
We use $\calP(S)$ to denote the set of Borel probability measures on $S$. For $\mu \in \calP(S)$, $\supp(\mu)$ denotes its support.
For a nonempty set $S$, let $\ell^\infty (S)$ denote the Banach space of bounded real functions on $S$ equipped with the sup-norm $\| \cdot \|_{\infty,S} = \sup_{x \in S}| \cdot |$. 
For every compact set $\calX \subset \R^d$, let $\calC(\calX)$ denote the Banach space of continuous functions on $\calX$ equipped with the sup-norm $\| \cdot \|_{\infty,\calX}$. 
For every multi-index $k=(k_1,\dots,k_d) \in \mathbb{N}_0^d$ with $|k| = \sum_{j=1}^ d k_j$ ($\NN_0 = \NN \cup \{ 0 \}$), define the differential operator $D^k$ by $D^k = \frac{\partial^{|k|}}{\partial x_1^{k_1} \cdots \partial x_{d}^{k_d}}$
with $D^0 f = f$. 
For every $s \in \NN_0$ and nonempty compact set $\calX \subset \R^d$ that agrees with the closure of its interior, $\calC^s(\calX)$ denotes the set of functions $f$ on $\calX$ such that $f$ has continuous derivatives of all orders $\le s$ on $\inte (\calX)$ and the derivatives have continuous extensions to $\calX$ ($\calC^0 (\calX) = \calC (\calX)$). Define the norm $\| f \|_{\calC^s(\calX)} = \sum_{j=0}^s  \max_{|k| = j} \| D^k f \|_{\infty, \inte (\calX)}$; ($\calC^s(\calX),\| \cdot \|_{\calC^s(\calX)})$
 is a separable Banach space [see Problem 5.1 in \cite{gilbarg2015elliptic}; separability follows by noting that $f \mapsto (D^k f)_{|k| \le s}$ is isomorphic from $\calC^s(\calX)$ onto a closed subset of $\prod_{k: |k| \le s} \calC(\calX)$]. We often identify a finite signed Borel measure $\gamma$ on $\R^d$ with the linear functional $f \mapsto \gamma (f) := \int f \, d\gamma$ defined on the bounded Borel functions on $\R^d$.
For two real numbers $a$ and $b$, let $a \vee b = \max \{a,b \}$.

\section{Background and Preliminaries}
\label{sec: prelminiaries}

\subsection{EOT problem}
In this paper, we study EOT problems with smooth cost functions for compactly supported distributions on $\R^d$. 
We briefly review basic definitions and results concerning EOT problems.
Let $c: \R^d \times \R^d \to \R_{+}$ be a smooth (i.e., infinitely differentiable) cost function.
To simplify exposition concerning the Sinkhorn divergence, we will assume that $c$ is symmetric, i.e., $c(x_1,x_2) = c(x_2,x_1)$ for all $x_1,x_2 \in \R^d$. 
A canonical example is the quadratic cost $c(x_1,x_2) = \| x_1-x_2\|^2/2$. The corresponding EOT problem for compactly supported distributions $\mu^1,\mu^2$ on $\R^d$ is defined as
\begin{equation}
\mathsf{S}_{c,\varepsilon} (\mu^1,\mu^2) = \inf_{\pi \in \Pi (\mu^1,\mu^2)} \int c \, \, d\pi + \varepsilon \mathsf{D}_{\mathsf{KL}}(\pi || \mu^1 \otimes \mu^2), 
\label{eq: EOT}
\end{equation}
where $\varepsilon>0$ is a regularization parameter, $\Pi (\mu^1,\mu^2)$ is the set of couplings of $(\mu^1,\mu^2)$,  and 
$\mathsf{D}_{\mathsf{KL}}$ is the Kullback-Leibler divergence defined by
\[
\mathsf{D}_{\mathsf{KL}}(\alpha\|\beta):=
\begin{cases}
\int\log (d \alpha/d \beta) \, d\alpha & \text{if} \ \alpha \ll \beta \\
+\infty & \text{otherwise}
\end{cases}.
\]
The EOT problem admits a unique solution $\pi^\star$, which we call the \textit{EOT plan}. Throughout this paper, we assume that the regularization parameter $\varepsilon > 0$ is fixed, so we often omit the dependence on $\varepsilon$.

The EOT problem admits strong duality, which reads as
\begin{equation}
\mathsf{S}_{c,\varepsilon} (\mu^1,\mu^2) = \sup_{\bm{\varphi} = (\varphi_1,\varphi_2)} \int \varphi_1 \, d\mu^1 + \int \varphi_2 \, d\mu^2 - \varepsilon \int e^{\frac{\varphi_1 \oplus \varphi_2 -c}{\varepsilon}} \, \, d\mu^1 \otimes \mu^2 + \varepsilon,
\label{eq: dual}
\end{equation}
where $(\varphi_1  \oplus \varphi_2 )(x_1,x_2) = \varphi_1(x_1) + \varphi_2(x_2)$ and  the supremum is taken over all $\bm{\varphi} = (\varphi_1,\varphi_2) \in L^1 (\mu^1) \times L^1(\mu^2)$.
There exist functions $\bm{\varphi} = (\varphi_1,\varphi_2) \in L^1 (\mu^1) \times L^1(\mu^2)$ that achieve the supremum in the dual problem \eqref{eq: dual}, which we call \textit{EOT potentials}. EOT potentials are a.e. unique up to additive constants in the sense that if $(\tilde{\varphi}_1,\tilde{\varphi}_2)$ is another pair of EOT potentials, then there exists a constant $a \in \R$ such that $\tilde{\varphi}_1 = \varphi_1 +a $ $\mu^1$-a.e. and $\tilde{\varphi}_2 = \varphi_2  -a $ $\mu^2$-a.e.  
A pair of functions $\bm{\varphi} \in L^1 (\mu^1) \times L^1(\mu^2)$ are EOT potentials if and only if they satisfy the so-called \textit{Schr\"{o}dinger system}
 \[
\int e^{\frac{\varphi_1 \oplus \varphi_2 - c}{\varepsilon}} \, d\mu^{j} - 1 = 0 \quad \text{$\mu^i$-a.e., $i \ne j$},
 \]
where  $\mu^j$ acts on the $j$-th coordinate, i.e.,
 \[
 \int e^{\frac{\varphi_1 \oplus \varphi_2 - c}{\varepsilon}} \, d\mu^2 = \int e^{\frac{\varphi_1(\cdot) + \varphi_2 (x_2) - c(\cdot,x_2)}{\varepsilon}} \, d\mu^2(x_2).
 \]
Given EOT potentials $(\varphi_1,\varphi_2)$, the (unique) EOT plan $\pi^\star$ can be expressed as
\begin{equation}
\, d\pi^\star = e^{\frac{\varphi_1 \oplus \varphi_2 - c}{\varepsilon}} d(\mu^1 \otimes \mu^2).
\label{eq: EOT plan}
\end{equation}
 See Section 1 in \cite{nutz2021entropic} and the references therein for the above results.

\subsection{EOT potentials}\label{subsec:EOT potentials} 
In what follows, we deal with distributions supported in a compact set $\calX \subset \R^d$. We will maintain the following assumption throughout the paper: 
\[
\text{the set $\calX \subset \R^d$ is a bounded closed ball},
\]
where we implicitly assume that the radius of $\calX$ is sufficiently large to contain the supports of the population distributions.

Our limit theorems  rely on regularity properties of EOT potentials.
These properties are summarized in the following lemma (proved in \cref{sec: auxiliary proofs}), where the notation $\equiv$ is used to represent equality that holds everywhere on the domain.

\begin{lemma}[Regularity of EOT potentials]
\label{lem: EOT potential}
Pick and fix an arbitrary reference point 
$(x_1^\circ,x_2^\circ) \in \calX \times \calX$.
The following hold. 
\begin{enumerate}
    \item[(i)] For every 
    $\bm{\mu}=(\mu^1,\mu^2) \in \calP(\calX) \times \calP(\calX),$
    there exist EOT potentials 
    $(\varphi_1,\varphi_2) \in \calC(\calX) \times \calC(\calX)$
    such that 
\begin{equation}
\int e^{\frac{\varphi_1(\cdot) + \varphi_2(x_2) - c(\cdot,x_2)}{\varepsilon}} \, d\mu^2(x_2) - 1 \equiv 0, 
  \int e^{\frac{\varphi_1 (x_1) + \varphi_2(\cdot) - c(x_1,\cdot)}{\varepsilon}} \, d\mu^1(x_1) - 1 \equiv 0. 
 \label{eq: FOC}
 \end{equation}
Furthermore, if $(\tilde{\varphi}_1,\tilde{\varphi}_2)$ is another pair of EOT potentials satisfying \eqref{eq: FOC}, then there exists $a \in \R$ such that $(\tilde{\varphi}_1,\tilde{\varphi}_2) \equiv (\varphi_1+a,\varphi_2-a)$. Hence, there exists a unique pair of functions
$\bm{\varphi}^{\bm{\mu}} = (\varphi_1^{\bm{\mu}},\varphi_2^{\bm{\mu}}) \in \calC(\calX) \times \calC(\calX)$
that satisfies \eqref{eq: FOC} and $\varphi_1^{\bm{\mu}} (x_1^\circ) = \varphi_2^{\bm{\mu}}(x_2^\circ)$.
\item[(ii)] For every $s \in \mathbb{N}$, there exists $R_s > 0$ that may depend on $c, \varepsilon, d, \calX$, 
such that 
$\| \varphi_1^{\bm{\mu}} \|_{\calC^s(\calX)} \vee \| \varphi_2^{\bm{\mu}} \|_{\calC^s(\calX)} \le R_s$
for all 
$\bm{\mu} \in \calP(\calX) \times \calP(\calX)$.
\item[(iii)] Fix arbitrary $s \in \NN$ and equip $\calP(\calX)$ with the topology of weak convergence. Then, the map $\bm{\mu} \mapsto \bm{\varphi}^{\bm{\mu}}, \calP(\calX) \times \calP(\calX) \to \calC^s(\calX) \times \calC^s(\calX)$
is continuous, i.e., if  each $\mu_n^i$ converges weakly to $\mu^i$ ($i=1,2$), then $\bm{\varphi}^{\bm{\mu}_n} \to \bm{\varphi}^{\bm{\mu}}$ in 
$\calC^s(\calX) \times \calC^s(\calX)$.
\end{enumerate}
\end{lemma}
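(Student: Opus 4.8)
The plan is to prove the three parts in order, using the Schrödinger system as the fundamental tool and exploiting compactness of $\calX$ and smoothness (hence boundedness together with all derivatives) of $c$ on the compact set $\calX \times \calX$.

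\textbf{Part (i): existence, uniqueness up to constants, and the normalized pair.} Existence of EOT potentials in $L^1(\mu^1)\times L^1(\mu^2)$ together with the Schrödinger system is standard (cite Section~1 of \cite{nutz2021entropic}). The first step is to upgrade these to genuine continuous functions on all of $\calX$: given any measurable solution $(\varphi_1,\varphi_2)$ of the Schrödinger system, one defines $\tilde\varphi_1(x_1) := -\varepsilon \log \int e^{(\varphi_2(x_2)-c(x_1,x_2))/\varepsilon}\,d\mu^2(x_2)$ and symmetrically $\tilde\varphi_2$. Because $c$ is bounded and continuous on $\calX\times\calX$ and $\varphi_2$ is $\mu^2$-integrable, the integrand is bounded above and dominated, so $\tilde\varphi_1 \in \calC(\calX)$ by dominated convergence (continuity of $x_1 \mapsto c(x_1,x_2)$, uniform in $x_2$ on the compact set). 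One checks $\tilde\varphi_i = \varphi_i$ a.e., so $(\tilde\varphi_1,\tilde\varphi_2)$ is a continuous pair satisfying \eqref{eq: FOC} \emph{everywhere} (the ``$\equiv$''). For uniqueness up to a constant: if $(\tilde\varphi_1,\tilde\varphi_2)$ is another continuous pair satisfying \eqref{eq: FOC}, the a.e.\ uniqueness quoted above gives $\tilde\varphi_1 = \varphi_1 + a$ $\mu^1$-a.e.; but the ``softmin'' representation of $\varphi_1$ in terms of $\varphi_2$ (and vice versa) propagates this equality to \emph{every} point of $\calX$, yielding $\equiv$. Subtracting/adding the constant $a_0 := \tfrac12(\varphi_1(x_1^\circ)-\varphi_2(x_2^\circ))$ fixes the unique normalized pair $\bm\varphi^{\bm\mu}$.

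\textbf{Part (ii): uniform $\calC^s$ bounds.} The strategy is a bootstrap on the softmin representation $\varphi_1^{\bm\mu}(x_1) = -\varepsilon\log\int e^{(\varphi_2^{\bm\mu}(x_2)-c(x_1,x_2))/\varepsilon}\,d\mu^2(x_2)$. First get a uniform sup-norm bound: with the normalization, choosing $x_1 = x_1^\circ$ and using $\|c\|_{\infty,\calX\times\calX} \le \bar c$ gives, after a short computation, $\|\varphi_1^{\bm\mu}\|_{\infty,\calX} \vee \|\varphi_2^{\bm\mu}\|_{\infty,\calX} \le C_0$ for a constant depending only on $\bar c,\varepsilon,\diam(\calX)$, uniformly in $\bm\mu$. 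Then differentiate under the integral sign: $D^k_{x_1}\varphi_1^{\bm\mu}$ is expressible as a fixed polynomial in $\varepsilon^{-1}$, in the derivatives $D^\ell_{x_1}c(x_1,x_2)$ up to order $|k|$, and in the normalized probability kernel $e^{(\varphi_2^{\bm\mu}(x_2)-c(x_1,x_2))/\varepsilon}\,d\mu^2(x_2)$ (whose mass is $1$ by \eqref{eq: FOC}); all such derivative expressions are bounded in absolute value by a constant depending only on $\varepsilon$, $\max_{|\ell|\le s}\|D^\ell c\|_{\infty,\calX\times\calX}$, $C_0$ and $d$, and crucially \emph{not} on $\bm\mu$, because $\mu^2$ enters only through an averaging against a probability measure. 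This gives $R_s$. (One should verify that differentiation under the integral is legitimate — again immediate from the smoothness and uniform domination on the compact set.)

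\textbf{Part (iii): continuity in $\bm\mu$ for the weak topology.} Fix $s$. Suppose $\mu_n^i \rightharpoonup \mu^i$. By part (ii), the family $\{\bm\varphi^{\bm\mu_n}\}_n$ is bounded in $\calC^{s+1}(\calX)\times\calC^{s+1}(\calX)$, hence (Arzelà–Ascoli, since $\calC^{s+1}$-bounded sets are relatively compact in $\calC^s$) relatively compact in $\calC^s(\calX)\times\calC^s(\calX)$. It therefore suffices to show every $\calC^s$-convergent subsequence has limit $\bm\varphi^{\bm\mu}$. Along such a subsequence $\bm\varphi^{\bm\mu_{n_k}} \to \bm\psi$ in $\calC^s$, in particular uniformly; combined with $\mu_{n_k}^2 \rightharpoonup \mu^2$ and uniform continuity/boundedness of the integrand, one passes to the limit in \eqref{eq: FOC} (a standard ``$f_n\to f$ uniformly and $\nu_n\rightharpoonup\nu$ implies $\int f_n\,d\nu_n \to \int f\,d\nu$'' argument) to conclude $\bm\psi$ satisfies the Schrödinger system for $\bm\mu$; the normalization $\psi_1(x_1^\circ)=\psi_2(x_2^\circ)$ passes to the limit too, so by the uniqueness in part (i), $\bm\psi = \bm\varphi^{\bm\mu}$. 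Since every subsequence has a further subsequence converging to the same limit, the whole sequence converges.

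\textbf{Anticipated main obstacle.} The bookkeeping in part (ii) — writing out $D^k\varphi_i^{\bm\mu}$ via (a Faà di Bruno / logarithmic-derivative expansion of) the softmin and checking that every term is controlled by a $\bm\mu$-independent constant — is the technically heaviest step, though conceptually routine once one recognizes that $\mu^j$ always appears inside an integral against a \emph{probability} kernel. The one subtlety to be careful about is ensuring the sup-norm bound $C_0$ in part (ii) is genuinely uniform over \emph{all} of $\calP(\calX)\times\calP(\calX)$ (including degenerate measures), which the softmin bound delivers since it only uses $\|c\|_\infty$ and the probability normalization, not any nondegeneracy of $\bm\mu$.
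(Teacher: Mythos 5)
The proposal is correct and follows essentially the same approach as the paper: the softmin representation for extracting a continuous solution of the Schr\"odinger system and fixing the additive constant in (i), differentiation under the integral against the probability kernel for the uniform $\calC^s$ bounds in (ii), and an Ascoli--Arzel\`a compactness-plus-uniqueness subsequence argument for (iii). The only deviations are cosmetic --- you apply the softmin update to both coordinates in parallel and verify \eqref{eq: FOC} directly from the a.e.\ equality, where the paper updates sequentially and invokes Jensen's inequality, and in (iii) you use the compact embedding $\calC^{s+1}\hookrightarrow\calC^s$ in place of the paper's diagonal argument --- neither changes the substance.
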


\subsection{EOT map}
The EOT map is an efficiently computable surrogate of the Brenier map. Recall that the (vanilla) OT problem with the quadratic cost between $(\mu^1,\mu^2)$ with absolutely continuous $\mu^1$ admits a ($\mu^1$-a.e.) unique OT map (called the Brenier map) $T:\R^d \to \R^d$ and the (unique) OT plan concentrates on the graph of $T$,  $\{ (x_1,T(x_1)) : x_1 \in \supp(\mu^1) \}$.
Hence, the Brenier map agrees with the conditional expectation of the second coordinate given the first under the OT plan (also called the barycenter projection). Motivated by this observation, \cite{pooladian2021entropic} considered the following EOT analog of the Brenier map. 
 
\begin{definition}[EOT map]
\label{def: EOT map}
Consider the quadratic cost $c (x_1,x_2) =  \| x_1 - x_2 \|^2/2$. For 
$\bm{\mu} = (\mu^1,\mu^2) \in \calP(\calX) \times \calP(\calX)$,
the \textit{EOT map} is defined by
\[
T^{\bm{\mu}}= \E_{\pi^\star}[X_2 \mid X_1=\cdot], \ (X_1,X_2) \sim \pi^\star,
\]
where $\pi^\star$ is the unique EOT plan for $(\mu^1,\mu^2)$.
\end{definition}

The EOT map is \textit{a priori} defined only $\mu^1$-a.e. However, as noted in \cite{pooladian2021entropic}, using the expression \eqref{eq: EOT plan} of the EOT plan, we may define a version of the conditional expectation for all 
$x_1\in\calX$
(and indeed $x_1 \in \R^d$) as 

\begin{equation}
T^{\bm{\mu}} (x_1) = \frac{\int_{\calX} x_2 e^{\frac{\varphi_2^{\bm{\mu}}(x_2) - \| x_1-x_2\|^2/2}{\varepsilon}}\, d\mu^2(x_2)}{\int_{\calX} e^{\frac{\varphi_2^{\bm{\mu}}(x_2) - \| x_1-x_2\|^2/2}{\varepsilon}}\, d\mu^2(x_2)}, \quad x_1 \in \calX.
\label{eq: EOT map}
\end{equation}
We always choose this version throughout the  paper. Just as the Brenier map, the EOT map can be characterized in terms of the gradient of the EOT potential. 

\begin{lemma}[Proposition 2 in \cite{pooladian2021entropic}]
\label{lem: brenier}
Under the setting of \cref{def: EOT map}, we have
$T^{\bm{\mu}}(x_1) = x_1 - \nabla \varphi_1^{\bm{\mu}} (x_1)$ for 
$x_1 \in \calX$.
\end{lemma}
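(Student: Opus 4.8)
The plan is to obtain the identity by differentiating the first equation of the Schr\"odinger system \eqref{eq: FOC}. With the quadratic cost $c(x_1,x_2) = \|x_1-x_2\|^2/2$, I would set
\[
G(x_1) := \int_{\calX} e^{\frac{\varphi_2^{\bm{\mu}}(x_2) - \|x_1-x_2\|^2/2}{\varepsilon}} \, d\mu^2(x_2), \qquad x_1 \in \RR^d ,
\]
and note that, since \lemref{lem: EOT potential}(i) ensures that \eqref{eq: FOC} holds for \emph{every} $x_1 \in \calX$, factoring $e^{\varphi_1^{\bm{\mu}}(x_1)/\varepsilon}$ out of the integral turns the first identity in \eqref{eq: FOC} into the pointwise relation $e^{\varphi_1^{\bm{\mu}}(x_1)/\varepsilon} G(x_1) = 1$, i.e. $\varphi_1^{\bm{\mu}} = -\varepsilon \log G$ on $\calX$. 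So the first step is just this algebraic rearrangement of the Schr\"odinger equation.

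The second step is a calculus computation. I would first note that $G$ is strictly positive (the integrand is positive and $\mu^2$ is a probability measure) and, because the Gaussian kernel is smooth and $\varphi_2^{\bm{\mu}}$ is bounded on the compact set $\calX$ (by \lemref{lem: EOT potential}(i), which also supplies an integrable dominating function for the integrand and all its $x_1$-derivatives on compact sets), that $G$ is smooth on $\RR^d$ with derivatives obtained by differentiating under the integral sign; in particular
\[
\nabla G(x_1) = -\frac{1}{\varepsilon} \int_{\calX} (x_1 - x_2)\, e^{\frac{\varphi_2^{\bm{\mu}}(x_2) - \|x_1-x_2\|^2/2}{\varepsilon}} \, d\mu^2(x_2).
\]
Applying the chain rule to $\varphi_1^{\bm{\mu}} = -\varepsilon \log G$, substituting the expressions for $G$ and $\nabla G$, and splitting $x_1 - x_2$ in the numerator then yields
\[
\nabla \varphi_1^{\bm{\mu}}(x_1) = -\varepsilon\,\frac{\nabla G(x_1)}{G(x_1)} = \frac{\int_{\calX} (x_1 - x_2)\, e^{\frac{\varphi_2^{\bm{\mu}}(x_2) - \|x_1-x_2\|^2/2}{\varepsilon}} \, d\mu^2(x_2)}{\int_{\calX} e^{\frac{\varphi_2^{\bm{\mu}}(x_2) - \|x_1-x_2\|^2/2}{\varepsilon}} \, d\mu^2(x_2)} = x_1 - T^{\bm{\mu}}(x_1),
\]
where the last equality is precisely the definition \eqref{eq: EOT map} of the EOT map. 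Rearranging gives $T^{\bm{\mu}}(x_1) = x_1 - \nabla \varphi_1^{\bm{\mu}}(x_1)$ for $x_1 \in \calX$.

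Since the computation itself is a one-line chain rule, the only point that needs genuine care---and the one I expect to be the main (mild) obstacle---is the justification of the pointwise differentiation, which rests on the two facts handed to us by \lemref{lem: EOT potential}: that \eqref{eq: FOC} holds \emph{everywhere} on $\calX$ rather than merely $\mu^i$-a.e.\ (so that we may differentiate in $x_1$ pointwise), and that $\varphi_2^{\bm{\mu}}$ is bounded. A minor secondary issue is the interpretation of $\nabla \varphi_1^{\bm{\mu}}$ at points of $\partial \calX$: here I would observe that $-\varepsilon \log G$ is a smooth function on all of $\RR^d$ agreeing with $\varphi_1^{\bm{\mu}}$ on $\calX$, so $\nabla \varphi_1^{\bm{\mu}}$ is unambiguously the continuous extension of the interior gradient (consistent with the $\calC^s$-regularity asserted in \lemref{lem: EOT potential}(ii)), and the displayed identity, valid for all $x_1 \in \RR^d$, holds in particular on $\calX$.
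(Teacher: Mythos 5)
The paper does not prove this lemma itself; it simply cites Proposition~2 of \cite{pooladian2021entropic}. Your argument supplies a self-contained derivation, and it is correct: rearranging the Schr\"odinger relation to $\varphi_1^{\bm{\mu}} = -\varepsilon\log G$, differentiating under the integral sign (which is justified by the compactness of $\calX$ and the uniform $\calC^s$-bound of \lemref{lem: EOT potential}(ii)), and then splitting $x_1 - x_2$ in the resulting quotient recovers $x_1 - T^{\bm{\mu}}(x_1)$ directly from the definition \eqref{eq: EOT map}. This is the standard chain-rule computation and is essentially the same argument as in the cited reference, so there is no substantive difference in approach---only that you spell it out rather than deferring to the citation. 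Your remark about boundary points is also the right one: since $-\varepsilon\log G$ is a smooth function on all of $\RR^d$ agreeing with $\varphi_1^{\bm{\mu}}$ on $\calX$, the gradient at $\partial\calX$ is unambiguously the continuous extension from the interior, consistent with the convention for $\calC^s(\calX)$ laid out in the notation section.
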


\cite{pooladian2021entropic} used the EOT map as a means to estimate the (original) Brenier map by taking $\varepsilon = \varepsilon_n \to 0$. Here, we view the EOT map $T^{\bm{\mu}}$ as an object of interest on its own right, rather than approximating the Brenier map.

\begin{example}[Vector quantile function]
\label{ex: vector quantile}
The Brenier map can be interpreted as a vector version of the quantile function when $\mu^1$ is taken as a known reference measure, such as $\mu^1 = \mathrm{Unif} [0,1]^d$ \cite{chernozhukov2017monge,carlier2016vector,ghosal2019multivariate}. Indeed, for $d=1$, the Brenier map sending $\mu^1=\mathrm{Unif} [0,1]$ to $\mu^2$ agrees with the quantile function of $\mu^2$ \cite{santambrogio15}. The EOT map $T^{\bm{\mu}}$ can thus be viewed as an efficiently computable  surrogate of the vector quantile function \cite{carlier2020vector}. 
\end{example}

\subsection{Sinkhorn divergence} One drawback of  EOT in  \eqref{eq: EOT} is that it is not a metric even for distance-like costs, such as $c(x_1,x_2) = \| x_1-x_2 \|^{p}$, $p \ge 1$. In fact, EOT is not even a divergence since $\mathsf{S}_{c,\varepsilon}(\mu^1,\mu^2) \ne 0$ when $\mu^1 = \mu^2$, which renders it incompatible for applications to homogeneity and independence testing.\footnote{A divergence on the space of probability distributions is a mapping to the extended reals that is nonnegative and nullifies if and only if the distributions are the same.}  To remedy this issue, a popular approach is center EOT to obtain the Sinkhorn divergence:
\begin{equation}
\bar{\mathsf{S}}_{c,\varepsilon}(\mu^1,\mu^2) = \mathsf{S}_{c,\varepsilon} (\mu^1,\mu^2)-\frac{1}{2}\big( \mathsf{S}_{c,\varepsilon} (\mu^1,\mu^1) + \mathsf{S}_{c,\varepsilon} (\mu^2,\mu^2) \big).
\label{eq: sinkhorn}
\end{equation}
Under certain regularity conditions on the cost function (satisfied by the quadratic cost), the Sinkhorn divergence  satisfies $\bar{\mathsf{S}}_{c,\varepsilon}(\mu^1,\mu^2) \ge 0$ and $\bar{\mathsf{S}}_{c,\varepsilon}(\mu^1,\mu^2)= 0$ if and only if $\mu^1=\mu^2$ \cite{feydy2019interpolating}. 

Assuming $x_1^\circ = x_2^\circ$, by duality \eqref{eq: dual} and  $\varphi_1^{(\mu^i,\mu^i)} = \varphi_2^{(\mu^i,\mu^i)}$ (which follows by symmetry of the cost function), the Sinkhorn divergence admits the following expression \cite{pooladian2022debiaser}:
\begin{equation}
\label{eq: Sinkhorn dual}
\bar{\mathsf{S}}_{c,\varepsilon}(\mu^1,\mu^2)=\int (\varphi_1^{(\mu^1,\mu^2)} - \varphi_1^{(\mu^1,\mu^1)}) \, d\mu^1 + \int (\varphi_2^{(\mu^1,\mu^2)} - \varphi_2^{(\mu^2,\mu^2)}) \, d\mu^2.
\end{equation}
We will use this expression when studying the null limit distribution of the empirical Sinkhorn divergence.

\section{Main results}
\label{sec: main results}

In this section, we derive limit distributions for the empirical EOT potentials, map, and Sinkhorn divergence. For $\mu^i \in \calP(\calX)$,
$i=1,2$, let $\hat{\mu}_n^i$ denote the empirical distribution of a sample of $n$ independent observations from $\mu^i$, i.e., $\hat{\mu}_n^i = n^{-1}\sum_{j=1}^n \delta_{X_j^i}$, where $X_1^i,\dots,X_n^i$ are i.i.d. according to $\mu^i$. The samples from $\mu^1$ and $\mu^2$ are assumed to be independent, and we set $\hat{\bm{\mu}}_n = (\hat{\mu}_n^1,\hat{\mu}_n^2)$. 

\subsection{Limit theorems for EOT potentials and  maps}
\cref{lem: EOT potential} (ii) implies that EOT potentials lie in the H\"{o}lder space $\calC^s(\calX)$ for arbitrary $s \in \NN$.
The first main result concerns a limit distribution for the empirical EOT potentials in 
$\calC^s(\calX) \times \calC^s(\calX)$,
which, in view of \cref{lem: brenier}, automatically yields a limit distribution  for the empirical EOT map in 
$\calC^{s-1} (\calX;\R^d)$.
Recall that a random variable $G$ with values in a (real) Banach space $\BB$ is called Gaussian if for every $b^* \in \BB^*$ (the topological dual of $\BB$), $b^*G$ is a real-valued Gaussian random variable. We say that $G$ has mean zero if $b^*G$ does so for every $b^* \in \BB^*$. Let $\stackrel{d}{\to}$ denote convergence in distribution. When necessary, convergence in distribution is understood in the sense of Hoffmann-J{\o}rgensen (cf. Chapter 1 in \cite{van1996weak}).
For the product of two metric (or normed) spaces, we always consider a product metric (or norm).

\begin{theorem}[Limit theorem for EOT potentials]
\label{thm: limit theorem EOT potential}
Let $s \in \NN$
and $\bm{\mu} = (\mu^1,\mu^2) \in \calP(\calX) \times \calP(\calX)$ be arbitrary.
Then, for $\hat{\bm{\varphi}}_n = \bm{\varphi}^{\hat{\bm{\mu}}_n}$, we have as $n \to \infty$,
\[
\sqrt{n}\big(\hat{\bm{\varphi}}_n - \bm{\varphi}^{\bm{\mu}}\big) \stackrel{d}{\to} \bm{G}^{\bm{\mu}} \quad \text{in} \ \calC^s (\calX) \times \calC^s (\calX),
\]
where $\bm{G}^{\bm{\mu}} = (G_1^{\bm{\mu}}, G_2^{\bm{\mu}})$ is a zero-mean Gaussian random variable with values in 
$\calC^s (\calX) \times \calC^s (\calX)$.
\end{theorem}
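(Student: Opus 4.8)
\emph{Overall strategy.} The plan is to realize the empirical potentials as the image of the empirical marginals under a Hadamard differentiable map and then invoke the functional delta method \cite{romisch2004}. Fix an integer $s'$ with $s' > s$ and $s' > 1 + d/2$, and identify Borel probability measures on $\calX$ with elements of the Banach space $\D := (\calC^{s'}(\calX))^*$ as in \cref{subsec: notation}, so that $\calP(\calX) \times \calP(\calX) \subset \D \times \D$. By \cref{lem: EOT potential}(i)--(ii), the map
\[
\Phi : \calP(\calX) \times \calP(\calX) \to \calC^s(\calX) \times \calC^s(\calX), \qquad \Phi(\bm{\mu}) = \bm{\varphi}^{\bm{\mu}},
\]
is well defined and $\hat{\bm{\varphi}}_n = \Phi(\hat{\bm{\mu}}_n)$. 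Setting $\D_0 := \{ \gamma \in \D : \gamma(\calX) = 0 \}$, I would establish two ingredients: (A) a Donsker-type weak limit $\sqrt{n}(\hat{\bm{\mu}}_n - \bm{\mu}) \stackrel{d}{\to} \bm{G}$ in $\D \times \D$ with $\bm{G}$ supported on $\D_0 \times \D_0$; and (B) Hadamard differentiability of $\Phi$ (as a map on $\calP(\calX) \times \calP(\calX)$) at $\bm{\mu}$, into $\calC^s(\calX) \times \calC^s(\calX)$, tangentially to $\D_0 \times \D_0$.

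\emph{Ingredient (A).} Because $\calX$ is compact and convex and $s' > 1 + d/2$, the unit ball $\calF := \{ f \in \calC^{s'}(\calX) : \| f \|_{\calC^{s'}(\calX)} \le 1 \}$ has $\| \cdot \|_{\infty,\calX}$-metric entropy $\lesssim \delta^{-d/s'}$, hence is $P$-Donsker for every $P \in \calP(\calX)$ [cf. Section~2.7 in \cite{van1996weak}]. Consequently $\sqrt{n}(\hat{\mu}_n^i - \mu^i) \stackrel{d}{\to} G^i$ in $\ell^\infty(\calF)$, equivalently in the operator norm of $\D$, where $G^i$ is a tight mean-zero Gaussian with the covariance of the $\mu^i$-Brownian bridge. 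Independence of the two samples upgrades this to joint convergence with $\bm{G} = (G^1,G^2)$, and $\hat{\mu}_n^i(\calX) = \mu^i(\calX) = 1$ forces $G^i(\calX) = 0$ almost surely, so $\bm{G}$ is supported on $\D_0 \times \D_0$.

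\emph{Ingredient (B).} This is the heart of the matter and I would handle it in two stages. In Stage~1 I would prove Hadamard differentiability of $\Phi$ into $\calC(\calX) \times \calC(\calX)$: let $\Psi$ be the operator whose two components are the left-hand sides of \eqref{eq: FOC}, adjoined with the normalization $\varphi_1(x_1^\circ) = \varphi_2(x_2^\circ)$, so that $\Psi(\bm{\varphi}^{\bm{\mu}},\bm{\mu}) \equiv 0$. Smoothness of $c$ and the uniform bounds in \cref{lem: EOT potential}(ii) make $\Psi$ continuously Fr\'{e}chet differentiable in $(\bm{\varphi},\bm{\mu})$ into $\calC(\calX) \times \calC(\calX)$; the partial derivative $\partial_{\bm{\varphi}}\Psi(\bm{\varphi}^{\bm{\mu}},\bm{\mu})$ equals $\mathrm{Id} - K$ with $K$ a compact integral operator (continuous kernel on the compact $\calX \times \calX$) assembled from the conditional kernels of the EOT plan \eqref{eq: EOT plan}, and $\mathrm{Id} - K$ is injective by strict positivity of those kernels once the normalization is imposed; Fredholm theory then makes it boundedly invertible. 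An implicit-function and chain-rule argument now yields Hadamard differentiability of $\Phi$ at $\bm{\mu}$ into $\calC(\calX) \times \calC(\calX)$ tangentially to $\D_0 \times \D_0$, with $\Phi'_{\bm{\mu}}(\bm{\gamma})$ the unique solution of the linear system obtained by differentiating \eqref{eq: FOC} in $\bm{\mu}$ along $\bm{\gamma}$. In Stage~2 I would lift to $\calC^s$: differentiating \eqref{eq: FOC} in the space variable shows that each $D^k \varphi_i^{\bm{\mu}}$ with $|k| \le s$ solves a linear equation of the same type as in Stage~1, driven by lower-order space derivatives of the potentials and of $c$; inducting on $|k|$ and reusing the invertibility from Stage~1 (with \cref{lem: EOT potential}(ii) bounding the coefficients) shows that each $D^k \Phi$ is itself Hadamard differentiable into $\calC(\calX)$. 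Since $\| \cdot \|_{\calC^s(\calX)}$ is the sum of the $\| \cdot \|_{\infty,\calX}$-norms of derivatives of order $\le s$, this is exactly Hadamard differentiability of $\Phi$ into $\calC^s(\calX) \times \calC^s(\calX)$, with $\Phi'_{\bm{\mu}}$ a continuous linear operator on $\D_0 \times \D_0$.

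\emph{Conclusion and main obstacle.} With (A) and (B) in hand, the functional delta method \cite{romisch2004} applies---$\hat{\bm{\mu}}_n$ lies in the domain $\calP(\calX) \times \calP(\calX)$, $\sqrt{n}(\hat{\bm{\mu}}_n - \bm{\mu}) \stackrel{d}{\to} \bm{G}$, and $\bm{G}$ is supported on the tangent set---giving
\[
\sqrt{n}\big(\hat{\bm{\varphi}}_n - \bm{\varphi}^{\bm{\mu}}\big) = \sqrt{n}\big(\Phi(\hat{\bm{\mu}}_n) - \Phi(\bm{\mu})\big) \stackrel{d}{\to} \bm{G}^{\bm{\mu}} := \Phi'_{\bm{\mu}}(\bm{G}) \quad \text{in } \calC^s(\calX) \times \calC^s(\calX).
\]
Since $\Phi'_{\bm{\mu}}$ is continuous and linear and $\bm{G}$ is a mean-zero Gaussian element of $\D_0 \times \D_0$, for every $b^* \in (\calC^s(\calX) \times \calC^s(\calX))^*$ the variable $b^* \bm{G}^{\bm{\mu}} = (b^* \circ \Phi'_{\bm{\mu}})(\bm{G})$ is a continuous linear image of a Gaussian, hence real Gaussian with mean zero; thus $\bm{G}^{\bm{\mu}}$ is a mean-zero Gaussian random variable in $\calC^s(\calX) \times \calC^s(\calX)$, which is the assertion. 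The entire difficulty is concentrated in (B), and within it in Stage~2: the implicit function theorem naturally lives in the $\calC$-space, where $\Psi$ is smooth with invertible $\bm{\varphi}$-derivative, but the conclusion must be delivered in the finer H\"{o}lder topology, so bootstrapping the differentiability through the spatially differentiated Schr\"{o}dinger equations---while keeping all remainder terms controlled uniformly via \cref{lem: EOT potential}(ii)---is the technically demanding step, with injectivity (hence invertibility) of $\partial_{\bm{\varphi}}\Psi$ a secondary but essential checkpoint.
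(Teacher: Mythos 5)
Your proposal follows the paper's high-level template (Donsker control of the empirical marginals, Hadamard differentiability of $\bm{\mu}\mapsto\bm{\varphi}^{\bm{\mu}}$, and the functional delta method), and Ingredient (A) matches the paper's Lemma on weak convergence. The genuine divergence is in Ingredient (B), Stage 1, and there you have a gap.

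You claim that $\Psi$ is \emph{continuously Fr\'{e}chet differentiable} in $(\bm{\varphi},\bm{\mu})$ into $\calC(\calX)\times\calC(\calX)$ and then invoke an implicit-function-theorem-with-Fredholm argument. The continuity in $\bm{\mu}$ of the partial derivative $\partial_{\bm{\varphi}}\Psi$ is exactly the point that fails. That operator applied to $\bm{h}=(h_1,h_2)\in\calC\times\calC$ has components such as $\varepsilon^{-1}\int e^{(\varphi_1\oplus\varphi_2-c)/\varepsilon}(h_1\oplus h_2)\,d\nu^2$, and to bound the difference $\int g\,d(\nu^2-\tilde\nu^2)$ by $\|\nu^2-\tilde\nu^2\|_{\ell^\infty(B^s)}$ one needs a $\calC^s$ (or at least $\calC^{s'}$) bound on the integrand $g$. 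But $g$ contains $h_2$, which only ranges over $\calC$, so this $\calC^s$ bound is not available uniformly in $\|\bm h\|_{\calC\times\calC}\le1$. Hence the map $\bm{\nu}\mapsto\partial_{\bm{\varphi}}\Psi(\bm{\nu},\bm{\varphi})$ is \emph{not} continuous from $\ell^\infty(B^s)\times\ell^\infty(B^s)$ into the space of bounded operators on $\calC\times\calC$, and the classical implicit function theorem does not apply. The paper is explicit about this: there is a remark at the end of \cref{sec: proof of H derivative potential} noting that the IFT route ``asks Fr\'{e}chet differentiability of the map $\bm{\mu}\mapsto\bm{\varphi}^{\bm{\mu}}$'' and that verifying its hypotheses ``seems highly nontrivial.'' The paper instead uses the $Z$-functional lemma (\cref{lem: Z-functional}, drawn from Lemma 3.9.34 in \cite{van1996weak}), which only requires (a) Fr\'{e}chet differentiability of $\Psi(\bm{\mu},\cdot)$ in $\bm{\varphi}$ \emph{with the measure fixed}, and (b) Hadamard differentiability of $\bm{\nu}\mapsto\Psi(\bm{\nu},\cdot)$ into $\ell^\infty(\Theta^s,\DD)$; continuity of the limiting direction at $\bm{\varphi}^{\bm{\mu}}\vert_S$ replaces joint continuity of the derivative, and is checked separately in \cref{lem:HadamardDerivativeRestriction}.

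Two secondary issues. First, your tangent set $\D_0\times\D_0$ (all mean-zero elements) is too large: the paper works on $\calP_{\mu^1}\times\calP_{\mu^2}$ with tangent cone $\overline{\calM_{\mu^1}}^{\ell^\infty(B^s)}\times\overline{\calM_{\mu^2}}^{\ell^\infty(B^s)}$ precisely because the arguments (restriction to $S_i=\supp(\mu^i)$ and the quantitative injectivity estimate from \cite{carlier2020differential} used in \cref{lem: Psi functional}) require the perturbing measures to be supported in $\supp(\mu^i)$; differentiability tangentially to all of $\D_0$ is neither needed nor established. Second, in Stage 2 you say each $D^k\varphi_i^{\bm{\mu}}$ ``solves a linear equation of the same type as in Stage 1'' and that you ``reuse the invertibility.'' That is not how the lift works: differentiating the Schr\"{o}dinger system in the space variable produces \emph{explicit} integral formulas for $D^k\varphi_i^{\bm{\nu}}$ in terms of $\bm{\varphi}^{\bm{\nu}}$ and the measures (via Fa\`{a} di Bruno), and the paper checks Hadamard differentiability of those explicit expressions directly, then uses completeness of $\calC^s(\calX)$; no new invertibility is needed. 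Your Fredholm observation that $\dot\Psi_{\bm\mu}$ equals identity plus a compact integral operator is a reasonable way to understand the invertibility, but the paper obtains the needed quantitative bound from Proposition 3.1 in \cite{carlier2020differential} and, crucially, on the constrained subspace of $\lin(\Theta^s)$; your Fredholm argument would additionally have to address that the operator is considered on a codimension-one subspace and does not obviously map it to itself.
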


\cref{thm: limit theorem EOT potential} immediately implies the following corollary concerning the limit distribution of the empirical EOT map. 
For $s \in \NN_0$, let
$\calC^{s}(\calX;\R^d)$
denote the space of vector-valued functions 
$f=(f_1,\dots,f_d): \calX \to \R^d$
whose coordinate functions belong to 
$\calC^s(\calX)$,
equipped with the norm 
$\| f \|_{\calC^{s}(\calX;\R^d)} = \sum_{j=1}^d \| f_j \|_{\calC^s(\calX)}$.
\begin{corollary}[Limit theorem for EOT map]
\label{cor: limit theorem EOT map}
Let $s \in \NN$
and $\bm{\mu} = (\mu^1,\mu^2) \in \calP(\calX) \times \calP(\calX)$ be arbitrary.
Consider the quadratic cost $c(x_1,x_2) = \| x_1-x_2 \|^2/2$ and the EOT map $T^{\bm{\mu}}$ given in \eqref{eq: EOT map}. Then, for $\hat{T}_n = T^{\hat{\bm{\mu}}_n}$, we have as $n \to \infty$,
\[
\sqrt{n} \big (\hat{T}_n - T^{\bm{\mu}} \big) \stackrel{d}{\to} -\nabla G_{1}^{\bm{\mu}}  \quad \text{in} \ \calC^{s-1} (\calX; \R^d). 
\]
The limit $-\nabla G_{1}^{\bm{\mu}}$ is a zero-mean Gaussian random variable in 
$\calC^{s-1} (\calX; \R^d)$.
\end{corollary}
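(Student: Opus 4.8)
The plan is to deduce the corollary directly from \cref{thm: limit theorem EOT potential} together with \cref{lem: brenier} and the continuous mapping theorem, using that differentiation is a bounded linear operator from $\calC^s(\calX)$ into $\calC^{s-1}(\calX;\R^d)$. First I would record the key cancellation. By \cref{lem: brenier}, applied to $\bm\mu$ and to the (random) pair $\hat{\bm\mu}_n$, we have for every $x_1 \in \calX$ that $T^{\bm\mu}(x_1) = x_1 - \nabla\varphi_1^{\bm\mu}(x_1)$ and $\hat T_n(x_1) = T^{\hat{\bm\mu}_n}(x_1) = x_1 - \nabla\hat\varphi_{n,1}(x_1)$, where $\hat{\bm\varphi}_n = (\hat\varphi_{n,1},\hat\varphi_{n,2}) = \bm\varphi^{\hat{\bm\mu}_n}$. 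Subtracting, the affine part $x_1$ cancels and
\[
\sqrt n\,\big(\hat T_n - T^{\bm\mu}\big) = -\nabla\Big(\sqrt n\,\big(\hat\varphi_{n,1} - \varphi_1^{\bm\mu}\big)\Big)
\]
as an identity of $\R^d$-valued functions on $\calX$.

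Next I would observe that $\Lambda : \calC^s(\calX) \to \calC^{s-1}(\calX;\R^d)$, $\Lambda f = -\nabla f = -(\partial_{x_1} f,\dots,\partial_{x_d} f)$, is a well-defined bounded linear operator: linearity is immediate, and from the definitions of the norms each $\calC^{s-1}$-seminorm of a partial derivative $\partial_{x_j} f$ is dominated by a $\calC^s$-seminorm of $f$, giving $\|\Lambda f\|_{\calC^{s-1}(\calX;\R^d)} \le C_{s,d}\,\|f\|_{\calC^s(\calX)}$ (the case $s=1$, where $\calC^0 = \calC$, being covered as well). Hence $\Lambda$ is continuous. Composing the continuous coordinate projection $(\psi_1,\psi_2)\mapsto\psi_1$ on $\calC^s(\calX)\times\calC^s(\calX)$ with $\Lambda$ and invoking \cref{thm: limit theorem EOT potential} — the spaces $\calC^s(\calX)$ are separable, as noted in the Notation section, so the continuous mapping theorem for weak convergence in the Hoffmann--J\o rgensen sense applies — I would conclude
\[
\sqrt n\,\big(\hat T_n - T^{\bm\mu}\big) = \Lambda\!\left(\sqrt n\,\big(\hat\varphi_{n,1} - \varphi_1^{\bm\mu}\big)\right) \dconv \Lambda G_1^{\bm\mu} = -\nabla G_1^{\bm\mu} \quad \text{in } \calC^{s-1}(\calX;\R^d).
\]

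Finally I would verify that the limit is a mean-zero Gaussian random variable in $\calC^{s-1}(\calX;\R^d)$: for any $b^* \in (\calC^{s-1}(\calX;\R^d))^*$ the functional $b^*\circ\Lambda$ belongs to $(\calC^s(\calX))^*$, so $b^*(-\nabla G_1^{\bm\mu}) = (b^*\circ\Lambda)(G_1^{\bm\mu})$ is a real-valued Gaussian with mean zero because $\bm G^{\bm\mu}$, hence its first marginal $G_1^{\bm\mu}$, is a zero-mean Gaussian in $\calC^s(\calX)$ by \cref{thm: limit theorem EOT potential}. There is no genuine obstacle here: given \cref{thm: limit theorem EOT potential}, everything reduces to the continuity of $\Lambda$ and the continuous mapping theorem. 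The only points warranting (minor) care are the norm-domination estimate for $\Lambda$ at the endpoint $s=1$ and the routine measurability bookkeeping that lets one apply the continuous mapping theorem to the possibly nonmeasurable maps $\hat{\bm\varphi}_n$; both are standard once separability of the target spaces is in hand.
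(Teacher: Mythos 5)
Your proposal is correct and follows essentially the same route as the paper: cancel the affine part using \cref{lem: brenier}, note that $f\mapsto -\nabla f$ is a bounded linear (hence continuous) map from $\calC^s(\calX)$ to $\calC^{s-1}(\calX;\R^d)$, apply the continuous mapping theorem to \cref{thm: limit theorem EOT potential}, and deduce Gaussianity of the limit by duality. The extra remarks on the $s=1$ endpoint and measurability bookkeeping are fine but not essential given the paper's setup.
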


The recent work of \cite{delbarrio22EOT} shows that 
$\E[\| \hat{\varphi}^n_1 - \varphi_1^{\bm{\mu}}\|_{\calC^s(\calX)}^2] \vee \E[\| \hat{\varphi}^n_2 - \varphi_2^{\bm{\mu}}\|_{\calC^s(\calX)}^2] = O(n^{-1})$.
\cref{thm: limit theorem EOT potential} complements their result by further showing distributional convergence of $\sqrt{n}(\hat{\bm{\varphi}}_n - \bm{\varphi}^{\bm{\mu}})$ in 
$\calC^s (\calX) \times \calC^s (\calX)$.

The proof of \cref{thm: limit theorem EOT potential} employs  Hadamard differentiability of the map $\bm{\mu} \mapsto \bm{\varphi}^{\bm{\mu}}$ in 
$\calC^s (\calX) \times \calC^s (\calX)$
(stated in \cref{thm: H derivative potential} ahead) and the functional delta method; see \cref{sec: functional delta} for a review of Hadamard differentiability and the functional delta method. To this effect, we embed 
$\calP(\calX)$
into 
$\ell^\infty (B^s)$, where $B^s$
is the unit ball in 
$\calC^s(\calX)$.
Since 
$B^s$
is $\mu^i$-Donsker 
for $i=1,2$
when $s > d/2$ (cf. Theorem 2.7.1 in \cite{van1996weak} 
), the conclusion of \cref{thm: limit theorem EOT potential} follows from the functional delta method. The case~of $s\le d/2$ follows by noting that the inclusion map 
$f \mapsto f, \calC^s(\calX) \to \calC^{s'}(\calX)$,
with $s ' < s$ is continuous. Having \cref{thm: limit theorem EOT potential}, \cref{cor: limit theorem EOT map} follows by \cref{lem: brenier} and the fact that the gradient $f \mapsto \nabla f$ is continuous from 
$\calC^s(\calX)$
into 
$\calC^{s-1}(\calX;\R^d)$.
The corollary can also be deduced directly from Hadamard differentiability of the map $\bm{\mu} \mapsto \nabla \varphi_1^{\bm{\mu}}$ in 
$\calC^{s-1}(\calX;\R^d)$.

\begin{remark}[One-sample case]
We have only presented the two-sample limit distribution results for the EOT potentials and map, but as evident from the proof strategy, analogous conclusions continue to hold for the one-sample case where either $\mu^1$ or $\mu^2$ is known (cf. \cref{ex: vector quantile}). 
\end{remark}

\begin{remark}[Measurability]
Since $(x_1,\dots,x_n) \mapsto n^{-1}\sum_{j=1}^n \delta_{x_j}, \calX^n\to\calP(\calX)$ 
is weakly continuous, in view of Lemma \ref{lem: EOT potential} (iii), $\hat{\bm{\varphi}}_n$ is a proper, 
$\calC^s (\calX) \times \calC^s (\calX)$-valued
random variable. Likewise, the empirical EOT map $\hat{T}_n$ is a proper, 
$\calC^{s-1} (\calX;\R^d)$-valued
random variable.
\end{remark}

\begin{remark}[Higher-order fluctuations]
More can be said about higher-order fluctuations of the empirical EOT potentials. In \cref{thm: second order H derivative}, we will establish second-order Hadamard differentiability of the EOT potentials, which implies that 
\[
n \big(\hat{\bm{\varphi}}_n - \bm{\varphi}^{\bm{\mu}} - [\bm{\varphi}^{\bm{\mu}}]' (\hat{\bm{\mu}}_n - \bm{\mu}) \big)
\]
converges in distribution in 
$\calC(\calX) \times \calC(\calX)$,
where $[\bm{\varphi}^{\bm{\mu}}]'$ is the first Hadamard derivative at $\bm{\mu}$ (cf. \cref{thm: H derivative potential}). 
\end{remark}

\begin{remark}[Comparison with \cite{gunsilius2021matching}]
Theorem 1 in the latest update of  \cite{gunsilius2021matching} (updated on July 9, 2022 on arXiv)\footnote{The initial version stated a weak convergence result in $L^\infty(\mu^1) \times L^\infty(\mu^2)$.} states a limit distribution result for the EOT potentials in $\calC(S_1) \times \calC(S_2)$, where $S_i:=\supp(\mu^i)$ is a compact convex set (in fact \cite{gunsilius2021matching} consider the multimarginal setting, but we focus our discussion on the two-marginal case). Their proof differs from ours in that they do not derive Hadamard differentiability of EOT potentials (nor does the proof contain  Hadamard differentiability results). Importantly, the Hadamard differentiability result is stronger than just deriving a limit distribution, as it automatically yields bootstrap consistency and asymptotic efficiency of the empirical estimators; see \cref{sec: efficiency} for further discussion. The question of asymptotic efficiency is not accounted for in \cite{gunsilius2021matching}.
Furthermore, Hadamard differentiability of the EOT potentials in 
$\calC^s (\calX) \times \calC^s(\calX)$
plays a crucial role in deriving the null limit distribution of the empirical Sinkhorn divergence that involves the \textit{second-order} Hadamard derivative of the EOT potentials in 
$\calC(\calX) \times \calC(\calX)$.
\end{remark}

\subsection{Limit theorems for Sinkhorn divergence}
\label{sec: main results Sinkhorn}
The second main result concerns limit distributions for the empirical Sinkhorn divergence. 
We first state an asymptotic normality result for the empirical Sinkhorn divergence. 

\begin{proposition}[Limit theorem for Sinkhorn divergence]
\label{prop: limit theorem Sinkhorn}
For every $\bm{\mu}= (\mu^1,\mu^2) \in \calP(\calX) \times \calP(\calX)$, we have
\[
\sqrt{n}\big ( \bar{\mathsf{S}}_{c,\varepsilon} (\hat{\mu}_n^1,\hat{\mu}_n^2) - \bar{\mathsf{S}}_{c,\varepsilon} (\mu^1,\mu^2) \big )\stackrel{d}{\to} N ( 0, \sigma_{\bm{\mu}}^2), \quad \text{as} \ n \to \infty, 
\]
where $\sigma_{\bm{\mu}}^2 = \Var_{\mu^1} \big(\varphi_1^{\bm{\mu}}-\varphi_1^{(\mu^1,\mu^1)}\big) + \Var_{\mu^2} \big(\varphi_2^{\bm{\mu}}-\varphi_2^{(\mu^2,\mu^2)}\big)$. Furthermore, the asymptotic variance $\sigma_{\bm{\mu}}^2$ is strictly positive whenever $ \bar{\mathsf{S}}_{c,\varepsilon} (\mu^1,\mu^2) \ne 0$ and $\supp(\mu^1) \cap \supp (\mu^2) \ne \varnothing$.
\end{proposition}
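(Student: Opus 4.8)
\textbf{Proof plan for \cref{prop: limit theorem Sinkhorn}.}

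The plan is to combine the Hadamard differentiability of the EOT potentials (\cref{thm: H derivative potential}, taking $s=0$ so that the potentials are viewed as maps into $\calC(\calX)\times\calC(\calX)$) with the dual representation \eqref{eq: Sinkhorn dual} and the functional delta method. First I would embed $\calP(\calX)$ into $\ell^\infty(B)$, where $B$ is a $\mu^i$-Donsker class rich enough to recover integration against $\varphi_1^{\bm\mu}$, $\varphi_2^{\bm\mu}$ and the relevant first Hadamard derivatives (e.g.\ $B$ can be taken as the unit ball of $\calC^s(\calX)$ for $s>d/2$, with the $s\le d/2$ case handled by the continuous inclusion into a rougher H\"older space, exactly as in the proof of \cref{thm: limit theorem EOT potential}). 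The empirical process $\sqrt n(\hat\mu^i_n-\mu^i)$ then converges weakly in $\ell^\infty(B)$ to a tight Gaussian limit $\GG^i$ with covariance induced by $\mu^i$, and $\GG^1,\GG^2$ are independent since the samples are independent.

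Next I would express the Sinkhorn divergence as a composition of Hadamard-differentiable maps. Using \eqref{eq: Sinkhorn dual}, $\bar{\mathsf S}_{c,\varepsilon}(\mu^1,\mu^2)$ is a composition of: the map $\bm\mu\mapsto(\bm\varphi^{(\mu^1,\mu^2)},\bm\varphi^{(\mu^1,\mu^1)},\bm\varphi^{(\mu^2,\mu^2)})$, which is Hadamard differentiable into $(\calC(\calX)\times\calC(\calX))^3$ by \cref{thm: H derivative potential} and the chain rule (the diagonal embeddings $\mu^1\mapsto(\mu^1,\mu^1)$ etc.\ are linear and bounded); followed by the bilinear pairing $(\psi,\bm\mu)\mapsto\int\psi\,d\mu$, which is Hadamard differentiable on bounded sets of measures paired with $\calC(\calX)$. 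Applying the functional delta method then gives
\[
\sqrt n\big(\bar{\mathsf S}_{c,\varepsilon}(\hat\mu^1_n,\hat\mu^2_n)-\bar{\mathsf S}_{c,\varepsilon}(\mu^1,\mu^2)\big)\stackrel{d}{\to} L(\GG^1,\GG^2)
\]
for a continuous linear functional $L$. The crucial simplification is that when one differentiates \eqref{eq: Sinkhorn dual}, the terms arising from differentiating the \emph{potentials} (as opposed to the measures they are integrated against) cancel: this is the first-order optimality of the dual objective, i.e.\ the envelope theorem applied to \eqref{eq: dual}, which already underlies the clean form of \eqref{eq: Sinkhorn dual}. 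Hence $L(\GG^1,\GG^2)=\GG^1(\varphi_1^{\bm\mu}-\varphi_1^{(\mu^1,\mu^1)})+\GG^2(\varphi_2^{\bm\mu}-\varphi_2^{(\mu^2,\mu^2)})$, which is Gaussian with mean zero and variance $\Var_{\mu^1}(\varphi_1^{\bm\mu}-\varphi_1^{(\mu^1,\mu^1)})+\Var_{\mu^2}(\varphi_2^{\bm\mu}-\varphi_2^{(\mu^2,\mu^2)})=\sigma_{\bm\mu}^2$, the cross term vanishing by independence.

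For the positivity claim, I would argue by contradiction: if $\sigma_{\bm\mu}^2=0$ then $\varphi_1^{\bm\mu}-\varphi_1^{(\mu^1,\mu^1)}$ is $\mu^1$-a.s.\ constant and $\varphi_2^{\bm\mu}-\varphi_2^{(\mu^2,\mu^2)}$ is $\mu^2$-a.s.\ constant; plugging these constants back into \eqref{eq: Sinkhorn dual} would force $\bar{\mathsf S}_{c,\varepsilon}(\mu^1,\mu^2)$ to equal a quantity that I would show must be $0$ under the hypothesis $\supp(\mu^1)\cap\supp(\mu^2)\ne\varnothing$. Concretely, one uses the Schr\"odinger systems \eqref{eq: FOC} for the pairs $(\mu^1,\mu^2)$, $(\mu^1,\mu^1)$, $(\mu^2,\mu^2)$ evaluated at a common point $x\in\supp(\mu^1)\cap\supp(\mu^2)$ to pin down the additive constants, and then the symmetry/uniqueness normalization $\varphi_1^{(\mu^i,\mu^i)}=\varphi_2^{(\mu^i,\mu^i)}$ together with the fact that $(\varphi_1^{\bm\mu}+\text{const},\varphi_2^{\bm\mu}+\text{const})$ would then also solve the self-transport Schr\"odinger systems, yielding $\mu^1=\mu^2$ and hence $\bar{\mathsf S}_{c,\varepsilon}(\mu^1,\mu^2)=0$, a contradiction. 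I expect this positivity argument to be the main obstacle: the delta-method part is a routine assembly of ingredients already available in the paper, whereas identifying exactly which pointwise identities force degeneracy requires carefully tracking the additive-constant normalizations across the three EOT problems and invoking uniqueness of solutions to the Schr\"odinger system on the common support.
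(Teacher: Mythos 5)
Your plan for the CLT part matches the paper's approach almost exactly: embed $\calP(\calX)$ in $\ell^\infty(B^s)$ with $s>d/2$, use Hadamard differentiability of the EOT potentials and the dual formula \eqref{eq: Sinkhorn dual}, and invoke the functional delta method; the envelope-theorem observation is precisely why the derivative simplifies to $\gamma^1(\varphi_1^{\bm\mu}-\varphi_1^{(\mu^1,\mu^1)})+\gamma^2(\varphi_2^{\bm\mu}-\varphi_2^{(\mu^2,\mu^2)})$, which is what \cref{lem: H derivative Sinkhorn} encodes.

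The positivity argument, however, contains a gap. You correctly set up the contradiction: $\sigma_{\bm\mu}^2=0$ forces $\varphi_i^{(\mu^i,\mu^i)}=\varphi_i^{\bm\mu}+a_i$ $\mu^i$-a.e.\ for constants $a_1,a_2$, and you plan to feed this into the Schr\"odinger systems at a point in $\supp(\mu^1)\cap\supp(\mu^2)$. But your final step claims this "yields $\mu^1=\mu^2$." That inference is not justified and, without extra hypotheses, not provable: having common potentials (up to constants) solve two self-transport Schr\"odinger systems does not identify the two marginals, since the Schr\"odinger equations are equations \emph{relative to a given marginal}, not constraints that uniquely pin the marginal down. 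It is also not needed. What the Schr\"odinger systems for $(\mu^1,\mu^2)$ and $(\mu^2,\mu^2)$ actually give, after substituting $\varphi_i=\psi_i-a_i$ (with $\psi_i:=\varphi_i^{(\mu^i,\mu^i)}$), is the pointwise identity $\psi_1-\psi_2=a_1+a_2$ $\mu^1$-a.e., and by symmetry $\psi_2-\psi_1=a_1+a_2$ $\mu^2$-a.e.; by continuity these hold on $\supp(\mu^1)$ and $\supp(\mu^2)$ respectively, and at a common support point they force $a_1+a_2=0$. Plugging $\varphi_i^{\bm\mu}-\varphi_i^{(\mu^i,\mu^i)}\equiv-a_i$ into \eqref{eq: Sinkhorn dual} then gives $\bar{\mathsf S}_{c,\varepsilon}(\mu^1,\mu^2)=-a_1-a_2=0$ directly, contradicting the hypothesis, with no need to conclude anything about $\mu^1$ versus $\mu^2$. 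This is also why the proposition does not require $\bar{\mathsf S}_{c,\varepsilon}$ to be a valid divergence: the argument never passes through the statement $\mu^1=\mu^2$.
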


\begin{remark}
Since $(\mu^1,\mu^2) \mapsto \bar{\mathsf{S}}_{c,\varepsilon} (\mu^1,\mu^2)$ is weakly continuous (this follows by \cref{lem: EOT potential} and the duality formula), $\bar{\mathsf{S}}_{c,\varepsilon} (\hat{\mu}_n^1,\hat{\mu}_n^2)$ is a proper random variable. 
\end{remark}

The first claim of this proposition follows from relatively minor modifications to the proof of Theorem~7 in \cite{goldfeld2022statistical} that establishes asymptotic normality of the empirical EOT for the quadratic cost (or \cite{delbarrio22EOT}).
The second claim provides conditions under which the limiting Gaussian distribution is nondegenerate. If $\bar{\mathsf{S}}_{c,\varepsilon}$ is a valid divergence (e.g., when the cost is quadratic), then $\bar{\mathsf{S}}_{c,\varepsilon}(\mu^1,\mu^2)\ne 0$ if and only if $\mu^1 \ne \mu^2$. The assumption that $\supp(\mu^1) \cap \supp (\mu^2) \ne \varnothing$ can not be dropped in general. Indeed, if $\mu^1$ and $\mu^2$ are point masses at distinct points, we have $\sigma_{\bm{\mu}}^2 = 0$ but $\mu^1 \ne \mu^2$. 

In \cref{prop: limit theorem Sinkhorn}, when $\mu^1=\mu^2$, we have $\sigma_{\bm{\mu}}^2 = 0$, which entails $\sqrt{n}\bar{\mathsf{S}}_{c,\varepsilon} (\hat{\mu}_n^1,\hat{\mu}_n^2) \to 0$ in probability. Indeed, \cite{delbarrio22EOT} show that $\E[\bar{\mathsf{S}}_{c,\varepsilon} (\hat{\mu}_n^1,\hat{\mu}_n^2)] = O(n^{-1})$ under $\mu^1 = \mu^2$ for the quadratic cost, which implies that $n\bar{\mathsf{S}}_{c,\varepsilon} (\hat{\mu}_n^1,\hat{\mu}_n^2)$ is uniformly tight. The next theorem shows that, when $\mu^1 = \mu^2$, $n\bar{\mathsf{S}}_{c,\varepsilon} (\hat{\mu}_n^1,\hat{\mu}_n^2)$  in fact converges in distribution to some random variable, thereby determining a more precise random fluctuation of the empirical Sinkhorn divergence under the null. 

\begin{theorem}[Limit theorem for Sinkhorn divergence under null]
\label{thm: limit theorem Sinkhorn null}
Suppose that $\mu^1=\mu^2 = \mu\in\calP(\calX).$ 
Then, 
$n \bar{\mathsf{S}}_{c,\varepsilon} (\hat{\mu}_n^1,\hat{\mu}_n^2) \stackrel{d}{\to} \chi_{\mu}$ as $n \to \infty$
for some random variable $\chi_\mu$.
Furthermore, assuming nonnegativity of $\bar{\mathsf{S}}_{c,\varepsilon}$, the support of $\chi_{\mu}$ agrees with $[0,\infty)$, unless $\chi_{\mu} = 0$ a.s.
\end{theorem}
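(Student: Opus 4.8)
The plan is to derive the first claim from the second-order functional delta method (see \cref{sec: functional delta} and \cite{romisch2004}) applied to the Sinkhorn divergence, and to read the support of the limit off its representation as a nonnegative quadratic form evaluated at a Gaussian. Fix $s \in \NN$ with $s > d/2$ and embed $\calP(\calX)$ into $\ell^\infty(B^s)$, where $B^s$ is the unit ball of $\calC^s(\calX)$; then $B^s$ is $\mu$-Donsker, so $\sqrt{n}(\hat{\bm{\mu}}_n - \bm{\mu}) \dconv \bm{G} = (G^1,G^2)$ in $\ell^\infty(B^s) \times \ell^\infty(B^s)$, a centered Gaussian with independent coordinates whose support is the closure $\overline{\HH}$ of its Cameron--Martin space $\HH$; identifying signed measures with functionals on $B^s$, $\HH$ is contained in the linear set $\cT$ of tangential directions $(h^1,h^2)$ with $dh^i = g_i\,d\mu$, $g_i \in L^2(\mu)$, $\int g_i\,d\mu = 0$. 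It then suffices to show that $\bm{\mu} \mapsto \bar{\mathsf{S}}_{c,\varepsilon}(\mu^1,\mu^2)$ is second-order Hadamard differentiable at $(\mu,\mu)$ tangentially to $\cT$, with $\bar{\mathsf{S}}_{c,\varepsilon}(\mu,\mu) = 0$ and first Hadamard derivative vanishing there: the second-order delta method then yields $n\bar{\mathsf{S}}_{c,\varepsilon}(\hat{\mu}_n^1,\hat{\mu}_n^2) \dconv \chi_\mu := \tfrac{1}{2}[\bar{\mathsf{S}}_{c,\varepsilon}]''_{(\mu,\mu)}(\bm{G})$, a proper random variable since the second derivative is continuous and $\bm{G}$ is concentrated on a separable set.

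For the differentiability I would start from \eqref{eq: Sinkhorn dual}, which exhibits $\bar{\mathsf{S}}_{c,\varepsilon}$ as a composition of the EOT-potential maps $\bm{\mu} \mapsto \bm{\varphi}^{(\mu^1,\mu^2)}, \bm{\varphi}^{(\mu^1,\mu^1)}, \bm{\varphi}^{(\mu^2,\mu^2)}$ into $\calC^s(\calX) \times \calC^s(\calX)$ (the last two factoring through the linear embedding $\mu^i \mapsto (\mu^i,\mu^i)$) with the smooth integration pairings $(\psi,\nu) \mapsto \int \psi\,d\nu$. First- and second-order Hadamard differentiability of the potential maps are \cref{thm: H derivative potential} and \cref{thm: second order H derivative}, and the chain rule for Hadamard derivatives---valid also at second order---then transfers second-order differentiability to $\bar{\mathsf{S}}_{c,\varepsilon}$. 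That $\bar{\mathsf{S}}_{c,\varepsilon}(\mu,\mu) = 0$ is clear. For the first derivative, applying the envelope theorem to the dual \eqref{eq: dual} and using the Schr\"{o}dinger system \eqref{eq: FOC} (so that the exponential term is a constant whose derivative vanishes along directions tangent to $\calP(\calX)$) gives $[\mathsf{S}_{c,\varepsilon}]'_{\bm{\mu}}(h^1,h^2) = \int \varphi_1^{\bm{\mu}}\,dh^1 + \int \varphi_2^{\bm{\mu}}\,dh^2$; together with $\varphi_1^{(\mu^i,\mu^i)} = \varphi_2^{(\mu^i,\mu^i)}$ (symmetry of $c$) this yields
\[
[\bar{\mathsf{S}}_{c,\varepsilon}]'_{\bm{\mu}}(h^1,h^2) = \int \big( \varphi_1^{\bm{\mu}} - \varphi_1^{(\mu^1,\mu^1)} \big)\,dh^1 + \int \big( \varphi_2^{\bm{\mu}} - \varphi_2^{(\mu^2,\mu^2)} \big)\,dh^2 ,
\]
which vanishes identically at $\bm{\mu} = (\mu,\mu)$ since then $\varphi_1^{\bm{\mu}} \equiv \varphi_1^{(\mu,\mu)} \equiv \varphi_1^{(\mu^1,\mu^1)}$ and similarly in the second coordinate. (This also explains the degeneracy $\sigma_{\bm{\mu}}^2 = 0$ in \cref{prop: limit theorem Sinkhorn} under the null.)

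For the support statement, suppose $\bar{\mathsf{S}}_{c,\varepsilon} \ge 0$ and write $Q := [\bar{\mathsf{S}}_{c,\varepsilon}]''_{(\mu,\mu)}$ for the continuous, quadratic second-derivative form. Since $(\mu,\mu)$ is a global minimizer of $\bar{\mathsf{S}}_{c,\varepsilon}$ with vanishing first derivative, second-order differentiability forces $Q \ge 0$ on $\cT$, hence on $\supp(\bm{G}) = \overline{\HH} \subseteq \overline{\cT}$ by continuity, so $\chi_\mu = \tfrac{1}{2}Q(\bm{G}) \ge 0$ a.s. As $\bm{g} \mapsto \tfrac{1}{2}Q(\bm{g})$ is continuous, $\supp(\chi_\mu) = \overline{\{ \tfrac{1}{2}Q(\bm{g}) : \bm{g} \in \supp(\bm{G}) \}}$. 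The space $\HH$ is linear, so if $Q(\bm{h}) > 0$ for some $\bm{h} \in \HH$, then $t\bm{h} \in \supp(\bm{G})$ for all $t \in \RR$ and $\tfrac{1}{2}Q(t\bm{h}) = \tfrac{t^2}{2}Q(\bm{h})$ exhausts $[0,\infty)$, so $\supp(\chi_\mu) = [0,\infty)$ (closed and sandwiched between $[0,\infty)$ and $[0,\infty)$); otherwise $Q \equiv 0$ on $\overline{\HH} = \supp(\bm{G})$ and $\chi_\mu = 0$ a.s. This yields exactly the claimed dichotomy.

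I expect the main obstacle to be the second-order differentiability step: verifying that the chain rule for first- and second-order Hadamard derivatives applies to the composition in \eqref{eq: Sinkhorn dual} with mutually compatible tangential sets at each stage, and that \cref{thm: second order H derivative} indeed furnishes a second derivative valued in $\calC(\calX) \times \calC(\calX)$ that legitimately pairs against the fixed and perturbed marginals in the integration functionals---this is precisely where H\"{o}lder-space (rather than merely $\calC$-space) differentiability of the potentials enters. A secondary and more routine point is checking that $\bm{G}$ is supported on $\overline{\cT}$ and that $Q \ge 0$ there, which is handled by the usual approximation of $L^2(\mu)$ score directions by bounded ones so that the perturbed measures remain in $\calP(\calX)$.
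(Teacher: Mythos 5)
Your overall strategy matches the paper's: the first claim is obtained by the second-order functional delta method applied to $\bar{\mathsf{S}}_{c,\varepsilon}$, and the support statement by combining continuity, nonnegativity, and positive homogeneity of degree two of the second Hadamard derivative with the fact that $\supp(\GG^{(\mu,\mu)})$ is a vector subspace (the paper records this as \cref{lem: support Sinkhorn}). The genuine gap is in the claim that second-order Hadamard differentiability of $\bar{\mathsf{S}}_{c,\varepsilon}$ follows ``by the chain rule'' from \cref{thm: H derivative potential} and \cref{thm: second order H derivative}. \cref{thm: second order H derivative} delivers the second derivative of the potentials with values only in $\calC(\calX)\times\calC(\calX)$, not in $\calC^s(\calX)\times\calC^s(\calX)$, whereas the tangent directions for the marginals live in $\ell^\infty(B^s)$; the pairing $\int\psi\,d\gamma$ therefore does not make sense for merely $\calC$-regular $\psi$ and generic $\gamma\in\overline{\calM_\mu}^{\ell^\infty(B^s)}$. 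As a result, the outer bilinear map $(\bm\psi,\bm\nu)\mapsto\sum_i\int\psi_i\,d\nu^i$ is not twice Hadamard differentiable tangentially to the full tangent cone at $(\bm\psi^{(\mu,\mu)},\mu,\mu)$, and \cref{lem:SecondOrderChainRule} cannot be applied to the composition structure of \eqref{eq: Sinkhorn dual} as written.

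The paper's proof of \cref{thm: second H derivative Sinkhorn} resolves this by a direct expansion that exploits the structure: each integral $\int(\cdots)\,d\mu_t^i$ is split into a pairing against the fixed $\mu$ (where the $\calC$-valued second-order remainder suffices) plus $t$ times a pairing against $\gamma_t^i$ (where the extra factor of $t$ means only the first-order, $\calC^s$-valued expansion from \cref{thm: H derivative potential} is needed). Moreover, the $O(t)$ term in this expansion does not vanish term-by-term; it cancels only via the symmetry identity $[\varphi_2^{(\mu,\mu)}]'(\gamma^1-\gamma^2,0) = -[\varphi_1^{(\mu,\mu)}]'(0,\gamma^2-\gamma^1)$, which your proposal never invokes. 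You do correctly flag the second-order differentiability step as ``the main obstacle''; the missing ingredients are exactly this measure-splitting plus the symmetry cancellation, rather than a chain rule.
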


 The proof of \cref{thm: limit theorem Sinkhorn null} is significantly more involved than that of \cref{prop: limit theorem Sinkhorn} and relies on the second-order functional delta method. To this end, we establish second-order Hadamard differentiability of the map $(\nu^1,\nu^2) \mapsto \bar{\mathsf{S}}_{c,\varepsilon} (\nu^1,\nu^2)$ at $(\nu^1,\nu^2) = (\mu,\mu)$, which in turn involves the second-order Hadamard derivative of the EOT potentials.
 The limit variable is given by a nonlinear functional of a certain Gaussian process, but it seems highly nontrivial to derive an explicit expression of the limit distribution from our derivation.\footnote{The concurrent work \cite{gonzalez2022weak} derives an explicit expression of the Sinkhorn null limit, albeit with a different technique.} Still, the limit distribution can be consistently estimated by the (two-sample version of) subsampling or the $m$-out-of-$n$ bootstrap \cite{politis1994large,bickel1997resampling}. See Appendix \ref{sec: subsampling} for details. The proof of the consistency of the $m$-out-of-$n$ bootstrap again relies on second-order Hadamard differentiability of the Sinkhorn divergence w.r.t. the marginals.

The second claim of the theorem shows that 
the limit variable $\chi_\mu$ is nondegenerate in most cases. Indeed, 
assuming that either $\mu^1$ or $\mu^2$ is not a point mass, in view of \cref{lem: support Sinkhorn} below, $\chi_{\mu} = 0$ a.s. if and only if the the second Hadamard derivative of $\bar{\mathsf{S}}_{c,\varepsilon}$ at $(\mu,\mu)$ is identically zero. Otherwise, the support of $\chi_\mu$ agrees with $[0,\infty)$. 

\begin{remark}[One-sample case]
Analogous results hold for the one-sample case. For example, when $\mu^2$ is known, then $\sqrt{n} ( \bar{\mathsf{S}}_{c,\varepsilon} (\hat{\mu}_n^1,\mu^2) - \bar{\mathsf{S}}_{c,\varepsilon} (\mu^1,\mu^2)  ) \stackrel{d}{\to} N(0,\Var_{\mu^1} (\varphi_1^{\bm{\mu}}-\varphi_1^{(\mu^1,\mu^1)}))$ under the setting of \cref{prop: limit theorem Sinkhorn}, while $n \bar{\mathsf{S}}_{c,\varepsilon} (\hat{\mu}_n^1,\mu)$ converges in distribution to some random variable under the setting of \cref{thm: limit theorem Sinkhorn null}.
\end{remark}

\begin{remark}[Higher-order fluctuations]
The proof of \cref{thm: limit theorem Sinkhorn null} reveals that in general (i.e., $\mu^1 \ne \mu^2$ is allowed), the following stochastic expansion holds:
\[
\sqrt{n}\big ( \bar{\mathsf{S}}_{c,\varepsilon} (\hat{\mu}_n^1,\hat{\mu}_n^2) - \bar{\mathsf{S}}_{c,\varepsilon} (\mu^1,\mu^2) \big ) = \sum_{i=1}^2 \sqrt{n}(\hat{\mu}_n^i -\mu^i) \big(\varphi_i^{\bm{\mu}}-\varphi_i^{(\mu^i,\mu^i)}\big) + n^{-1/2}r_n,
 \]
where $r_n$ converges in distribution as $n \to \infty$. This expansion characterizes more precise random fluctuations of $\bar{\mathsf{S}}_{c,\varepsilon} (\hat{\mu}_n^1,\hat{\mu}_n^2)$; similar expansions hold for the (uncentered) EOT cost $\mathsf{S}_{c,\varepsilon}$. 
\end{remark}

\begin{remark}[Comparison with \cite{bigot2019central}]
A version of \cref{thm: limit theorem Sinkhorn null} was derived in \cite{bigot2019central} when the (common) distribution $\mu$ is finitely discrete, where the limit is given by a weighted sum of independent $\chi^2_1$-random variables. When $\mu$ is finitely discrete, it may be parameterized by a finite-dimensional simplex vector. Using this parameterization, \cite{bigot2019central} directly computed the Hessian matrix of the Sinkhorn divergence w.r.t. the simplex vectors and apply the second-order delta method. Clearly, this proof technique is restricted to the finitely discrete case and does not directly extend to the general case of \cref{thm: limit theorem Sinkhorn null}. Indeed, the major challenge in the proof of \cref{thm: limit theorem Sinkhorn null} stems from the fact that in the general case, the problem is inherently infinite-dimensional and requires delicate functional analytic arguments. This is accounted for by the second-order Hadamard differentiability result of the Sinkhorn divergence, stated in \cref{thm: second H derivative Sinkhorn}.
\end{remark}

\subsubsection{Independence testing with Sinkhorn divergence}
Let $(V_i,W_i) \in \R^{d_1} \times \R^{d_2}, i =1,\dots, n$ be i.i.d. with common distribution $\pi$. Set $d=d_1+d_2$ and $X_i = (V_i,W_i) \in \R^d$. Let $\pi^V$ and $\pi^W$ denote the marginal distributions of $V_i$ and $W_i$, respectively. Assume 
that $\pi^V$ and $\pi^W$ are compactly supported 
and let $\calX\subset \R^{d}$ be a closed ball containing $\supp(\pi^V)\times \supp(\pi^W)$.
Consider independence testing 
\[
H_0: \pi = \pi^V \otimes \pi^W \quad \text{vs}. \quad H_1: \pi \ne \pi^V \otimes \pi^W. 
\]
Motivated by computational considerations, \cite{liu2022entropy} proposed a test based on the Sinkhorn divergence that compares the empirical distribution of $X_i = (V_i,W_i)$ with the product of the marginal empirical distributions of $V_i$ and $W_i$. \cite{liu2022entropy} focused on the quadratic cost, but we allow here a general smooth cost. Specifically, \cite{liu2022entropy} proposed a test that rejects the null for large values of the statistic
\[
D_n = \bar{\mathsf{S}}_{c,\varepsilon}(\hat{\pi}_n,\hat{\pi}_n^V \otimes \hat{\pi}_n^W),
\]
where $\hat{\pi}_n = n^{-1}\sum_{i=1}^n\delta_{X_i}, \hat{\pi}_n^V = n^{-1}\sum_{i=1}^n \delta_{V_i}$, and $\hat{\pi}_n^W = n^{-1}\sum_{i=1}^n \delta_{W_i}$. The rationale behind this test is as follows. By Varadarajan's theorem, it holds that $\hat{\pi}_n \to \pi$ and $\hat{\pi}_n^V \otimes \hat{\pi}_n^W \to \pi^V \otimes \pi^W$ weakly a.s., so by \cref{lem: EOT potential} (iii) and duality, we have $D_n \to \bar{\mathsf{S}}_{c,\varepsilon}(\pi,\pi^V \otimes \pi^W)$ a.s. At least for the quadratic cost, $\bar{\mathsf{S}}_{c,\varepsilon}(\pi,\pi^V \otimes \pi^W) = 0$ if and only if $\pi = \pi^V \otimes \pi^W$, so it is reasonable to reject $H_0$ when $D_n$ is large. 

\cite{liu2022entropy} suggested a critical value of order $n^{-1/2}$ derived from a finite sample deviation inequality for $D_n$. However, under
$H_0$, both $\hat{\pi}_n$ and $\hat{\pi}_n^V \otimes \hat{\pi}_n^W $ converge to the same limit, so \cref{thm: limit theorem Sinkhorn null} suggests that the correct order of $D_n$ under the null should be $n^{-1}$. The next proposition confirms this, thereby determining the precise rate for $D_n$ under the null. 

\begin{proposition}[Null limit of Sinkhorn independence test]
\label{prop: independence test}
Consider the setting as stated above.  Then, under the null $H_0$, we have $n D_n \stackrel{d}{\to} \aleph_{\pi}$ as $n \to \infty$ for some random variable $\aleph_{\pi}$. Furthermore, assuming nonnegativity of $\bar{\mathsf{S}}_{c,\varepsilon}$, the support of $\aleph_{\pi}$ agrees with $[0,\infty)$, unless $\aleph_{\pi} = 0$ a.s.
\end{proposition}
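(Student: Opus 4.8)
Throughout I work under $H_0$, so $\pi = \pi^V \otimes \pi^W =: \mu \in \calP(\calX)$ and both $\hat{\pi}_n$ and $\hat{\pi}_n^V \otimes \hat{\pi}_n^W$ converge weakly a.s.\ to the \emph{same} measure $\mu$ (Varadarajan's theorem plus continuity of the tensor product). The plan is to run the second-order functional delta method for the Sinkhorn divergence exactly as in the proof of \cref{thm: limit theorem Sinkhorn null}, but with $(\hat{\pi}_n,\hat{\pi}_n^V\otimes\hat{\pi}_n^W)$ replacing the pair of i.i.d.\ empirical measures there. As there, I embed $\calP(\calX)$ into $\ell^\infty(B^s)$ with $B^s$ the unit ball of $\calC^s(\calX)$, taking $s$ large --- large enough that $B^s$ is $\mu$-Donsker and the degenerate $U$-process bound below applies; since the EOT potentials are $\calC^\infty$-bounded (\cref{lem: EOT potential}(ii)) and second-order Hadamard differentiability of $\bar{\mathsf{S}}_{c,\varepsilon}$ holds for every $s$, enlarging $s$ costs nothing. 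The one genuinely new point relative to \cref{thm: limit theorem Sinkhorn null} is that $\hat{\pi}_n^V\otimes\hat{\pi}_n^W$ is a $V$-statistic of order two, not the empirical measure of an i.i.d.\ sample.

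First I would establish the joint weak limit $\sqrt n\big((\hat{\pi}_n,\hat{\pi}_n^V\otimes\hat{\pi}_n^W)-(\mu,\mu)\big)\stackrel{d}{\to}\bm{\GG}=(\GG^X,\GG^{VW})$ in $\ell^\infty(B^s)^2$, $\bm{\GG}$ a tight centered Gaussian. The first coordinate is the empirical process of $X_i=(V_i,W_i)$ over the $\mu$-Donsker class $B^s$. For the second, I would Hoeffding-decompose: for $g\in B^s$,
\[
(\hat{\pi}_n^V\otimes\hat{\pi}_n^W-\mu)(g)=(\hat{\pi}_n^V-\pi^V)(g_{\pi^W})+(\hat{\pi}_n^W-\pi^W)(g^{\pi^V})+n^{-2}\sum_{i,j}\tilde g(V_i,W_j),
\]
with $g_{\pi^W}(v)=\int g(v,\cdot)\,d\pi^W$, $g^{\pi^V}(w)=\int g(\cdot,w)\,d\pi^V$, and $\tilde g(v,w)=g(v,w)-g_{\pi^W}(v)-g^{\pi^V}(w)+\iint g$; note $g_{\pi^W}\in B^s(\calX_V)$, $g^{\pi^V}\in B^s(\calX_W)$ (projections $\calX_V,\calX_W$ of $\calX$), both $\mu$-Donsker. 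The two linear terms, together with $\sqrt n(\hat{\pi}_n-\mu)$, are continuous linear images of the single empirical process of $\{(V_i,W_i)\}$ over a Donsker class and hence converge jointly to a centered Gaussian. In the last term the diagonal part $n^{-2}\sum_i\tilde g(V_i,W_i)$ is $O_p(n^{-3/2})$ uniformly over $B^s$ (an empirical process over the $\mu$-Donsker class $\{\tilde g\}$, using $(V_i,W_i)\sim\mu$ under $H_0$), and the off-diagonal part $n^{-2}\sum_{i\ne j}\tilde g(V_i,W_j)$ is $O_p(n^{-1})$ uniformly over $B^s$ by a maximal inequality for completely degenerate $U$-processes indexed by $B^s$ \cite{PeGi1999,chen2020jackknife}. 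So $\sqrt n$ times the last term is $o_p(1)$ uniformly and does not affect the weak limit.

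Next I would bring in second-order Hadamard differentiability of $\Psi:=\bar{\mathsf{S}}_{c,\varepsilon}$ at $(\mu,\mu)$ (\cref{thm: second H derivative Sinkhorn}, the engine of the proof of \cref{thm: limit theorem Sinkhorn null}): the first Hadamard derivative vanishes there, $(\mu,\mu)$ being a global minimizer of the nonnegative $\Psi$, and the second derivative is a continuous quadratic map $\Psi''_{(\mu,\mu)}$. Given the joint weak limit and the uniform tightness of $\sqrt n\big((\hat{\pi}_n,\hat{\pi}_n^V\otimes\hat{\pi}_n^W)-(\mu,\mu)\big)$ just obtained, the second-order functional delta method \cite{romisch2004} yields
\[
nD_n=n\,\Psi\big(\hat{\pi}_n,\hat{\pi}_n^V\otimes\hat{\pi}_n^W\big)\stackrel{d}{\to}\tfrac12\,\Psi''_{(\mu,\mu)}(\bm{\GG})=:\aleph_\pi.
\]
For the support claim, under nonnegativity of $\bar{\mathsf{S}}_{c,\varepsilon}$ we have $nD_n\ge0$, hence $\aleph_\pi\ge0$ a.s.; since $\aleph_\pi$ is a nonnegative continuous quadratic functional of the Gaussian element $\bm{\GG}$, \cref{lem: support Sinkhorn} forces its support to equal $[0,\infty)$ unless $\aleph_\pi=0$ a.s., exactly as for $\chi_\mu$ in \cref{thm: limit theorem Sinkhorn null}.

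The hard part is the product empirical measure in the first step. Because $\hat{\pi}_n^V\otimes\hat{\pi}_n^W$ is a product of empirical measures rather than the empirical measure of an i.i.d.\ sample, ordinary empirical-process/Donsker theory does not apply to it directly, and one cannot shortcut by treating the tensor-product map $(\alpha,\beta)\mapsto\alpha\otimes\beta$ as Hadamard differentiable between the relevant $\ell^\infty(B^s)$ spaces: integrating a $\calC^s$ function against a dual element loses smoothness, so the induced bilinear remainder fails to be bounded in the output norm and the naive chain rule breaks down. Instead one must Hoeffding-decompose and control the completely degenerate quadratic term uniformly over $B^s$ via degenerate $U$-process bounds, choosing $s$ large enough simultaneously for the Donsker property, for the $U$-process maximal inequality, and compatibly with the function spaces carrying the second-order Hadamard differentiability of the Sinkhorn divergence. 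Once this linearization is in hand, the rest is a routine application of the second-order functional delta method on top of \cref{thm: second H derivative Sinkhorn}.
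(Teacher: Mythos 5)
Your argument is correct and coincides with the paper's own proof in all essentials: Hoeffding-decompose $\hat{\pi}_n^V\otimes\hat{\pi}_n^W$ (your degenerate remainder $n^{-2}\sum_{i\neq j}\tilde{g}(V_i,W_j)$ is, after symmetrizing over $i,j$, precisely $-\tfrac12$ of the paper's $U_n(h_g^{(2)})$ up to an $O(n^{-2})$ correction), control the completely degenerate part uniformly over $B^s$ with $s>2d$ via the degenerate $U$-process maximal inequality of Chen--Kato, establish joint weak convergence of $\sqrt{n}(\hat{\pi}_n-\pi)$ and $\sqrt{n}(\hat{\pi}_n^V\otimes\hat{\pi}_n^W-\pi)$, apply the second-order functional delta method with \cref{thm: second H derivative Sinkhorn}, and conclude the support claim via \cref{lem: support Sinkhorn}. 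Your observation that the tensor-product map is not Hadamard differentiable between the relevant $\ell^\infty(B^s)$ spaces (because integration against a dual element loses smoothness) correctly identifies why the Hoeffding/$U$-process route is needed.
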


Note that \cref{prop: independence test} does not immediately follow from \cref{thm: limit theorem Sinkhorn null} since $\hat{\pi}_n^V \otimes \hat{\pi}_n^W$ is not an empirical process but a two-sample $V$-process \cite{PeGi1999}, and $\hat{\pi}_n$ and $\hat{\pi}_n^V \otimes \hat{\pi}_n^W$ are dependent. The proof first finds a joint limit distribution of $\sqrt{n}(\hat{\pi}_n-\pi)$ and $\sqrt{n}(\hat{\pi}_n^V \otimes \hat{\pi}_n^W - \pi)$ in $\ell^\infty (B^s) \times \ell^\infty (B^s)$ with $s > 2d$ using techniques from $U$-processes \cite{chen2020jackknife}, and then applies the second-order functional delta method. Additionally, as in \cref{thm: limit theorem Sinkhorn null}, unless $\pi$ degenerates to a point mass or the second derivative of $\bar{\mathsf{S}}_{c,\varepsilon}$ at $(\pi,\pi)$ with $\pi = \pi^V \otimes \pi^W$ is identically zero, the support of the limit variable $\aleph_\pi$ agrees with $[0,\infty)$, yielding nondegeneracy of the limit law.

\section{Differentiability of EOT potentials  and Sinkhorn divergence}
\label{sec: EOT potential}
As already stated, the main ingredients of the proofs of the results in the preceding section are the first and second-order Hadamard differentiability results for the EOT potentials and Sinkhorn divergence. The present section collects those results.

\subsection{Hadamard differentiability of EOT potentials}
\label{sec: H derivative of potential}

Our goal is to establish Hadamard differentiability of the map $\bm{\mu} \mapsto \bm{\varphi}^{\bm{\mu}}$. 
Since $
\calP(\calX) \times \calP(\calX)
$ is \textit{a priori} not a vector space, we embed the preceding map into a normed space as follows. Note that $\bm{\varphi}^{\bm{\mu}}$ is fully characterized as the solution of \eqref{eq: FOC} (subject to the normalization $\varphi_1(x_1^\circ)=\varphi_2(x_2^\circ)$),  and whenever $\varphi_i \in \calC^s(\calX)$
for $i=1,2$, we have
\[
\left(e^{\frac{\varphi_1(\cdot) + \varphi_2(x_2) - c(\cdot,x_2)}{\varepsilon}}\,,\,e^{\frac{\varphi_1(x_1) + \varphi_2(\cdot) - c(x_1,\cdot)}{\varepsilon}}\right) \in 
\calC^s(\calX)\times \calC^s(\calX),
\quad \forall (x_1,x_2) \in
\calX \times \calX,
\]
for $s \in \mathbb{N}$ arbitrary. With this in mind,  it is natural to think of $\mu^i$ as a functional on 
$\calC^s(\calX)$,
and we identify 
$\calP(\calX)\times \calP(\calX)$
as a subset of 
$\ell^\infty (B^s) \times \ell^\infty (B^s)$,
where 
\[
B^s= \big\{ f \in \calC^s (\calX) : \| f \|_{\calC^s(\calX)} \le 1 \big\}.
\]
Formally, defining $\tau : 
\calP(\calX)\times \calP(\calX)
\to 
\ell^\infty(B^s) \times \ell^\infty(B^s)
$
by $(\tau \bm{\mu})_i = (f \mapsto \int f \, d\mu^i)_{f\in B^s}$,
$i=1,2$, we identify the map $\bm{\nu} \mapsto \bm{\varphi}^{\bm{\nu}}$ with $\tau \bm{\nu} \mapsto \bm{\varphi}^{\bm{\nu}}$. Since $\tau$ is one-to-one (cf. \cref{lem: convergence determining}), the latter map is well-defined. 
Equip 
$\ell^\infty (B^s) \times \ell^\infty (B^s)$
with the norm 
$\| \gamma^1 \|_{\infty,B^s} \vee \| \gamma^2 \|_{\infty,B^s}$ for $\bm{\gamma} = (\gamma^1,\gamma^2) \in \ell^\infty (B^s) \times \ell^\infty (B^s)$.
Finally, for $\eta\in\calP(\calX)$, set
\[
\frakP_{\eta}=\big\{\nu\in\calP(\calX) : \supp(\nu)\subset\supp(\eta)\big\} \quad
    \text{and} \quad \calM_{\eta} = \big\{ t(\nu-\eta) : \nu \in \frakP_{\eta}, t > 0 \big\}.  
\]
Elements of $\calM_{\eta}$ are signed measures with total mass zero supported in $\supp(\eta)$. 
Observe that $\calM_{\eta} \subset \ell^\infty (B^s)$.
These definitions are an artifact of our proof technique.

\begin{theorem}[Hadamard differentiability of EOT potentials]
\label{thm: H derivative potential}
For every $s \in \mathbb{N}$ and $\bm{\mu}=(\mu^1,\mu^2)\in
\calP(\calX)\times \calP(\calX)
$, the map $\bm{\nu} \mapsto \bm{\varphi}^{\bm{\nu}}, 
\calP_{\mu^1} \times \calP_{\mu^2}
\subset 
\ell^\infty (B^s) \times \ell^\infty (B^s)
\to  
\calC^s(\calX) \times \calC^s(\calX)$
is Hadamard differentiable at $\bm{\mu}$ tangentially to 
$\overline{\calM_{\mu^1}}^{\ell^\infty(B^s)} \times \overline{\calM_{\mu^2}}^{\ell^\infty(B^s)}$.
\end{theorem}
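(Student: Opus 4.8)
The plan is to apply the implicit function theorem for Hadamard differentiability (as in \cite{romisch2004}, or the version adapted to our setting collected in \cref{sec: functional delta}) to the Schr\"{o}dinger system \eqref{eq: FOC}, viewed as a map whose zero set characterizes $\bm{\varphi}^{\bm{\nu}}$. First I would set up the underlying spaces carefully. By \cref{lem: EOT potential} (ii), the potentials live in a fixed ball of $\calC^s(\calX) \times \calC^s(\calX)$ uniformly in $\bm{\nu}$, so it suffices to work on the closed affine subspace $\cH^s = \{ (\psi_1,\psi_2) \in \calC^s(\calX) \times \calC^s(\calX) : \psi_1(x_1^\circ) = \psi_2(x_2^\circ) \}$ to absorb the normalization. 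Define the map
\[
F : \big(\ell^\infty(B^s) \times \ell^\infty(B^s)\big) \times \cH^s \to \calC^s(\calX) \times \calC^s(\calX), \quad
F(\bm{\nu}, \bm{\psi}) = \Big( \textstyle\int e^{\frac{\psi_1 \oplus \psi_2 - c}{\varepsilon}}\, d\nu^2 - 1,\ \int e^{\frac{\psi_1 \oplus \psi_2 - c}{\varepsilon}}\, d\nu^1 - 1 \Big),
\]
where the integrals are understood coordinatewise in the free variable (so the first component is a function of $x_1 \in \calX$, the second of $x_2 \in \calX$). The content of \cref{lem: EOT potential} (i) is exactly that $F(\bm{\nu}, \bm{\varphi}^{\bm{\nu}}) \equiv \bm{0}$ for every $\bm{\nu}$, and that $\bm{\varphi}^{\bm{\nu}}$ is the unique element of $\cH^s$ with this property (uniqueness up to the additive constant being killed by the normalization).

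The key steps are then: (1) Show $F$ is Hadamard differentiable at $(\bm{\mu}, \bm{\varphi}^{\bm{\mu}})$ tangentially to $\big(\overline{\calM_{\mu^1}}^{\ell^\infty(B^s)} \times \overline{\calM_{\mu^2}}^{\ell^\infty(B^s)}\big) \times \cH^s$. Differentiability in the measure argument $\bm{\nu}$ is linear and straightforward: the derivative in direction $\bm{\gamma} = (\gamma^1,\gamma^2)$ is $\bm{\gamma} \mapsto \big( x_1 \mapsto \gamma^2(e^{\frac{\varphi_1^{\bm{\mu}}(x_1) + \varphi_2^{\bm{\mu}}(\cdot) - c(x_1,\cdot)}{\varepsilon}}),\ x_2 \mapsto \gamma^1(e^{\frac{\varphi_1^{\bm{\mu}}(\cdot) + \varphi_2^{\bm{\mu}}(x_2) - c(\cdot,x_2)}{\varepsilon}})\big)$, using that the integrand lies in $B^s$ after rescaling (the uniform $\calC^s$ bound from \cref{lem: EOT potential} (ii) and smoothness of $c$ make this a continuous linear functional of $\gamma^i$). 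Differentiability in $\bm{\psi}$ follows by a Taylor expansion of the exponential, using that on the relevant bounded $\calC^s$-neighborhood the map $\bm{\psi} \mapsto e^{(\psi_1 \oplus \psi_2 - c)/\varepsilon}$ is smooth into $\calC^s(\calX \times \calX)$ and integration against a probability measure is bounded $\calC^s(\calX \times \calX) \to \calC^s(\calX)$; the joint differentiability (needed for the chain rule in the implicit function theorem) is handled by the same expansion with the cross term. (2) Compute the partial derivative $A := \partial_{\bm{\psi}} F(\bm{\mu}, \bm{\varphi}^{\bm{\mu}}) : \cH^s \to \calC^s(\calX) \times \calC^s(\calX)$. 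Writing $\pi^\star$ for the EOT plan, one gets, for $\bm{h} = (h_1,h_2) \in \cH^s$,
\[
A \bm{h} = \tfrac{1}{\varepsilon}\Big( x_1 \mapsto h_1(x_1) + \textstyle\int h_2(x_2)\, \pi^\star(dx_2 \mid x_1),\ x_2 \mapsto h_2(x_2) + \int h_1(x_1)\, \pi^\star(dx_1 \mid x_2) \Big),
\]
i.e. $\varepsilon A = \mathrm{Id} + K$ where $K$ is the ``conditional expectation'' operator built from the EOT plan. (3) Show $A$ is invertible with bounded inverse from $\calC^s(\calX) \times \calC^s(\calX)$ onto $\cH^s$. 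This is where I expect to spend most of the effort: $K$ is a compact operator on $\calC^s(\calX) \times \calC^s(\calX)$ (its range consists of functions obtained by integrating a fixed smooth kernel, which gains regularity, hence $K$ maps bounded sets to precompact ones by Arzel\`{a}--Ascoli-type arguments at each derivative level), so $\mathrm{Id} + K$ is Fredholm of index zero and invertibility reduces to injectivity. Injectivity on $\cH^s$ is precisely the uniqueness-up-to-constants statement in \cref{lem: EOT potential} (i), read at the linearized level: if $A\bm{h} = 0$ then $\bm{h}$ is a tangent direction along which the Schr\"{o}dinger system is preserved to first order, forcing $h_1 \equiv \mathrm{const}$, $h_2 \equiv -\mathrm{const}$, and the normalization $h_1(x_1^\circ) = h_2(x_2^\circ)$ then forces $\bm{h} = \bm{0}$. (One may need to argue this uniqueness directly via a variance/strict-convexity argument using that $\pi^\star$ has full support on $\calX \times \calX$.)

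Given these three facts, the implicit function theorem yields that $\bm{\nu} \mapsto \bm{\varphi}^{\bm{\nu}}$ is Hadamard differentiable at $\bm{\mu}$ tangentially to $\overline{\calM_{\mu^1}}^{\ell^\infty(B^s)} \times \overline{\calM_{\mu^2}}^{\ell^\infty(B^s)}$, with derivative $\bm{\gamma} \mapsto -A^{-1} \big( \partial_{\bm{\nu}} F(\bm{\mu}, \bm{\varphi}^{\bm{\mu}}) \bm{\gamma}\big)$. Two technical points deserve care and I would address them explicitly: first, the domain $\calP_{\mu^1} \times \calP_{\mu^2}$ is only a subset (not a neighborhood) of $\ell^\infty(B^s) \times \ell^\infty(B^s)$, which is exactly why the tangency is restricted to $\overline{\calM_{\mu^1}}^{\ell^\infty(B^s)} \times \overline{\calM_{\mu^2}}^{\ell^\infty(B^s)}$ — one must check that for any $\bm{\gamma}$ in this tangent set and any sequences $t_n \downarrow 0$, $\bm{\gamma}_n \to \bm{\gamma}$ with $\bm{\mu} + t_n \bm{\gamma}_n \in \calP_{\mu^1} \times \calP_{\mu^2}$, the perturbed potentials $\bm{\varphi}^{\bm{\mu} + t_n \bm{\gamma}_n}$ stay in the uniform $\calC^s$ ball (by \cref{lem: EOT potential} (ii)) and converge to $\bm{\varphi}^{\bm{\mu}}$ (by \cref{lem: EOT potential} (iii)), so the implicit solution branch is well-defined along admissible directions. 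Second, the regularity bootstrapping: the whole argument is run at level $s$, and the uniform bounds of \cref{lem: EOT potential} (ii) at every order guarantee that $F$ genuinely takes values in $\calC^s$ and that $A^{-1}$ is bounded into $\cH^s \subset \calC^s(\calX) \times \calC^s(\calX)$, not merely into a larger $\calC^{s'}$ space. The main obstacle, as noted, is step (3) — establishing invertibility of $\mathrm{Id} + K$ on the Hölder scale — and in particular verifying compactness of $K$ on $\calC^s$ uniformly in the derivative order and pinning down the kernel of $\mathrm{Id} + K$; once that is in hand the rest is a fairly mechanical application of the functional delta-method machinery.
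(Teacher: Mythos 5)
You take a genuinely different route from the paper: a direct implicit function theorem at the H\"{o}lder level $s$, with a Fredholm argument to invert the linearized Schr\"{o}dinger operator. The paper instead first proves Hadamard differentiability into $\calC(\calX)\times\calC(\calX)$ via the $Z$-functional lemma (Lemma 3.9.34 of \cite{van1996weak}), and then lifts to $\calC^s(\calX)\times\calC^s(\calX)$ by showing that each derivative $D^k\varphi_i^{\bm{\nu}}$ is itself Hadamard differentiable in $\calC(\calX)$ (via the Schr\"{o}dinger formula and Fa\`{a} di Bruno) and invoking completeness of $\calC^s$.

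There is a concrete gap in your step (3). Write $A=\varepsilon^{-1}(\mathrm{Id}+K)$. The range of $A$ (restricted to directions satisfying $h_1(x_1^\circ)=h_2(x_2^\circ)$) is \emph{not} all of $\calC^s(\calX)\times\calC^s(\calX)$: integrating your formula against the marginals gives, for every admissible $\bm h$,
\[
\int (A\bm h)_1\, d\mu^1 \;=\; \tfrac{1}{\varepsilon}\Big(\int h_1\,d\mu^1+\int h_2\,d\mu^2\Big)\;=\;\int (A\bm h)_2\, d\mu^2,
\]
so $A$ maps into the codimension-one subspace $\{(g_1,g_2):\int g_1\,d\mu^1=\int g_2\,d\mu^2\}$. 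This is forced by the Fredholm index: on the full space $\calC^s\times\calC^s$ the compact perturbation $\mathrm{Id}+K$ has index zero with a one-dimensional kernel $(a,-a)$, hence one-dimensional cokernel; cutting the domain to the normalization hyperplane kills the kernel but drops the index to $-1$, so surjectivity onto the full codomain is impossible. Worse, the compatibility constraint on the range depends on $\bm{\nu}$ (it is $\int g_1\,d\nu^1=\int g_2\,d\nu^2$), so you cannot simply shrink the codomain to a fixed Banach space and apply the classical IFT. This is precisely why the paper appeals to the $Z$-functional lemma: it requires only that $\dot\Psi_{\bm\mu}$ be \emph{injective with continuous inverse on its range} (verified via Proposition 3.1 of \cite{carlier2020differential} in Lemma~\ref{lem: Psi functional}(ii)), and it separately has to check that the directions $z(\bm{\varphi}^{\bm\mu}|_S)$ fed into $\dot\Psi_{\bm\mu}^{-1}$ actually lie in the closure of that range (Remark~\ref{rem: Z-functional}) --- a point your argument does not address. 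A secondary point: continuity of the inverse of $\Psi_{\bm\mu}$ (the hypothesis you need even in the $Z$-functional form) is established in the paper only in the $\calC^0$ norm via Arzel\`{a}--Ascoli; establishing it directly in $\calC^s$, as your one-shot scheme would require, needs an additional compactness-at-order-$s$ argument (which is available from Lemma~\ref{lem: EOT potential}(ii) at order $s+1$, but is another thing to check). In short, the implicit-function-theorem route can in principle be salvaged by reformulating the codomain appropriately, but as written the claim that $A$ is a bijection between $\cH^s$ (or its tangent space) and $\calC^s\times\calC^s$ is false, and the Fredholm-index-zero reasoning does not deliver it.
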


The first step of the proof of \cref{thm: H derivative potential} is to establish Hadamard differentiability in 
$\calC(\calX) \times \calC(\calX)$
instead of 
$\calC^s(\calX) \times \calC^s(\calX)$.
To this effect, we regard $\bm{\varphi}^{\bm{\mu}}$ as a solution to the system of functional equations \eqref{eq: FOC} and use a version of the implicit function theorem to prove Hadamard differentiability of the map $\bm{\mu} \mapsto \bm{\varphi}^{\bm{\mu}}$ in 
$\calC(\calX) \times \calC(\calX)$.
Precisely, Hadamard differentiability of the EOT potentials is first established in $\calC(\supp(\mu^1))\times \calC(\supp(\mu^2))$, then the EOT potentials are extended to $\calX\times \calX$ via \eqref{eq: FOC} and differentiability in $\calC(\calX)\times \calC(\calX)$ follows readily.

To lift the Hadamard differentiability to 
$\calC^s(\calX) \times \calC^s(\calX)$,
we again use the expression from~\eqref{eq: FOC},
\[
e^{-\varphi_i^{\bm{\mu}}/\varepsilon} (x_i)=  \int e^{(\varphi_j^{\bm{\mu}}(x_j) - c(x_1,x_2))/\varepsilon} \, d\mu^j(x_j),  \quad i \ne j,
\]
and show that derivatives of $\varphi_i^{\bm{\mu}}$ are Hadamard differentiable in 
$\calC(\calX)$.
This argument is partly inspired by the proof of Theorem 4.5 in \cite{delbarrio22EOT}. Completeness of 
$\calC^s(\calX)$
then yields that the map $\bm{\mu} \mapsto \varphi_i^{\bm{\mu}}$ is Hadamard differentiable in 
$\calC^s(\calX)$.
See the proof in \cref{sec: proof of H derivative potential} for the full details.

\begin{remark}[Tangent cone]
\label{rem: tangent cone}
Since $\calP_{\mu^1} \times \calP_{\mu^2}$  
is convex, the set 
$\overline{\calM_{\mu^1}}^{\ell^\infty(B^s)} \times \overline{\calM_{\mu^2}}^{\ell^\infty(B^s)}$
agrees with the tangent cone to 
$\calP_{\mu^1} \times \calP_{\mu^2}$
at $\bm{\mu}$ in 
$\ell^\infty (B^s) \times \ell^\infty (B^s)$
(cf. Chapter 4 in \cite{aubin2009set}; see also \cref{sec: functional delta}). 
Each element of
$\overline{\calM_{\mu^i}}^{\ell^\infty(B^s)}$
extends uniquely to a bounded linear functional on 
$\calC^s(\calX)$
(cf. Lemma 1 in \cite{nickl2009convergence}). Hence, by the Riesz-Markov-Kakutani theorem, each 
$\gamma^i \in \overline{\calM_{\mu^i}}^{\ell^\infty(B^s)}$
corresponds to finite signed Borel measures $(\gamma^i_{k})_{k \in \NN_0^d,|k| \le s}$ on 
$\calX$ supported in $\supp(\mu^i)$
such that $\gamma^i(f) = \sum_{|k| \le s} \int D^k f \, d\gamma_k^i$ for 
$f \in \calC^s(\calX)$.
We remark that this action also makes sense for a function $f\in\calC(\supp(\mu^i))$ that merely admits some extension $\bar f\in\calC^s(\calX)$. Indeed, let $\gamma^i_n\in\calM_{\mu^i}$ converge to $\gamma^i$ in $\ell^{\infty}(B^s)$, then $\gamma^i_n(f)=\gamma^i_n(\bar f)\to\gamma^i(\bar f)$ as $\supp(\gamma^i_n)\subset \supp(\mu^i)$; the first equality also shows that the limit is independent of the choice of extension, hence we may define $\gamma^i(f):=\gamma^i(\bar f)$.
Abusing notation, we often denote the action of $\gamma^i$ on $f\in\calC(\supp(\mu^i))$ admitting a $\calC^s$ extension to $\calX$ or
$f \in \calC^s(\calX)$
as $\gamma^i (f) = \int f \, d\gamma^i$. 
\end{remark}

\begin{remark}[Functional delta method]
Recalling that $s$ in \cref{lem: EOT potential} (ii) is arbitrary, if we choose $s > d/2$, then 
$B^s$
is $\mu^i$-Donsker for each $i=1,2$ (cf. Theorem 2.7.1 in \cite{van1996weak}
). We use this fact in the proof of the limit theorem for EOT potentials from \cref{thm: limit theorem EOT potential}. Since the support of a tight $\mu^i$-Brownian bridge in 
$\ell^\infty(B^s)$
is contained in 
$\overline{\calM_{\mu^i}}^{\ell^\infty(B^s)}$
(see \cref{lem: support}), the functional delta method (see \cref{lem: functional delta method}) immediately applies to the map $\bm{\mu} \mapsto \bm{\varphi}^{\bm{\mu}}$. Similar comments apply to other Hadamard differentiability results. 
\end{remark}

The derivation of the null limit distribution of the empirical Sinkhorn divergence involves the second-order Hadamard derivative of EOT potentials, which is given~next. 

\begin{theorem}[Second-order Hadamard differentiability of EOT potentials]
\label{thm: second order H derivative}
For every $s \in \mathbb{N}$ and $\bm{\mu}=(\mu^1,\mu^2)\in
\calP(\calX)\times \calP(\calX)
$, there exists a continuous map 
$[\bm{\varphi}^{\bm{\mu}}]'': 
\overline{\calM_{\mu^1}}^{\ell^\infty(B^s)} \times \overline{\calM_{\mu^2}}^{\ell^\infty(B^s)} \to \calC(\calX) \times \calC(\calX)$
such that for every sequence $(\bm{\mu}_t)_{t > 0} \subset  
\calP_{\mu^1} \times \calP_{\mu^2}$
with $\bm{\gamma}_t := t^{-1}(\bm{\mu}_t-\bm{\mu}) \to \bm{\gamma}$ in $\ell^\infty (B^s) \times \ell^\infty (B^s)$
as $t \downarrow 0$, we have 
\[
\frac{\bm{\varphi}^{\bm{\mu}_t}-\bm{\varphi}^{\bm{\mu}} - t[\bm{\varphi}^{\bm{\mu}}]'(\bm{\gamma}_t)}{t^2/2} \to [\bm{\varphi}^{\bm{\mu}}]''(\bm{\gamma}) \quad \text{in} \ 
\calC(\calX)\times \calC(\calX).
\]
The map $[\bm{\varphi}^{\bm{\mu}}]''$ is positively homogeneous of degree $2$, i.e., $[\bm{\varphi}^{\bm{\mu}}]''(t\bm{\gamma}) = t^2[\bm{\varphi}^{\bm{\mu}}]''(\bm{\gamma})$ for every $t > 0$ and $\bm{\gamma} \in \overline{\calM_{\mu^1}}^{\ell^\infty(B^s)} \times \overline{\calM_{\mu^2}}^{\ell^\infty(B^s)}$. 
\end{theorem}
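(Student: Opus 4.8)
\textbf{Proof plan for \cref{thm: second order H derivative}.}
The plan is to differentiate the Schr\"{o}dinger system \eqref{eq: FOC} a second time, treating $\bm{\varphi}^{\bm{\mu}}$ as an implicitly defined solution and reusing the machinery already developed for the first-order derivative. Write $\Psi_i(\bm{\varphi},\bm{\nu})(x_i) = \int e^{(\varphi_i(x_i)+\varphi_j(x_j)-c(x_i,x_j))/\varepsilon}\,d\nu^j(x_j) - 1$ for $i\neq j$, so that $\bm{\varphi}^{\bm{\nu}}$ is characterized (modulo the normalization at $(x_1^\circ,x_2^\circ)$) by $\Psi(\bm{\varphi}^{\bm{\nu}},\bm{\nu}) = 0$. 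The first-order derivative $[\bm{\varphi}^{\bm{\mu}}]'(\bm{\gamma})$ is obtained, as in the proof of \cref{thm: H derivative potential}, by solving the linearized system $D_{\bm{\varphi}}\Psi(\bm{\varphi}^{\bm{\mu}},\bm{\mu})\,[\bm{h}] + D_{\bm{\nu}}\Psi(\bm{\varphi}^{\bm{\mu}},\bm{\mu})\,[\bm{\gamma}] = 0$; the operator $D_{\bm{\varphi}}\Psi(\bm{\varphi}^{\bm{\mu}},\bm{\mu})$ is boundedly invertible on (a complemented subspace of) $\calC(\calX)\times\calC(\calX)$, which is exactly the content underlying the implicit-function-theorem step. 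For the second order I would set $\bm{\varphi}_t := \bm{\varphi}^{\bm{\mu}_t}$, $\bm{h}_t := t^{-1}(\bm{\varphi}_t - \bm{\varphi}^{\bm{\mu}})$, and Taylor-expand the identity $\Psi(\bm{\varphi}_t,\bm{\mu}_t)=0$ to second order in $t$. Because $\Psi$ is built from the smooth exponential map composed with integration against $\bm{\mu}_t = \bm{\mu} + t\bm{\gamma}_t$, and because \cref{lem: EOT potential}(ii)--(iii) guarantee that $\bm{\varphi}_t\to\bm{\varphi}^{\bm{\mu}}$ in every $\calC^s(\calX)$ with uniformly bounded $\calC^s$-norms, the remainder terms are controlled uniformly; collecting the $O(t^2)$ terms gives a linear equation $D_{\bm{\varphi}}\Psi(\bm{\varphi}^{\bm{\mu}},\bm{\mu})\,[\bm{k}] = \bm{b}(\bm{\gamma})$ for the second-order term $\bm{k} = [\bm{\varphi}^{\bm{\mu}}]''(\bm{\gamma})$, where $\bm{b}(\bm{\gamma})$ is an explicit expression quadratic in $([\bm{\varphi}^{\bm{\mu}}]'(\bm{\gamma}),\bm{\gamma})$ together with cross terms $D_{\bm{\varphi}\bm{\nu}}^2\Psi\,[\,[\bm{\varphi}^{\bm{\mu}}]'(\bm{\gamma}),\bm{\gamma}\,]$ and the pure second $\bm{\varphi}$-variation $D_{\bm{\varphi}\bm{\varphi}}^2\Psi\,[[\bm{\varphi}^{\bm{\mu}}]'(\bm{\gamma})]^{\otimes 2}$. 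Inverting $D_{\bm{\varphi}}\Psi$ then defines $[\bm{\varphi}^{\bm{\mu}}]''$ and yields the claimed convergence in $\calC(\calX)\times\calC(\calX)$.

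The key steps, in order: (1) record the first and second Fr\'{e}chet derivatives of $\Psi$ in $\bm{\varphi}$ and in $\bm{\nu}$ at $(\bm{\varphi}^{\bm{\mu}},\bm{\mu})$, noting that $D_{\bm{\nu}}\Psi$ acts linearly on $\bm{\gamma}\in\overline{\calM_{\mu^1}}^{\ell^\infty(B^s)}\times\overline{\calM_{\mu^2}}^{\ell^\infty(B^s)}$ via the measure-action convention of \cref{rem: tangent cone} (this is where $s\in\NN$ rather than $s=0$ matters, since the relevant integrands $e^{(\varphi_j(x_j)-c)/\varepsilon}$ and their derivatives must lie in $\calC^s(\calX)$ for $\bm{\gamma}$ to act on them); (2) recall from the proof of \cref{thm: H derivative potential} that $D_{\bm{\varphi}}\Psi(\bm{\varphi}^{\bm{\mu}},\bm{\mu})$ is invertible on the normalized subspace, so $[\bm{\varphi}^{\bm{\mu}}]'(\bm{\gamma})$ is well-defined and depends continuously (in fact linearly) on $\bm{\gamma}$; (3) expand $0 = \Psi(\bm{\varphi}^{\bm{\mu}}+t\bm{h}_t,\ \bm{\mu}+t\bm{\gamma}_t)$ via the integral Taylor formula, isolating the $t^0$, $t^1$, and $t^2$ coefficients and bundling the rest into a remainder; (4) use the first-order identity to kill the $t^0$ and $t^1$ terms and show $\bm{h}_t = [\bm{\varphi}^{\bm{\mu}}]'(\bm{\gamma}_t) + O(t)$ with the $O(t)$ term converging after division by $t/2$ to $[\bm{\varphi}^{\bm{\mu}}]''(\bm{\gamma})$; (5) verify the remainder is $o(t^2)$ using uniform bounds on $\bm{\varphi}_t$ in $\calC^s$ (from \cref{lem: EOT potential}(ii)) and the continuity $\bm{\varphi}_t\to\bm{\varphi}^{\bm{\mu}}$ (from \cref{lem: EOT potential}(iii)), together with local Lipschitz bounds on the derivatives of $\Psi$ that follow from smoothness of $c$ and the $\calC^s$-bound on $\bm{\varphi}_t$; (6) read off positive $2$-homogeneity from the fact that $[\bm{\varphi}^{\bm{\mu}}]'$ is $1$-homogeneous (linear) and $\bm{b}(\cdot)$ is a sum of terms each $2$-homogeneous in $\bm{\gamma}$, and continuity of $[\bm{\varphi}^{\bm{\mu}}]''$ from continuity of $[\bm{\varphi}^{\bm{\mu}}]'$, of the bilinear forms $D^2\Psi$, and of the bounded inverse of $D_{\bm{\varphi}}\Psi$.

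The main obstacle I anticipate is Step (5): making the second-order Taylor remainder genuinely $o(t^2)$ uniformly. The subtlety is that $\bm{h}_t$ is only known a priori to be $O(1)$ in $\calC(\calX)$ from the first-order Hadamard differentiability, but to control the quadratic remainder one needs more — either a $\calC(\calX)$-rate $\bm{h}_t = [\bm{\varphi}^{\bm{\mu}}]'(\bm{\gamma}_t) + o(1)$ bootstrapped up to the right order, or an a priori $\calC^s$-bound on $\bm{h}_t$ itself (not just on $\bm{\varphi}_t$). The former can be extracted from \cref{thm: H derivative potential} applied along the sequence, and the latter follows by differentiating the relation $e^{-\varphi_i^{\bm{\mu}}/\varepsilon}(x_i) = \int e^{(\varphi_j^{\bm{\mu}}(x_j)-c)/\varepsilon}\,d\mu^j(x_j)$ and using the $\calC^s$-bounds of \cref{lem: EOT potential}(ii) for both $\bm{\mu}$ and $\bm{\mu}_t$ — this is the same ``lifting'' trick used to pass from $\calC(\calX)$ to $\calC^s(\calX)$ in the proof of \cref{thm: H derivative potential}, and it is why the statement is only claimed with target $\calC(\calX)\times\calC(\calX)$ (not $\calC^s$) even though the domain tangent cone sits inside $\ell^\infty(B^s)$. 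A secondary technical point is that the inverse of $D_{\bm{\varphi}}\Psi(\bm{\varphi}^{\bm{\mu}},\bm{\mu})$ must be applied to a right-hand side $\bm{b}(\bm{\gamma})$ living in $\calC(\calX)\times\calC(\calX)$; one should check $\bm{b}(\bm{\gamma})$ lies in the range of $D_{\bm{\varphi}}\Psi$ (equivalently, satisfies the appropriate mass/orthogonality constraint), which holds because differentiating the constraint $\int e^{(\varphi_1\oplus\varphi_2-c)/\varepsilon}\,d(\mu^1\otimes\mu^2) = 1$ twice forces exactly that compatibility.
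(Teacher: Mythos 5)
Your proposal is correct and follows essentially the same strategy as the paper: differentiate the Schr\"{o}dinger system $\Psi(\bm{\varphi}^{\bm{\nu}},\bm{\nu})=0$ to second order, solve the resulting linearized system via the (already established) bounded invertibility of $\dot\Psi_{\bm{\mu}}$, and use the $\calC^s$-level Hadamard differentiability of the potentials precisely to control the cross term where $\bm{\gamma}_t$ is integrated against $\bm{\varphi}^{\bm{\mu}_t}-\bm{\varphi}^{\bm{\mu}}$ --- which you correctly flag as the crux. The paper packages your Steps (3)--(5) into an abstract second-order $Z$-functional lemma (Lemma~\ref{lem: second order H derivative}) and additionally works first on the restriction $\bm{\varphi}^{\bm{\nu}}\vert_{S}$ to the supports (where $\dot\Psi_{\bm\mu}$ is genuinely invertible by Lemma~\ref{lem: Psi functional}) before extending to $\calX$ via the formula $\varphi_i^{\bm{\nu}}=-\varepsilon\log(\cdot)$; your outline elides this restriction-then-extension step, but you reference the extension identity in Step (5), and the rest of your plan is compatible with it.
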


To prove this theorem, we extend Lemma 3.9.34 in \cite{van1996weak} to the second-order Hadamard derivative, which is presented in \cref{lem: second order H derivative}. It is worth noting that, while the second-order Hadamard differentiability result is stated in terms of 
$\calC(\calX) \times \calC(\calX)$,
its proof requires the (first-order) Hadamard derivative in 
$\calC^s(\calX) \times \calC^s(\calX)$
to verify Condition (iv) in \cref{lem: second order H derivative}. See the proof in \cref{sec: proof of second order H derivative} for details.

\subsection{Hadamard differentiability of Sinkhorn divergence}

In this section, we study Hadamard derivatives of the Sinkhorn divergence. 
The following lemma follows relatively easily from the proof of Theorem 7 in \cite{goldfeld2022statistical}.
\begin{lemma}[Hadamard derivative of Sinkhorn divergence]
\label{lem: H derivative Sinkhorn}
For every $s \in \mathbb{N}$ and $\bm{\mu} = (\mu^1,\mu^2) \in \calP(\calX) \times \calP(\calX)$,  the map $\bm{\nu} = (\nu^1,\nu^2) \mapsto \bar{\mathsf{S}}_{c,\varepsilon}(\nu^1,\nu^2), 
\calP_{\mu^1} \times \calP_{\mu^2}
\subset \ell^\infty(B^s) \times \ell^\infty(B^s) \to \R$ is Hadamard differentiable at $\bm{\mu}$  tangentially to $\overline{\calM_{\mu^1}}^{\ell^\infty(B^s)} \times \overline{\calM_{\mu^2}}^{\ell^\infty(B^s)}$ with derivative 
\[
\big[\bar{\mathsf{S}}^{\bm{\mu}}_{c,\varepsilon}\big]'(\bm{\gamma}) = \int \big( \varphi_1^{\bm{\mu}} - \varphi_1^{(\mu^1,\mu^1)} \big) \, d\gamma^1 + \int \big ( \varphi_2^{\bm{\mu}} - \varphi_2^{(\mu^2,\mu^2)} \big) \, d\gamma^2
\]
for $\bm{\gamma} =(\gamma^1,\gamma^2) \in \overline{\calM_{\mu^1}}^{\ell^\infty(B^s)} \times \overline{\calM_{\mu^2}}^{\ell^\infty(B^s)}$. 
\end{lemma}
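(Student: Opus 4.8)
The target is the first-order Hadamard derivative of the Sinkhorn divergence $\bm\nu\mapsto\bar{\mathsf S}_{c,\varepsilon}(\nu^1,\nu^2)$ at $\bm\mu$, tangentially to the relevant tangent cone, with the stated closed form. The natural route is to start from the dual expression \eqref{eq: Sinkhorn dual},
\[
\bar{\mathsf S}_{c,\varepsilon}(\nu^1,\nu^2)=\int\big(\varphi_1^{(\nu^1,\nu^2)}-\varphi_1^{(\nu^1,\nu^1)}\big)\,d\nu^1+\int\big(\varphi_2^{(\nu^1,\nu^2)}-\varphi_2^{(\nu^2,\nu^2)}\big)\,d\nu^2,
\]
and then differentiate it as a composition of (i) the map $\bm\nu\mapsto\bm\varphi^{\bm\nu}$ and its ``diagonal'' analogues $\nu^i\mapsto\varphi^{(\nu^i,\nu^i)}$, which are Hadamard differentiable into $\calC^s(\calX)\times\calC^s(\calX)$ by \cref{thm: H derivative potential}, and (ii) the bilinear pairing $(\gamma,f)\mapsto\int f\,d\gamma$. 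The key simplification, which is why the derivative has such a clean form, is the envelope/first-order-condition phenomenon: because $\bm\varphi^{\bm\nu}$ maximizes the dual objective \eqref{eq: dual}, the terms coming from differentiating the potentials themselves (as opposed to differentiating the measures they are integrated against) vanish. Concretely, I would first perturb $\bm\mu$ along $\bm\mu_t=\bm\mu+t\bm\gamma_t$ with $\bm\gamma_t\to\bm\gamma$ in $\ell^\infty(B^s)\times\ell^\infty(B^s)$, and use \eqref{eq: dual} evaluated at the potentials $\bm\varphi^{\bm\mu_t}$ and $\bm\varphi^{\bm\mu}$ respectively, sandwiching $\mathsf S_{c,\varepsilon}(\mu_t^1,\mu_t^2)$ between the value of the dual functional at the two candidate potentials to get a two-sided bound whose leading terms agree.

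\textbf{Key steps.} First, handle the uncentered EOT cost $\mathsf S_{c,\varepsilon}$: invoke (the proof of) Theorem 7 in \cite{goldfeld2022statistical} — or redo the short argument — to get that $\bm\nu\mapsto\mathsf S_{c,\varepsilon}(\nu^1,\nu^2)$ is Hadamard differentiable at $\bm\mu$ with derivative $\bm\gamma\mapsto\int\varphi_1^{\bm\mu}\,d\gamma^1+\int\varphi_2^{\bm\mu}\,d\gamma^2$; the envelope argument here is: from \eqref{eq: dual}, $\mathsf S_{c,\varepsilon}(\mu_t^1,\mu_t^2)\ge \int\varphi_1^{\bm\mu}d\mu_t^1+\int\varphi_2^{\bm\mu}d\mu_t^2-\varepsilon\int e^{(\varphi_1^{\bm\mu}\oplus\varphi_2^{\bm\mu}-c)/\varepsilon}d\mu_t^1\otimes\mu_t^2+\varepsilon$, and symmetrically with $\bm\mu$ and $\bm\mu_t$ swapped; expanding both sides to first order in $t$, using that $\bm\gamma_t\to\bm\gamma$ acts on the fixed $\calC^s$ functions $\varphi_i^{\bm\mu}$ and on $e^{(\varphi_1^{\bm\mu}\oplus\varphi_2^{\bm\mu}-c)/\varepsilon}$ (which lies in $\calC^s$ by \cref{lem: EOT potential}), and that $\int e^{(\varphi_1^{\bm\mu}\oplus\varphi_2^{\bm\mu}-c)/\varepsilon}d\mu^j=1$ by the Schrödinger system \eqref{eq: FOC}, the first-order terms collapse to $\int\varphi_1^{\bm\mu}d\gamma^1+\int\varphi_2^{\bm\mu}d\gamma^2$, with the cross term $\varepsilon\int e^{(\cdots)/\varepsilon}\,d(\gamma^1\otimes\mu^2+\mu^1\otimes\gamma^2)=\gamma^1(\vone)+\gamma^2(\vone)=0$ since $\gamma^i\in\overline{\calM_{\mu^i}}$ has total mass zero; the remainder terms (the ones quadratic in $t$, and those controlling the gap between $\bm\varphi^{\bm\mu_t}$ and $\bm\varphi^{\bm\mu}$) are $o(t)$ by the boundedness from \cref{lem: EOT potential} (ii). Second, apply this to each of the three constituents of \eqref{eq: Sinkhorn dual}: $\mathsf S_{c,\varepsilon}(\nu^1,\nu^2)$, $\mathsf S_{c,\varepsilon}(\nu^1,\nu^1)$, $\mathsf S_{c,\varepsilon}(\nu^2,\nu^2)$; for the diagonal ones, use the chain rule with the (Hadamard-differentiable) embeddings $\nu^i\mapsto(\nu^i,\nu^i)$, whose derivative is $\gamma^i\mapsto(\gamma^i,\gamma^i)$, so that $\nu^i\mapsto\mathsf S_{c,\varepsilon}(\nu^i,\nu^i)$ has derivative $\gamma^i\mapsto\int\varphi_1^{(\mu^i,\mu^i)}d\gamma^i+\int\varphi_2^{(\mu^i,\mu^i)}d\gamma^i=2\int\varphi_1^{(\mu^i,\mu^i)}d\gamma^i$, using $\varphi_1^{(\mu^i,\mu^i)}=\varphi_2^{(\mu^i,\mu^i)}$ from cost symmetry. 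Third, combine linearly: the derivative of $\bar{\mathsf S}_{c,\varepsilon}$ at $\bm\mu$ applied to $\bm\gamma=(\gamma^1,\gamma^2)$ equals
\[
\Big(\int\varphi_1^{\bm\mu}d\gamma^1+\int\varphi_2^{\bm\mu}d\gamma^2\Big)-\tfrac12\cdot 2\int\varphi_1^{(\mu^1,\mu^1)}d\gamma^1-\tfrac12\cdot 2\int\varphi_2^{(\mu^2,\mu^2)}d\gamma^2,
\]
which rearranges to the claimed $\int(\varphi_1^{\bm\mu}-\varphi_1^{(\mu^1,\mu^1)})d\gamma^1+\int(\varphi_2^{\bm\mu}-\varphi_2^{(\mu^2,\mu^2)})d\gamma^2$; linearity and continuity of this map in $\bm\gamma$ are immediate since the integrands are fixed $\calC^s$ functions and $\bm\gamma\mapsto(\int\cdot\,d\gamma^1,\int\cdot\,d\gamma^2)$ is bounded on $\ell^\infty(B^s)\times\ell^\infty(B^s)$.

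\textbf{Main obstacle.} The delicate point is justifying that the ``potential-perturbation'' contributions are genuinely negligible, i.e. making the envelope/sandwich argument rigorous along arbitrary tangential sequences $\bm\gamma_t\to\bm\gamma$ (not just straight-line perturbations), and controlling the remainder uniformly. This is where one must be careful that $\bm\mu_t=\bm\mu+t\bm\gamma_t$ stays in $\calP_{\mu^1}\times\calP_{\mu^2}$ (so that the potentials and the duality \eqref{eq: dual} are available), that the exponential integrand $e^{(\varphi_1^{\bm\mu}\oplus\varphi_2^{\bm\mu}-c)/\varepsilon}$ has $\calC^s$-norm bounded uniformly (it does, by \cref{lem: EOT potential} (ii) and smoothness of $c$ on the compact $\calX\times\calX$), and that the second-order terms in $t$ are $O(t^2\|\bm\gamma_t\|^2)=o(t)$. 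Since the statement only claims \emph{first-order} Hadamard differentiability, the quantitative control needed is mild — essentially uniform boundedness of the potentials and of the exponential kernel on $\calX$ — so although this bookkeeping is the technical heart of the proof, it is routine given \cref{lem: EOT potential}; indeed the lemma asserts that most of it ``follows relatively easily from the proof of Theorem 7 in \cite{goldfeld2022statistical},'' and I would simply cite that argument for the uncentered cost and then assemble \eqref{eq: Sinkhorn dual} by the chain rule and linearity as above.
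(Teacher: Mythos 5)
Your proposal is correct and follows essentially the same route as the paper: Step 1 establishes Hadamard differentiability of the uncentered cost $\bm\nu\mapsto\mathsf S_{c,\varepsilon}(\nu^1,\nu^2)$ with derivative $\bm\gamma\mapsto\int\varphi_1^{\bm\mu}\,d\gamma^1+\int\varphi_2^{\bm\mu}\,d\gamma^2$ (citing the envelope argument from the proof of Theorem 7 in \cite{goldfeld2022statistical}, which you also spell out), and Step 2 decomposes $\bar{\mathsf S}_{c,\varepsilon}=\mathsf S_{c,\varepsilon}-\tfrac12(\mathsf S_{c,\varepsilon}+\mathsf S_{c,\varepsilon})$, applies the chain rule through $\nu^i\mapsto(\nu^i,\nu^i)$, and uses $\varphi_1^{(\mu^i,\mu^i)}=\varphi_2^{(\mu^i,\mu^i)}$ by cost symmetry to assemble the stated derivative. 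A small labeling slip: the ``three constituents'' you list come from \eqref{eq: sinkhorn} rather than from the potential-based expression \eqref{eq: Sinkhorn dual}.
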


When $\mu^1=\mu^2$, we have $\big[\bar{\mathsf{S}}^{\bm{\mu}}_{c,\varepsilon}\big]'(\bm{\gamma}) = 0$. Therefore, to explore the limit distribution of the empirical Sinkhorn divergence under the null $\mu^1=\mu^2$, we need to look at the second-order Hadamard derivative of the Sinkhorn divergence, which is given next. 

\begin{theorem}[Second-order Hadamard derivative of Sinkhorn divergence]
\label{thm: second H derivative Sinkhorn}
For every $s \in \mathbb{N}$ and $\bm{\mu} = (\mu^1,\mu^2) \in \calP(\calX) \times \calP(\calX)$, 
there exists a continuous functional $\Delta_\mu: \overline{\calM_{\mu}}^{\ell^\infty(B^s)} \times \overline{\calM_{\mu}}^{\ell^\infty(B^s)} \to \R$ such that for every sequence $(\mu_t^1,\mu_t^2) \in \frakP_{\mu}\times \frakP_{\mu}$ 
with $t^{-1}(\mu_t^i-\mu) \to \gamma^i$ in $\ell^\infty(B^s)$ as $t \downarrow 0$ for $i=1,2$, we have
\[
\frac{\bar{\mathsf{S}}_{c,\varepsilon}(\mu_t^1,\mu_t^2)}{t^2/2} \to \Delta_\mu(\bm{\gamma})
\]
with $\bm{\gamma} = (\gamma^1,\gamma^2)$. 
The functional $\Delta_\mu$ is positively homogeneous of degree $2$.
\end{theorem}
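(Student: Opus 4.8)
The plan is to read off $\Delta_\mu$ from a second-order expansion of the dual representation \eqref{eq: Sinkhorn dual}, fed by the first- and second-order Hadamard differentiability of the EOT potentials (\cref{thm: H derivative potential,thm: second order H derivative}) together with the vanishing of the first Hadamard derivative of $\bar{\mathsf{S}}_{c,\varepsilon}$ along the diagonal (\cref{lem: H derivative Sinkhorn}). Fix $\mu \in \calP(\calX)$, $s \in \NN$, write $\bm{\mu} = (\mu,\mu)$, and let $(\mu^1_t,\mu^2_t) \in \frakP_\mu \times \frakP_\mu$ with $\bm{\gamma}_t = (\gamma^1_t,\gamma^2_t) := t^{-1}\big((\mu^1_t,\mu^2_t) - \bm{\mu}\big) \to \bm{\gamma} = (\gamma^1,\gamma^2)$ in $\ell^\infty(B^s) \times \ell^\infty(B^s)$; then $\gamma^i \in \overline{\calM_\mu}^{\ell^\infty(B^s)}$, $\|\gamma^i_t\|_{\mathrm{TV}} \le 2/t$, and $\mu^i_t \to \mu$ weakly.

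Write $a_i := [\bm{\varphi}^{\bm{\mu}}]'_i$ (a bounded \emph{linear} map into $\calC^s(\calX)$, by \cref{thm: H derivative potential}) and $b_i := [\bm{\varphi}^{\bm{\mu}}]''_i$ (a map into $\calC(\calX)$, positively homogeneous of degree $2$, by \cref{thm: second order H derivative}). Applying \cref{thm: H derivative potential,thm: second order H derivative} at $\bm{\mu}$ to each of $\varphi_i^{(\mu^1_t,\mu^2_t)}$, $\varphi_i^{(\mu^1_t,\mu^1_t)}$, $\varphi_i^{(\mu^2_t,\mu^2_t)}$ (the last two viewed as compositions with the linear diagonal embedding $\nu \mapsto (\nu,\nu)$) yields expansions of the form
\[
\varphi_i^{(\mu^1_t,\mu^2_t)} = \varphi_i^{\bm{\mu}} + t\,a_i(\gamma^1_t,\gamma^2_t) + \tfrac{t^2}{2}\,b_i(\gamma^1,\gamma^2) + o(t^2) \quad \text{in } \calC(\calX),
\]
where the remainder is in fact $o(t)$ in $\calC^s(\calX)$ once the $\tfrac{t^2}{2}b_i$ term is dropped. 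By linearity of $a_1$, the difference appearing in \eqref{eq: Sinkhorn dual} satisfies $\varphi_1^{(\mu^1_t,\mu^2_t)} - \varphi_1^{(\mu^1_t,\mu^1_t)} = t\,a_1(0,\gamma^2_t-\gamma^1_t) + \tfrac{t^2}{2}\big(b_1(\gamma^1,\gamma^2) - b_1(\gamma^1,\gamma^1)\big) + o(t^2)$, and analogously for the $\varphi_2$ difference with $a_2(\gamma^1_t-\gamma^2_t,0)$ and $b_2(\gamma^1,\gamma^2) - b_2(\gamma^2,\gamma^2)$.

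I would then integrate these against $\mu^i_t = \mu + t\gamma^i_t$ and collect powers of $t$. The $t^0$-term vanishes since the two potentials in each difference agree at $\bm{\mu}$. The $t^1$-term is $\int a_1(0,\gamma^2_t-\gamma^1_t)\,d\mu + \int a_2(\gamma^1_t-\gamma^2_t,0)\,d\mu$, a fixed bounded linear functional of $\gamma^2_t - \gamma^1_t$; it vanishes identically because symmetry of $c$ (with $x_1^\circ = x_2^\circ$) gives $\varphi_1^{(\nu^1,\nu^2)} \equiv \varphi_2^{(\nu^2,\nu^1)}$ and hence $a_1(0,\cdot) = a_2(\cdot,0)$ — equivalently, it coincides with $\big[\bar{\mathsf{S}}^{\bm{\mu}}_{c,\varepsilon}\big]'(\bm{\gamma}_t) = 0$ from \cref{lem: H derivative Sinkhorn} — which is exactly why the correct scaling is $t^2$. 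The surviving $O(t^2)$ terms are of two kinds: (i) the merely continuous second-order pieces integrated against the \emph{probability} measure $\mu$, contributing $\tfrac{t^2}{2}\int\big(b_1(\gamma^1,\gamma^2) - b_1(\gamma^1,\gamma^1)\big)\,d\mu + \tfrac{t^2}{2}\int\big(b_2(\gamma^1,\gamma^2) - b_2(\gamma^2,\gamma^2)\big)\,d\mu$; and (ii) the $\calC^s(\calX)$-valued first-order pieces paired with the fluctuations, contributing $t^2\gamma^1_t\big(a_1(0,\gamma^2_t-\gamma^1_t)\big) + t^2\gamma^2_t\big(a_2(\gamma^1_t-\gamma^2_t,0)\big)$, which converge to $t^2\big[\int a_1(0,\gamma^2-\gamma^1)\,d\gamma^1 + \int a_2(\gamma^1-\gamma^2,0)\,d\gamma^2\big]$ by continuity of $a_i$ and joint continuity of the pairing $\ell^\infty(B^s)\times\calC^s(\calX) \to \RR$. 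Dividing by $t^2/2$ and letting $t \downarrow 0$ then defines $\Delta_\mu(\bm{\gamma})$, depending only on $\bm{\gamma}$, as the sum of the limiting type-(i) and type-(ii) expressions; it is continuous on $\overline{\calM_\mu}^{\ell^\infty(B^s)}\times\overline{\calM_\mu}^{\ell^\infty(B^s)}$ by continuity of $a_i, b_i$ and of the pairings, and positively homogeneous of degree $2$ since the $b_i$ are (\cref{thm: second order H derivative}) and the type-(ii) terms are bilinear.

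The step requiring the most care — and where the Hölder-space strengthening in \cref{thm: H derivative potential} is essential — is the remainder accounting for the terms paired with the fluctuation $t\gamma^i_t$, whose total variation grows like $1/t$. For an $o(t^2)$ remainder $r$ in $\calC(\calX)$ one has $|t\gamma^i_t(r)| \le (2/t)\cdot t\cdot \|r\|_{\infty,\calX} = o(t^2)$; for the merely-$\calC(\calX)$-valued second-order piece $g := b_i(\gamma^1,\gamma^2) - b_i(\gamma^i,\gamma^i)$ one has $\tfrac{t^3}{2}\gamma^i_t(g) = \tfrac{t^2}{2}\big(\int g\,d\mu^i_t - \int g\,d\mu\big) = o(t^2)$ by weak convergence of $\mu^i_t$; and the only fluctuation-against-derivative term that is genuinely of order $t^2$ is the type-(ii) term, which converges precisely because $a_i$ is $\calC^s(\calX)$-valued while $\gamma^i_t$ is uniformly bounded as a functional on $\calC^s(\calX)$. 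Organizing the expansion so that the second-order potential derivative is never paired, at leading order, with anything rougher than $\mu$, and checking these $o(t^2)$ bounds, is the bulk of the work; the remaining verifications (continuity and degree-$2$ homogeneity of $\Delta_\mu$, and the $o(t^2)$ pieces integrated against $\mu$) are routine.
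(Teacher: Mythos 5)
Your proposal is correct and takes essentially the same route as the paper's proof: expand the dual representation \eqref{eq: Sinkhorn dual} by writing $\mu_t^i = \mu + t\gamma_t^i$, use the first- and second-order Hadamard expansions of the potentials from \cref{thm: H derivative potential,thm: second order H derivative}, invoke the cost symmetry (equivalently $\varphi_1^{(\nu^1,\nu^2)}\equiv\varphi_2^{(\nu^2,\nu^1)}$) to kill the $O(t)$ term, and observe that the only $O(t^2)$ cross-terms surviving are the $\calC^s(\calX)$-valued first-order pieces paired against the $\gamma^i_t$-fluctuations — exactly the place where the H\"{o}lder-space lift is indispensable, as you correctly identify. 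Your resulting $\Delta_\mu$ matches the paper's expression
$\int\sum_{i}\big([\varphi_i^{(\mu,\mu)}]''(\gamma^1,\gamma^2)-[\varphi_i^{(\mu,\mu)}]''(\gamma^i,\gamma^i)\big)\,d\mu + 2\sum_{i}\int[\varphi_i^{(\mu,\mu)}]'(\gamma^1-\gamma^i,\gamma^2-\gamma^i)\,d\gamma^i$, and your remainder accounting (using $\|\gamma^i_t\|_{\mathrm{TV}}\le 2/t$ for the $o(t^2)$ remainder, and weak convergence of $\mu_t^i$ for the $\calC(\calX)$-valued $b_i$ piece) is slightly more explicit than the paper's but amounts to the same bookkeeping.
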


Given the first and second-order Hadamard differentiability results for the EOT potentials, the proof of \cref{thm: second H derivative Sinkhorn} is reasonably straightforward. Indeed, the proof consists of expanding $\varphi_i^{(\mu_t^1,\mu_t^2)}$ and $\varphi_i^{(\mu_t^i,\mu_t^i)}$ up to the second-order and plugging these expansions into the dual expression \eqref{eq: Sinkhorn dual} of $\bar{\mathsf{S}}_{c,\varepsilon}(\mu_t^1,\mu_t^2)$. 

\section{Bootstrap consistency and asymptotic efficiency}
\label{sec: efficiency}
As discussed before, our limit theorems in \cref{thm: limit theorem EOT potential}, \cref{cor: limit theorem EOT map}, and \cref{prop: limit theorem Sinkhorn} follow by establishing Hadamard differentiability of the relevant maps. Importantly, Hadamard differentiability results automatically also yield bootstrap consistency and asymptotic efficiency (cf. Chapter 3.11 in \cite{van1996weak} and Chapter 25 in \cite{vanderVaart1998asymptotic}) of the empirical estimators of the EOT potentials, map, and Sinkhorn divergence (with $\mu^1 \ne \mu^2$). To simplify our discussion, we focus here on estimating the EOT map for the quadratic cost with known $\mu^1$ (i.e., the one-sample case); other cases are similar. The setting of known $\mu^1$ is motivated by the connection of the EOT map to the vector quantile function (\cref{ex: vector quantile}).  Consider the setting of \cref{cor: limit theorem EOT map}. 

\subsection{Bootstrap consistency}

Let $\tilde{T}_n = T^{(\mu_n^{1},\hat{\mu}_n^{2})}$ be the empirical EOT map, and $\tilde{T}_n^{B}$ be its bootstrap analog, i.e., $\tilde{T}_n^B = T^{(\mu_n^{1},\hat{\mu}_n^{2,B})}$, where $\hat{\mu}_n^{2,B}$ is the empirical distribution of a bootstrap sample from $\hat{\mu}_n^2$ of size $n$ (cf. Chapter 3.6 in \cite{van1996weak} and Chapter 23 in \cite{vanderVaart1998asymptotic}). 

Pick any $s,s' \in \NN_0$ with $s' < s$ and $s > d/2$.
\cref{lem: brenier} and \cref{thm: H derivative potential} yield that the map 
$\delta: \nu^2 \mapsto T^{(\mu^1,\nu^2)},
\frakP_{\mu^2} \subset \ell^\infty(B^s) \to \calC^{s'}(\calX; \R^d)$
is Hadamard differentiable at $\nu^2=\mu^2$ tangentially to 
$\overline{\calM_{\mu^2}}^{\ell^\infty(B^s)}$
with derivative $\delta_{\mu^2}'(\gamma^2) = -[\nabla \varphi_1^{\bm{\mu}}]'(0,\gamma^2)$. The tangent cone 
$\overline{\calM_{\mu^2}}^{\ell^\infty(B^s)}$
contains a vector subspace on which  a tight $\mu^2$-Brownian bridge $\GG_2^{\mu^2}$ in 
$\ell^\infty(B^s)$
concentrates (cf. \cref{lem: support}; note that 
$B^s$
with $s > d/2$ is $\mu^2$-Donsker). Hence, by the functional delta method, we have 
\[
\sqrt{n}(\tilde{T}_n - T^{\bm{\mu}}) \stackrel{d}{\to} \delta_{\mu^2}'(\GG_2^{\mu^2}) \quad  \text{in} \quad \calC^{s'}(\calX;\R^d).
\] 
Furthermore, by Theorems 3.6.1 and 3.9.11 in \cite{van1996weak},  the following bootstrap consistency holds:
\begin{equation}
\sup_{h \in \mathrm{BL}_1(\calC^{s'}(
\calX
;\R^d))}\big|\E^B\big[h\big(\sqrt{n}(\tilde{T}_n^B-\tilde{T}_n) \big)\big]  - \E\big[h\big ( \delta_{\mu^2}'(\GG_2^{\mu^2})\big) \big]  \big | \to 0
\label{eq: bootstrap consistency}
\end{equation}
in outer probability, where $\E^B$ denotes the conditional expectation given the sample and $\mathrm{BL}_1(\calC^{s'}(
\calX
;\R^d))$ is the class of $1$-Lipschitz functions $h: \calC^{s'}(
\calX
;\R^d) \to [-1,1]$. 

\begin{example}[Confidence bands for EOT map]
Consider constructing confidence bands for $T_j^{\bm{\mu}}$ with $T^{\bm{\mu}} = (T_1^{\bm{\mu}},\dots,T_d^{\bm{\mu}})$. 
The continuous mapping theorem yields that  $\| \sqrt{n}(\tilde{T}_{n,j} - T_j^{\bm{\mu}}) \|_{\infty,
\calX
} \stackrel{d}{\to}  \| [\delta_{\mu^2}'(\GG_2^{\mu^2})]_j\|_{\infty,
\calX
}$. 
For a given $\alpha \in (0,1)$, let $\hat{q}_{1-\alpha}$ denote the conditional $(1-\alpha)$-quantile of $\| \sqrt{n}(\tilde{T}_{n,j}^B - \tilde{T}_{n,j}) \|_{\infty,
\calX
}$ given the data.
The bootstrap consistency result \eqref{eq: bootstrap consistency} yields that $\{ [\tilde{T}_j^n (x_1) \pm \hat{q}_{1-\alpha}/\sqrt{n}] : x_1 \in 
\calX
\}$ is a valid confidence band for $T_j^{\bm{\mu}}$, i.e., $\Prob \big(T_j^{\bm{\mu}}(x_1) \in [\tilde{T}_j^n(x_1) \pm \hat{q}_{1-\alpha}/\sqrt{n}] \ \text{for all} \ x_1 \in 
\calX
\big) \to 1-\alpha$.
\end{example}

\begin{remark}[Statistical inference for unregularized Brenier maps]
When the source measure $\mu^1$ is absolutely continuous, Brenier's theorem \cite{brenier1991polar} guarantees the existence of the $\mu^1$-a.e. unique map  (called the Brenier map) given by the gradient of a convex function transporting $\mu^1$ onto $\mu^2$. The EOT map approximates the (unregularized) Brenier map as $\varepsilon \to 0$ (cf. \cite{mikami2004monge,carlier2022convergence}). However, all the limit theorems in the present paper crucially rely on the fact that the regularization parameter $\varepsilon>0$ is fixed and do not directly extend to the case where $\varepsilon = \varepsilon_n \to 0$. Indeed, estimation of the unregularized Brenier map suffers from the curse of dimensionality \cite{hutter2021minimax}, so $\sqrt{n}$-consistency toward the unregularized Brenier map does not hold in general for $\varepsilon = \varepsilon_n \to 0$. One exception is the semidiscrete case where $\mu^1$ is absolutely continuous and $\mu^2$ is finitely discrete, for which \cite{pooladian2023minimax} establish the parametric convergence rate of empirical EOT maps with vanishing regularization parameters under the $L^2$-loss. For the semidiscrete case, the recent preprint \cite{sadhu2023limit} by a subset of the authors studies statistical inference for the empirical (unregularized) Brenier map. Beyond the semidiscrete case, however, the problem of developing valid inferential methods for Brenier maps remains largely open. 
\end{remark}

\subsection{Asymptotic efficiency}

Regarding asymptotic efficiency, we follow Chapter~3.11 in \cite{van1996weak}.
Consider the setting of \cref{cor: limit theorem EOT map} and fix $(\mu^1,\mu^2) \in \calP(\calX) \times \calP(\calX)$. As before, let $s,s' \in \NN_0$ be such that $s'<s$ and $s > d/2$. For notational convenience, set $\BB = \calC^{s'}(
\calX
; \R^d)$. By linearity of the derivative, the limit variable $\delta_{\mu^2}'(\GG_2^{\mu^2})$ is zero-mean Gaussian in $\BB$. 

To apply the results of Chapter 3.11 in \cite{van1996weak}, we need to specify statistical experiments $(\mathsf{X}_n,\calA_n,P_{n,h}: h \in H)$ indexed by a vector subspace $H$ of a Hilbert space and local parameters $\kappa_n (h)$. Choose $H$ to be the set of bounded measurable functions on 
$\calX$
with $\mu^2$-mean zero equipped with the $L^2(\mu^2)$-inner product, and set $\mathsf{X}_n = 
\calX^n,
\calA_n=$ (Borel $\sigma$-field on 
$\calX^n$
), $P_{n,h} = (\mu_{n,h}^2)^{\otimes n}$ with $d\mu_{n,h}^2 = (1+h/\sqrt{n})d\mu^2$, and $\kappa_n (h) = \delta(\mu_{n,h}^2) = T^{(\mu^1,\mu_{n,h}^2)}: H \to \BB$. Note that for each $h \in H$, $\mu_{n,h}^2$ is a valid probability measure for sufficiently large $n$. By Lemma 3.10.11 in \cite{van1996weak}, the sequence of experiments $(\mathsf{X}_n,\calA_n,P_{n,h}: h \in H)$ is asymptotically normal in the sense of \cite[p~412]{van1996weak}. 
Under this setup, the following proposition regarding asymptotic efficiency of the empirical EOT map holds. We say that the parameter sequence $\kappa_n(h)$ is \textit{regular} if there exists a continuous linear operator $\dot{\kappa}:H \to \mathbb{B}$ such that $\sqrt{n}(\kappa_n(h) - \kappa_n(0)) \to \dot{\kappa}(h)$ for every $h \in H$; a sequence of ($\mathbb{B}$-measurable) estimators $\calT_n$ is called \textit{regular}  if the limit law of $\sqrt{n}(\calT_n-\kappa_n(h))$ under $P_{n,h}$ exists for every $h \in H$ and is independent of $h$. Additionally, a function $\ell: \mathbb{B} \to \R_{+}$ is called \textit{subconvex} if for every $c \in \R_+$, the level set $\ell^{-1}([0,c])$ is closed, convex, and symmetric.

\begin{proposition}[Asymptotic efficiency of empirical EOT map]
\label{prop: asymptotic efficiency}
Consider the above setting. Then the following hold.
\begin{enumerate}
\item[(i)] (Convolution) The sequences of parameters $\kappa_n(h)$ and estimators $\tilde{T}_n$ are regular. For every regular sequence of $\mathbb{B}$-measurable estimators $\calT_n$  based on $X_1^2,\dots,X_n^2$, the limit law of $\sqrt{n}(\calT_n-T^{\bm{\mu}})$ under $P_{n,0}$ equals the distribution of the sum  $\delta_{\mu^2}'(\GG_2^{\mu^2})+W$ for some $\mathbb{B}$-valued random variable $W$ independent of $\delta_{\mu^2}'(\GG_2^{\mu^2})$.
\item[(ii)] (Local asymptotic minimaxity) For every sequence of $\mathbb{B}$-measurable estimators $\calT_n$  based on $X_1^2,\dots,X_n^2$ and every subconvex function $\ell: \mathbb{B} \to \R_{+}$,
\[
\sup_{I \subset H: \text{finite}} \liminf_{n \to \infty}\sup_{h \in I}\E_{h}\Big [ \ell \Big(\sqrt{n}(\calT_n-\kappa_n(h) \big)\Big) \Big] \ge \E\Big[ \ell \big ( \delta_{\mu^2}'(\GG_2^{\mu^2}) \big ) \Big],
\]
where $\E_h$ denotes the expectation under $P_{n,h}$. 
\end{enumerate}
\end{proposition}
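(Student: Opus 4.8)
The plan is to recognize \cref{prop: asymptotic efficiency} as an instance of the abstract convolution and local asymptotic minimax theorems for Hadamard differentiable functionals of a distribution (Chapter 3.11 in \cite{van1996weak}; see also Chapter 25 in \cite{vanderVaart1998asymptotic}), applied to the map $\delta:\nu^2\mapsto T^{(\mu^1,\nu^2)}$. The ingredients are three: (a) asymptotic normality of the local experiments $(\mathsf{X}_n,\calA_n,P_{n,h}:h\in H)$, which is already recorded in the setup via Lemma 3.10.11 in \cite{van1996weak}; (b) Hadamard differentiability of $\delta$ at $\mu^2$ tangentially to $\overline{\calM_{\mu^2}}^{\ell^\infty(B^s)}$ with linear derivative $\delta_{\mu^2}'(\gamma^2)=-[\nabla\varphi_1^{\bm{\mu}}]'(0,\gamma^2)$, which follows from \cref{lem: brenier}, \cref{thm: H derivative potential}, and continuity of $f\mapsto\nabla f$ from $\calC^s(\calX)$ into $\calC^{s-1}(\calX;\R^d)$, exactly as in the bootstrap subsection; and (c) identification of the efficient limit together with the verification that the plug-in $\tilde T_n=\delta(\hat\mu_n^2)$ attains it. Measurability of $\tilde T_n$ as a $\BB$-valued random variable follows from continuity of $\delta$ (\cref{lem: EOT potential}(iii) and gradient continuity), as in the measurability remarks.

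First I would make the score operator explicit. Writing $\mu_{n,h}^2$ for the local alternative, one has $\sqrt{n}(\mu_{n,h}^2-\mu^2)\to A h$ in $\ell^\infty(B^s)$ along a \emph{nonrandom} sequence, where $A:H\to\ell^\infty(B^s)$ is the bounded linear map $(Ah)(f)=\int fh\,d\mu^2$; equivalently $Ah=h\,\mu^2$, a mean-zero signed measure supported in $\supp(\mu^2)$. Since $(1+th)\,\mu^2\in\frakP_{\mu^2}$ for all small $t>0$ when $h$ is bounded, $Ah=t^{-1}\big((1+th)\mu^2-\mu^2\big)\in\calM_{\mu^2}$, so $\operatorname{ran}(A)\subset\overline{\calM_{\mu^2}}^{\ell^\infty(B^s)}$. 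By \cref{lem: support}, the tight $\mu^2$-Brownian bridge $\GG_2^{\mu^2}$ in $\ell^\infty(B^s)$ also concentrates on a vector subspace of the same tangent cone (recall $B^s$ is $\mu^2$-Donsker since $s>d/2$). Consequently $\delta_{\mu^2}'$, being linear and continuous on the tangent cone (it is the restriction of a bounded linear operator), is defined and linear on $\operatorname{ran}(A)$ and almost surely on $\GG_2^{\mu^2}$, and $\delta_{\mu^2}'(\GG_2^{\mu^2})$ is a zero-mean Gaussian random element of $\BB$.

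Next I would establish the two regularity claims. For the parameter sequence, $\kappa_n(h)-\kappa_n(0)=\delta(\mu_{n,h}^2)-\delta(\mu^2)$, and since $\sqrt{n}(\mu_{n,h}^2-\mu^2)\to Ah$ in $\ell^\infty(B^s)$ along a nonrandom sequence, Hadamard differentiability gives $\sqrt{n}(\kappa_n(h)-\kappa_n(0))\to\delta_{\mu^2}'(Ah)$; hence $\kappa_n$ is regular with $\dot\kappa=\delta_{\mu^2}'\circ A$, a continuous linear operator. For $\tilde T_n$, under $P_{n,h}$ Le Cam's third lemma (applied to the jointly asymptotically Gaussian pair of $\sqrt n(\hat\mu_n^2-\mu^2)$ and the local log-likelihood) yields $\sqrt{n}(\hat\mu_n^2-\mu^2)\rightsquigarrow\GG_2^{\mu^2}+Ah$ in $\ell^\infty(B^s)$; applying the functional delta method (\cref{lem: functional delta method}) along this contiguous sequence and using linearity of $\delta_{\mu^2}'$ gives $\sqrt{n}(\tilde T_n-T^{\bm{\mu}})\rightsquigarrow\delta_{\mu^2}'(\GG_2^{\mu^2})+\delta_{\mu^2}'(Ah)$ under $P_{n,h}$. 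Combining with $\sqrt{n}(\kappa_n(h)-T^{\bm{\mu}})\to\delta_{\mu^2}'(Ah)$ shows $\sqrt{n}(\tilde T_n-\kappa_n(h))\rightsquigarrow\delta_{\mu^2}'(\GG_2^{\mu^2})$ independently of $h$, proving regularity of $\tilde T_n$ and that it attains the limit $\delta_{\mu^2}'(\GG_2^{\mu^2})$.

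Finally I would invoke the abstract results. For (i), the convolution theorem for Hadamard differentiable functionals (Theorem 3.11.2 in \cite{van1996weak}) shows that any regular $\BB$-valued estimator sequence $\calT_n$ has limit law under $P_{n,0}$ of the form $\delta_{\mu^2}'(\GG_2^{\mu^2})+W$ with $W$ independent of $\delta_{\mu^2}'(\GG_2^{\mu^2})$; the case $W\equiv 0$ is realized by $\tilde T_n$ as just shown. For (ii), the companion local asymptotic minimax theorem in Chapter 3.11 of \cite{van1996weak} gives, for every subconvex $\ell:\BB\to\R_+$, the stated lower bound $\E[\ell(\delta_{\mu^2}'(\GG_2^{\mu^2}))]$ on the local asymptotic minimax risk. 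The main obstacle is the bookkeeping around the tangent structure: one must check that $\operatorname{ran}(A)$ and the support of $\GG_2^{\mu^2}$ both lie inside the tangent cone $\overline{\calM_{\mu^2}}^{\ell^\infty(B^s)}$, that $\delta_{\mu^2}'$ is genuinely linear and continuous there so that the abstract theorems (phrased for linear derivatives on a subspace) apply, and that the delta method remains valid along the contiguous alternatives (Le Cam's third lemma). These are all standard given \cref{thm: H derivative potential}, \cref{lem: support}, and the asymptotic normality of the experiments, but they carry the real content of the argument.
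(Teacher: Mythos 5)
Your derivation of regularity of $\kappa_n$ and of $\tilde T_n$ via Le Cam's third lemma and the second claim of the functional delta method is correct and matches the paper's route, and the bookkeeping about the tangent structure (range of the score operator and support of $\GG_2^{\mu^2}$ both lying inside $\overline{\calM_{\mu^2}}^{\ell^\infty(B^s)}$) is also handled properly. The final step, however, has a genuine gap. You invoke Theorem~3.11.2 in \cite{van1996weak} as if it directly produced the decomposition of the limit law of $\calT_n$ into $\delta_{\mu^2}'(\GG_2^{\mu^2})+W$, but that theorem yields a decomposition $G+W$ where $G$ is a \emph{specific} zero-mean Gaussian characterized by
\[
\Var(b^*G)=\sup_{h\in H:\,\|h\|_{L^2(\mu^2)}=1}\big(b^*\dot\kappa(h)\big)^2=\sup_{h:\,\|h\|_{L^2(\mu^2)}=1}\big(b^*\delta_{\mu^2}'(h\,d\mu^2)\big)^2,\qquad b^*\in\BB^*.
\]
To restate this in the form of \cref{prop: asymptotic efficiency}(i), one must verify the distributional identity $\delta_{\mu^2}'(\GG_2^{\mu^2})\stackrel{d}{=}G$, i.e.\ that the supremum above coincides with $\Var\big(b^*\delta_{\mu^2}'(\GG_2^{\mu^2})\big)$ for every $b^*\in\BB^*$.

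Your remark that ``the case $W\equiv 0$ is realized by $\tilde T_n$'' does not close this gap. Applying the convolution theorem to the regular sequence $\tilde T_n$ shows only that $\delta_{\mu^2}'(\GG_2^{\mu^2})\stackrel{d}{=}G+W_0$ for \emph{some} $W_0$ independent of $G$, which gives the one-sided inequality $\Var(b^*\delta_{\mu^2}'(\GG_2^{\mu^2}))\ge\Var(b^*G)$ but does not force $W_0=0$. The missing reverse inequality is the content the paper supplies by adapting Proposition~2 in \cite{goldfeld2022statistical}: identify $b^*\circ\delta_{\mu^2}'$ with a continuous linear functional on the closed subspace $\overline{\mathfrak{M}_{\mu^2}}^{\ell^\infty(B^s)}\supset\supp(\GG_2^{\mu^2})$, where $\mathfrak{M}_{\mu^2}=\{h\,d\mu^2:h\in H\}$, and compute its variance against the $\mu^2$-Brownian bridge using \cref{lem: support}, showing it equals the supremum. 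Without this covariance identity the convolution statement is not in the announced form, nor is the efficiency of $\tilde T_n$ established. Once (i) is completed, part~(ii) follows from Theorem~3.11.5 in \cite{van1996weak} exactly as you state.
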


The regularity of the parameter sequence $\kappa_n(h)$ follows from Hadamard differentiability of the map $\nu^2 \mapsto T^{(\mu^1,\nu^2)}$. The regularity of the empirical EOT map follows from the Hadamard differentiability result and Le Cam's third lemma. The second claim of \cref{prop: asymptotic efficiency} (i) follows from applying Theorem 3.11.2 in \cite{van1996weak}. To this effect, we need to verify that the law of the Gaussian variable appearing in the cited theorem agrees with that of $\delta_{\mu^2}'(\GG_2^{\mu^2})$ in our setting, which follows by adapting the argument in the proof of Proposition 2 in \cite{goldfeld2022statistical}. Given (i), \cref{prop: asymptotic efficiency} (ii) directly follows from Theorem 3.11.5 in \cite{van1996weak}.

\cref{prop: asymptotic efficiency} (i) shows that the limit law $\delta_{\mu^2}'(\GG_2^{\mu^2})$ of the empirical EOT map is the most concentrated around zero among all regular estimators for $T$. Furthermore, by regularity of the empirical EOT map, for every bounded continuous function $\ell: \BB \to \R_{+}$ and every finite set $I \subset H$, 
\[
\lim_{n \to \infty}\sup_{h \in I}\E_{h}\Big [ \ell \Big(\sqrt{n}(\tilde{T}_n-\kappa_n(h) \big)\Big) \Big] = \E\Big[ \ell \big ( \delta_{\mu^2}'(\GG_2^{\mu^2}) \big ) \Big],
\]
showing that $\tilde{T}_n$ is asymptotically minimax in a local sense.

\section{Proofs for Sections \ref{sec: main results} and \ref{sec: EOT potential}}
\label{sec: proofs}

\subsection{Proof of \cref{thm: H derivative potential}}
\label{sec: proof of H derivative potential}
As noted in \cref{sec: H derivative of potential}, we will first establish Hadamard differentiability of the map $\bm{\nu} \mapsto \bm{\varphi}^{\bm{\nu}}$ in 
$\calC(\calX) \times \calC(\calX)$
.
\begin{lemma}
\label{lem: H derivative potential}
Consider the setting of \cref{thm: H derivative potential}.  Then, the map $\bm{\nu} \mapsto \bm{\varphi}^{\bm{\nu}}, 
\calP_{\mu^1} \times \calP_{\mu^2}
\subset 
\ell^\infty (B^s) \times \ell^\infty (B^s)
\to  \calC(\calX)\times\calC(\calX)$ is Hadamard differentiable at $\bm{\mu}$ tangentially to 
$\overline{\calM_{\mu^1}}^{\ell^\infty(B^s)} \times \overline{\calM_{\mu^2}}^{\ell^\infty(B^s)}$.
\end{lemma}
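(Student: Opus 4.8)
\textbf{Proof proposal for \cref{lem: H derivative potential}.}

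The plan is to realize $\bm{\varphi}^{\bm{\nu}}$ as the solution of the Schr\"odinger system \eqref{eq: FOC} and to apply an implicit function theorem tailored to Hadamard differentiability. First I would recast \eqref{eq: FOC} (after the normalization $\varphi_1(x_1^\circ) = \varphi_2(x_2^\circ)$) as $\Psi(\bm{\nu}, \bm{\varphi}) = 0$, where $\Psi$ maps a neighborhood of $(\bm{\mu}, \bm{\varphi}^{\bm{\mu}})$ in $\big(\ell^\infty(B^s)\times\ell^\infty(B^s)\big) \times \big(\calC(\supp(\mu^1))\times\calC(\supp(\mu^2))\big)$ into $\calC(\supp(\mu^2))\times\calC(\supp(\mu^1))\times\R$ by
\[
\Psi(\bm{\nu},\bm{\varphi}) = \left( \int e^{\frac{\varphi_1 \oplus \varphi_2 - c}{\varepsilon}}\,d\nu^1 - 1,\ \int e^{\frac{\varphi_1 \oplus \varphi_2 - c}{\varepsilon}}\,d\nu^2 - 1,\ \varphi_1(x_1^\circ) - \varphi_2(x_2^\circ) \right).
\]
Here I use that $\nu^i \in \calP_{\mu^i}$ acts on $\calC(\supp(\mu^i))$, and that for $\varphi_i$ continuous on $\supp(\mu^i)$ the integrand, viewed as a function of the free variable, extends smoothly to all of $\calX$ (as noted before \cref{thm: H derivative potential}), so the outputs indeed lie in $\calC(\supp(\mu^j))$. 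The linearity of $\Psi$ in $\bm{\nu}$ makes the joint differentiability in $\bm{\nu}$ trivial, so the crux is the partial derivative in $\bm{\varphi}$.

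Next I would compute the partial Fr\'echet derivative $D_{\bm{\varphi}}\Psi(\bm{\mu},\bm{\varphi}^{\bm{\mu}})$ and show it is a Banach-space isomorphism. Differentiating in direction $\bm{h} = (h_1,h_2)$, using the optimality relations $\int e^{(\varphi_1^{\bm{\mu}}\oplus\varphi_2^{\bm{\mu}}-c)/\varepsilon}\,d\mu^j = 1$, gives a linear operator of the form $\bm{h} \mapsto \big(\varepsilon^{-1}(h_1 + \mathcal{K}_{21} h_2),\ \varepsilon^{-1}(\mathcal{K}_{12} h_1 + h_2),\ h_1(x_1^\circ)-h_2(x_2^\circ)\big)$ where $\mathcal{K}_{12}, \mathcal{K}_{21}$ are integral operators with strictly positive continuous kernels (densities of the EOT plan's conditionals); these are compact on the relevant $\calC$-spaces. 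Invertibility then follows from the Fredholm alternative: I must check the kernel is trivial, i.e. the only solution of $h_1 = -\mathcal{K}_{21}h_2$, $h_2 = -\mathcal{K}_{12}h_1$, $h_1(x_1^\circ)=h_2(x_2^\circ)$ is $\bm{h}=0$. This is the uniqueness statement for the linearized Schr\"odinger system; it can be extracted from the strict convexity of the dual functional (the Hessian of the dual objective is positive semidefinite with null space exactly the constants $(a,-a)$, which the normalization kills), paralleling the a.e.-uniqueness of EOT potentials in \cref{lem: EOT potential}(i). Continuous invertibility on $\calC$-spaces follows once the kernel is trivial and the operator is a compact perturbation of an isomorphism. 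I would then invoke a Hadamard-differentiable implicit function theorem — e.g., the version in \cite{romisch2004} or a direct argument combining the classical IFT structure with the fact that $\bm{\nu}\mapsto\Psi(\bm{\nu},\cdot)$ has Hadamard-differentiable (indeed linear) dependence on $\bm{\nu}$ tangentially to $\overline{\calM_{\mu^1}}^{\ell^\infty(B^s)}\times\overline{\calM_{\mu^2}}^{\ell^\infty(B^s)}$ — to conclude Hadamard differentiability of $\bm{\nu}\mapsto\bm{\varphi}^{\bm{\nu}}$ as a map into $\calC(\supp(\mu^1))\times\calC(\supp(\mu^2))$, with derivative $[\bm{\varphi}^{\bm{\mu}}]'(\bm{\gamma}) = -\big(D_{\bm{\varphi}}\Psi\big)^{-1} D_{\bm{\nu}}\Psi(\bm{\gamma})$.

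Finally I would upgrade from $\calC(\supp(\mu^1))\times\calC(\supp(\mu^2))$ to $\calC(\calX)\times\calC(\calX)$: given $\bm{\varphi}^{\bm{\nu}}$ on the supports, the right-hand identities in \eqref{eq: FOC}, namely $e^{-\varphi_i^{\bm{\nu}}(x_i)/\varepsilon} = \int e^{(\varphi_j^{\bm{\nu}}(x_j) - c(x_i,x_j))/\varepsilon}\,d\nu^j(x_j)$ for $x_i \in \calX$, $i\ne j$, provide the canonical $\calC(\calX)$-extension as an explicit smooth functional of $\varphi_j^{\bm{\nu}}|_{\supp(\mu^j)}$ and $\nu^j$; this functional is Fr\'echet (hence Hadamard) differentiable, so by the chain rule the extended map is Hadamard differentiable into $\calC(\calX)\times\calC(\calX)$ tangentially to the stated cone. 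I expect the main obstacle to be verifying invertibility of $D_{\bm{\varphi}}\Psi$ on the $\calC$-spaces — specifically pinning down the correct function spaces so that the integral operators are compact and the Fredholm argument applies, and showing the linearized system has only the trivial solution once the normalization is imposed; the passage to $\calC(\calX)$ and the $\bm{\nu}$-dependence are comparatively routine.
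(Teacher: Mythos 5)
Your high-level plan --- recast \eqref{eq: FOC} as $\Psi(\bm\nu,\bm\varphi)=0$, linearize in $\bm\varphi$, show the linearization is injective, apply an implicit-function-type differentiability result, and then extend from the supports to $\calX$ --- is the same decomposition the paper uses, and your Fredholm argument for the linearization is a valid alternative to the paper's. The paper invokes the coercivity bound $\inf_{\|\bm{h}\|=1,\,\int h_1\,d\mu^1=0}\|\dot\calT_{\bm{\mu}}(\bm{h})\|>0$ from Proposition~3.1 of \cite{carlier2020differential}, whereas you propose the Fredholm alternative plus triviality of the kernel modulo constants; the latter does work, since the pairing $\langle L\bm{h},\bm{h}\rangle_{L^2(\mu^1)\times L^2(\mu^2)}=\varepsilon^{-1}\int(h_1\oplus h_2)^2\,d\pi^\star$ shows $\ker L=\{(a,-a):a\in\R\}$ and the integral operators $\mathcal{K}_{12},\mathcal{K}_{21}$ are compact on the $\calC$-spaces. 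That trade is fine and buys you a more self-contained argument at the cost of some extra verification. The extension step from $\calC(\supp(\mu^i))$ to $\calC(\calX)$ is also the paper's \eqref{eq:PotentialRepresentation} argument.

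Where your proposal has genuine gaps is in the two steps you call ``comparatively routine.'' First, your augmented operator $\tilde{L}:\bm{h}\mapsto\big(\varepsilon^{-1}(h_1+\mathcal{K}_{21}h_2),\,\varepsilon^{-1}(\mathcal{K}_{12}h_1+h_2),\,h_1(x_1^\circ)-h_2(x_2^\circ)\big)$ is injective once the normalization equation is appended, but it is \emph{not} surjective onto $\calC(\supp(\mu^2))\times\calC(\supp(\mu^1))\times\R$: by the Fredholm alternative the first two components have range of codimension one (dually to the one-dimensional kernel before normalization), and tacking on $\R$ does not repair this. So no Banach-space isomorphism onto the codomain you wrote exists, and a naive implicit function theorem cannot be invoked --- and \cite{romisch2004}, which you cite, supplies the functional delta method, not an IFT of the needed form. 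The correct tool is the $Z$-functional differentiability lemma (\cref{lem: Z-functional}, i.e.\ Lemma~3.9.34 of \cite{van1996weak}), which requires only injectivity of $\dot\Psi_{\bm{\mu}}$ and continuity of its inverse on the range, not surjectivity. Second, even with that lemma in hand, you must separately verify continuity at $0$ of the inverse of the \emph{nonlinear} map $\bm\varphi\mapsto\Psi(\bm\mu,\bm\varphi)$; this is a hypothesis of \cref{lem: Z-functional}, not a consequence of linearized invertibility. The paper secures it by restricting the parameter set to $\Theta^s$, which is precompact in $\DD$ by the uniform $\calC^s$-bound of \cref{lem: EOT potential}(ii) together with Arzel\`{a}--Ascoli, and then using compactness plus uniqueness of the zero. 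Your proposal never constrains the parameter set, so this continuity --- and hence the whole application of the implicit-function machinery --- is left unjustified.
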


The proof of \cref{lem: H derivative potential} proceeds as follows. Fix $\bm{\mu}=(\mu^1,\mu^2)\in
\calP(\calX)\times \calP(\calX)
$ with $S_i:=\supp(\mu^i)$ for $i=1,2$. For notational convenience, set $S := S_1 \times S_2$ and $\DD:= \calC(S_1) \times \calC(S_2)$. We equip $\DD$ with a product norm, $\| (\varphi_1,\varphi_2)\|_{\DD} = \| \varphi_1 \|_{\infty,S_1} \vee \| \varphi_2 \|_{\infty,S_2}$.
Choose an arbitrary fixed reference point $(x_1^\circ,x_2^\circ) \in S$. 
With \cref{lem: EOT potential} in mind, consider
\[
\begin{split}
\Theta^s=\Big \{(\varphi_1\vert_{S_1},\varphi_2\vert_{S_2})  &:(\varphi_1,\varphi_2) \in \calC^s(\calX) \times \calC^s(\calX), \\
& \| \varphi_1 \|_{\calC^s(\calX)} \vee \| \varphi_2 \|_{\calC^s(
\calX
)} \le R_s, 
\varphi_1(x_1^\circ) = \varphi_2(x_2^\circ) \Big\} \subset \DD.
\end{split}
\]
Define the map $\Psi: 
\big(\calP_{\mu^1} \times \calP_{\mu^2}\big)
\times 
\Theta^s
\to 
\DD
$ by 
\begin{equation}
\Psi (
\bm\nu
,\bm{\varphi}) = \left ( \int e^{\frac{\varphi_1 \oplus \varphi_2 - c}{\varepsilon}} d
\nu^2
-1, \int e^{\frac{\varphi_1 \oplus \varphi_2 - c}{\varepsilon}} d
\nu^1
-1 \right )
\label{eq: psi function}
\end{equation}
for $
\bm\nu=(\nu^1,\nu^2) \in \calP_{\mu^1} \times \calP_{\mu^2}
$ 
and $\bm{\varphi} = (\varphi_1,\varphi_2) \in 
\Theta^s
$.  Given 
$\bm\nu\in\calP_{\mu^1} \times \calP_{\mu^2}$, the corresponding EOT potentials $\bm{\varphi}^{
\bm\nu}$ are fully characterized by its restriction to $S$, 
$\bm{\varphi}^{\bm{\nu}}\vert_{S}:=
({\varphi}^{\bm{\nu}}_1\vert_{S_1},{\varphi}^{\bm{\nu}}_2\vert_{S_2}) \in \Theta^s$,
being the unique solution to $\Psi (\bm{\nu},\bm{\varphi}) \equiv 0$ on $S$. We will decompose the map 
$\bm\nu\mapsto \bm\varphi^{\bm\nu}$
into the composition of the maps 
$
\bm\nu
\mapsto \Psi(
\bm\nu
,\cdot)$, $\Psi (
\bm\nu
,\cdot) \mapsto \bm{\varphi}^{
\bm\nu
}\vert_{S}$, and $\bm{\varphi}^{\bm\nu
}\vert_{S}\mapsto \bm{\varphi}^{\bm{\nu}}$, 
and separately show their Hadamard differentiability.

To formulate Hadamard differentiability of the map $\Psi (
\bm\nu
,\cdot) \mapsto 
\bm{\varphi}^{\bm{\nu}}\vert_{S}
$, we consider the following setting (cf. Section 3.9.4.7 in \cite{van1996weak}; see also \cref{sec: Z-functional}). Let 
$\ell^\infty (\Theta^s,\DD)$
be the Banach space of all uniformly norm-bounded maps $z: \Theta^s \to \DD$ equipped with the norm $\| z \|_{\ell^\infty (\Theta^s,\DD)} = \sup_{\bm{\varphi} \in \Theta^s} \| z (\bm{\varphi}) \|_{\DD}$. Also, let $Z(\Theta^s,\DD)$ be the subset of $\ell^\infty (\Theta^s,\DD)$ consisting of all maps with at least one zero. Let $\phi: Z(\Theta^s,\DD) \to \Theta^s$ be a map that assigns each $z \in Z(\Theta^s,\DD)$ to one of its zeros $\phi(z)$, i.e., $z (\phi(z)) = 0$.

For a given $\bm\nu
\in 
\calP_{\mu^1} \times \calP_{\mu^2}
$, $\Psi (
\bm\nu
,\cdot): \bm{\varphi} \mapsto \Psi (
\bm\nu
,\bm{\varphi})$ is a uniformly norm-bounded  map from $\Theta^s$ into $\DD$ with a zero at 
$\bm\varphi^{\bm\nu}\vert_{S}$, guaranteeing that $\Psi (
\bm\nu
,\cdot) \in Z(\Theta^s,\DD)$. Uniqueness of the EOT potentials then yields that
\[
\bm{\varphi}^{
\bm\nu
}\vert_{S} = \phi \circ \Psi (
\bm\nu
,\cdot).
\]
Furthermore, in light of \eqref{eq: FOC}, we have the following representation of the EOT potentials $\bm\varphi^{\bm\nu}$ for $\bm\nu\in\calP_{\mu^1} \times \calP_{\mu^2}$,
\begin{equation}
\label{eq:PotentialRepresentation}
    \bm\varphi^{\bm\nu}=-\varepsilon\Bigg(\log\left(\int e^{\frac{\varphi_2^{\bm\nu}\vert_{S_2}(x_2)-c(\cdot,x_2)}{\varepsilon}}d\nu^2(x_2)\right),
    \log\left(\int e^{\frac{\varphi_1^{\bm\nu}\vert_{S_1}(x_1)-c(x_1,\cdot)}{\varepsilon}}d\nu^1(x_1)\right)\Bigg),
\end{equation}
which depends on the potentials only through their restrictions to $S_1$ and $S_2$.

We will establish Hadamard differentiability of $\phi$ at $\Psi(\bm{\mu},\cdot)$  by invoking Lemma 3.9.34 in \cite{van1996weak} (see also \cref{lem: Z-functional}).  Hadamard differentiability of the map $
\bm\nu
\mapsto \Psi (\bm\nu,\cdot)$ is straightforward. The chain rule for Hadamard differentiable maps (cf. Lemma 3.9.3 in \cite{van1996weak}) then yields Hadamard differentiability of the map $\bm{\nu} \mapsto \bm{\varphi}^{\bm{\nu}}\vert_{S}$ in $\DD$. 
Hadamard differentiability of $\bm\nu \mapsto\bm\varphi^{\bm\nu}$ in $\calC(\calX) \times \calC(\calX)$ follows readily.

We shall first verify Hadamard differentiability of the map 
$\bm{\nu} \mapsto \Psi (\bm{\nu},\cdot), \calP_{\mu^1} \times \calP_{\mu^2} \subset \ell^\infty (B^s) \times \ell^\infty (B^s) \to Z(\Theta^s,\DD) \subset \ell^\infty (\Theta^s,\DD)$.
For notational convenience, define $\psi: 
\calP_{\mu^1} \times \calP_{\mu^2} 
\to Z(\Theta^s,\DD)$ by $\psi(
\bm\nu
) =  \Psi (
\bm\nu
,\cdot)$. 

 \begin{lemma}
\label{lem: Hadamard differentiability of objective function}
The map $\psi: 
\calP_{\mu^1} \times \calP_{\mu^2}
\subset 
\ell^\infty (B^s) \times \ell^\infty (B^s)
\to Z(\Theta^s,\DD) \subset \ell^\infty(\Theta^s,\DD)$ is Hadamard differentiable at $\bm{\mu}$ tangentially to $\overline{\calM_{\mu^1}}^{\ell^\infty(
B^s
)} \times \overline{\calM_{\mu^2}}^{\ell^\infty(
B^s
)}$ with derivative $\psi': \overline{\calM_{\mu^1}}^{\ell^\infty(
B^s
)} \times \overline{\calM_{\mu^2}}^{\ell^\infty(
B^s
)} \to \ell^\infty(\Theta^s,\DD)$ given by
\begin{equation}
\begin{split}
&\psi'(\bm{\gamma})(\bm{\varphi}) =\left ( \int e^{\frac{\varphi_1 \oplus \varphi_2 - c}{\varepsilon}} \, d\gamma^{2}, \int e^{\frac{\varphi_1 \oplus \varphi_2 - c}{\varepsilon}} \, d\gamma^{1} \right ), \\ 
&\bm{\varphi} = (\varphi_1,\varphi_2) \in \Theta^s, \bm{\gamma} = (\gamma^1,\gamma^2) \in\overline{\calM_{\mu^1}}^{\ell^{\infty}(B^s)}\times \overline{\calM_{\mu^2}}^{\ell^{\infty}(B^s)}. 
\end{split}
\label{eq: psi derivative}
\end{equation}
\end{lemma}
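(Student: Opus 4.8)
The plan is to exploit the fact that $\psi$ is \emph{affine} in its argument. For each fixed $\bm{\varphi}=(\varphi_1,\varphi_2)\in\Theta^s$ the constant $-1$ in \eqref{eq: psi function} does not depend on $\bm\nu$, and $\bm\nu\mapsto\big(\int e^{(\varphi_1\oplus\varphi_2-c)/\varepsilon}\,d\nu^2,\int e^{(\varphi_1\oplus\varphi_2-c)/\varepsilon}\,d\nu^1\big)$ is linear in $\bm\nu$. Consequently, for any $t_n\downarrow 0$ and any $\bm\gamma_n=(\gamma^1_n,\gamma^2_n)\to\bm\gamma=(\gamma^1,\gamma^2)$ in $\ell^\infty(B^s)\times\ell^\infty(B^s)$ with $\bm\mu+t_n\bm\gamma_n\in\calP_{\mu^1}\times\calP_{\mu^2}$ (which forces $\gamma^i_n\in\calM_{\mu^i}$, as $\tau$ is injective), the difference quotient $t_n^{-1}\big(\psi(\bm\mu+t_n\bm\gamma_n)-\psi(\bm\mu)\big)$, evaluated at $\bm\varphi$, equals $\big(\int e^{(\varphi_1\oplus\varphi_2-c)/\varepsilon}\,d\gamma^2_n,\int e^{(\varphi_1\oplus\varphi_2-c)/\varepsilon}\,d\gamma^1_n\big)$ \emph{exactly}, with no remainder term. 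After recording that $\psi$ does land in $Z(\Theta^s,\DD)$ (by \cref{lem: EOT potential}(i), $\bm\varphi^{\bm\nu}|_S\in\Theta^s$ is a zero of $\psi(\bm\nu)$), the lemma therefore reduces to: (a) $\psi'$ in \eqref{eq: psi derivative} is a well-defined, continuous, linear map into $\ell^\infty(\Theta^s,\DD)$; and (b) the above difference quotient converges to $\psi'(\bm\gamma)$ in $\ell^\infty(\Theta^s,\DD)$.

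Both (a) and (b) rest on a single uniform estimate: for every $y\in\calX$ and $(\varphi_1,\varphi_2)\in\Theta^s$, the function $x\mapsto e^{(\varphi_1(y)+\varphi_2(x)-c(y,x))/\varepsilon}$, a priori defined on $\supp(\mu^2)$, admits a $\calC^s(\calX)$-extension of norm at most some $C_s=C_s(c,\varepsilon,d,\calX,s)$, and symmetrically with the two coordinates interchanged. I would prove this by choosing an extension $\bar\varphi_2\in\calC^s(\calX)$ of $\varphi_2$ with $\|\bar\varphi_2\|_{\calC^s(\calX)}\le R_s$ (available by the definition of $\Theta^s$, which encodes \cref{lem: EOT potential}(ii)); since $c$ is smooth and $\calX$ compact, $M:=\sup_{y\in\calX}\|c(y,\cdot)\|_{\calC^s(\calX)}<\infty$, so $h:=\varepsilon^{-1}\big(\varphi_1(y)+\bar\varphi_2-c(y,\cdot)\big)$ satisfies $\|h\|_{\calC^s(\calX)}\le\varepsilon^{-1}(2R_s+M)$, a bound independent of $y$ and $\bm\varphi$ (using $|\varphi_1(y)|\le R_s$). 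Because $\calC^s(\calX)$ is, up to an equivalent norm, a Banach algebra, $h\mapsto e^h$ sends norm-bounded subsets of $\calC^s(\calX)$ into norm-bounded subsets (equivalently, Fa\`{a} di Bruno bounds $\|D^k e^h\|_{\infty,\calX}$ by a polynomial in $\|h\|_{\calC^s(\calX)}$ times $e^{\|h\|_{\infty,\calX}}$), so $\|e^h\|_{\calC^s(\calX)}\le C_s$ and $e^h$ is the desired extension. By \cref{rem: tangent cone}, pairing this function against any $\gamma\in\overline{\calM_{\mu^i}}^{\ell^\infty(B^s)}$ is then unambiguous (independent of the chosen extension) and satisfies $\big|\int e^{(\varphi_1(y)+\varphi_2(\cdot)-c(y,\cdot))/\varepsilon}\,d\gamma\big|\le C_s\|\gamma\|_{\infty,B^s}$, since $\gamma$ extends to a functional on $\calC^s(\calX)$ of dual norm $\|\gamma\|_{\infty,B^s}$.

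Granting this estimate, (a) is immediate: $\psi'(\bm\gamma)(\bm\varphi)\in\DD$ (continuity in the free variable follows from dominated convergence), $\|\psi'(\bm\gamma)(\bm\varphi)\|_{\DD}\le C_s\big(\|\gamma^1\|_{\infty,B^s}\vee\|\gamma^2\|_{\infty,B^s}\big)$ uniformly over $\bm\varphi\in\Theta^s$, hence $\|\psi'(\bm\gamma)\|_{\ell^\infty(\Theta^s,\DD)}\le C_s\big(\|\gamma^1\|_{\infty,B^s}\vee\|\gamma^2\|_{\infty,B^s}\big)$; linearity of $\psi'$ is clear from linearity of the integral in the measure, and this bound yields continuity. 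For (b), applying the same estimate to $\gamma^i_n-\gamma^i$ (a bounded functional on $\calC^s(\calX)$, since $\gamma^i_n$ is a genuine signed measure and $\gamma^i\in\overline{\calM_{\mu^i}}^{\ell^\infty(B^s)}$) gives
\[
\sup_{\bm\varphi\in\Theta^s}\Big\|\frac{\psi(\bm\mu+t_n\bm\gamma_n)(\bm\varphi)-\psi(\bm\mu)(\bm\varphi)}{t_n}-\psi'(\bm\gamma)(\bm\varphi)\Big\|_{\DD}\le C_s\big(\|\gamma^1_n-\gamma^1\|_{\infty,B^s}\vee\|\gamma^2_n-\gamma^2\|_{\infty,B^s}\big),
\]
whose right-hand side tends to $0$ as $n\to\infty$. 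This is precisely Hadamard differentiability of $\psi$ at $\bm\mu$ tangentially to $\overline{\calM_{\mu^1}}^{\ell^\infty(B^s)}\times\overline{\calM_{\mu^2}}^{\ell^\infty(B^s)}$ with derivative $\psi'$.

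I expect the only genuinely nontrivial ingredient to be the uniform $\calC^s(\calX)$-bound on the exponentials $e^{(\varphi_1(y)+\varphi_2(\cdot)-c(y,\cdot))/\varepsilon}$, i.e., that composition with $\exp$ is bounded on bounded subsets of $\calC^s(\calX)$, uniformly over $y\in\calX$ and over the potentials (whose $\calC^s$-norms are controlled by \cref{lem: EOT potential}(ii)); everything else is bookkeeping around the linearity of $\psi$ and the embedding $\calP(\calX)\hookrightarrow\ell^\infty(B^s)$. A minor point requiring care is that the test functions above are only canonically defined on $\supp(\mu^i)$, so pairing them with tangent vectors in $\overline{\calM_{\mu^i}}^{\ell^\infty(B^s)}$ must be verified to be well-posed, which is exactly what \cref{rem: tangent cone} supplies.
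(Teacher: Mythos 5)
Your proof is correct and follows essentially the same route as the paper: both exploit the affine (linear-in-the-marginals) structure of $\psi$ so that the difference quotient equals $\psi'(\bm{\gamma}_t)$ exactly, and both reduce to a uniform $\calC^s(\calX)$-bound on the exponentials $e^{(\varphi_1\oplus\varphi_2-c)/\varepsilon}$ over $\Theta^s$ and $\calX$ (the paper's constant $R'$, your $C_s$) to obtain the Lipschitz estimate $\|\psi'(\bm{\gamma})\|_{\ell^\infty(\Theta^s,\DD)}\lesssim\|\gamma^1\|_{\infty,B^s}\vee\|\gamma^2\|_{\infty,B^s}$ and conclude by continuity. You additionally spell out the Banach-algebra/Fa\`{a} di Bruno justification of that uniform bound and the well-posedness of the pairing against elements of $\overline{\calM_{\mu^i}}^{\ell^\infty(B^s)}$, which the paper asserts with less detail.
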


As before, $\gamma^j$ acts on the $j$-th coordinate, i.e.,
\[
\int e^{\frac{\varphi_1 \oplus \varphi_2 - c}{\varepsilon}} \, d\gamma^{2} = \int e^{\frac{\varphi_1 (\cdot)+ \varphi_2(x_2) - c(\cdot,x_2)}{\varepsilon}} \, d\gamma^{2}(x_2).
\]
\begin{proof}
Let $\mathfrak{M}_i$ 
denote the space of finite signed Borel measures on 
$\calX$ supported in $S_i=\supp(\mu^i)$.
The definition \eqref{eq: psi derivative} makes sense for $\bm{\gamma} \in \mathfrak{M}_1 \times \mathfrak{M}_2$, and the extended map $\psi': \mathfrak{M}_1 \times \mathfrak{M}_2 \to \ell^\infty (\Theta^s,\DD)$ is linear. 
For every $\bm{\varphi}=(\varphi_1,\varphi_2)\in\Theta^s$, let $\bar{\bm\varphi}=(\bar\varphi_1,\bar\varphi_2)\in \calC^s(\calX) \times \calC^s(\calX)$ denote an arbitrary $\calC^s$-extension with $\| \bar\varphi_1 \|_{\calC^s(\calX)} \vee \| \bar\varphi_2 \|_{\calC^s(\calX)} \le R_s$. 
Observe that 
\begin{equation}
\begin{split}
R' = \sup_{(x_1,x_2) \in \calX \times \calX
} \sup_{(\varphi_1,\varphi_2) \in \Theta^s} &\left \| e^{\frac{\bar \varphi_1(x_1) + \bar \varphi_2(\cdot) - c(x_1,\cdot)}{\varepsilon}} \right \|_{\calC^s(\calX
)} \\
&\qquad \qquad \bigvee \left \| e^{\frac{\bar \varphi_1(\cdot) + \bar \varphi_2(x_2) - c(\cdot,x_2)}{\varepsilon}} \right \|_{\calC^s(
\calX
)} < \infty.
\end{split}
\label{eq: R prime}
\end{equation}
Since $\psi'(\bm{\gamma})(\bm\varphi)=\psi'(\bm{\gamma})(\bar{\bm\varphi})|_{S}$ for
every $\bm{\gamma}  \in \mathfrak{M}_1 \times \mathfrak{M}_2$, we have
\begin{equation}
\| \psi'(\bm{\gamma}) \|_{\ell^\infty (\Theta^s,\DD)}\le R'\big ( \| \gamma^1 \|_{\infty,
B^s
} \vee \| \gamma^2 \|_{\infty,
B^s
}  \big).
\label{eq: lipschitz}
\end{equation}
Hence, $\psi'$  extends uniquely to a continuous linear operator from $\overline{\mathfrak{M}_1}^{\ell^\infty(
B^s
)} \times \overline{\mathfrak{M}_2}^{\ell^\infty(
B^s
)}$ into $\ell^\infty (\Theta^s,\DD)$. 

Second, for every $\bm{\gamma} \in \overline{\calM_{\mu^1}}^{\ell^\infty(
B^s
)} \times \overline{\calM_{\mu^2}}^{\ell^\infty(
B^s
)}$, pick a sequence of pairs of signed Borel measures $(\bm{\gamma}_t)_{t > 0}$ with total mass zero such that $\bm{\mu}+t\bm{\gamma}_t \in 
\calP_{\mu^1} \times \calP_{\mu^2}
$ for sufficiently small $t$ and $\bm{\gamma}_t \to \bm{\gamma}$ in $\ell^\infty (
B^s
) \times \ell^\infty(
B^s
)$ as $t \downarrow 0$. Then, as $t \downarrow 0$,
\[
\frac{\psi(\bm{\mu}+t\bm{\gamma}_t) - \psi(\bm{\mu})}{t} = \psi'(\bm{\gamma}_t) \to \psi'(\bm{\gamma}) \quad \text{in} \ \ell^\infty (\Theta^s,\DD). 
\]
This completes the proof. 
\end{proof}

Next, we shall establish Hadamard differentiability of the map $\phi: Z(\Theta^s,\DD) \subset \ell^\infty(\Theta^s,\DD) \to \DD$  at $\Psi_{\bm{\mu}} := \Psi (\bm{\mu},\cdot)$.
To this end, we apply Lemma 3.9.34 in \cite{van1996weak}. The following lemma verifies the required conditions to apply the lemma. The proof, which we defer to \cref{sec: auxiliary proofs}, relies on the results from \cite{carlier2020differential}.
\begin{lemma}
\label{lem: Psi functional}
The following hold.
\begin{enumerate}
    \item[(i)] The map $\Psi_{\bm{\mu}}: \Theta^s \ni  \bm{\varphi} \mapsto \Psi (\bm{\mu},\bm{\varphi}) \in \DD$ is injective and its inverse (defined on $\Psi_{\bm{\mu}}(\Theta^s)$) is continuous at $0$.
    \item[(ii)] The map $\Psi_{\bm{\mu}}$ is Fr\'{e}chet differentiable at $\bm{\varphi} = \bm{\varphi}^{\bm{\mu}}\vert_{S}$ with derivative $\dot{\Psi}_{\bm{\mu}}: \lin (\Theta^s) \to \DD$ given by
\[
\dot{\Psi}_{\bm{\mu}} (\bm{h})= \varepsilon^{-1}\left ( \int e^{\frac{\varphi_1^{\bm{\mu}}\oplus \varphi_2^{\bm{\mu}} - c}{\varepsilon}} (h_1 \oplus h_2) \, d\mu^2,\int e^{\frac{\varphi_1^{\bm{\mu}}\oplus \varphi_2^{\bm{\mu}} - c}{\varepsilon}} (h_1 \oplus h_2) \, d\mu^1  \right )
\]
for $\bm{h} = (h_1,h_2) \in \lin (\Theta^s)$, where $\lin (\Theta^s)$ is the linear hull of $\Theta^s$.
Furthermore, $\dot{\Psi}_{\bm{\mu}}: (\lin (\Theta^s), \| \cdot \|_{\DD}) \to \DD$ is injective and its inverse is continuous. 
\end{enumerate} 
\end{lemma}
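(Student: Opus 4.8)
The plan is to prove the two parts by a maximum-principle (Sinkhorn/Birkhoff--Hopf) analysis of the kernel $P(x_1,x_2) := e^{(\varphi_1^{\bm{\mu}}(x_1)+\varphi_2^{\bm{\mu}}(x_2)-c(x_1,x_2))/\varepsilon}$. Since $\varphi_i^{\bm{\mu}}\in\calC^s(\calX)$ and $c$ is smooth, $P$ is bounded above and below by positive constants on $\calX\times\calX$, and by the Schr\"odinger system \eqref{eq: FOC} it satisfies $\int P(x_1,\cdot)\,d\mu^2\equiv 1$ on $S_1$ and $\int P(\cdot,x_2)\,d\mu^1\equiv 1$ on $S_2$. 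Two kernel operators will recur: $(\calK_1 h)(x_2)=\int P(x_1,x_2)h(x_1)\,d\mu^1(x_1)$ and $(\calK_2 h)(x_1)=\int P(x_1,x_2)h(x_2)\,d\mu^2(x_2)$; both are Markov (positivity preserving, $\calK_i 1\equiv 1$), and because $\varphi_i^{\bm{\mu}}$ and $c$ are smooth on $\calX$, $\calK_i$ regularizes: for any input in $\calC(S_j)$, $\calK_i h$ is the restriction to $S_i$ of a $\calC^s(\calX)$-function ($i\ne j$).

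For (i), injectivity: if $\Psi_{\bm{\mu}}(\bm{\varphi})=\Psi_{\bm{\mu}}(\tilde{\bm{\varphi}})$, write $p_i=e^{\varphi_i/\varepsilon}$, $\tilde p_i=e^{\tilde\varphi_i/\varepsilon}$, $r_i=p_i/\tilde p_i$. Equating the first components yields $1/r_1(x_1)=\int r_2\,d\nu_{x_1}$ for every $x_1\in S_1$, where $\nu_{x_1}$ is the probability measure on $S_2$ proportional to $e^{(\tilde\varphi_2(x_2)-c(x_1,x_2))/\varepsilon}\,d\mu^2(x_2)$, which has full support; symmetrically $1/r_2(x_2)=\int r_1\,d\eta_{x_2}$. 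The chain $\max_{S_1}r_1 = \big(\min_{x_1}\int r_2\,d\nu_{x_1}\big)^{-1}\le(\min_{S_2}r_2)^{-1} = \max_{x_2}\int r_1\,d\eta_{x_2}\le\max_{S_1}r_1$ must collapse to equalities; since the minimum over the compact $S_1$ is attained and the corresponding $\nu_{x_1}$ has full support, this forces $r_2$ (hence $r_1$) to be constant, and the normalization $\varphi_1(x_1^\circ)=\varphi_2(x_2^\circ)$ pins the constant to $1$. Continuity of $\Psi_{\bm{\mu}}^{-1}$ at $0$: as $s\ge 1$, the elements of $\Theta^s$ are uniformly bounded and uniformly Lipschitz, hence $\Theta^s$ is relatively compact in $\DD$ by Arzel\`a--Ascoli, while $\Psi_{\bm{\mu}}$ is continuous on all of $\calC(S_1)\times\calC(S_2)$; if $\Psi_{\bm{\mu}}(\bm{\varphi}^{(n)})\to 0$, every $\DD$-limit of a subsequence is a continuous solution of \eqref{eq: FOC} on $S$ obeying the normalization, hence equals $\bm{\varphi}^{\bm{\mu}}\vert_S$ by the uniqueness just proved, so the whole sequence converges to $\bm{\varphi}^{\bm{\mu}}\vert_S$.

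For (ii), Fr\'echet differentiability: extend $\Psi_{\bm{\mu}}$ to $\calC(S_1)\times\calC(S_2)$ by the same formula; expanding $e^{(h_1\oplus h_2)/\varepsilon}-1 = (h_1\oplus h_2)/\varepsilon + O(\|\bm{h}\|_{\DD}^{2})$ uniformly and using boundedness of $P$ gives $\Psi_{\bm{\mu}}(\bm{\varphi}^{\bm{\mu}}\vert_S+\bm{h})-\Psi_{\bm{\mu}}(\bm{\varphi}^{\bm{\mu}}\vert_S)=\dot{\Psi}_{\bm{\mu}}(\bm{h})+O(\|\bm{h}\|_{\DD}^{2})$ with $\dot{\Psi}_{\bm{\mu}}$ as in the statement, manifestly bounded into $\DD$. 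For injectivity with continuous inverse, use the Schr\"odinger identities to rewrite $\varepsilon\dot{\Psi}_{\bm{\mu}}(\bm{h}) = (h_1+\calK_2 h_2,\,h_2+\calK_1 h_1)$. A kernel element satisfies $h_1=\calK_2\calK_1 h_1$, $h_2=\calK_1\calK_2 h_2$; since $\calK_2\calK_1$ is Markov with density bounded below (by the square of the lower bound on $P$) against its full-support invariant measure $\mu^1$, a maximum principle forces $h_1,h_2$ constant, and the normalization then gives $\bm{h}=0$. The same lower bound makes $\calK_2\calK_1$ a strict contraction in the oscillation seminorm, so $I-\calK_2\calK_1$ is boundedly invertible on the $\mu^1$-mean-zero subspace of $\calC(S_1)$; given $\dot{\Psi}_{\bm{\mu}}(\bm{h})=(g_1,g_2)$, solving $(I-\calK_2\calK_1)h_1 = \varepsilon(g_1-\calK_2 g_2)$ for the mean-zero part of $h_1$, fixing the additive constant via $h_1(x_1^\circ)=h_2(x_2^\circ)$, and setting $h_2=\varepsilon g_2-\calK_1 h_1$ --- the $\calC^s$-smoothing of $\calK_i$ keeping $\bm{h}\in\lin(\Theta^s)$ --- yields $\|\bm{h}\|_{\DD}\le C\|\dot{\Psi}_{\bm{\mu}}(\bm{h})\|_{\DD}$. (Invertibility of the linearized Schr\"odinger operator with the required bound is also available from \cite{carlier2020differential}.)

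The main obstacle is the quantitative half of (ii): injectivity of $\dot{\Psi}_{\bm{\mu}}$ is soft (a maximum principle), but since $(\lin(\Theta^s),\|\cdot\|_{\DD})$ is incomplete, continuity of the inverse genuinely needs the spectral-gap / Birkhoff--Hopf contraction for the linearized Schr\"odinger operator, together with the bookkeeping that the constructed preimage is a $\calC^s$-restriction satisfying the normalization (for which the regularizing property of $\calK_1,\calK_2$ is essential). The remaining steps --- Fr\'echet differentiability and the compactness argument in (i) --- are routine.
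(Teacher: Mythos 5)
Your proof is correct, and it takes a genuinely different route from the paper's at the two places where the paper leans on external citations. The paper proves Lemma~\ref{lem: Psi functional}(i)'s injectivity by invoking Theorem~4.3 in \cite{carlier2020differential}, and the quantitative lower bound in (ii) by invoking Proposition~3.1 in \cite{carlier2020differential} followed by a short translation step from the constraint $\int h_1\,d\mu^1=0$ to the $\lin(\Theta^s)$-normalization $h_1(x_1^\circ)=h_2(x_2^\circ)$. You instead give direct, self-contained arguments: for (i) a two-step max/min chain for the ratios $r_i$ closing into equality, where full support of $\nu_{x_1}$ forces $r_2$ constant and $r_1r_2\equiv1$ together with the normalization kills the constant; and for (ii) the Dobrushin/Birkhoff--Hopf oscillation contraction for $\calK_2\calK_1$, whose kernel against $\mu^1$ is bounded below by $\underline{P}^2>0$, giving invertibility of $I-\calK_2\calK_1$ on mean-zero functions and hence the inverse bound after solving $(I-\calK_2\calK_1)h_1=\varepsilon(g_1-\calK_2 g_2)$ and pinning the additive constant by the reference-point normalization. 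The compactness argument for continuity of $\Psi_{\bm{\mu}}^{-1}$ at $0$ and the Fr\'echet-differentiability computation are essentially identical to the paper's. Your approach buys a fully self-contained proof that makes the mechanism of the spectral gap explicit, at the cost of re-deriving material the paper treats as a black box; the paper's approach is shorter and delegates the heavy lifting. One small remark: the parenthetical about ``$\calC^s$-smoothing keeping $\bm{h}\in\lin(\Theta^s)$'' is unnecessary for the claimed bounded-inverse estimate, since $\bm{h}$ is given a priori to lie in $\lin(\Theta^s)$; the estimate $\|\bm{h}\|_\DD\le C\|\dot{\Psi}_{\bm{\mu}}(\bm{h})\|_\DD$ follows from the solution formula alone, without needing to verify that the constructed preimage is a $\calC^s$-restriction.
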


Extend $\dot \Psi_{\bm{\mu}}^{-1}$  continuously to $\overline{\dot \Psi_{\bm{\mu}} (\lin (\Theta^s))}^{\DD}$. 
Now, Lemma 3.9.34 in \cite{van1996weak} implies the following. See also \cref{sec: Z-functional}.

\begin{lemma}
\label{lem: Hadamard differentiability of zero}
The map $\phi: Z(\Theta^s,\DD) \subset \ell^\infty(\Theta^s,\DD) \to \DD$ is Hadamard differentiable at $\Psi_{\bm{\mu}} := \Psi (\bm{\mu},\cdot)$ tangentially to the set
\[
\begin{split}
\mathcal{Z}_{\bm{\mu}} =&\Big \{ z \in \ell^\infty (\Theta^s,\DD) : z=\lim_{t \downarrow 0} \frac{z_t -\Psi_{\bm{\mu}}}{t} \ \text{for some} \ z_t \to \Psi_{\bm{\mu}} \ \text{in} \ Z(\Theta^s,\DD), \  t \downarrow  0 \Big\} \\
&\bigcap \Big \{ z \in \ell^\infty (\Theta^s,\DD) : \text{$z$ is continuous at $\bm{\varphi}^{\bm{\mu}}\vert_{S}$} \Big \}. 
\end{split}
\]
 The derivative is given by $\phi_{\Psi_{\bm{\mu}}}'(z) = - \dot{\Psi}_{\bm{\mu}}^{-1}(z(\bm{\varphi}^{\bm{\mu}}\vert_{S}))$.
\end{lemma}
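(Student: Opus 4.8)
The plan is to obtain the lemma as a direct application of Lemma 3.9.34 in \cite{van1996weak}, the abstract implicit-function-type result for Hadamard differentiability of zero-selection maps; \cref{lem: Psi functional} was in fact tailored to supply exactly its hypotheses. Recall that this abstract lemma concerns a map $\phi: Z(\Theta^s,\DD)\to\Theta^s$ selecting, for each $z$, one of its zeros, and that it yields Hadamard differentiability of $\phi$ at a point $\Psi_{\bm{\mu}}\in Z(\Theta^s,\DD)$ having a \emph{unique} zero $\theta_0$, under the conditions: (a) $\phi$ is consistent at $\Psi_{\bm{\mu}}$, i.e.\ $z_t\to\Psi_{\bm{\mu}}$ in $\ell^\infty(\Theta^s,\DD)$ forces $\phi(z_t)\to\theta_0$; (b) $\Psi_{\bm{\mu}}$ is Fr\'echet differentiable at $\theta_0$ with derivative $\dot{\Psi}_{\bm{\mu}}$; and (c) $\dot{\Psi}_{\bm{\mu}}$ is one-to-one with continuous inverse on its range. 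The conclusion is then Hadamard differentiability tangentially to the set $\mathcal{Z}_{\bm{\mu}}$ displayed in the statement, with derivative $z\mapsto-\dot{\Psi}_{\bm{\mu}}^{-1}(z(\theta_0))$.

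First I would record that $\theta_0=\bm{\varphi}^{\bm{\mu}}\vert_S$ is the unique zero of $\Psi_{\bm{\mu}}$ in $\Theta^s$: by \eqref{eq: psi function}, a point $\bm{\varphi}\in\Theta^s$ solves $\Psi_{\bm{\mu}}(\bm{\varphi})=0$ on $S$ precisely when its $\calC^s$-extension gives EOT potentials obeying the Schr\"odinger system \eqref{eq: FOC}, and \cref{lem: EOT potential} together with the normalization $\varphi_1(x_1^\circ)=\varphi_2(x_2^\circ)$ built into $\Theta^s$ pins this down uniquely. Conditions (b) and (c) are precisely \cref{lem: Psi functional}(ii). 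For the consistency condition (a), I would argue that if $z_t\to\Psi_{\bm{\mu}}$ in $\ell^\infty(\Theta^s,\DD)$ then, using $z_t(\phi(z_t))=0$,
\[
\big\|\Psi_{\bm{\mu}}(\phi(z_t))\big\|_{\DD}=\big\|\Psi_{\bm{\mu}}(\phi(z_t))-z_t(\phi(z_t))\big\|_{\DD}\le\|\Psi_{\bm{\mu}}-z_t\|_{\ell^\infty(\Theta^s,\DD)}\to 0,
\]
so that $\Psi_{\bm{\mu}}(\phi(z_t))\to 0=\Psi_{\bm{\mu}}(\theta_0)$ in $\DD$; since $\Psi_{\bm{\mu}}$ is injective with inverse continuous at $0$ by \cref{lem: Psi functional}(i), applying $\Psi_{\bm{\mu}}^{-1}$ gives $\phi(z_t)\to\theta_0$ in $\DD$, which is consistency.

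With (a)--(c) in hand, Lemma 3.9.34 in \cite{van1996weak} applies and delivers the claimed Hadamard differentiability of $\phi$ at $\Psi_{\bm{\mu}}$ tangentially to $\mathcal{Z}_{\bm{\mu}}$ with $\phi_{\Psi_{\bm{\mu}}}'(z)=-\dot{\Psi}_{\bm{\mu}}^{-1}(z(\bm{\varphi}^{\bm{\mu}}\vert_S))$, where $\dot{\Psi}_{\bm{\mu}}^{-1}$ is read as its continuous extension to $\overline{\dot{\Psi}_{\bm{\mu}}(\lin(\Theta^s))}^{\DD}$; this extension is what makes the right-hand side meaningful for every $z\in\mathcal{Z}_{\bm{\mu}}$, since a $z$ arising as a limit of increments $(z_t-\Psi_{\bm{\mu}})/t$ of maps in $Z(\Theta^s,\DD)$ need not have $z(\theta_0)$ in the range of $\dot{\Psi}_{\bm{\mu}}$ itself. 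I do not anticipate a genuine obstacle here: the analytic substance---injectivity of $\Psi_{\bm{\mu}}$ and of its Fr\'echet derivative, which rests on the results of \cite{carlier2020differential}---is entirely contained in \cref{lem: Psi functional}, and what remains is the short consistency argument above plus notational bookkeeping to match the hypotheses of \cite[Lemma 3.9.34]{van1996weak} (in particular, that $\Theta^s$ is a subset of the normed space $\DD$ and $\phi$ a selection of zeros), all of which hold by construction. If any point needs care, it is precisely this use of the extended inverse $\dot{\Psi}_{\bm{\mu}}^{-1}$ on the tangent set.
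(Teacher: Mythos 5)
Your proposal is correct and follows essentially the same route as the paper: the paper's entire proof of this lemma is the single step of invoking Lemma 3.9.34 of van der Vaart and Wellner (restated as \cref{lem: Z-functional}), with the hypotheses supplied by \cref{lem: Psi functional} and $\dot{\Psi}_{\bm{\mu}}^{-1}$ extended continuously to $\overline{\dot{\Psi}_{\bm{\mu}}(\lin(\Theta^s))}^{\DD}$. Your ``consistency'' step (a) is really an internal step in the proof of that cited lemma rather than one of its stated hypotheses (the stated hypothesis is continuity of $\Psi_{\bm{\mu}}^{-1}$ at $0$, which is \cref{lem: Psi functional}(i)), but your derivation of consistency from it is correct and the overall argument matches the paper.
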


Hadamard differentiability of the map $\bm\nu\in\calP_{\mu^1} \times \calP_{\mu^2}\mapsto \bm\varphi^{\bm\nu}\vert_{S}\in\DD$ at $\bm\mu$ now follows directly from Lemmas  \ref{lem: Hadamard differentiability of objective function} and \ref{lem: Hadamard differentiability of zero}.

\begin{lemma}
\label{lem:HadamardDerivativeRestriction}
     The map $\bm\nu\mapsto \bm\varphi^{\bm\nu}\vert_{S},$ $\calP_{\mu^1} \times \calP_{\mu^2}\subset \ell^{\infty}(B^s)\times\ell^{\infty}(B^s)\to \DD$  is Hadamard differentiable at $\bm\mu$ tangentially to $\overline{\calM_{\mu^1}}^{\ell^{\infty}(B^s)}\times \overline{\calM_{\mu^2}}^{\ell^{\infty}(B^s)}$.
\end{lemma}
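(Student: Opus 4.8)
The plan is to deduce \cref{lem:HadamardDerivativeRestriction} from the chain rule for Hadamard differentiable maps (Lemma 3.9.3 in \cite{van1996weak}) applied to the factorization
\[
\bm{\nu} \;\longmapsto\; \psi(\bm{\nu}) := \Psi(\bm{\nu},\cdot) \;\longmapsto\; \phi\bigl(\psi(\bm{\nu})\bigr) = \bm{\varphi}^{\bm{\nu}}\vert_S ,
\]
where the second identity holds because, by uniqueness of the EOT potentials, $\bm{\varphi}^{\bm{\nu}}\vert_S$ is the only zero in $\Theta^s$ of $\Psi(\bm{\nu},\cdot)\in Z(\Theta^s,\DD)$, so every choice of the selection map $\phi$ agrees with $\bm{\nu}\mapsto \bm{\varphi}^{\bm{\nu}}\vert_S$ on $\calP_{\mu^1}\times\calP_{\mu^2}$. \cref{lem: Hadamard differentiability of objective function} supplies Hadamard differentiability of $\psi$ at $\bm{\mu}$ tangentially to $\mathcal{T} := \overline{\calM_{\mu^1}}^{\ell^\infty(B^s)}\times\overline{\calM_{\mu^2}}^{\ell^\infty(B^s)}$, with derivative $\psi'$ given by \eqref{eq: psi derivative}, and \cref{lem: Hadamard differentiability of zero} supplies Hadamard differentiability of $\phi$ at $\Psi_{\bm{\mu}}=\psi(\bm{\mu})$ tangentially to $\mathcal{Z}_{\bm{\mu}}$, with derivative $z\mapsto -\dot{\Psi}_{\bm{\mu}}^{-1}\bigl(z(\bm{\varphi}^{\bm{\mu}}\vert_S)\bigr)$ (here $\dot{\Psi}_{\bm{\mu}}^{-1}$ is the continuous extension from \cref{lem: Psi functional}(ii)). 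Hence it remains only to check the compatibility condition $\psi'(\mathcal{T})\subset\mathcal{Z}_{\bm{\mu}}$; granting it, the chain rule yields that $\bm{\nu}\mapsto \bm{\varphi}^{\bm{\nu}}\vert_S$ is Hadamard differentiable at $\bm{\mu}$ tangentially to $\mathcal{T}$ with derivative $\bm{\gamma}\mapsto -\dot{\Psi}_{\bm{\mu}}^{-1}\bigl(\psi'(\bm{\gamma})(\bm{\varphi}^{\bm{\mu}}\vert_S)\bigr)$, where by \eqref{eq: psi derivative} the argument $\psi'(\bm{\gamma})(\bm{\varphi}^{\bm{\mu}}\vert_S)$ equals $\bigl(\int e^{(\varphi_1^{\bm{\mu}}\oplus\varphi_2^{\bm{\mu}}-c)/\varepsilon}\,d\gamma^2,\ \int e^{(\varphi_1^{\bm{\mu}}\oplus\varphi_2^{\bm{\mu}}-c)/\varepsilon}\,d\gamma^1\bigr)$.

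To establish $\psi'(\bm{\gamma})\in\mathcal{Z}_{\bm{\mu}}$ for $\bm{\gamma}=(\gamma^1,\gamma^2)\in\mathcal{T}$ I would verify the two conditions defining $\mathcal{Z}_{\bm{\mu}}$ in turn. Since $\mathcal{T}$ is the tangent cone at $\bm{\mu}$ to the convex set $\calP_{\mu^1}\times\calP_{\mu^2}$ (\cref{rem: tangent cone}), there is a family of mass-zero signed measures $\bm{\gamma}_t$ with $\bm{\mu}_t := \bm{\mu}+t\bm{\gamma}_t\in\calP_{\mu^1}\times\calP_{\mu^2}$ for all small $t>0$ and $\bm{\gamma}_t\to\bm{\gamma}$ in $\ell^\infty(B^s)\times\ell^\infty(B^s)$. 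Then $z_t := \psi(\bm{\mu}_t)$ lies in $Z(\Theta^s,\DD)$ (it vanishes at $\bm{\varphi}^{\bm{\mu}_t}\vert_S\in\Theta^s$), one has $z_t\to\Psi_{\bm{\mu}}$ in $\ell^\infty(\Theta^s,\DD)$ since a Hadamard differentiable map is continuous along such curves, and $t^{-1}(z_t-\Psi_{\bm{\mu}})=\psi'(\bm{\gamma}_t)\to\psi'(\bm{\gamma})$ by \cref{lem: Hadamard differentiability of objective function}; this places $\psi'(\bm{\gamma})$ in the first set defining $\mathcal{Z}_{\bm{\mu}}$. The remaining requirement, continuity of $\psi'(\bm{\gamma})$ at $\bm{\varphi}^{\bm{\mu}}\vert_S$, is the step I expect to be the main obstacle.

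The difficulty is that the naive estimate $|\gamma^i(h)|\le C\|h\|_\infty$ is \emph{unavailable}: elements of $\overline{\calM_{\mu^i}}^{\ell^\infty(B^s)}$ are in general not finite measures but only bounded functionals on $(\calC^s(\calX),\|\cdot\|_{\calC^s(\calX)})$, so $\psi'(\bm{\gamma})(\bm{\varphi})(x_1)=\gamma^2\bigl(e^{(\bar\varphi_1(x_1)+\bar\varphi_2(\cdot)-c(x_1,\cdot))/\varepsilon}\bigr)$ a priori sees derivatives of the potentials. Two features rescue continuity. First, being an $\ell^\infty(B^s)$-limit of finite measures, $\gamma^i$ is a distribution of order at most $s-1$, so $\psi'(\bm{\gamma})(\bm{\varphi})$ depends on $\bm{\varphi}$ only through its derivatives up to order $s-1$; second, every element of $\Theta^s$ obeys $\|\varphi_i\|_{\calC^s(\calX)}\le R_s$ (\cref{lem: EOT potential}(ii)), so by the Arzel\`{a}--Ascoli theorem $\DD$-norm convergence on $\Theta^s$ upgrades to $\calC^{s-1}(\calX)$-convergence of the extensions. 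Together with smoothness of $\bm{\varphi}\mapsto e^{(\bar\varphi_1\oplus\bar\varphi_2-c)/\varepsilon}$ this gives continuity of $\psi'(\bm{\gamma})$ on $\Theta^s$, hence at $\bm{\varphi}^{\bm{\mu}}\vert_S$. (Alternatively, and sufficiently for the chain rule: one needs continuity of $\psi'(\bm{\gamma})$ only along the curves $\bm{\varphi}^{\bm{\mu}_t}\vert_S$, which converge to $\bm{\varphi}^{\bm{\mu}}\vert_S$ in $\calC^s(\calX)\times\calC^s(\calX)$ by \cref{lem: EOT potential}(iii) because $\bm{\mu}_t\to\bm{\mu}$ weakly; along $\calC^s$-convergent sequences $\psi'(\bm{\gamma})$ is continuous thanks to the bound $|\psi'(\bm{\gamma})(\bm{\varphi})-\psi'(\bm{\gamma})(\bm{\varphi}')|\le(\|\gamma^1\|_{\infty,B^s}\vee\|\gamma^2\|_{\infty,B^s})\,\|e^{(\bar\varphi_1\oplus\bar\varphi_2-c)/\varepsilon}-e^{(\bar\varphi_1'\oplus\bar\varphi_2'-c)/\varepsilon}\|_{\calC^s(\calX)}$.) Once this continuity is settled, the remaining bookkeeping is routine and the lemma follows with the derivative displayed above.
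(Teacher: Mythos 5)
Your decomposition, invocation of the chain rule, and identification of the single nontrivial check (membership $\psi'(\bm\gamma)\in\mathcal Z_{\bm\mu}$, which reduces to $\DD$-continuity of $\psi'(\bm\gamma)$ at $\bm\varphi^{\bm\mu}\vert_S$) all coincide with the paper's proof. The verification of the ``first set'' defining $\mathcal Z_{\bm\mu}$ is also the same. Where you diverge is in how you establish the continuity, and there is a genuine gap in your main argument: the claim that an $\ell^\infty(B^s)$-limit $\gamma^i$ of finite measures is ``a distribution of order at most $s-1$'' is neither established nor, as stated, correct --- such a $\gamma^i$ is a bounded functional on $\calC^s(\calX)$ and its uniform-on-$B^s$ approximability by measures makes it $\|\cdot\|_\infty$-continuous \emph{on $B^s$}, but it does not make it bounded on $\calC^{s-1}(\calX)$ (the approximating measures can have unbounded total variation, so the Lipschitz constants blow up). Consequently, the assertion that ``$\psi'(\bm\gamma)(\bm\varphi)$ depends on $\bm\varphi$ only through its derivatives up to order $s-1$'' does not hold, and the Arzel\`a--Ascoli step that follows (``$\DD$-convergence upgrades to $\calC^{s-1}$-convergence of the extensions'') also has issues (extensions to $\calX$ are non-unique, and different subsequential limits of extensions differ off $S$). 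Your parenthetical alternative --- observing that $\bm\varphi^{\bm\mu_t}\vert_S\to\bm\varphi^{\bm\mu}\vert_S$ in $\calC^s$ by \cref{lem: EOT potential}(iii), so one only needs continuity along these curves --- is a correct and nice observation, but it is not ``sufficient for the chain rule'' as you assert: the tangent set $\mathcal Z_{\bm\mu}$ of Lemma~\ref{lem: Hadamard differentiability of zero} explicitly requires $\|\cdot\|_\DD$-continuity of $z$ at $\bm\varphi^{\bm\mu}\vert_S$, so to use this route you would have to re-prove the $Z$-functional lemma with the continuity hypothesis weakened to continuity along the relevant curve, rather than simply citing the chain rule.

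The paper sidesteps all of this. It chooses $\tilde{\bm\gamma}\in\calM_{\mu^1}\times\calM_{\mu^2}$ --- an honest pair of mass-zero signed measures --- with $\|\bm\gamma-\tilde{\bm\gamma}\|_{\ell^\infty(B^s)\times\ell^\infty(B^s)}\le\eta$, and applies the uniform bound \eqref{eq: lipschitz}, namely $\|\psi'(\bm\gamma)-\psi'(\tilde{\bm\gamma})\|_{\ell^\infty(\Theta^s,\DD)}\le R'\eta$, which holds for \emph{all} $\bm\varphi\in\Theta^s$ at once. For a genuine signed measure $\tilde\gamma^j$ supported in $S_j$, continuity of $\bm\varphi\mapsto\psi'(\tilde{\bm\gamma})(\bm\varphi)$ in the $\DD$-norm is elementary, since the integral only sees $\varphi_j\vert_{S_j}$. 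A triangle inequality and letting $\eta\downarrow 0$ then yields continuity of $\psi'(\bm\gamma)$ at $\bm\varphi^{\bm\mu}\vert_S$. This is the same approximation-by-measures idea that is implicitly powering your intuition, but executed directly through the uniform Lipschitz estimate rather than through a regularity claim about $\gamma^i$; you would do well to replace the ``order $s-1$'' step with this approximation argument.
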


\begin{proof}
Given Lemmas  \ref{lem: Hadamard differentiability of objective function} and \ref{lem: Hadamard differentiability of zero}, the lemma follows from the chain rule for Hadamard differentiable maps; see Lemma 3.9.3 in \cite{van1996weak}. The only thing we need to verify is that, for every $\bm{\gamma} = (\gamma^1,\gamma^2) \in \overline{\calM_{\mu^1}}^{\ell^\infty(
B^s
)} \times \overline{\calM_{\mu^2}}^{\ell^\infty(
B^s
)}$, it holds that $\psi' (\bm{\gamma}) \in \mathcal{Z}_{\bm{\mu}}$.  Let $(\bm{\mu}_t)_{t > 0} \subset 
\calP_{\mu^1} \times \calP_{\mu^2}
$ be a sequence such that $\bm{\gamma}_t := t^{-1}(\bm{\mu}_t - \bm{\mu}) \to \bm{\gamma}$ in $\ell^\infty (
B^s
) \times \ell^\infty(
B^s
)$. Then $\Psi_{\bm{\mu}_t} \in Z(\Theta^s,\DD)$ and $t^{-1}(\Psi_{\bm{\mu}_t}-\Psi_{\bm{\mu}}) = \psi'(\bm{\gamma}_t)  \to \psi'(\bm{\gamma})$ in $\ell^\infty(\Theta^s,\DD)$. 
It remains to show that $\psi'(\bm{\gamma})$ is continuous at $\bm{\varphi}^{\bm{\mu}}\vert_{S}$. By construction, for every $\eta > 0$, there exists $\tilde{\bm{\gamma}} = (\tilde{\gamma}^1,\tilde{\gamma}^2) \in \calM_{\mu^1} \times \calM_{\mu^2}$ such that $\| \gamma^1 - \tilde{\gamma}^1 \|_{\ell^\infty(
B^s
)} \vee \| \gamma^2 - \tilde{\gamma}^2 \|_{\ell^\infty(
B^s
)} \le \eta$. By \eqref{eq: lipschitz}, we have 
$\| \psi' (\bm{\gamma}) - \psi'(\tilde{\bm{\gamma}}) \|_{\ell^\infty(\Theta^s,\DD)} \le R'\eta$ with $R'$ given by \eqref{eq: R prime}. Since $\tilde{\bm{\gamma}}$ is a pair of signed measures, for every sequence $\bm{\varphi}_n \in \Theta^s$ with $\| \bm{\varphi}_n - \bm{\varphi}^{\bm{\mu}}\vert_{S}\|_{\DD} \to 0$, we see that $\| \psi'(\tilde{\bm{\gamma}})(\bm{\varphi}_n) - \psi'(\tilde{\bm{\gamma}})(\bm{\varphi}^{\bm{\mu}}\vert_{S})\|_{\DD} \to 0$. We thus conclude that $\limsup_{n \to \infty} \| \psi'(\bm{\gamma})(\bm{\varphi}_n) - \psi'(\bm{\gamma})(\bm{\varphi}^{\bm{\mu}}\vert_{S}) \|_{\DD} \le 2R'\eta$, and as $\eta > 0$ is arbitrary, we have shown that $\psi'(\bm{\gamma})$ is continuous at $\bm{\varphi}^{\bm{\mu}}\vert_{S}$. Hence, the chain rule applies, and the Hadamard derivative $[\bm{\varphi}^{\bm{\mu}}\vert_{S}]': 
\overline{\calM_{\mu^1}}^{\ell^\infty(
B^s
)} \times \overline{\calM_{\mu^2}}^{\ell^\infty(
B^s
)} \to \DD$ is given by $[\bm{\varphi}^{\bm{\mu}}\vert_{S}]'(\bm{\gamma}) = -\dot \Psi_{\bm{\mu}}^{-1} (\psi'(\bm{\gamma})(\bm{\varphi}^{\bm{\mu}}\vert_{S}))$.
\end{proof}

Now, Lemma \ref{lem:HadamardDerivativeRestriction} and the formula for the potentials \eqref{eq:PotentialRepresentation} together yield \cref{lem: H derivative potential}.

\begin{proof}[Proof of \cref{lem: H derivative potential}]
We show Hadamard differentiability of the map $\bm\nu\mapsto\varphi_1^{\bm\nu}$ at $\bm\mu$; differentiability of the second potential follows analogously. 
To simplify notation, define 
\[
    \xi^{\bm\nu}:(x_1,x_2)\in\calX\times S_2\mapsto e^{\frac{\bm\varphi^{\bm\nu}\vert_{S_2}(x_2)-c(x_1,x_2)}{\varepsilon}}.
\]
By the formula \eqref{eq:PotentialRepresentation} 
 and the chain rule, it suffices to show that the map $\Upsilon:\bm\nu\in\calP_{\mu^1} \times \calP_{\mu^2}\mapsto \int\xi^{\bm\nu}(\cdot,x_2)d\nu^2(x_2)\in\calC(\calX)$ is Hadamard differentiable at $\bm\mu$ tangentially to $\overline{\calM_{\mu^1}}^{\ell^{\infty}(B^s)}\times \overline{\calM_{\mu^2}}^{\ell^{\infty}(B^s)}$. 
To this effect, let $(\bm\mu_t)_{t>0}\subset \calP_{\mu^1} \times \calP_{\mu^2}$ be such that $\bm\gamma_t:=t^{-1}(\bm\mu_t-\bm\mu)\to\bm\gamma$ in $\ell^{\infty}(B^s)\times \ell^{\infty}(B^s)$, and consider
\begin{align*} 
&t^{-1}\left(
\Upsilon(\bm\mu_t)-\Upsilon(\bm\mu)
\right)
=
    \int t^{-1}\left(\xi^{\bm\mu_t}(\cdot,x_2)-\xi^{\bm\mu}(\cdot,x_2)\right)\, d\mu^2(x_2)+\int\xi^{\bm\mu_t}(\cdot,x_2)\, d\gamma_t^2(x_2). 
\end{align*}
As $t\downarrow 0$, the first term on the right-hand side converges  to the Hadamard derivative of  the map $\bm\nu\in\calP_{\mu^1} \times \calP_{\mu^2}\mapsto \int \xi^{\bm\nu}(\cdot,x_2)d\nu^2(x_2)\in\calC(\calX)$ at $\bm\mu$, which agrees with
$
\varepsilon^{-1}  
    \int \xi^{\bm\mu}(\cdot,x_2)
    [\varphi_2^{\bm\mu}\vert_{S_2}]'(\bm \gamma)(x_2)\, d\mu^2(x_2)
$ 
by \cref{lem:HadamardDerivativeRestriction} and the chain rule.

Pertaining to the second term, since $\supp(\gamma_t^2)\subset S_2$, we have
\begin{align*}
&\sup_{x_1\in\calX}\Big|\int\xi^{\bm\mu_t}(x_1,x_2)\, d\gamma_t^2(x_2)-\int\xi^{\bm\mu}(x_1,x_2)\, d\gamma^2(x_2)\Big| \\
    &\leq \sup_{x_1\in\calX}\Big\|e^{\frac{\varphi^{\bm\mu_t}_2(\cdot)-c(x_1,\cdot)}{\varepsilon}}-e^{\frac{\varphi^{\bm\mu}_2(\cdot)-c(x_1,\cdot)}{\varepsilon}}\Big\|_{\calC^s(\calX)}\|\gamma_t^2\|_{\infty,B^s}\\
&\quad + \sup_{x_1\in\calX}\Big\|e^{\frac{\varphi^{\bm\mu}_2(\cdot)-c(x_1,\cdot)}{\varepsilon}}\Big\|_{\calC^s(\calX)}\|\gamma_t^2-\gamma^2\|_{\infty,B^s}, 
\end{align*}
The right-hand side  converges to $0$ as $t\downarrow 0$ since $\|\gamma_t^2\|_{\infty,B^s}=O(1)$, $\|\gamma_t^2-\gamma^2\|_{\infty,B^s}=o(1)$, and $\varphi^{\bm\mu_t}_2\to\varphi^{\bm\mu}_2$ in $\calC^s(\calX)$ by \cref{lem: EOT potential} (iii) (note that $\bm\mu_t\to\bm\mu$ in $\ell^{\infty}(B^s)\times\ell^{\infty}(B^s)$ and hence $\mu_t^i\to \mu^i$ weakly for $i=1,2$ by \cref{lem: convergence determining}).  
Conclude that, for $\bm{\gamma}\in\overline{\mathcal M_{\mu^1}}^{\ell^{\infty}(B^s)}\times \overline{\mathcal M_{\mu^2}}^{\ell^{\infty}(B^s)}$,
\begin{equation}
\label{eq:IntegratedXiDerivative}
  \Upsilon'_{\bm\mu}(\bm\gamma)=\varepsilon^{-1}\int \xi^{\bm\mu}(\cdot,x_2)[\varphi_2^{\bm\mu}\vert_{S_2}]'(\bm\gamma)(x_2)\, d\mu^2(x_2)
   + \int \xi^{\bm\mu}(\cdot,x_2)\, d\gamma^2(x_2),
\end{equation}
and hence $[\varphi_1^{\bm\mu}]'(\bm\gamma)=-\varepsilon e^{\frac{\varphi^{\bm\mu}_1}{\varepsilon}}\Upsilon'_{\bm\mu}(\bm\gamma)$ by the chain rule. 
\end{proof}

    \begin{remark}[Compatibility of derivatives of EOT potentials]
    \label{rmk:CompatibilityPotentialDerivatives}
        As Lemmas \ref{lem:HadamardDerivativeRestriction} and \ref{lem: H derivative potential} establish, respectively, the Hadamard derivatives of the maps $\bm\nu\in\calP_{\mu^1} \times \calP_{\mu^2}\mapsto \bm\varphi^{\bm\nu}\vert_{S}\in \DD$ and $\bm\nu\in\calP_{\mu^1} \times \calP_{\mu^2}\mapsto \bm\varphi^{\bm\nu}\in\calC(\calX)\times \calC(\calX)$, the latter derivative should extend the former. Indeed, for every $\bm\gamma\in\overline{\calM_{\mu^1}}^{\ell^{\infty}(B^s)}\times\overline{\calM_{\mu^2}}^{\ell^{\infty}(B^s)}$ and $(\bm\mu_t)_{t>0}\subset\calP_{\mu^1} \times \calP_{\mu^2}$ for which $t^{-1}(\bm\mu_t-\bm\mu)\to \bm\gamma$ in $\ell^{\infty}(B^s)\times\ell^{\infty}(B^s)$, $t^{-1}(\bm\varphi^{\bm\mu_t}-\bm\varphi^{\bm\mu})\to[\bm\varphi^{\bm\mu}]'(\bm\gamma)$ in $\calC(\calX)\times \calC(\calX)$ and hence also in $\DD$. So, $[\bm\varphi^{\bm\mu}]'(\bm\gamma)\vert_{S}=[\bm\varphi^{\bm\mu}\vert_{S}]'(\bm\gamma)$, as desired.  
    \end{remark}

    \begin{remark}[Choice of reference point]
We have chosen a reference point $(x_1^\circ,x_2^\circ)$ from $S$ in the proof of Lemma \ref{lem: H derivative potential}, but this is immaterial. Indeed, for a different choice of reference point $(\tilde x_1^\circ,\tilde x_2^\circ) \in \calX \times \calX$, the functions $\tilde{\varphi}_1^\mu = \varphi_1^\mu - \frac{1}{2} (\varphi_1(\tilde{x}_1^\circ)-\varphi_2(\tilde{x}_1^\circ))$ and $\tilde{\varphi}_2^\mu = \varphi_2^\mu+\frac{1}{2} (\varphi_1(\tilde{x}_1^\circ)-\varphi_2(\tilde{x}_1^\circ))$ are EOT potentials satisfying the constraint $\tilde{\varphi}_1^\mu(\tilde x_1^\circ)= \tilde{\varphi}_2^\mu(\tilde x_2^\circ)$. Clearly, by construction, the map $\bm{\nu} \mapsto \tilde{\bm{\varphi}}^{\bm{\nu}}, \calP_{\mu^1} \times \calP_{\mu^2} \subset \ell^\infty (B^s) \times \ell^\infty (B^s) \to \calC(\calX) \times \calC(\calX)$ is Hadamard differentiable at $\bm{\mu}$. Hence the conclusion of Lemma \ref{lem: H derivative potential} holds for an arbitrary reference point $(x_1^\circ,x_2^\circ) \in \calX \times \calX$.
    \end{remark}

We are now ready to prove \cref{thm: H derivative potential}.

\begin{proof}[Proof of \cref{thm: H derivative potential}]
We divide the proof into two steps.

\underline{Step 1}. 
Pick any multi-index $k=(k_1,\dots,k_d) \in \mathbb{N}_0^d$ with $0 < |k| \le s$. In what follows, $i \in \{ 1,2 \}$ is arbitrary. Also, $\mu^{-1} = \mu^{2}$ and $\mu^{-2} = \mu^1$. Similar conventions apply to $x_{-i}$ etc. We will show that the map
\[
\bm{\nu} \mapsto D^{k} \varphi_i^{\bm{\nu}}, \ 
\calP_{\mu^1} \times \calP_{\mu^2}
\subset \ell^\infty(
B^s
)\times\ell^\infty(
B^s
) \to \calC(
\calX
)
\]
is Hadamard differentiable at $\bm{\mu}$.
Observe that 
\[
e^{-\varphi_i^{\bm{\nu}}(x_i)/\varepsilon} =\int  e^{ (\varphi_{-i}^{\bm{\nu}}(x_{-i})-c(x_1,x_2))/\varepsilon}d\nu^{-i}(x_{-i}),
\]
so that by interchanging differentiation and integration, we see that $D^{k}_{x_i} \big (e^{-\varphi_i^{\bm{\nu}}(x_i)/\varepsilon}\big)$ can be expressed as a linear combination of functions of the form
\[
\int \prod_{j=1}^J \big[D_{x_i}^{\ell_j}c(x_1,x_2)  \big]^{m_j} \times e^{ (\varphi_{-i}^{\bm{\nu}}(x_{-i})-c(x_1,x_2))/\varepsilon} d\nu^{-i}(x_{-i}),
\]
where $1 \le J \le |k|$, and $m_1,\dots,m_J \in \NN$ and $\ell_1,\dots,\ell_J \in \NN_0^d \setminus \{ 0 \}$ are such that $m_1\ell_1+\dots+m_J\ell_J = k$.
Combining the fact that $(\log y)^{(n)} = (-1)^{n+1}(n-1)!y^{-n}$ and the multivariate Fa\`{a} di Bruno formula (cf. Theorem 2.1 in \cite{constantine1996multivariate}), we see that 
$D^{k}\varphi_i^{\bm{\nu}} (x_i) = -\varepsilon D^{k}  \log (e^{-\varphi_i^{\bm{\nu}} (x_i)/\varepsilon})$ can be expressed as a linear combination of products of functions of the form 
\[
\begin{split}
&\frac{\int \zeta(x_1,x_2) e^{(\varphi_{-i}^{\bm{\nu}}(x_{-i})-c(x_1,x_2))/\varepsilon}d\nu^{-i}(x_{-i})}{\int  e^{(\varphi_{-i}^{\bm{\nu}}(x_{-i})-c(x_1,x_2))/\varepsilon}d\nu^{-i}(x_{-i})} \\
&= \int \zeta(x_1,x_2) e^{\frac{\varphi_1^{\bm{\nu}}(x_1)+\varphi_2^{\bm{\nu}}(x_2)-c(x_1,x_2)}{\varepsilon}}d\nu^{-i}(x_{-i}),
\end{split}
\] 
where $\zeta$ is a smooth function on $\R^d \times \R^d$ that depends only on the cost $c$ and multi-index $k$.
For example, for $y=(y_1,\dots,y_d)$, 
\[
\begin{split}
\frac{\partial^3}{\partial y_{j_1} \partial y_{j_2} \partial y_{j_3}} \log (f(y)) &= 
\frac{\frac{\partial^3}{\partial y_{j_1} \partial y_{j_2} \partial y_{j_3}}f(y)}{f(y)} - \frac{\frac{\partial^2}{\partial y_{j_1}\partial y_{j_3}}f(y)}{f(y)}\frac{\frac{\partial}{\partial y_{j_2}}f(y)}{f(y)} \\
&- \frac{\frac{\partial}{\partial y_{j_1}}f(y)}{f(y)} \frac{\frac{\partial^2}{\partial y_{j_2}\partial y_{j_3}}f(y)}{f(y)} +2
\frac{\frac{\partial}{\partial y_{j_1}}f(y)}{f(y)}\frac{\frac{\partial}{\partial y_{j_2}}f(y)}{f(y)}\frac{\frac{\partial}{\partial y_{j_3}}f(y)}{f(y)}.
\end{split}
\]
Hence, it suffices to show that, for every smooth function $\zeta$ on $\R^d \times \R^d$, the map
\[
\bm{\nu} \mapsto \int \zeta(x_1,x_2) e^{\frac{\varphi_1^{\bm{\nu}}(x_1)+\varphi_2^{\bm{\nu}}(x_2)-c(x_1,x_2)}{\varepsilon}} d\nu^{-i}(x_{-i}), \ 
\calP_{\mu^1} \times \calP_{\mu^2}
\subset \ell^\infty(
B^s
)\times\ell^\infty(
B^s
) \to \calC(
\calX
)
\]
is Hadamard differentiable at $\bm{\mu}$.

Let $(\bm{\mu}_t)_{t > 0} \subset  
\calP_{\mu^1} \times \calP_{\mu^2}
$ be a sequence such that $\bm{\gamma}_t := t^{-1}(\bm{\mu}_t-\bm{\mu}) \to \bm{\gamma}$ in $\ell^\infty (
B^s
) \times \ell^\infty (
B^s
)$ as $t \downarrow 0$ with $\bm{\gamma}=(\gamma_1,\gamma_2)$. Define 
\[
g_t(x_1,x_2)= \zeta(x_1,x_2) e^{\frac{\varphi_1^{\bm{\mu}_t}(x_1)+\varphi_2^{\bm{\mu}_t}(x_2)-c(x_1,x_2)}{\varepsilon}}, \ t \ge 0
\]
with $\bm{\mu}_0 = \bm{\mu}$. We have shown in \cref{lem: H derivative potential} that $t^{-1} (\varphi_{i}^{\bm{\mu}_t}-\varphi_i^{\bm{\mu}}) \to [\varphi_i^{\bm{\mu}}]'(\bm{\gamma})$ in $\calC(
\calX
)$
as $t \downarrow 0$, so that
\[
t^{-1}(g_t - g_0) \to \varepsilon^{-1}  \big \{ [\varphi_1^{\bm{\mu}}]'(\bm{\gamma}) \oplus [\varphi_2^{\bm{\mu}}]'(\bm{\gamma}) \big \} g_0 =: h(\bm{\gamma}) \quad \text{in} \ \calC(
\calX\times \calX
). 
\]

Observe that
\[
\int g_t (x_1,x_2) \, d\mu^{-i}_t(x_{-i}) 
= \int g_t (x_1,x_2)\, d\mu^{-i}(x_{-i})  + t \int g_t (x_1,x_2)\, d\gamma_{t}^{-i}(x_{-i}). 
\]
As $t \downarrow 0$, we have
\[
\begin{split}
&t^{-1}\left\{ \int g_t (x_1,x_2) \, d\mu^{-i}(x_{-i}) - \int g_0 (x_1,x_2) \, d\mu^{-i}(x_{-i}) \right \} \\
&\quad \to \int h(\bm{\gamma})(x_1,x_2) \, d\mu^{-i}(x_{-i}) \quad \text{in} \ \calC(
\calX
).
\end{split}
\]
To control $\int g_t (x_1,x_2)\, d\gamma_{t}^{-i}(x_{-i})$, observe that
\[
\begin{split}
&\left| \int g_t (x_1,x_2) \, d\gamma_t^2(x_2) - \int g_{0} (x_1,x_2) \, d\gamma^2(x_2) \right| \\
&\le \left| \int (g_t-g_0) (x_1,x_2) \, d\gamma_t^2(x_2) \right | + \left | \int g_{0} (x_1,x_2) d(\gamma_t^2-\gamma^2)(x_2) \right | 
 \\ 
&\le \|  g_t(x_1,\cdot)-g_0(x_1,\cdot) \|_{\calC^s(
\calX
)}  \| \gamma_t^2 \|_{\infty,
B^s
} +  \|g_0(x_1,\cdot) \|_{\calC^s(
\calX
)} \| \gamma_t^2 - \gamma^2 \|_{\infty,
B^s
}.
\end{split}
\]
We have $\| \gamma_t^2 \|_{\infty,
B^s
} = O(1)$ and $\| \gamma_t^2 - \gamma^2 \|_{\infty,
B^s
} = o(1)$ as $t \downarrow 0$ by construction, and $\sup_{x_1 \in 
\calX
} \|g_0(x_1,\cdot) \|_{\calC^s(
\calX
)} < \infty$ by \cref{lem: EOT potential} (ii). Since $\mu_t^i$ converges weakly to $\mu^i$ as $t \downarrow 0$ (as convergence in $\ell^\infty(
B^s
)$ implies weak convergence; cf. \cref{lem: convergence determining}), \cref{lem: EOT potential} (iii) implies that $\bm{\varphi}^{\bm{\mu}_t} \to \bm{\varphi}^{\bm{\mu}}$ in $\calC^s(
\calX
) \times \calC^s(
\calX
)$, which in turn implies
that $\sup_{x_1 \in 
\calX
} \| g_t(x_1,\cdot)-g_0(x_1,\cdot) \|_{\calC^s(
\calX
)} = o(1)$ as $t \downarrow 0$. Hence, we have
\[
\int g_t (\cdot,x_2) \, d\gamma_t^2(x_2) \to \int g_0 (\cdot,x_2) \, d\gamma^2(x_2) \quad \text{in} \ \calC(
\calX
). 
\]
Likewise, we have $\int g_t (x_1,\cdot) \, d\gamma_t^1(x_1) \to \int g_0 (x_1,\cdot) \, d\gamma^1(x_1)$ in $\calC(
\calX
)$. Conclude that
\[
\begin{split}
&t^{-1} \left \{\int g_t (x_1,x_2) \, d\mu^{-i}_t(x_{-i}) - \int g_0(x_1,x_2) \, d\mu^{-i}(x_{-i}) \right\} \\
&\quad \to \int h(\bm{\gamma})(x_1,x_2) \, d\mu^{-i}(x_{-i}) + \int g_0 (x_1,x_2) \, d\gamma^{-i}(x_{-i}) \quad \text{in} \ \calC(
\calX
).
\end{split}
\]
The limit is linear and continuous from $\overline{\calM_{\mu^1}}^{\ell^\infty (
B^s
)} \times \overline{\calM_{\mu^2}}^{\ell^\infty (
B^s
)}$ into $\calC(
\calX
)$.

\underline{Step 2}. As in Step 1, let $(\bm{\mu}_t)_{t > 0} \subset  
\calP_{\mu^1} \times \calP_{\mu^2}
$ be a sequence such that $\bm{\gamma}_t := t^{-1}(\bm{\mu}_t-\bm{\mu}) \to \bm{\gamma}$ in $\ell^\infty (
B^s
) \times \ell^\infty (
B^s
)$ as $t \downarrow 0$. By \cref{lem: H derivative potential} and Step 1, for every multi-index $k\in \mathbb{N}_0^d$ with $|k| \le s$, the map $\bm{\nu} \mapsto D^k \varphi_i^{\bm{\nu}},  
\calP_{\mu^1} \times \calP_{\mu^2}
\subset \ell^\infty(
B^s
)\times\ell^\infty(
B^s
) \to \calC(
\calX
)$ is Hadamard differentiable at $\bm{\mu}$. Denote its derivative by $[D^k \varphi_i^{\bm{\mu}}]'$, so that
\[
t^{-1}(D^k\varphi_i^{\bm{\mu}_t} - D^k\varphi_i^{\bm{\mu}}) \to [D^k \varphi_i^{\bm{\mu}}]'(\bm{\gamma}) \quad \text{in} \ \calC(
\calX
). 
\]
Pick any sequence $t_n \downarrow 0$. Then, $t_n^{-1}(\varphi_i^{\bm{\mu}_{t_n}} - \varphi_i^{\bm{\mu}})$ is Cauchy in $\calC^s(
\calX
)$, so by completeness of $\calC^s(
\calX
)$, the limit in $\calC^s(
\calX
)$ exists, i.e., $t_n^{-1}(\varphi_i^{\bm{\mu}_{t_n}} - \varphi_i^{\bm{\mu}}) \to \bar{\varphi}_i$ in $\calC^s(
\calX
)$. 
The limit $\bar{\varphi}_i$ satisfies that $D^k\bar{\varphi}_i = [D^k \varphi_i^{\bm{\mu}}]'(\bm{\gamma})$ for every multi-index $k\in \mathbb{N}_0^d$ with $|k| \le s$, which shows that $[\varphi_i^{\bm{\mu}}]'(\bm{\gamma}) \in \calC^s(
\calX
)$ with $D^k [\varphi_i^{\bm{\mu}}]'(\bm{\gamma}) = [D^k \varphi_i^{\bm{\mu}}]'(\bm{\gamma})$ for every multi-index $k\in \mathbb{N}_0^d$ with $|k| \le s$. Since the map $\bm{\gamma} \mapsto [\varphi_i^{\bm{\mu}}]'(\bm{\gamma})$ is linear and continuous from $\overline{\calM_{\mu^1}}^{\ell^\infty(
B^s
)} \times \overline{\calM_{\mu^2}}^{\ell^\infty(
B^s
)}$ into $\calC^s(
\calX
)$, we obtain the desired result.
\end{proof}

\begin{remark}
Another possible approach would be to employ the implicit function theorem for Banach spaces (see, e.g., Theorem I.5.9 in \cite{lang2012fundamentals}), which asks Fr\'{e}chet differentiability of the map $\bm{\mu} \mapsto \bm{\varphi}^{\bm{\mu}}$. However, in our problem, it seems highly nontrivial to verify the required conditions to directly apply the implicit function theorem. For statistical purposes, Hadamard differentiability is sufficient in most cases; cf. Chapter 3.9 in \cite{van1996weak}. 
\end{remark}

\subsection{Proof of \cref{thm: second order H derivative}}
\label{sec: proof of second order H derivative}
We divide the proof into two steps.

\underline{Step 1}. We first show twice Hadamard differentiability of the mapping $\bm{\nu} \mapsto \bm{\varphi}^{\bm{\nu}}$ in $\DD$. 
Recall that $\bm{\varphi}^{\bm{\mu}_t}\vert_{S} = \phi \circ \Psi (\bm{\mu}_t,\cdot)$. Observe that $\Psi (\bm{\mu}_t,\cdot) =\Psi (\bm{\mu},\cdot) + t \psi'(\bm{\gamma}_t)$.
We will apply \cref{lem: second order H derivative} below with $\Theta = \Theta^s, \D = \DD, \LL = \DD, \Psi = \Psi (\bm{\mu},\cdot), \theta_0 = \bm{\varphi}^{\bm{\mu}}\vert_{S}, \theta_t = \bm{\varphi}^{\bm{\mu}_t}\vert_{S}$, and $z_t = \psi'(\bm{\gamma}_t)$. To this end, we shall verify the conditions in \cref{lem: second order H derivative}.

Twice Fr\'{e}chet differentiability of $\Psi(\bm{\mu},\cdot)$ is straightforward to verify, with second derivative given by
\[
\ddot{\Psi}_{\bm{\mu}} (h_1,h_2)= \varepsilon^{-2}\left ( \int e^{\frac{\varphi_1^{\bm{\mu}}\oplus \varphi_2^{\bm{\mu}} - c}{\varepsilon}} (h_1 \oplus h_2)^2 \, d\mu^2, \int e^{\frac{\varphi_1^{\bm{\mu}}\oplus \varphi_2^{\bm{\mu}} - c}{\varepsilon}} (h_1 \oplus h_2)^2 \, d\mu^1  \right ). 
\]
We have already verified Conditions (i)--(ii) of \cref{lem: second order H derivative} in the proof of \cref{lem:HadamardDerivativeRestriction}. Regarding Condition (iii), since $\psi'(\bm{\gamma}_t) \in \mathcal{Z}_{\bm{\mu}}$ (cf. the proof of \cref{lem:HadamardDerivativeRestriction}), in view of \cref{rem: Z-functional} after \cref{lem: Z-functional}, it holds that $\psi'(\bm{\gamma}_t)(\bm{\varphi}^{\bm{\mu}}\vert_{S}) \in \overline{\dot \Psi_{\bm{\mu}}(\lin (\Theta^s))}^{\DD}$.  
It remains to verify Condition (iv) in \cref{lem: second order H derivative}. Observe that, as $\supp (\gamma_t^i) \subset S_i$,
\[
\begin{split}
&t^{-1} \left \{\psi'(\bm{\gamma}_t)(\bm{\varphi}^{\bm{\mu}_t}\vert_{S}) - \psi'(\bm{\gamma}_t)(\bm{\varphi}^{\bm{\mu}}\vert_{S}) \right \}\\
&= \left ( \int t^{-1}\Big ( e^{\frac{\varphi_1^{\bm{\mu}_t}\oplus \varphi_2^{\bm{\mu}_t} - c}{\varepsilon}} - e^{\frac{\varphi_1^{\bm{\mu}}\oplus \varphi_2^{\bm{\mu}} - c}{\varepsilon}} \Big ) \, d\gamma_t^{2},
\int t^{-1}\Big ( e^{\frac{\varphi_1^{\bm{\mu}_t} \oplus \varphi_2^{\bm{\mu}_t} - c}{\varepsilon}} - e^{\frac{\varphi_1^{\bm{\mu}}\oplus \varphi_2^{\bm{\mu}} - c}{\varepsilon}} \Big ) \, d\gamma_t^{1} \right ). 
\end{split}
\]
By symmetry, it suffices to show convergence of the first coordinate. Recall from \cref{thm: H derivative potential} that $t^{-1}(\varphi_i^{\bm{\mu}_t} - \varphi_i^{\bm{\mu}}) \to [\varphi_i^{\bm{\mu}}]'(\bm{\gamma})$ in $\calC^s(
\calX)$ for $i=1,2$. We shall show that
\begin{equation}
\begin{split}
\sup_{x_1 \in 
\calX
} \Bigg \|
&t^{-1}\Big\{ e^{\frac{\varphi_1^{\bm{\mu}_t}(x_1) + \varphi_2^{\bm{\mu}_t}(\cdot) - c(x_1,\cdot)}{\varepsilon}} - e^{\frac{\varphi_1^{\bm{\mu}}(x_1) + \varphi_2^{\bm{\mu}}(\cdot) - c(x_1,\cdot)}{\varepsilon}} \Big \} \\
&- \varepsilon^{-1}\big \{ [\varphi_1^{\bm{\mu}}]'(\bm{\gamma})(x_1) + [\varphi_2^{\bm{\mu}}]'(\bm{\gamma})(\cdot) \big \}  e^{\frac{\varphi_1^{\bm{\mu}} (x_1)+ \varphi_2^{\bm{\mu}}(\cdot) - c(x_1,\cdot)}{\varepsilon}} \Bigg \|_{\calC^s(
\calX
)} \to 0. 
\end{split}
\label{eq: equi differentiatbility}
\end{equation}
Indeed, since $\gamma_t^2 \to \gamma^2$ in $\ell^\infty (
B^s
)$, \eqref{eq: equi differentiatbility} implies that 
\[
\int t^{-1}\Big ( e^{\frac{\varphi_1^{\bm{\mu}_t} \oplus \varphi_2^{\bm{\mu}_t} - c}{\varepsilon}} - e^{\frac{\varphi_1^{\bm{\mu}} \oplus \varphi_2^{\bm{\mu}} - c}{\varepsilon}} \Big ) \, d\gamma_t^{2} \to \varepsilon^{-1}\int \big \{ [\varphi_1^{\bm{\mu}}]'(\bm{\gamma}) \oplus [\varphi_2^{\bm{\mu}}]'(\bm{\gamma})\big \}  e^{\frac{\varphi_1^{\bm{\mu}} \oplus \varphi_2^{\bm{\mu}} - c}{\varepsilon}} \, d\gamma^2
\]
in $\calC(
\calX
)$, yielding the desired result in view of \cref{rem: tangent cone}.

Observe that,  in general, for $f,g \in \calC^s(
\calX
)$, it holds that \[
\| fg \|_{\calC^s(
\calX
)} \le K \| f \|_{\calC^s(
\calX
)} \| g \|_{\calC^s(
\calX
)}
\]
for some constant $K$ that depends only on $s,d$. 
Now, since
\[
\sup_{x_1 \in 
\calX
} \Big \| e^{\frac{\varphi_1^{\bm{\mu}} (x_1)+ \varphi_2^{\bm{\mu}}(\cdot) - c(x_1,\cdot)}{\varepsilon}} \Big \|_{\calC^s(
\calX
)} < \infty
\]
by \cref{lem: EOT potential} (ii), the left-hand side of \eqref{eq: equi differentiatbility} is bounded by 
\begin{equation}
\sup_{x_1 \in 
\calX
} \Bigg \|
t^{-1} \Big \{ e^{\frac{\varphi_1^{\bm{\mu}_t}(x_1)-\varphi_1^{\bm{\mu}}(x_1) + \varphi_2^{\bm{\mu}_t}(\cdot)-\varphi_2^{\bm{\mu}}(\cdot)}{\varepsilon}}  -1 \Big \}
- \varepsilon^{-1}\big \{ [\varphi_1^{\bm{\mu}}]'(\bm{\gamma})(x_1) + [\varphi_2^{\bm{\mu}}]'(\bm{\gamma})(\cdot) \big \}  \Bigg \|_{\calC^s(
\calX
)}
\label{eq: equi differentiability 2}
\end{equation}
up to a constant independent of $t$.  It is not difficult to see that \eqref{eq: equi differentiability 2} converges to zero when $\| \cdot \|_{\calC^s(
\calX
)}$  is replaced by $\| \cdot \|_{\infty,
\calX
}$. We shall show that for every multi-index $k \in \NN_0^d$ with $0 < |k| \le s$,
\begin{equation}
\sup_{x_1 \in 
\calX
}\Bigg \|
t^{-1} D_{x_2}^k\big( e^{\frac{\varphi_1^{\bm{\mu}_t}(x_1)-\varphi_1^{\bm{\mu}}(x_1) + \varphi_2^{\bm{\mu}_t}(\cdot)-\varphi_2^{\bm{\mu}}(\cdot)}{\varepsilon}}  \big)
- \varepsilon^{-1}D_{x_2}^k [\varphi_2^{\bm{\mu}}]'(\bm{\gamma})(\cdot)   \Bigg \|_{\infty,
\calX
} \to 0.
\label{eq: convergence of derivative}
\end{equation}
By the multivariate Fa\`{a} di Bruno formula (cf. Theorem 2.1 in \cite{constantine1996multivariate}), 
\[
t^{-1} D_{x_2}^k\Big( e^{\frac{\varphi_1^{\bm{\mu}_t}(x_1)-\varphi_1^{\bm{\mu}}(x_1) + \varphi_2^{\bm{\mu}_t}(x_2)-\varphi_2^{\bm{\mu}}(x_2)}{\varepsilon}} \Big)
\]
can be expressed as a linear combination of functions of the form
\begin{equation}
e^{\frac{\varphi_1^{\bm{\mu}_t}(x_1)-\varphi_1^{\bm{\mu}}(x_1) + \varphi_2^{\bm{\mu}_t}(x_2)-\varphi_2^{\bm{\mu}}(x_2)}{\varepsilon}}  \times t^{-1}\prod_{j=1}^J \Big[D_{x_2}^{\ell_j}\big( \varphi_2^{\bm{\mu}_t}(x_2)-\varphi_2^{\bm{\mu}}(x_2) \big) \Big]^{m_j},
\label{eq: equi differentiability 3}
\end{equation}
where $1 \le J \le |k|$, and $m_1,\dots,m_J \in \NN$ and $\ell_1,\dots,\ell_J \in \NN_0^d \setminus \{ 0 \}$ are such that $m_1\ell_1+\dots+m_J\ell_J = k$.
The coefficient for the leading term with $J=1$ and $m_1=1$ is $\varepsilon^{-1}$. 
Except for the leading term, the order of \eqref{eq: equi differentiability 3} is $O(t)$ as $t \downarrow 0$ in $\calC(
\calX\times \calX
)$, so we arrive at the following expansion in $\calC(
\calX\times \calX
)$:
\[
\begin{split}
&t^{-1} D_{x_2}^k\Big( e^{\frac{\varphi_1^{\bm{\mu}_t}(x_1)-\varphi_1^{\bm{\mu}}(x_1) + \varphi_2^{\bm{\mu}_t}(x_2)-\varphi_2^{\bm{\mu}}(x_2)}{\varepsilon}} \Big) \\
&= \varepsilon^{-1}e^{\frac{\varphi_1^{\bm{\mu}_t}(x_1)-\varphi_1^{\bm{\mu}}(x_1) + \varphi_2^{\bm{\mu}_t}(x_2)-\varphi_2^{\bm{\mu}}(x_2)}{\varepsilon}}  \times t^{-1} D_{x_2}^{k}\big( \varphi_2^{\bm{\mu}_t}(x_2)-\varphi_2^{\bm{\mu}}(x_2) \big) + o(1).
\end{split}
\]
The right-hand side converges to $\varepsilon^{-1}D_{x_2}^k [\varphi_2^{\bm{\mu}}]'(\bm{\gamma})(x_2)$ in $\calC(
\calX\times\calX
)$ as $t^{-1}(\varphi_i^{\bm{\mu}_t} - \varphi_i^{\bm{\mu}}) \to [\varphi_i^{\bm{\mu}}]'(\bm{\gamma})$ in $\calC^s(
\calX
)$ for $i=1,2$, which leads to \eqref{eq: convergence of derivative} as desired. 

Therefore, \cref{lem: second order H derivative} guarantees that the limit 
\[
\lim_{t \downarrow 0} \frac{\bm{\varphi}^{\bm{\mu}_t}\vert_{S} - \bm{\varphi}^{\bm{\mu}}\vert_{S} - t[\bm{\varphi}^{\bm{\mu}}\vert_{S}]'(\bm{\gamma}_t)}{t^2/2}
\]
exists in $\DD$. By our construction, the limit depends on $\bm{\gamma}$ but not on the choice of sequence $\bm{\mu}_t$, so denote the limit by $[\bm{\varphi}^{\bm{\mu}}\vert_{S}]''(\bm{\gamma})$. 

\underline{Step 2}. Next, 
 we leverage twice differentiability of $\bm\nu\mapsto \bm\varphi^{\bm\nu}\vert_{S}$ to establish the second derivative of $\bm\nu\mapsto \bm\varphi^{\bm\nu}$ using  \eqref{eq:PotentialRepresentation}. As in the proof of \cref{lem: H derivative potential}, we only deal with the first potential.
 Recall the notation $\xi^\nu$ and $\Upsilon$ that appeared in the proof of \cref{lem: H derivative potential} and observe that $\varphi_1^{\bm\nu}=-\varepsilon\log(\Upsilon(\bm\nu))$. Precisely, $\varphi_1^{\bm\nu}$ agrees with the composition of the maps $f \in \calC_{+}(\calX) \mapsto -\varepsilon \log (f) \in \calC(\calX)$ with $\calC_{+}(\calX) = \{ f \in \calC(\calX) : f > 0 \}$ and $\Upsilon: \calP_{\mu^1} \times \calP_{\mu^2} \to \calC_+(\calX)$. The former map is twice Hadamard (indeed Fr\'{e}chet) differentiable and its domain $\calC_+(\calX)$ is open in $\calC(\calX)$, so the tangent cone $\mathfrak{T}_{\calC_+(\calX)}(f)$ agrees with $\calC(\calX)$ for every $f \in \calC_+(\calX)$.
From the second-order chain rule for Hadamard  differentiable maps (see \cref{lem:SecondOrderChainRule}), it suffices to establish second-order Hadamard differentiability of $\Upsilon$ at $\bm\mu$ tangentially to $\overline{\mathcal M_{\mu^1}}^{\ell^{\infty}(B^s)}\times \overline{\mathcal M_{\mu^2}}^{\ell^{\infty}(B^s)}$. As such, with $\bm\mu_t$ and $\bm\gamma_t$ as above, recalling the expression of the derivative of $\Upsilon$ from \eqref{eq:IntegratedXiDerivative} and that $[\varphi_2^{\bm\mu}\vert_{S_2}]'(\bm\gamma_t)=[\varphi_2^{\bm\mu}]'(\bm\gamma_t)$ on $S_2$ 
(see \cref{rmk:CompatibilityPotentialDerivatives}), we have
\begin{align*}
&\frac{\Upsilon(\bm\mu_t)-\Upsilon(\bm\mu)-t\Upsilon'_{\bm\mu}(\bm\gamma_t)}{t^2/2}
\\
&=(2/t^2)\int \left \{\xi^{\bm\mu_t}(\cdot,x_2)-\xi^{\bm\mu}(\cdot,x_2)-t\varepsilon^{-1}\xi^{\bm\mu}(\cdot,x_2)[\varphi_2^{\bm\mu}]'(\bm\gamma_t)(x_2)\right\}\, d\mu^2(x_2)
\\
&\quad +(2/t)\int \left \{\xi^{\bm\mu_t}(\cdot,x_2)-\xi^{\bm\mu}(\cdot,x_2)\right \}\, d\gamma^2_t(x_2),
\end{align*}
As $t\downarrow 0$, the first term on the right-hand side  converges to the second-order Hadamard derivative of the map $\bm\nu\in\calP_{\mu^1} \times \calP_{\mu^2}\mapsto \int \xi^{\bm\nu}(\cdot,x_2)\, d\mu^2(x_2)\in\calC(\calX)$ at $\bm\mu$, which agrees with
\begin{align*}
    \int \left \{  \varepsilon^{-2}\xi^{\bm\mu}(\cdot,x_2)\left([\varphi_2^{\bm\mu}]'(\bm\gamma)(x_2)\right)^2+\varepsilon^{-1}\xi^{\bm\mu}(\cdot,x_2)[\varphi_2^{\bm\mu}\vert_{S_2}]''(\bm\gamma)(x_2) \right \}\, d\mu^2(x_2)
\end{align*} 
by the chain rule and twice differentiability of the restricted potentials.

As $(x_1,x_2)\in\calX\times\calX\mapsto e^{\frac{\varphi_2^{\bm\nu}(x_2)-c(x_1,x_2)}{\varepsilon}}$ extends $\xi^{\bm\nu}$, the second term satisfies  
\begin{equation}
\label{eq:AuxBoundXi}
\begin{aligned}
    &\sup_{x_1\in\calX}\left|t^{-1}\int \big \{ \xi^{\bm\mu_t}(x_1,\cdot)-\xi^{\bm\mu}(x_1,\cdot)\big \}\, d\gamma^2_t-\varepsilon^{-1}\int\xi^{\bm\mu}(x_1,\cdot)[\varphi_2^{\bm\mu}]'(\bm\gamma)(\cdot)\, d\gamma^2\right|
    \\
    &\leq
    \sup_{x_1\in\calX} \Big\|e^{\frac{\varphi_2^{\bm\mu}-c(x_1,\cdot)}{\varepsilon}}\Big(t^{-1}\Big(e^{\frac{\varphi_2^{\bm\mu_t}-\varphi_2^{\bm\mu}}{\varepsilon}}-1\Big)-\varepsilon^{-1}[\varphi_2^{\bm\mu}]'(\bm\gamma)(\cdot)\Big)\Big\|_{\calC^s(\calX)}\|\gamma_t^2\|_{\infty,B^s}
    \\
    &\quad +\sup_{x_1\in\calX} \Big\|\varepsilon^{-1}e^{\frac{\varphi_2^{\bm\mu}-c(x_1,\cdot)}{\varepsilon}}[\varphi_2^{\bm\mu}]'(\bm\gamma)(\cdot)\Big\|_{\calC^s(\calX)}\|\gamma_t^2-\gamma^2\|_{\infty,B^s}.  
\end{aligned}
\end{equation}
As in Step 1, one can show that the right-hand side of \eqref{eq:AuxBoundXi} converges to $0$ as $t\downarrow 0$.

Consequently, $\Upsilon$ is twice Hadamard differentiable at $\bm\mu$ with derivative
\begin{align*}
 \Upsilon''_{\bm\mu}(\bm\gamma)&=\int \left \{ \varepsilon^{-2}\xi^{\bm\mu}(\cdot,x_2)\left([\varphi_2\vert_{S_2}^{\bm\mu}]'(\bm\gamma)(x_2)\right)^2+\varepsilon^{-1}\xi^{\bm\mu}(\cdot,x_2)[\varphi_2\vert_{S_2}^{\bm\mu}]''(\bm\gamma)(x_2) \right \}\, d\mu^2(x_2)
 \\
 &\quad +2\varepsilon^{-1}\int\xi^{\bm\mu}(x_1,\cdot)[\varphi_2\vert_{S_2}^{\bm\mu}]'(\bm\gamma)(\cdot)\, d\gamma^2, \quad \bm{\gamma}\in\overline{\mathcal M_{\mu^1}}^{\ell^{\infty}(B^s)}\times \overline{\mathcal M_{\mu^2}}^{\ell^{\infty}(B^s)}, 
\end{align*}
and hence
    $[\varphi_1^{\bm\mu}]''(\bm\gamma)=\varepsilon e^{\frac{2\varphi^{\bm\mu}_1}{\varepsilon}}\big(\Upsilon'_{\bm\mu}(\bm\gamma)\big)^2-\varepsilon e^{\frac{\varphi_1^{\bm\mu}}{\varepsilon}}\Upsilon''_{\bm\mu}(\bm\gamma)$ by the chain rule.
 Finally, continuity and positive homogeneity (of degree $2$) of $[\bm{\varphi}^{\bm{\mu}}]''$ follow from the construction.
\qed

\subsection{Proof of Lemma \ref{lem: H derivative Sinkhorn}}
We divide the proof into two steps. Since the Sinkhorn divergence is invariant w.r.t. the choice of reference points and $\gamma^i (a) = 0$ for every constant $a \in \R$, we may assume without loss of generality that $x_1^\circ=x_2^\circ$.

\underline{Step 1}. The map $\bm{\nu} = (\nu^1,\nu^2) \mapsto \mathsf{S}_{c,\varepsilon}(\nu^1,\nu^2), 
\calP_{\mu^1} \times \calP_{\mu^2}
\subset \ell^\infty(B^s) \times \ell^\infty(B^s) \to \R$ is Hadamard differentiable at $\bm{\mu}$  tangentially to $\overline{\calM_{\mu^1}}^{\ell^\infty(B^s)} \times \overline{\calM_{\mu^2}}^{\ell^\infty(B^s)}$ with derivative 
\[
\big[\mathsf{S}^{\bm{\mu}}_{c,\varepsilon}\big]'(\bm{\gamma}) = \int \varphi_1^{\bm{\mu}}\, d\gamma^1 + \int \varphi_2^{\bm{\mu}} \, d\gamma^2.
\]
Given regularity of EOT potentials (\cref{lem: EOT potential}), the proof follows similarly to the proof (or its argument) of Theorem 7 in \cite{goldfeld2022statistical}, with small modifications. To avoid repetitions, we omit the details. 

\underline{Step 2}. The conclusion of the lemma follows by Step 1 and noting that $\varphi_1^{(\mu^i,\mu^i)} = \varphi_2^{(\mu^i,\mu^i)}$ by symmetry of the cost function. 
\qed

\subsection{Proof of \cref{thm: second H derivative Sinkhorn}}
As before, we may assume without loss of generality that $x_1^\circ=x_2^\circ$. Using \eqref{eq: Sinkhorn dual}, we can expand $\bar{\mathsf{S}}_{c,\varepsilon}(\mu_t^1,\mu_t^2)$ as
\[
\begin{split}
\bar{\mathsf{S}}_{c,\varepsilon}(\mu_t^1,\mu_t^2) &= \int (\varphi_1^{(\mu_t^1,\mu_t^2)}-\varphi_{1}^{(\mu_t^1,\mu_t^1)}) \, d\mu_t^1 + \int (\varphi_2^{(\mu_t^1,\mu_t^2)}-\varphi_{2}^{(\mu_t^2,\mu_t^2)}) \, d\mu_t^2 \\
&=\int (\varphi_1^{(\mu_t^1,\mu_t^2)}-\varphi_{1}^{(\mu_t^1,\mu_t^1)}) \, d\mu  + \int (\varphi_2^{(\mu_t^1,\mu_t^2)}-\varphi_{2}^{(\mu_t^2,\mu_t^2)}) \, d\mu \\
&\quad+ t \int (\varphi_1^{(\mu_t^1,\mu_t^2)}-\varphi_{1}^{(\mu_t^1,\mu_t^1)}) \, d\gamma_t^1 +t \int (\varphi_2^{(\mu_t^1,\mu_t^2)}-\varphi_{2}^{(\mu_t^2,\mu_t^2)}) \, d\gamma_t^2. 
\end{split}
\]
By \cref{thm: H derivative potential}, we have 
\[
\frac{\varphi_i^{(\mu_t^1,\mu_t^2)} - \varphi_i^{(\mu,\mu)}}{t} \to [\varphi_i^{(\mu,\mu)}]'(\gamma^1,\gamma^2) \quad \text{in} \ \calC^s (\calX)
\]
as $t \downarrow 0$, while
\[
\frac{\varphi_i^{(\mu_t^i,\mu_t^i)} - \varphi_i^{(\mu,\mu)}}{t} \to [\varphi_i^{(\mu,\mu)}]'(\gamma^i,\gamma^i) \quad \text{in} \ \calC^s (\calX).
\]
Since $\gamma_t^i \to \gamma^i$ in $\ell^\infty(B^s)$ as $t \downarrow 0$, we have 
\[
\int (\varphi_i^{(\mu_t^1,\mu_t^2)}-\varphi_{i}^{(\mu_t^i,\mu_t^i)}) \, d\gamma_t^i = t\int [\varphi_i^{(\mu,\mu)}]'(\gamma^1-\gamma^i,\gamma^2-\gamma^i) \, d\gamma^i + o(t). 
\]
On the other hand, by \cref{thm: second order H derivative}, we have
\[
\begin{split}
&\frac{\varphi_1^{(\mu_t^1,\mu_t^2)}-\varphi_{1}^{(\mu_t^1,\mu_t^1)} - t[\varphi_1^{(\mu,\mu)}]'(0,\gamma_t^2-\gamma_t^1)}{t^2/2} \\
&\to [\varphi_1^{(\mu,\mu)}]''(\gamma^1,\gamma^2) - [\varphi_1^{(\mu,\mu)}]''(\gamma^1,\gamma^1) \quad \text{in} \ \calC(\calX) \times \calC(\calX). 
\end{split}
\]
A similar expansion holds for $\varphi_2^{(\mu_t^1,\mu_t^2)}-\varphi_{2}^{(\mu_t^2,\mu_t^2)}$. Hence, we have
\begin{equation}
\begin{split}
&\int (\varphi_1^{(\mu_t^1,\mu_t^2)}-\varphi_{1}^{(\mu_t^1,\mu_t^1)}) \, d\mu  + \int (\varphi_2^{(\mu_t^1,\mu_t^2)}-\varphi_{2}^{(\mu_t^2,\mu_t^2)}) \, d\mu  \\
&=t \int \big ([\varphi_1^{(\mu,\mu)}]'(0,\gamma_t^2-\gamma_t^1) + [\varphi_2^{(\mu,\mu)}]'(\gamma_t^1-\gamma_t^2,0) \big) \, d\mu \\
&\quad + \frac{t^2}{2}\int \Big (\sum_{i=1}^2\big( [\varphi_i^{(\mu,\mu)}]''(\gamma^1,\gamma^2) - [\varphi_i^{(\mu,\mu)}]''(\gamma^i,\gamma^i)\big) \Big) \, d\mu + o(t^2). 
\end{split}
\label{eq: expansion}
\end{equation}
As the cost function is symmetric, we have 
\[
[\varphi_2^{(\mu,\mu)}]'(\gamma_t^1-\gamma_t^2,0) = [\varphi_1^{(\mu,\mu)}]'(0,\gamma_t^1-\gamma_t^2) = - [\varphi_1^{(\mu,\mu)}]'(0,\gamma_t^2-\gamma_t^1),
\]
so that the first term on the right-hand side of \eqref{eq: expansion} vanishes. Conclude that 
\[
\begin{split}
\lim_{t \downarrow 0} \frac{\bar{\mathsf{S}}_{c,\varepsilon}(\mu_t^1,\mu_t^2)}{t^2/2} &= \int \Big (\sum_{i=1}^2\big( [\varphi_i^{(\mu,\mu)}]''(\gamma^1,\gamma^2) - [\varphi_i^{(\mu,\mu)}]''(\gamma^i,\gamma^i)\big) \Big)\, d\mu \\
&\quad + 2 \sum_{i=1}^2\int [\varphi_i^{(\mu,\mu)}]'(\gamma^1-\gamma^i,\gamma^2-\gamma^i) \, d\gamma^i =: \Delta_\mu (\bm{\gamma}).
\end{split}
\]
Continuity and positive homogeneity (of degree $2$) of $\Delta_\mu$ follows immediately (or from its construction).
\qed

\subsection{Proofs for \cref{sec: main results} (except \cref{prop: independence test})}
In what follows, let 
\[
\mathfrak{M}_{\mu^i} = \Big \{ gd\mu^i : \text{$g: 
\calX
\to \R$ is bounded and measurable with $\mu^i$-mean zero} \Big \},
\]
where $gd\mu^i$ should be understood as a signed measure $A \mapsto \int_{A} g d\mu^i$.

For a (generic) probability measure $\mu$  and a function class $\calF \subset L^2 (\mu)$, a stochastic process $G = (G(f))_{f \in \calF}$ is called a $\mu$-Brownian bridge if it is a Gaussian process with mean zero and covariance function $\E[G(f)G(g)] = \Cov_\mu(f,g)$; furthermore, if $G$ is a tight measurable map into $\ell^\infty (\calF)$, then we call $G$ a tight $\mu$-Brownian bridge in $\ell^\infty(\calF)$. Recall that a (zero-mean) Gaussian process that is a tight measurable map into $\ell^\infty(\calF)$ is an $\ell^\infty(\calF)$-valued Gaussian random variable (with mean zero) in the Banach space sense; see Lemma 3.9.8 in \cite{van1996weak}.
\begin{lemma}
\label{lem: weak convergence}
Let $s$ be a positive integer with $s > d/2$. Then, for every $\bm{\mu} = (\mu^1,\mu^2) \in \calP(
\calX
) \times \calP(
\calX
)$, we have 
\begin{equation}
\sqrt{n}(\hat{\bm{\mu}}_n - \bm{\mu}) \stackrel{d}{\to} \GG^{\bm{\mu}} = \big(\GG_1^{\mu^1},\GG_2^{\mu^2}\big) \quad \text{in} \ \ell^\infty(
B^s
) \times \ell^\infty(
B^s
),
\label{eq: weak convergence}
\end{equation}
where $\GG_1^{\mu^1}$ and $\GG_2^{\mu^2}$ are independent, tight $\mu^1$- and $\mu^2$-Brownian bridges in $\ell^\infty(
B^s
)$ and $\ell^\infty(
B^s
)$, respectively. Furthermore, $\supp(\GG^{\bm{\mu}}) \subset  \overline{\mathfrak{M}_{\mu^1}}^{\ell^\infty(
B^s
)} \times \overline{\mathfrak{M}_{\mu^2}}^{\ell^\infty(
B^s
)}$. 
\end{lemma}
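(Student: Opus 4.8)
The plan is to establish the weak convergence \eqref{eq: weak convergence} by a standard empirical process argument and then identify the support of the limit. First I would verify that the unit ball $B^s \subset \calC^s(\calX)$ is $\mu^i$-Donsker for each $i$. Since $\calX$ is a bounded closed ball in $\R^d$ and $s > d/2$, the metric entropy bound for H\"older/Sobolev-type balls (Theorem 2.7.1 in \cite{van1996weak}) gives $\log N_{[\,]}(\eta, B^s, L^2(\mu^i)) \lesssim \eta^{-d/s}$, and $d/s < 2$ ensures the bracketing integral $\int_0^1 \sqrt{\log N_{[\,]}(\eta, B^s, L^2(\mu^i))}\,d\eta$ is finite; moreover $B^s$ has a bounded, measurable (indeed continuous) envelope, so $B^s$ is $\mu^i$-Donsker. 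Hence $\sqrt{n}(\hat\mu_n^i - \mu^i) \stackrel{d}{\to} \GG_i^{\mu^i}$ in $\ell^\infty(B^s)$, where $\GG_i^{\mu^i}$ is a tight $\mu^i$-Brownian bridge. Because the two samples are independent, the joint convergence in the product space $\ell^\infty(B^s) \times \ell^\infty(B^s)$ follows, with $\GG_1^{\mu^1}$ and $\GG_2^{\mu^2}$ independent (one can argue this via the continuous mapping theorem applied to the map sending the joint empirical process to the pair, or directly by showing the characteristic functionals factor).

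Next I would pin down the support. Fix $i$ and abbreviate $\GG = \GG_i^{\mu^i}$. The claim is $\supp(\GG) \subset \overline{\mathfrak{M}_{\mu^i}}^{\ell^\infty(B^s)}$. Each realization $\sqrt{n}(\hat\mu_n^i - \mu^i)$, viewed as an element of $\ell^\infty(B^s)$, is of the form $\sqrt{n}(\hat\mu_n^i - \mu^i)(f) = \sqrt{n}\int f\, d(\hat\mu_n^i - \mu^i)$, a signed measure with total mass zero supported on $\supp(\mu^i)$ (being a difference of an empirical measure with atoms in $\supp(\mu^i)$ and $\mu^i$ itself); thus each realization lies in $\mathfrak{M}_{\mu^i}$ — or more precisely in the cone $\calM_{\mu^i}$ scaled appropriately, which is contained in $\overline{\mathfrak{M}_{\mu^i}}^{\ell^\infty(B^s)}$. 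Since $\mathfrak{M}_{\mu^i}$ is a linear subspace of $\ell^\infty(B^s)$, its closure $\overline{\mathfrak{M}_{\mu^i}}^{\ell^\infty(B^s)}$ is a closed subspace, hence in particular a closed set. A Gaussian measure on a Banach space assigns full mass to the closed linear span (indeed to the closure of the support of any sequence converging to it in distribution): by the portmanteau theorem, for the closed set $F := \overline{\mathfrak{M}_{\mu^i}}^{\ell^\infty(B^s)}$ we have $\Prob(\GG \in F) \ge \limsup_n \Prob(\sqrt{n}(\hat\mu_n^i - \mu^i) \in F) = 1$. Therefore $\GG$ concentrates on $F$, and since $F$ is closed, $\supp(\GG) \subset F$. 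Applying this coordinatewise and using independence gives $\supp(\GG^{\bm{\mu}}) \subset \overline{\mathfrak{M}_{\mu^1}}^{\ell^\infty(B^s)} \times \overline{\mathfrak{M}_{\mu^2}}^{\ell^\infty(B^s)}$.

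The main obstacle, such as it is, is the Donsker verification: one must be careful that the relevant function class is exactly the \emph{unit ball} $B^s$ of $\calC^s(\calX)$ rather than a ball of continuous functions, and invoke the correct entropy estimate (the condition $s > d/2$ is precisely what makes $d/s < 2$, which is the threshold for the bracketing entropy integral to converge); the measurability/envelope conditions and the passage to the joint two-sample limit are then routine. A minor point worth stating carefully is that the realizations of $\sqrt n(\hat\mu_n^i-\mu^i)$ genuinely define bounded linear functionals on $\calC^s(\calX)$ restricting to elements of $\ell^\infty(B^s)$, which is immediate since $\mu^i$ and the empirical measures are probability measures on the compact set $\calX$ and $\| f\|_{\infty,\calX} \le \|f\|_{\calC^s(\calX)}$.
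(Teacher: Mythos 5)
Your verification that $B^s$ is $\mu^i$-Donsker and the passage to the joint two-sample limit match the paper's proof (which cites Theorem 2.7.1 and Example 1.4.6 in van der Vaart--Wellner). The gap is in the support argument. You assert that each realization $\sqrt{n}(\hat\mu_n^i - \mu^i)$ ``lies in $\mathfrak{M}_{\mu^i}$,'' but this is false: unless $\mu^i$ is discrete, $\hat\mu_n^i$ has atoms at the sample points and so $\sqrt{n}(\hat\mu_n^i - \mu^i)$ has a singular part relative to $\mu^i$; membership in $\mathfrak{M}_{\mu^i}$ requires a bounded density $g$ w.r.t.~$\mu^i$. Your correction to the cone $\calM_{\mu^i}$ is right, but the ensuing assertion $\calM_{\mu^i} \subset \overline{\mathfrak{M}_{\mu^i}}^{\ell^\infty(B^s)}$ is precisely the nontrivial content of the claim, and you state it without any argument. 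As written, your portmanteau step only yields $\supp(\GG_i^{\mu^i}) \subset \overline{\calM_{\mu^i}}^{\ell^\infty(B^s)}$ --- the weaker inclusion that the paper itself says ``follows from the portmanteau theorem'' in the proof of \cref{thm: limit theorem EOT potential}.

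The paper proves the stronger inclusion $\supp(\GG_i^{\mu^i}) \subset \overline{\mathfrak{M}_{\mu^i}}^{\ell^\infty(B^s)}$ by an entirely different route, namely the RKHS characterization of the support of a Banach-valued Gaussian (\cref{lem: support}): every element of the RKHS of a $\mu$-Brownian bridge acts as $f \mapsto \int f g\, d\mu$ for some $g \in L^2(\mu)$ with mean zero, and such functionals lie in $\overline{\mathfrak{M}_\mu}^{\ell^\infty(\calF)}$; since the support of a centered Gaussian equals the closure of its RKHS, the inclusion follows. If you want to salvage the portmanteau route, you must actually prove $\calM_{\mu^i}\subset\overline{\mathfrak{M}_{\mu^i}}^{\ell^\infty(B^s)}$; this is true but takes work --- e.g.\ approximate $\delta_x - \mu^i$ for $x \in \supp(\mu^i)$ by $\bigl(\mu^i(B(x,\epsilon))^{-1}\ind_{B(x,\epsilon)} - 1\bigr)\,d\mu^i$, using $\mu^i(B(x,\epsilon))>0$ and the uniform Lipschitz bound on $B^s$, and then extend to general $\nu \in \frakP_{\mu^i}$ by density of finitely supported measures in the bounded-Lipschitz metric. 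Asserting the inclusion without such an argument is a genuine gap.
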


\begin{proof}[Proof of \cref{lem: weak convergence}]
The set $B^s$
is $\mu^i$-Donsker by Theorem 2.7.1 in \cite{van1996weak}. 
Since the samples are independent, by Example 1.4.6 in \cite{van1996weak} (combined with Lemma 3.2.4 in \cite{dudley2014uniform}
concerning measurable covers), we obtain the weak convergence result \eqref{eq: weak convergence}. The second claim follows by \cref{lem: support}.
\end{proof}

The following lemma will be used to prove the second claim of Theorem \ref{thm: limit theorem Sinkhorn null}.

\begin{lemma}
\label{lem: support Sinkhorn}
Consider the setting of \cref{thm: second H derivative Sinkhorn} and assume that $\bar{\mathsf{S}}_{c,\varepsilon}$ is nonnegative. Let $W$ be a tight random variable with values in $\ell^\infty(B^s) \times \ell^\infty(B^s)$ whose support is a cone $\ne \{ 0 \}$. Then, unless $\Delta_\mu$ is identically zero, the support of $\Delta_\mu(W)$ agrees with $[0,\infty)$. 
\end{lemma}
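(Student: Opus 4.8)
The plan is to prove the two inclusions $\supp(\Delta_\mu(W))\subseteq[0,\infty)$ and $[0,\infty)\subseteq\supp(\Delta_\mu(W))$ separately, using three ingredients: that $\Delta_\mu$ is nonnegative on its domain, that it is continuous and positively homogeneous of degree two (both from \cref{thm: second H derivative Sinkhorn}), and that $\supp(W)$ is a cone.

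For the first inclusion I would first record that $\Delta_\mu\ge 0$ everywhere on $\overline{\calM_\mu}^{\ell^\infty(B^s)}\times\overline{\calM_\mu}^{\ell^\infty(B^s)}$. Here \cref{rem: tangent cone} enters: that set is the tangent cone to $\frakP_\mu\times\frakP_\mu$ at $(\mu,\mu)$, so any $\bm{\gamma}$ in it equals $\lim_{t\downarrow0}t^{-1}(\bm{\mu}_t-(\mu,\mu))$ for some $(\mu_t^1,\mu_t^2)\in\frakP_\mu\times\frakP_\mu$, and then \cref{thm: second H derivative Sinkhorn} gives $\Delta_\mu(\bm{\gamma})=\lim_{t\downarrow0}\bar{\mathsf{S}}_{c,\varepsilon}(\mu_t^1,\mu_t^2)/(t^2/2)\ge 0$ by the assumed nonnegativity of $\bar{\mathsf{S}}_{c,\varepsilon}$. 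Since $W$ takes values in this domain almost surely and $\Delta_\mu$ is continuous, hence Borel, $\Delta_\mu(W)$ is a genuine nonnegative random variable, which yields $\supp(\Delta_\mu(W))\subseteq[0,\infty)$.

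For the reverse inclusion I would use the hypothesis that $\Delta_\mu$ is not identically zero to produce a point $\bm{\gamma}^*\in\supp(W)$ with $c_0:=\Delta_\mu(\bm{\gamma}^*)>0$, then exploit that $\supp(W)$ is a closed cone so that $t\bm{\gamma}^*\in\supp(W)$ for all $t\ge 0$, and finally that degree-two homogeneity makes $\Delta_\mu(t\bm{\gamma}^*)=t^2c_0$ run over all of $[0,\infty)$ as $t$ varies. Given a target $\lambda\ge 0$, pick $t_\lambda\ge 0$ with $t_\lambda^2c_0=\lambda$; for each $\delta>0$ continuity of $\Delta_\mu$ makes $\Delta_\mu^{-1}((\lambda-\delta,\lambda+\delta))$ an open neighbourhood of the support point $t_\lambda\bm{\gamma}^*$, so it is charged with positive probability, i.e.\ $\Prob(|\Delta_\mu(W)-\lambda|<\delta)>0$; hence $\lambda\in\supp(\Delta_\mu(W))$. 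The endpoint $\lambda=0$ is included via $t_\lambda=0$, since a nonempty cone has the origin in its closure. Combining the two inclusions gives $\supp(\Delta_\mu(W))=[0,\infty)$.

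The main obstacle I anticipate is the one step that is more than bookkeeping: extracting $\bm{\gamma}^*$ \emph{inside} $\supp(W)$ rather than merely inside the domain of $\Delta_\mu$, i.e.\ transferring the hypothesis ``$\Delta_\mu$ not identically zero'' to a nonvanishing statement on $\supp(W)$. In the applications of this lemma $W$ is the pair of independent Brownian bridges of \cref{lem: weak convergence}, whose support is the $\ell^\infty(B^s)$-closure of the corresponding Cameron–Martin spaces, namely $\overline{\mathfrak{M}_{\mu^1}}^{\ell^\infty(B^s)}\times\overline{\mathfrak{M}_{\mu^2}}^{\ell^\infty(B^s)}$; when $\mu^1=\mu^2=\mu$ this coincides with the domain $\overline{\calM_\mu}^{\ell^\infty(B^s)}\times\overline{\calM_\mu}^{\ell^\infty(B^s)}$ because $\mathfrak{M}_\mu\subseteq\calM_\mu$ and the two sets have the same $\ell^\infty(B^s)$-closure, so the transfer is immediate. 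I would make this identification explicit so that ``identically zero'' is unambiguous; everything else—nonnegativity, the scaling identity, and the fact that open neighbourhoods of support points receive positive mass—is routine.
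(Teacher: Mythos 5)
Your argument is correct and is essentially the paper's own: nonnegativity of $\Delta_\mu$ (from nonnegativity of $\bar{\mathsf S}_{c,\varepsilon}$) gives one inclusion, and continuity together with degree-two homogeneity and the cone structure of $\supp(W)$ gives that every open interval in $[0,\infty)$ has positive mass under $\Delta_\mu(W)$. The only point where you diverge is the worry about locating a nonvanishing point inside $\supp(W)$: the paper sidesteps this by restricting $\Delta_\mu$ to $\supp(W)$ from the outset, so that ``identically zero'' in the lemma statement is understood as ``identically zero on $\supp(W)$'' (which matches how the lemma is invoked, since the theorems phrase the exception as ``$\chi_\mu=0$ a.s.''); your extra identification $\overline{\mathfrak{M}_\mu}^{\ell^\infty(B^s)}=\overline{\calM_\mu}^{\ell^\infty(B^s)}$ is therefore not needed for the lemma itself.
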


\begin{proof}[Proof of \cref{lem: support Sinkhorn}]
Consider the restriction of $\Delta_\mu$ on $\supp(W)$, which we denote by the same symbol. The restriction is still continuous and positively homogeneous of degree $2$ (the latter follows as $\supp(W)$ is a cone).
The functional $\Delta_\mu$ is nonnegative by construction. 
By positive homogeneity, $\Delta_\mu$ is either identically zero or onto $[0,\infty)$. 
In our setting, $\Delta_\mu$ is onto $[0,\infty)$. For every open interval $(a,b) \subset [0,\infty)$, the inverse image $\Delta_\mu^{-1}((a,b))$ is nonempty and open in $\supp(W)$, and hence by the definition of support, we have $\Prob (\Delta_\mu(W) \in (a,b))> 0$, which implies that $\supp(\Delta_\mu(W)) = [0,\infty)$.
\end{proof}

We are now ready to prove \cref{thm: limit theorem EOT potential}, \cref{cor: limit theorem EOT map}, \cref{prop: limit theorem Sinkhorn}, and \cref{thm: limit theorem Sinkhorn null}.

\begin{proof}[Proof of \cref{thm: limit theorem EOT potential}]
The theorem follows from the Hadamard differentiability result (\cref{thm: H derivative potential}) combined with the functional delta method. 
Let $s > d/2$, so that the weak convergence \eqref{eq: weak convergence} holds.
Since $\sqrt{n}(\hat{\bm{\mu}}_n - \bm{\mu}) \in \calP_{\mu^1} \times \calP_{\mu^2}$ a.s. (as $\supp(\hat{\mu}_n^i) \subset \supp(\mu^i)$ a.s.) and $\supp(\GG^{\bm{\mu}}) \subset  \overline{\mathfrak{M}_{\mu^1}}^{\ell^\infty(
B^s)} \times \overline{\mathfrak{M}_{\mu^2}}^{\ell^\infty(
B^s)} \subset \overline{\calM_{\mu^1}}^{\ell^\infty(
B^s)} \times \overline{\calM_{\mu^2}}^{\ell^\infty(
B^s)}$  (or the inclusion $\supp(\GG^{\bm{\mu}}) \subset \overline{\calM_{\mu^1}}^{\ell^\infty(
B^s)} \times \overline{\calM_{\mu^2}}^{\ell^\infty(
B^s)}$
follows from the portmanteau theorem), we may apply the functional delta method (\cref{lem: functional delta method}) to conclude that 
\[
\sqrt{n}\big (\hat{\bm{\varphi}}_n -\bm{\varphi}^{\bm{\mu}} \big) \stackrel{d}{\to} [\bm{\varphi}^{\bm{\mu}}]'(\GG^{\bm{\mu}}) \quad \text{in} \ \calC^s(
\calX
) \times \calC^s(
\calX
). 
\]
Also, since $\overline{\mathfrak{M}_{\mu^1}}^{\ell^\infty(
B^s
)} \times \overline{\mathfrak{M}_{\mu^2}}^{\ell^\infty(
B^s
)}$ is a closed subspace of $\ell^\infty(
B^s
) \times \ell^\infty(
B^s
)$ and the restriction of $[\bm{\varphi}^{\bm{\mu}}]'$ to $\overline{\mathfrak{M}_{\mu^1}}^{\ell^\infty(
B^s
)} \times \overline{\mathfrak{M}_{\mu^2}}^{\ell^\infty(
B^s
)}$ is a continuous linear operator, we see that $[\bm{\varphi}^{\bm{\mu}}]'(\GG^{\bm{\mu}})$ is a zero-mean Gaussian random variable with values in $\calC^s(
\calX
) \times \calC^s(
\calX
)$. The result for the $s \le d/2$ case follows by the fact the inclusion map $f \mapsto f, \calC^s(
\calX
) \to \calC^{s'}(
\calX
)$ with $s' < s$ is continuous.
\end{proof}

\begin{proof}[Proof of \cref{cor: limit theorem EOT map}]
Since the map $f \mapsto \nabla f, \calC^s(
\calX
) \to \calC^{s-1}(
\calX
;\R^d)$ is continuous and linear, we have $\sqrt{n}(\hat{T}_n - T^{\bm{\mu}}) = - \nabla \sqrt{n}(\hat{\varphi}^n_{1} - \varphi_1^{\bm{\mu}}) \stackrel{d}{\to} -\nabla G_1^{\bm{\mu}}$ in $ \calC^{s-1}(
\calX
;\R^d)$ and the limit $-\nabla G_1^{\bm{\mu}}$ is again a zero-mean Gaussian variable. 
\end{proof}

\begin{proof}[Proof of \cref{prop: limit theorem Sinkhorn}]
Given the weak convergence and Hadamard differentiablity results (Lemmas \ref{lem: weak convergence} and \ref{lem: H derivative Sinkhorn}), the first claim follows by applying the functional delta method.

For the second claim, 
assume $\bar{\mathsf{S}}_{c,\varepsilon}(\mu^1,\mu^2) \ne 0$ and $\supp (\mu^1) \cap \supp(\mu^2) \ne \varnothing$. For simplicity of notation, assume without loss of generality that $\varepsilon=1$ and $x_1^\circ = x_2^\circ$, and define the shorthands $\varphi_i^{\bm{\mu}} = \varphi_i$ and $\varphi_i^{(\mu^i,\mu^i)} = \psi_i$, for $i=1,2$. Suppose on the contrary that $\sigma_{\bm{\mu}}^2 = 0$, which entails $\Var_{\mu^i}(\varphi_i - \psi_i) = 0$, so $\psi_i = \varphi_i + a_i$ $\mu^i$-a.e. for some $a_i \in \R$ for $i=1,2$.
The Schr\"{o}dinger system \eqref{eq: FOC} implies that 
\[
1 = \int e^{\varphi_1 \oplus \varphi_2 -c} \, d\mu^2 = e^{-a_1-a_2} \int e^{\psi_1\oplus \psi_2 -c} \, d\mu^2 = e^{-a_1-a_2 + \psi_1 - \psi_2}
\]
where we used the fact that $\varphi_1^{(\mu^2,\mu^2)} = \varphi_2^{(\mu^2,\mu^2)} = \psi_2$. The equality above holds $\mu^1$-a.e., so $\psi_1 - \psi_2 = a_1+a_2$ $\mu^1$-a.e. By symmetry, we also have $\psi_2-\psi_1 = a_1+a_2$ $\mu^2$-a.e.
Since $\psi_1$ and $\psi_2$ are continuous, we have $\psi_1-\psi_2=a_1+a_2$ on $\supp (\mu^1)$ and $\psi_2-\psi_1=a_1+a_2$ on $\supp (\mu^2)$, and as $\supp (\mu^1) \cap \supp(\mu^2) \ne \varnothing$, we must have $a_1 + a_2 = 0$. However, the duality formula \eqref{eq: Sinkhorn dual} then entails $\bar{\mathsf{S}}_{c,\varepsilon} (\mu^1,\mu^2) = 0$, which is a contradiction. 
\end{proof}

\begin{proof}[Proof of \cref{thm: limit theorem Sinkhorn null}]
Given the second-order Hadamard differentiability result (\cref{thm: second H derivative Sinkhorn}), the first claim of the theorem follows by applying the second-order functional delta method (\cref{lem: second order functional delta method}), 
$n \bar{\mathsf{S}}_{c,\varepsilon} (\hat{\mu}_n^1,\hat{\mu}_n^2) \stackrel{d}{\to} \Delta_\mu(\GG^{(\mu,\mu)})/2$.
The second claim follows by \cref{lem: support Sinkhorn}, upon noting that $\supp (\GG^{(\mu,\mu)})$ is a vector subspace of $\ell^\infty(B^s) \times \ell^\infty(B^s)$ (cf. the proof of \cref{lem: support} or Lemma 5.1 in \cite{van2008reproducing}).
\end{proof}

\subsection{Proof of \cref{prop: independence test}}
The proof uses techniques from $U$-processes. We refer to \cite{PeGi1999} as an excellent reference on $U$-processes. 
Assume $\pi = \pi^V \otimes \pi^W$.
For notational convenience, set $\hat{\pi}_n^\circ = \hat{\pi}_n^V \otimes \hat{\pi}_n^W$.
Let $B^s$ be the unit ball in $\calC^s(\calX)$ with $s > 2d$ (recall that $d=d_1+d_2$ and $\calX = \mathcal V \times \mathcal W$).
We will derive a joint limit distribution for $\sqrt{n}(\hat{\pi}_n - \pi)$ and $\sqrt{n}(\hat{\pi}_n^\circ - \pi)$ in $\ell^\infty (B^s) \times \ell^\infty (B^s)$. 
As in \cite{albert2015bootstrap},  define for $f \in B^s$, 
\begin{equation}
\label{eq: kernel_symmetrization}
\begin{split}
    h_f ( x_1, x_2 ) &= h_f((v_1,w_1),(v_2,w_2)) \\
    &:= f(v_1,w_1) + f(v_2,w_2) - f(v_1,w_2) - f(v_2,w_1).
    \end{split}
\end{equation}
The function $h_f$ is symmetric  with $\| h_f \|_{\calC^s(\calX \times \calX)} \le 4$. As $V_i$ and $W_i$ are independent, the mean of $h_f(X_1,X_2)$ is zero: $\E[h_f(X_1,X_2)] = 0$. Consider the $U$-statistic with kernel $h_f$:
\[
U_n(h_f) = \frac{1}{n(n-1)} \sum_{1 \le i\neq j \le n} h_f(X_i, X_j).
\]
Then, keeping in mind $X_i = (V_i,W_i)$, we can expand $U_n(h_f)$ as 
\begin{align*}
    U_n(h_f) &= \frac{2}{n} \sum_{i=1}^n f(X_i) - \frac{2}{n(n-1)} \sum_{1 \le i \neq j \le n} f(V_i,W_j)\\
    &=  \frac{2}{n} \sum_{i=1}^n f(X_i) - \frac{2}{n^2} \sum_{i,j=1}^n f(V_i,W_j) \\
    &\quad + \underbrace{\frac{2}{n^3 - n^2} \sum_{1 \le i \neq j \le n} f(V_i,W_j) + \frac{2}{n^2} \sum_{i=1}^n f(X_i)}_{=: A_n(f)} \\
    &= 2\big( \hat{\pi}_n (f) -  \hat{\pi}_n^\circ (f) \big) + A_n(f),
\end{align*}
that is,
\[
\hat{\pi}_n^\circ (f) = \hat{\pi}_n (f) - \frac{1}{2} U_n (h_f) + \frac{1}{2}A_n(f). 
\]
Since $|f| \le 1$, we have $|A_n(f)| \le 4/n$. 
We will approximate $U_n(h_f)$ by the Haj\'{e}k process. Apply the Hoeffding decomposition to $h_f$:
\[
h_f^{(1)} (x) = \int h_f (x,x') \, d\pi(x'), \ h_{f}^{(2)} (x,x') = h_f(x,x') - h_{f}^{(1)}(x) - h_{f}^{(1)}(x').
\]
Then, we can decompose $U_n(h_f)$ as $U_n (h_f) = 2 \hat{\pi}_n(h_f^{(1)}) + U_n(h_f^{(2)})$.

Now, since $\{ h_f : f \in B^s \} \subset \{ h \in \calC^s (\calX \times \calX) : \| h \|_{\calC^s(\calX \times \calX)} \le 4 \}$ and the $t$-entropy number of the latter function class  w.r.t. $\| \cdot \|_{\infty,\calX \times \calX}$ is of order $t^{-2d/s}$ as $t \downarrow 0$ by Theorem 2.7.1 in \cite{van1996weak}, applying Corollary 5.6 in \cite{chen2020jackknife} with $k=2$, we have 
\[
\E[\| U_n(h_f^{(2)}) \|_{\infty,B^s}] \le O(n^{-1}) \times \int_0^1 t^{-2d/s} dt = O(n^{-1}). 
\]
Summarizing, we have $\E\big[\big \| \sqrt{n} \big(\hat{\pi}_n^{\circ}(f) - \hat{\pi}_n(f-h_f^{(1)}) \big) \big\|_{\infty,B^s}\big] \to 0$.

For notational convenience, set $\GG_n = \sqrt{n}(\hat{\pi}_n - \pi)$ and $\tilde{h}_f = f-h_f^{(1)}$. We will show that $\big ( \big(\GG_n(f)\big)_{f \in B^s}, \big(\GG_n(\tilde{h}_f) \big)_{f \in B^s} \big)$ converges in distribution a tight limit in $\ell^\infty(B^s) \times \ell^\infty(B^s)$. To this end, we will apply \cref{lem: product weak limit}. Again, by Theorem 2.7.1 in \cite{van1996weak}, $B^s$ and $\tilde{\calH} := \{ \tilde{h}_f : f \in B^s\}$ are both $\pi$-Donsker. Finite-dimensional convergence follows trivially, so by \cref{lem: product weak limit}, 
\[
\big ( \big(\GG_n(f)\big)_{f \in B^s}, \big(\GG_n(h) \big)_{h \in \tilde{\mathcal{H}}} \big) \stackrel{d}{\to} \bm{G}^\pi \quad \text{in} \ \ell^\infty (B^s) \times \ell^\infty(\tilde{\calH}) 
\]
for some tight limit $\bm{G}^\pi$.
Finally, since the map $\iota : \ell^\infty (B^s) \times \ell^\infty(\tilde{\calH}) \to \ell^\infty (B^s) \times \ell^\infty (B^s)$ defined by
\[
(\iota \bm{z})_1 (f) = z_1 (f) \quad \text{and} \quad  (\iota \bm{z})_2 (f) = z_2 (\tilde{h}_f), \quad \bm{z}=(z_1,z_2) \in \ell^\infty (B^s) \times \ell^\infty(\tilde{\calH})
\]
is continuous, we conclude that 
\[
\big ( \big(\GG_n(f)\big)_{f \in B^s}, \big(\GG_n(\tilde{h}_f) \big)_{f \in B^s} \big) \stackrel{d}{\to}  \iota \bm{G}^\pi =: \tilde{\bm{G}}^{\pi} \quad \text{in} \ \ell^\infty (B^s) \times \ell^\infty (B^s).
\]
The limit variable $\tilde{\bm{G}}^{\pi}= (\tilde G_1^\pi,\tilde G_2^\pi)$ is a two-dimensional Gaussian process with mean zero and covariance structure
\[
\begin{split}
&\Cov (\tilde G_1^\pi(f),\tilde G_1^\pi(g)) = \Cov_\pi (f,g), \ \Cov (\tilde G_2^\pi(f),\tilde G_2^\pi(g)) = \Cov_\pi (\tilde h_f,\tilde h_g), \\
&\text{and} \quad \Cov (\tilde G_1^\pi(f),\tilde G_2^\pi(g)) = \Cov_{\pi}(f,\tilde{h}_g).
\end{split}
\]
In view of the discussion above, the same limit holds for $\big (\sqrt{n}(\hat{\pi}_n-\pi),\sqrt{n}(\hat{\pi}_n^\circ-\pi) \big)$. Now, combining \cref{thm: second order H derivative} and the second-order functional delta method (\cref{lem: second order functional delta method}), we conclude that 
\[
nD_n = n \bar{\mathsf{S}}_{c,\varepsilon}(\hat{\pi}_n,\hat{\pi}_n^\circ) \stackrel{d}{\to} \frac{1}{2}\Delta_\pi (\tilde{\bm{G}}^{\pi}).
\]
Finally, the second claim of the proposition follows by \cref{lem: support Sinkhorn}, upon noting that the support of $\tilde{\bm{G}}^\pi$ is a vector subspace of $\ell^\infty(B^s) \times \ell^\infty(B^s)$; cf. Lemma 5.1 in \cite{van2008reproducing}.
This completes the proof.
\qed

\subsection{Proof of \cref{prop: asymptotic efficiency}}
(i). 
We first note that, as $hd\mu^2 \in \overline{\calM_{\mu^2}}^{\ell^\infty(B^s)}
$, by Hadamard differentiability, $\kappa_n$ satisfies 
\[
\sqrt{n}\big(\kappa_n(h) - \kappa_n(0)\big) \to \delta_{\mu^2}'(hd\mu^2) =: \dot \kappa (h). 
\]
Since $h \mapsto hd\mu^2$ is linear and continuous from $H$ into 
$\ell^\infty(B^s)$
(cf. Example 1.5.10 in \cite{van1996weak}), $\dot \kappa: H \to \BB$ is a continuous linear map, establishing regularity of the parameter sequence $\kappa_n(h)$. 

Next, we shall verify regularity of the empirical EOT map $\tilde{T}_n$. Hadamard differentiability enables finding limit distributions under local alternatives.
By the second claim of the functional delta method (\cref{lem: functional delta method}), we have
\[
\sqrt{n} \big(\tilde{T}_n - T^{\bm{\mu}}\big) - \delta_{\mu^2}'(\sqrt{n}(\hat{\mu}_n^2-\mu^2)) \to 0
\]
in probability under $P_{n,0}$. From the proof of Theorem 3.10.12 in \cite{van1996weak}, 
\[
\Big ( \sqrt{n}(\hat{\mu}_n^2 - \mu^2), \frac{dP_{n,h}}{dP_{n,0}} \Big ) \stackrel{d}{\to} (\GG_{2}^{\mu^2},\Lambda) \quad \text{in} \ 
\ell^\infty(B^s)
\times \R
\]
under $P_{n,0}$, and the law $L$ on 
$\ell^\infty(B^s)$
defined by $L(A) = \E[\ind_{A}(\GG_{2}^{\mu^2})\Lambda]$ agrees with the law of $\GG_{2}^{\mu^2} + hd\mu^2$ (note: $hd\mu^2$ should be understood as an element of 
$\ell^\infty(B^s)$
). Combining these two displays above yields that 
\[
\Big (\sqrt{n} (\tilde{T}_n - T^{\bm{\mu}}), \frac{dP_{n,h}}{dP_{n,0}} \Big ) \stackrel{d}{\to} (\delta_{\mu^2}'(\GG_{2}^{\mu^2}),\Lambda) \quad \text{in} \ 
\ell^\infty(B^s)
\times \R
\]
under $P_{n,0}$. Hence, by Le Cam's third lemma (Theorem 3.10.7 in \cite{van1996weak}), $\sqrt{n} (\tilde{T}_n - T^{\bm{\mu}}) \stackrel{d}{\to} \delta_{\mu^2}'(\GG_{2}^{\mu^2} + hd\mu^2)$ under $P_{n,h}$. Note that $\GG_{2}^{\mu^2} + hd\mu^2 \in \overline{\mathfrak{M}_{\mu^2}}^{\ell^\infty(
B^s
)}$. By linearity of the derivative, we have $\delta_{\mu^2}'(\GG_{2}^{\mu^2} + hd\mu^2) = \delta_{\mu^2}'(\GG_{2}^{\mu^2}) + \delta_{\mu^2}'(hd\mu^2)$.
Conclude that
\[
\sqrt{n} (\tilde{T}_n - T^{\bm{\mu}}) \stackrel{d}{\to} \delta_{\mu^2}'(\GG_{2}^{\mu^2}) + \delta_{\mu^2}'(hd\mu^2) \quad \text{in} \ \BB
\]
under $P_{n,h}$. This immediately implies that $\tilde{T}_n$ is regular, i.e., $\sqrt{n}(\tilde{T}_n - T^{(\mu^1,\mu_{n,h}^2)}) \stackrel{d}{\to} \delta_{\mu^2}'(\GG_{2}^{\mu^2})$ under $P_{n,h}$.

Now, in view of Theorems 3.11.2 in \cite{van1996weak}, the last claim follows by verifying that  $\delta_{\mu^2}'(\GG_2^{\mu^2}) \stackrel{d}{=} G$, where $G$ is a Gaussian random variable in $\BB$ such that for every $b^* \in \BB^*$, $b^*G$ is Gaussian with mean zero and variance 
\[
\sup_{h \in H: \| h \|_{L^2(\mu^2)}=1} (b^* \dot \kappa(h))^2 = \sup_{h \in H: \| h \|_{L^2(\mu^2)}=1} \big(b^* \delta_{\mu^2}'(hd\mu^2) \big)^2.
\]
By construction $\delta_{\mu^2,b^*}':=b^* \circ \delta_{\mu^2}'$ is a continuous linear functional on $\overline{\mathfrak{M}_{\mu^2}}^{\ell^\infty(
B^s
)} \supset \supp(\GG_2^{\mu^2})$ with $\mathfrak{M}_{\mu^2} = \{ hd\mu^2 : h \in H \}$ (\cref{lem: support}). Note that 
$\overline{\mathfrak{M}_{\mu^2}}^{\ell^\infty(B^s)}$
is a closed subspace of 
$\ell^\infty(B^s)$
. Arguing as in the proof of Proposition 2 in \cite{goldfeld2022statistical}, we obtain 
\[
\Var (b^* G) = \sup_{h \in H: \| h \|_{L^2(\mu^2)}=1} \big(\delta_{\mu^2,b^*}'(hd\mu^2) \big)^2 = \Var \big ( \delta_{\mu^2,b^*}' (\GG_2^{\mu^2}) \big ) = \Var (b^*\delta_{\mu^2}'(\GG_2^{\mu^2})). 
\]
Conclude that $\delta_{\mu^2}'(\GG_2^{\mu^2}) \stackrel{d}{=} G$.

\medskip

(ii). Given the result of Part (i), Part (ii) follows directly from Theorem 3.11.5 in \cite{van1996weak}.
\qed

\section{Discussions}
\label{sec: conclusion}

We have established limit distributions for the EOT potentials, map, and Sinkhorn divergence. The main ingredient of our proofs was Hadamard differentiability of the relevant maps. Importantly, the Hadamard differentiability results yield not only limit distributions but also bootstrap consistency and asymptotic efficiency. Regarding the Sinkhorn divergence, our main contribution is the derivation of the null limit distribution for compactly supported distributions. When the population measures agree, the first Hadamard derivative of the Sinkhorn divergence vanishes, which necessitates looking into higher-order Hadamard derivatives. For this, Hadamard differentiability of the EOT map in (sufficiently regular) H\"{o}lder spaces plays an important role. As another contribution, we have derived the null limit distribution of the Sinkhorn independence test statistic and determined the precise order of the test statistic under the null, which was not available before. We end this paper with discussions of two possible extensions. 

\subsection{Unbounded supports}
In the present paper, we have assumed that the marginals $\mu^1$ and $\mu^2$ are compactly supported, which excludes, for instance, Gaussian distributions (one exception is \cref{prop: limit theorem Sinkhorn}, where the compactness assumption can be relaxed to a sub-Gaussian condition when $c$ is quadratic; see \cite{del2022central,goldfeld2022statistical}). Indeed, the compactness assumption is essential to formulate the Hadamard differentiability results, where we first regard the dual potentials as a mapping into the H\"{o}lder space with arbitrary smoothness level (cf. \cref{lem: EOT potential} (ii)), and then embed each marginal into the topological dual of the H\"{o}lder space (cf. \cref{rem: tangent cone}). The second step is natural in view of the Schr\"{o}dinger system, since whenever $\varphi_1,\varphi_2 \in \calC^s(\calX)$, the exponentiated functions
\begin{equation}
e^{\frac{\varphi_1(x_1) + \varphi_2(\cdot) - c(x_1,\cdot)}{\varepsilon}} \quad \text{and} \quad e^{\frac{\varphi_1(\cdot) + \varphi_2(x_2) - c(\cdot,x_2)}{\varepsilon}}
\label{eq: exponential}
\end{equation}
both lie in $\calC^s(\calX)$ for every $x_1,x_2 \in \calX$. 

For a smooth cost, even when the marginals are not compactly supported, as long as they have sufficiently light tails, we may uniformly upper bound derivatives of EOT potentials by a polynomial of $1+\| \cdot \|$ (cf. \cite{mena2019statistical}), so a natural idea would be to replace the H\"{o}lder space with a certain weighted H\"{o}lder space (cf. \cite{nickl2007bracketing}). However, even when the dual potentials lie in a weighted H\"{o}lder space, the exponentiated functions in \eqref{eq: exponential} need not lie in the same weighted H\"{o}lder space, so it is nontrivial to find a suitable normed space into the marginals are embedded when the marginals are not compactly supported. Such an extension would be highly technical and hence is left for future research. It is worth mentioning that \cite{hundrieser2021limit} derive a Hadamard derivative for the EOT plan w.r.t. the marginals when the supports are \textit{countable}. They use a curvature-type condition on the cost to allow for unbounded supports (see Remark 5.9 in \cite{hundrieser2021limit}; see also \cite{harchaoui2020asymptotics} for a similar condition), but their proof technique is restricted to the countable support case where we may naturally embed the marginals into a sequence space, and does not directly extend to a more general case.

\subsection{Multimarginal EOT}
The results of the present paper extend to the multimarginal case, as long as we deal with compactly supported marginals. Given a smooth cost function $c: (\R^d)^N \to \R_+$ and marginals $\mu^1,\dots,\mu^N \in \calP(\calX)$ with $N \ge 2$ arbitrary, the multimarginal EOT problem reads as
\begin{equation}
\inf_{\pi \in \Pi (\mu^1,\dots,\mu^N)} \int c \, d\pi + \varepsilon \mathsf{D}_{\mathsf{KL}} ( \pi \| m ), \quad m = \mu^1 \otimes \cdots \otimes \mu^N,
\label{eq: EOT multi}
\end{equation}
where $\Pi(\mu^1,\dots,\mu^N)$ is the set of Borel probability measures on $\calX^N$ with marginals $\mu^1,\dots,\mu^N$. To simplify notation, we set $\varepsilon = 1$. The corresponding dual problem is
\[
\sup_{\bm{\varphi} = (\varphi_1,\dots,\varphi_N) \in \prod_{i=1}^N L^\infty (\mu^i)} \sum_{i=1}^N \int \varphi_i \, d\mu^i - \int e^{\sum_{i=1}^N\varphi_i(x_i) -c(x_1,\dots,x_N)} dm(x_1,\dots,x_N)+ 1.
\]
Bounded functions $\bm{\varphi} = (\varphi_1,\dots,\varphi_N)$ solve the dual problem if and only if they satisfy the Schr\"{o}dinger system, i.e., 
\begin{equation}
\int e^{\sum_{j =1}^N \varphi_j(x_j) -c(x_1,\dots,x_N)} dm^{-i}(x_{-i}) -1=0 \quad \text{for} \ \mu^i\text{-a.e.} \ x_i, \ i=1,\dots,N, 
\label{eq: Schrodinger multi}
\end{equation}
where $m^{-i} = \otimes_{j \ne i} \mu^j$ and $x_{-i}=(x_1,\dots,x_{i-1},x_{i+1},\dots,x_N)$. See \cite{carlier2020differential}. 
Theorem 4.3 in \cite{carlier2020differential} shows that the Schr\"{o}dinger system \eqref{eq: Schrodinger multi} admits a solution $\bm{\varphi} = (\varphi_1,\dots,\varphi_N) \in \prod_{i=1}^N L^\infty (\mu^i)$, which is unique in the sense that if $\tilde {\bm{\varphi}} = (\tilde \varphi_1,\dots,\tilde \varphi_N) \in \prod_{i=1}^N L^\infty (\mu^i)$ is another solution to the Schr\"{o}dinger system \eqref{eq: Schrodinger multi}, then $\sum_{i=1}^N \varphi(x_i) = \sum_{i=1}^N \tilde \varphi_i(x_i)$ for $m$-a.e. $(x_1,\dots,x_N)$, i.e., there exist constants $a_1,\dots,a_N$ that sum to zero such that $(\tilde{\varphi}_1,\dots,\tilde{\varphi}_N) = (\varphi_1+a_1,\dots,\varphi_N+a_N)$ for $m$-a.e. $(x_1,\dots,x_N)$. We call $\bm{\varphi}$ EOT potentials. Then, the (unique) optimal solution to the multimarginal EOT problem \eqref{eq: EOT multi} is given by
\[
d\pi^{\star} (x_1,\dots,x_N)=e^{\sum_{i=1}^N\varphi_i(x_i) -c(x_1,\dots,x_N)} dm(x_1,\dots,x_N). 
\]

Given these preparations, it is not difficult to see that \cref{lem: EOT potential} naturally extends to the multimarginal setting. For simplicity, we choose a common reference point $x^\circ \in \calX$. 
Then, for every $\bm{\mu} = (\mu^1,\dots,\mu^N) \in \prod_{i=1}^N \calP(\calX)$, there exists a unique set of functions $\bm{\varphi}^{\bm{\mu}} = (\varphi_1^{\bm{\mu}},\dots,\varphi_{N}^{\bm{\mu}}) \in \prod_{i=1}^N \calC(\calX)$ satisfying the Schr\"{o}dinger system \eqref{eq: Schrodinger multi} for every $x_i \in \calX$ for each $i=1,\dots,N$, and such that $\varphi_1^{\bm{\mu}} (x^\circ) = \cdots = \varphi_N^{\bm{\mu}} (x^\circ)$. Furthermore, for every $s \in \NN$, there exists a constant $R_s$ such that $\max_{1 \le i \le N} \| \varphi_i^{\bm{\mu}} \|_{\calC^s(\calX)} \le R_s$ for all $\bm{\mu}  \in \prod_{i=1}^N \calP(\calX)$. Then, Theorem \ref{thm: H derivative potential} extends to the multimarginal setting as follows.

\begin{theorem}
For every $s \in \NN$ and $\bm{\mu} = (\mu^1,\dots,\mu^N) \in \prod_{i=1}^N \calP(\calX)$, the map $\bm{\nu} \mapsto \bm{\varphi}^{\bm{\nu}}, \prod_{i=1}^N \calP_{\mu^i} \subset \prod_{i=1}^N \ell^\infty (B^s) \to \prod_{i=1}^N\calC^s(\calX)$ is Hadamard differentiable tangentially to $\prod_{i=1}^N \overline{\calM_{\mu^i}}^{\ell^\infty(B^s)}$. 
\end{theorem}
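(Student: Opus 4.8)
The plan is to transcribe the proof of \cref{thm: H derivative potential} to the $N$-marginal setting; the structural ingredients carry over unchanged. Fix $\bm{\mu}=(\mu^1,\dots,\mu^N)\in\prod_{i=1}^N\calP(\calX)$, set $S_i=\supp(\mu^i)$, $S=\prod_{i=1}^N S_i$, and $\DD=\prod_{i=1}^N\calC(S_i)$ with the maximum product norm, and for $\bm{\nu}=(\nu^1,\dots,\nu^N)$ write $m^{-i}_{\bm{\nu}}=\bigotimes_{j\ne i}\nu^j$ and $x_{-i}=(x_1,\dots,x_{i-1},x_{i+1},\dots,x_N)$. Fix a common reference point $x^\circ\in S$ and let $\Theta^s\subset\DD$ be the set of restrictions to $S$ of tuples $(\varphi_1,\dots,\varphi_N)\in\prod_i\calC^s(\calX)$ with $\max_i\|\varphi_i\|_{\calC^s(\calX)}\le R_s$ and $\varphi_1(x^\circ)=\dots=\varphi_N(x^\circ)$; by Arzel\`a--Ascoli, $\Theta^s$ is compact in $\DD$. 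Define $\Psi:\big(\prod_i\calP_{\mu^i}\big)\times\Theta^s\to\DD$ by letting its $i$-th coordinate be $(\bm{\nu},\bm{\varphi})\mapsto\int e^{(\sum_{j}\varphi_j(x_j)-c(x_1,\dots,x_N))/\varepsilon}\,dm^{-i}_{\bm{\nu}}(x_{-i})-1$, so that $\bm{\varphi}^{\bm{\nu}}\vert_S$ is the unique zero of $\Psi(\bm{\nu},\cdot)$ on $S$, and record the representation
\[
\varphi_i^{\bm{\nu}}=-\varepsilon\log\Big(\int e^{(\sum_{j\ne i}\varphi_j^{\bm{\nu}}(x_j)-c(x_1,\dots,x_N))/\varepsilon}\,dm^{-i}_{\bm{\nu}}(x_{-i})\Big),\qquad i=1,\dots,N,
\]
which holds on all of $\calX$ and depends on $\bm{\varphi}^{\bm{\nu}}$ only through its restriction to $S$. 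As in the two-marginal case, writing $\bm{\varphi}^{\bm{\nu}}\vert_S=\phi\circ\Psi(\bm{\nu},\cdot)$ with $\phi$ a zero-selecting map on $\ell^\infty(\Theta^s,\DD)$, one first establishes Hadamard differentiability of $\bm{\nu}\mapsto\bm{\varphi}^{\bm{\nu}}$ into $\prod_i\calC(\calX)$ and then lifts it to $\prod_i\calC^s(\calX)$.

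For the first step I would reprove the three auxiliary lemmas. Hadamard differentiability of $\bm{\nu}\mapsto\Psi(\bm{\nu},\cdot)$ at $\bm{\mu}$, with linear derivative whose $i$-th coordinate is $\bm{\varphi}\mapsto\sum_{j\ne i}\int e^{(\sum_k\varphi_k-c)/\varepsilon}\,d\big(\gamma^j\otimes\bigotimes_{k\notin\{i,j\}}\mu^k\big)$, follows exactly as in \cref{lem: Hadamard differentiability of objective function}, using the uniform bound $\sup_{(x_1,\dots,x_N)}\sup_{\bm{\varphi}\in\Theta^s}\|e^{(\sum_{j\ne i}\bar{\varphi}_j(\cdot)-c)/\varepsilon}\|_{\calC^s(\calX)}<\infty$ (the analogue of \eqref{eq: R prime}) to obtain both the requisite Lipschitz estimate and the extension of the derivative to $\prod_i\overline{\calM_{\mu^i}}^{\ell^\infty(B^s)}$. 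The map $\Psi_{\bm{\mu}}:=\Psi(\bm{\mu},\cdot)$ is injective on $\Theta^s$ by uniqueness of EOT potentials, and since $\Theta^s$ is compact its inverse is continuous at $0$; it is Fr\'echet differentiable at $\bm{\varphi}^{\bm{\mu}}\vert_S$ with derivative $\dot{\Psi}_{\bm{\mu}}$ the linearized Schr\"odinger operator whose $i$-th coordinate sends $\bm{h}\in\lin(\Theta^s)$ to $\varepsilon^{-1}\int e^{(\sum_k\varphi_k^{\bm{\mu}}-c)/\varepsilon}\big(\sum_j h_j(x_j)\big)\,dm^{-i}_{\bm{\mu}}(x_{-i})$. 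The one genuinely new point is that $\dot{\Psi}_{\bm{\mu}}$ is injective on $\lin(\Theta^s)$ with $\|\cdot\|_{\DD}$-continuous inverse: its kernel consists of tuples of constants summing to zero and agreeing at $x^\circ$, hence is $\{0\}$, while the quantitative invertibility on the closure of its range is supplied by the analysis of the multimarginal Schr\"odinger system in \cite{carlier2020differential} (the reference on which \cref{lem: Psi functional} already relies, and whose framework is $N$-marginal from the outset). Granting this, Lemma 3.9.34 in \cite{van1996weak} yields Hadamard differentiability of $\phi$ at $\Psi_{\bm{\mu}}$ with derivative $z\mapsto-\dot{\Psi}_{\bm{\mu}}^{-1}(z(\bm{\varphi}^{\bm{\mu}}\vert_S))$, the chain rule (Lemma 3.9.3 in \cite{van1996weak}) gives Hadamard differentiability of $\bm{\nu}\mapsto\bm{\varphi}^{\bm{\nu}}\vert_S$ into $\DD$ (the verification that the derivative of $\bm{\nu}\mapsto\Psi(\bm{\nu},\cdot)$ lands in the set to which $\phi$ is differentiable tangentially is identical to the two-marginal case), and composing with the representation formula -- an application of the chain rule to $f\mapsto-\varepsilon\log f$ and to $\bm{\nu}\mapsto\int e^{(\sum_{j\ne i}\varphi_j^{\bm{\nu}}(x_j)-c)/\varepsilon}\,dm^{-i}_{\bm{\nu}}$, whose Hadamard differentiability follows by the same splitting of $\int\cdot\,dm^{-i}_{\bm{\mu}_t}$ into a frozen-measure term and a lower-order term used in \cref{lem: H derivative potential} -- delivers Hadamard differentiability of $\bm{\nu}\mapsto\bm{\varphi}^{\bm{\nu}}$ into $\prod_i\calC(\calX)$.

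To lift to $\prod_i\calC^s(\calX)$ I would repeat Steps 1 and 2 of the proof of \cref{thm: H derivative potential}. Differentiating $e^{-\varphi_i^{\bm{\nu}}(x_i)/\varepsilon}=\int e^{(\sum_{j\ne i}\varphi_j^{\bm{\nu}}(x_j)-c)/\varepsilon}\,dm^{-i}_{\bm{\nu}}(x_{-i})$ and applying the multivariate Fa\`a di Bruno formula (Theorem 2.1 in \cite{constantine1996multivariate}), for every multi-index $k$ with $|k|\le s$ the function $D^k\varphi_i^{\bm{\nu}}$ is a linear combination of products of maps of the form $\bm{\nu}\mapsto\int\zeta(x_1,\dots,x_N)\,e^{(\sum_j\varphi_j^{\bm{\nu}}(x_j)-c)/\varepsilon}\,dm^{-i}_{\bm{\nu}}(x_{-i})$ with $\zeta$ smooth and depending only on $c$ and $k$, and each such map is Hadamard differentiable into $\calC(\calX)$ by the $\calC^s$-convergence of $\bm{\varphi}^{\bm{\nu}}$ together with \cref{lem: EOT potential}(ii)--(iii) -- the argument is verbatim that of Step 1, with $N-1$ integration variables in place of one. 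Since every $D^k\varphi_i^{\bm{\nu}}$ with $|k|\le s$ is then Hadamard differentiable into $\calC(\calX)$, completeness of $\calC^s(\calX)$ upgrades the differentiability of $\bm{\nu}\mapsto\bm{\varphi}^{\bm{\nu}}$ to $\prod_i\calC^s(\calX)$, with derivative linear and continuous from $\prod_i\overline{\calM_{\mu^i}}^{\ell^\infty(B^s)}$. I expect the main obstacle to be nothing more than the invertibility-with-bounded-inverse statement for the linearized multimarginal Schr\"odinger operator $\dot{\Psi}_{\bm{\mu}}$, which is precisely what \cite{carlier2020differential} provides; the remaining work is purely organizational bookkeeping of the $N$-fold tensor products ($m^{-i}_{\bm{\nu}}$, the factors $\bigotimes_{k\notin\{i,j\}}\mu^k$, and the coordinate on which each $\gamma^j$ acts) through the compactness, chain-rule, and Fa\`a di Bruno arguments.
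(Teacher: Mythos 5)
Your proposal fills in, along exactly the lines the authors indicate, the proof that the paper explicitly omits (``The proof is similar to the two-marginal case, so omitted for brevity''), and you correctly identify the only genuinely new ingredient: quantitative invertibility of the linearized multimarginal Schr\"odinger operator, supplied by \cite{carlier2020differential} whose framework is $N$-marginal from the outset. (Incidentally, the concluding section's reference to \cite{carcamo2020directional} for this point appears to be a typo; the proof of \cref{lem: Psi functional} actually invokes Theorem 4.3 and Proposition 3.1 of \cite{carlier2020differential}, which is what you cite.) One place where ``follows exactly as in \cref{lem: Hadamard differentiability of objective function}'' is slightly too glib: for $N=2$ each coordinate of $\Psi$ is affine in a single $\nu^j$, so $t^{-1}\bigl(\psi(\bm{\mu}+t\bm{\gamma}_t)-\psi(\bm{\mu})\bigr)=\psi'(\bm{\gamma}_t)$ identically, which is what the paper's proof uses; for $N\ge 3$ the $i$-th coordinate depends $(N-1)$-multilinearly on $\bigotimes_{j\ne i}\nu^j$, so the difference quotient acquires cross-terms involving $\gamma_t^{j_1}\otimes\cdots\otimes\gamma_t^{j_k}$ with $k\ge 2$, each of order $O(t^{k-1})$. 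These vanish uniformly over $\Theta^s$ by the $\calC^s$-bound you extract as the analogue of \eqref{eq: R prime} together with $\|\gamma_t^j\|_{\infty,B^s}=O(1)$, so nothing breaks, but it is precisely where affineness is silently used in the two-marginal argument and is worth stating rather than folding into ``organizational bookkeeping.''
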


The proof is similar to the two-marginal case, so omitted for brevity. Note that the proof of Lemma \ref{lem: Psi functional} relies on the results of \cite{carcamo2020directional}, but their results cover the multimarginal case. Other results extend similarly. For example, let $\hat{\mu}_n^i$ denote the empirical distribution of $n$ i.i.d. data from $\mu^i$ and assume the samples from different marginal distributions are independent.  Then, for $\hat{\bm{\varphi}}_n = \bm{\varphi}^{\hat{\bm{\mu}}_n}$, we have that $\sqrt{n}(\hat{\bm{\varphi}}_n  - \bm{\varphi}^{\bm{\mu}})$ converges in distribution to a zero-mean Gaussian random variable in $\prod_{i=1}^N \calC^s(\calX)$.


\appendix

\section{$m$-out-of-$n$ bootstrap for Sinkhorn null limit}
\label{sec: subsampling}

We consider estimating  the Sinkhorn null limit distribution in Theorem \ref{thm: limit theorem Sinkhorn null} by the two-sample $m$-out-of-$n$ bootstrap. 
Let $\bm{\mu} = (\mu^1,\mu^2) \in \calP(\calX) \times \calP(\calX)$ be arbitrary. For each $i=1,2$, let $X_1^i,\dots,X_n^i$ be i.i.d. data from $\mu^i$ with $\hat{\mu}_n^i = n^{-1}\sum_{j=1}^n \delta_{X_j^i}$. Consider the pooled empirical distribution $\hat{\rho}_{n} =(2n)^{-1}\sum_{j=1}^n (\delta_{X_j^1} + \delta_{X_j^2})$, and let $Z_1^B,\dots,Z_{2m}^B$ be an independent sample from $\hat{\rho}_n$, where $m=m_n \to \infty$. Set
\[
\hat{\rho}_{m,n}^{1,B} = \frac{1}{m}\sum_{j=1}^m \delta_{Z_j^B} \quad \text{and} \quad \hat{\rho}_{m,n}^{2,B} = \frac{1}{m}\sum_{j=m+1}^{2m}\delta_{Z_j^B}.
\]
The following proposition shows that the $m$-out-of-$n$ bootstrap can consistently estimate the null limit law in Theorem \ref{thm: limit theorem Sinkhorn null} when $\mu^1=\mu^2=\mu$.
Recall that $\mathsf{BL}_1(\R)$ denotes the collection of $1$-Lipschitz functions $g: \R \to [-1,1]$.
\begin{proposition}
Consider the above setting and set $\rho = (\mu^1+\mu^2)/2$. Assume $m=o(n)$. Then, we have
\begin{equation}
\sup_{g \in \mathsf{BL}_1(\R)}\Big| \E^B \Big[ g\big(m\bar{\mathsf{S}}_{c,\varepsilon}(\hat{\rho}_{m,n}^{1,B},\hat{\rho}_{m,n}^{2,B})\big)\Big] - \E\big[g(\chi_\rho
)\big] \Big| \to 0
\label{eq: subsampling}
\end{equation}
in probability,  where $\E^B$ denotes the conditional expectation given the sample and $\chi_\rho$ follows the limit law in Theorem \ref{thm: limit theorem Sinkhorn null} with $\mu$ replaced by $\rho$.
\end{proposition}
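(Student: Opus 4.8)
The plan is to write $m\,\bar{\mathsf S}_{c,\varepsilon}(\hat{\rho}_{m,n}^{1,B},\hat{\rho}_{m,n}^{2,B})$ as the image under $\bar{\mathsf S}_{c,\varepsilon}$ of a scaled, recentered pair of bootstrap empirical measures and then pass to the limit via the conditional second-order functional delta method together with \cref{thm: second H derivative Sinkhorn} evaluated at $(\rho,\rho)$. Fix $s\in\NN$ with $s>d/2$, so that $B^s$ is $\mu^1$-, $\mu^2$- and $\rho$-Donsker. Two preliminary observations are needed: (a) writing $\hat\rho_n=\tfrac12(\hat\mu_n^1+\hat\mu_n^2)$, the Glivenko--Cantelli property of $B^s$ gives $\|\hat\rho_n-\rho\|_{\infty,B^s}\to0$ a.s., while Donskerness gives $\sqrt n(\hat\rho_n-\rho)=\tfrac12\big(\sqrt n(\hat\mu_n^1-\mu^1)+\sqrt n(\hat\mu_n^2-\mu^2)\big)=O_{\Prob}(1)$ in $\ell^\infty(B^s)$, whence $\|\sqrt m(\hat\rho_n-\rho)\|_{\infty,B^s}=\sqrt{m/n}\,O_{\Prob}(1)\to0$ in probability since $m=o(n)$; (b) conditionally on the sample, $\hat\rho_{m,n}^{1,B}$ and $\hat\rho_{m,n}^{2,B}$ are empirical measures of $m$ i.i.d.\ draws from $\hat\rho_n$ built from disjoint blocks of $Z_1^B,\dots,Z_{2m}^B$, hence conditionally independent, and $\supp(\hat\rho_{m,n}^{i,B})\subset\supp(\hat\rho_n)\subset\supp(\rho)$, so $\hat\rho_{m,n}^{i,B}\in\frakP_\rho$ a.s.

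The first and main step is a conditional central limit theorem for the bootstrap empirical processes: for $i=1,2$,
\[
\sup_{h\in\mathrm{BL}_1(\ell^\infty(B^s))}\Big|\E^B\big[h\big(\sqrt m(\hat\rho_{m,n}^{i,B}-\hat\rho_n)\big)\big]-\E\big[h(\GG_i^\rho)\big]\Big|\to 0\quad\text{in probability},
\]
where $\GG_1^\rho,\GG_2^\rho$ are independent tight $\rho$-Brownian bridges in $\ell^\infty(B^s)$. This is the $m$-out-of-$n$ bootstrap CLT for the Donsker class $B^s$: resampling $m$ points from $\hat\rho_n$ amounts to resampling (with replacement) $m$ of the $2n$ pooled observations, the sup-norm entropy bound for $B^s$ supplies the uniform-entropy control needed to handle the triangular array $\hat\rho_n\to\rho$, and $m=o(n)$ eliminates the usual correction term so that the conditional limit is the uncorrected $\rho$-Brownian bridge (see \cite{bickel1997resampling,politis1994large} and the exchangeable-bootstrap results in \cite[Ch.~3.6]{van1996weak}). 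Combining this with observation (a) (a conditionally deterministic shift of $\ell^\infty(B^s)$-norm $\to0$ in probability changes the conditional law by at most that norm in the bounded-Lipschitz metric) and with the conditional independence in (b), we obtain
\[
\sqrt m\big(\hat\rho_{m,n}^{1,B}-\rho,\ \hat\rho_{m,n}^{2,B}-\rho\big)\ \rightsquigarrow\ \GG^{(\rho,\rho)}:=(\GG_1^\rho,\GG_2^\rho)\quad\text{conditionally, in probability},
\]
in $\ell^\infty(B^s)\times\ell^\infty(B^s)$. By \cref{lem: support}, $\GG^{(\rho,\rho)}$ is tight and concentrated on $\overline{\mathfrak M_\rho}^{\ell^\infty(B^s)}\times\overline{\mathfrak M_\rho}^{\ell^\infty(B^s)}\subset\overline{\calM_\rho}^{\ell^\infty(B^s)}\times\overline{\calM_\rho}^{\ell^\infty(B^s)}$, i.e.\ on the tangent cone to $\frakP_\rho\times\frakP_\rho$ at $(\rho,\rho)$.

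The second step applies the second-order functional delta method. The map $(\nu^1,\nu^2)\mapsto\bar{\mathsf S}_{c,\varepsilon}(\nu^1,\nu^2)$, viewed on $\frakP_\rho\times\frakP_\rho\subset\ell^\infty(B^s)\times\ell^\infty(B^s)$, takes the value $0$ at $(\rho,\rho)$, has vanishing first-order Hadamard derivative there (by \cref{lem: H derivative Sinkhorn}), and has second-order Hadamard derivative $\Delta_\rho$ tangentially to $\overline{\calM_\rho}^{\ell^\infty(B^s)}\times\overline{\calM_\rho}^{\ell^\infty(B^s)}$ (by \cref{thm: second H derivative Sinkhorn}). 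Since $\hat\rho_{m,n}^{i,B}\in\frakP_\rho$, the bootstrap version of the second-order functional delta method (the conditional analogue of \cref{lem: second order functional delta method}, proved exactly as in the unconditional case; cf.\ the first-order bootstrap delta method, Theorem~3.9.11 in \cite{van1996weak}), applied with rate $r_n=\sqrt m$, gives
\[
m\,\bar{\mathsf S}_{c,\varepsilon}(\hat\rho_{m,n}^{1,B},\hat\rho_{m,n}^{2,B})\ \rightsquigarrow\ \tfrac12\Delta_\rho\big(\GG^{(\rho,\rho)}\big)\quad\text{conditionally, in probability}.
\]
Since the null limit of \cref{thm: limit theorem Sinkhorn null}, applied with $\mu=\rho$, satisfies $\chi_\rho\stackrel{d}{=}\tfrac12\Delta_\rho(\GG^{(\rho,\rho)})$ (see its proof), and $\bar{\mathsf S}_{c,\varepsilon}(\hat\rho_{m,n}^{1,B},\hat\rho_{m,n}^{2,B})$ is a proper random variable by weak continuity of the Sinkhorn divergence, this is exactly \eqref{eq: subsampling}. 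The only delicate point is the first step: one must verify that resampling fewer than $n$ points from the data-dependent, null-imposing pooled measure $\hat\rho_n$ yields a conditional limit equal to the \emph{fixed} $\rho$-Brownian bridge rather than one tied to $\hat\rho_n$ or to the original (possibly unequal) marginals — a triangular-array empirical-process statement — after which the remaining steps are routine applications of the already-established Hadamard-differentiability machinery.
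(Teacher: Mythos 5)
Your argument is correct and follows the same strategy as the paper: establish the conditional $m$-out-of-$n$ bootstrap CLT for the Donsker class $B^s$, note that $\sqrt{m}(\hat\rho_n-\rho)=o_{\Prob}(1)$ in $\ell^\infty(B^s)$ because $m=o(n)$, use conditional independence of the two bootstrap blocks to get the joint limit $(\GG_1^\rho,\GG_2^\rho)$ for $\sqrt{m}(\hat\rho_{m,n}^{i,B}-\rho)$, and then feed this into the second-order Hadamard differentiability of $\bar{\mathsf S}_{c,\varepsilon}$ at $(\rho,\rho)$. The only packaging difference is that you invoke a conditional second-order functional delta method directly (asserting it holds by analogy with Theorem 3.9.11 in \cite{van1996weak}), whereas the paper sidesteps having to state or prove such a lemma by passing to subsequences along which the in-probability statements become almost sure, so that the unconditional \cref{lem: second order functional delta method} applies outright.
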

\begin{remark}
Since $\mathsf{BL}_1(\R)$ is compact w.r.t. the topology of locally uniform convergence by the Ascoli-Arzel\`{a} theorem, the left-hand side of \eqref{eq: subsampling} reduces to the supremum over a countable subcollection of $\mathsf{BL}_1(\R)$ and hence is a proper random variable. 
\end{remark}
\begin{proof}
Fix $s > d/2$. 
Set $\GG_{m,n}^{i,B} = \sqrt{m} (\hat{\rho}_{m,n}^{i,B} - \hat{\rho}_n)$ for $i=1,2$. Given the sample, $(\GG_{m,n}^{1,B},\GG_{m,n}^{2,B}\big)$ has only finitely many possible values, so induces the conditional distribution defined on the Borel $\sigma$-field on $\ell^\infty(B^s) \times \ell^\infty(B^s)$. Then, since $B^s$ is uniformly bounded and Donsker for $\mu^1$ and $\mu^2$, arguing as in the proof of Theorem 3.7.1 in \cite{van1996weak}, we have 
\[
(\GG_{m,n}^{1,B},\GG_{m,n}^{2,B}\big) \stackrel{d}{\to} (\GG^\rho_1,\GG^\rho_2) \quad \text{in} \quad \ell^\infty(B^s) \times \ell^\infty(B^s)
\]
given almost every sequence $X_1^1,X_2^1,\dots,X_1^2,X_2^2,\dots$, where $\GG^\rho_1$ and $\GG^\rho_2$ are independent tight $\rho$-Brownian bridges in $\ell^\infty (B^s)$. Furthermore, $\E[ \| \sqrt{m}(\hat{\rho}_n-\rho) \|_{\infty,B^s} ] = O(\sqrt{m/n}) = o(1)$ by Theorem 2.14.1 in \cite{van1996weak} and the assumption that $m=o(n)$, so $\| \sqrt{m}(\hat{\rho}_n-\rho) \|_{\infty,B^s} \to 0$ in probability by Markov's inequality.  Pick any subsequence $n'$ of $n$ and choose a further subsequence $n''$ for which $\| \sqrt{m''}
(\hat{\rho}_{n''}-\rho) \|_{\infty,B^s} \to 0$ a.s. with $m''=m_{n''}$. Now, since $\sqrt{m}(\hat{\rho}_{m,n}^{i,B} - \rho) = \GG_{m,n}^{i,B} + \sqrt{m}(\hat{\rho}_n-\rho)$,
we have
\[
\big(\sqrt{m''}(\hat{\rho}_{m'',n''}^{1,B} - \rho),\sqrt{m''}(\hat{\rho}_{m'',n''}^{2,B} - \rho)\big) \stackrel{d}{\to} (\GG^\rho_1,\GG^\rho_2) \quad \text{in} \quad  \ell^\infty(B^s) \times \ell^\infty(B^s)
\]
given almost every sequence $X_1^1,X_2^1,\dots,X_1^2,X_2^2,\dots$ Then, by the second-order Hadamard differentiability of $\bar{\mathsf{S}}_{c,\varepsilon}$ (Theorem \ref{thm: second H derivative Sinkhorn}) and the second-order functional delta method (Lemma \ref{lem: second order functional delta method}), we have 
\[
m'' \bar{\mathsf{S}}_{c,\varepsilon}(\hat{\rho}_{m'',n''}^{1,B},\hat{\rho}_{m'',n''}^{2,B}) \stackrel{d}{\to} \frac{1}{2} \Delta_{\rho} \big(\GG^\rho_1,\GG^\rho_2\big)\stackrel{d}{=} \chi_\rho
\]
given almost every sequence $X_1^1,X_2^1,\dots,X_1^2,X_2^2,\dots$ Since the limit is independent of the choice of subsequence $n'$, we obtain the result. 
\end{proof}

\subsection{Numerical experiments}
\label{subsec: Numerical Experiments}

We present numerical experiments to assess the scaling for the Sinkhorn null limit  in \cref{thm: limit theorem Sinkhorn null} as well as the finite sample performance of the two-sample $m$-out-of-$n$ bootstrap procedure. 

\begin{figure}[!htb]
    \centering  \includegraphics[width=\textwidth]{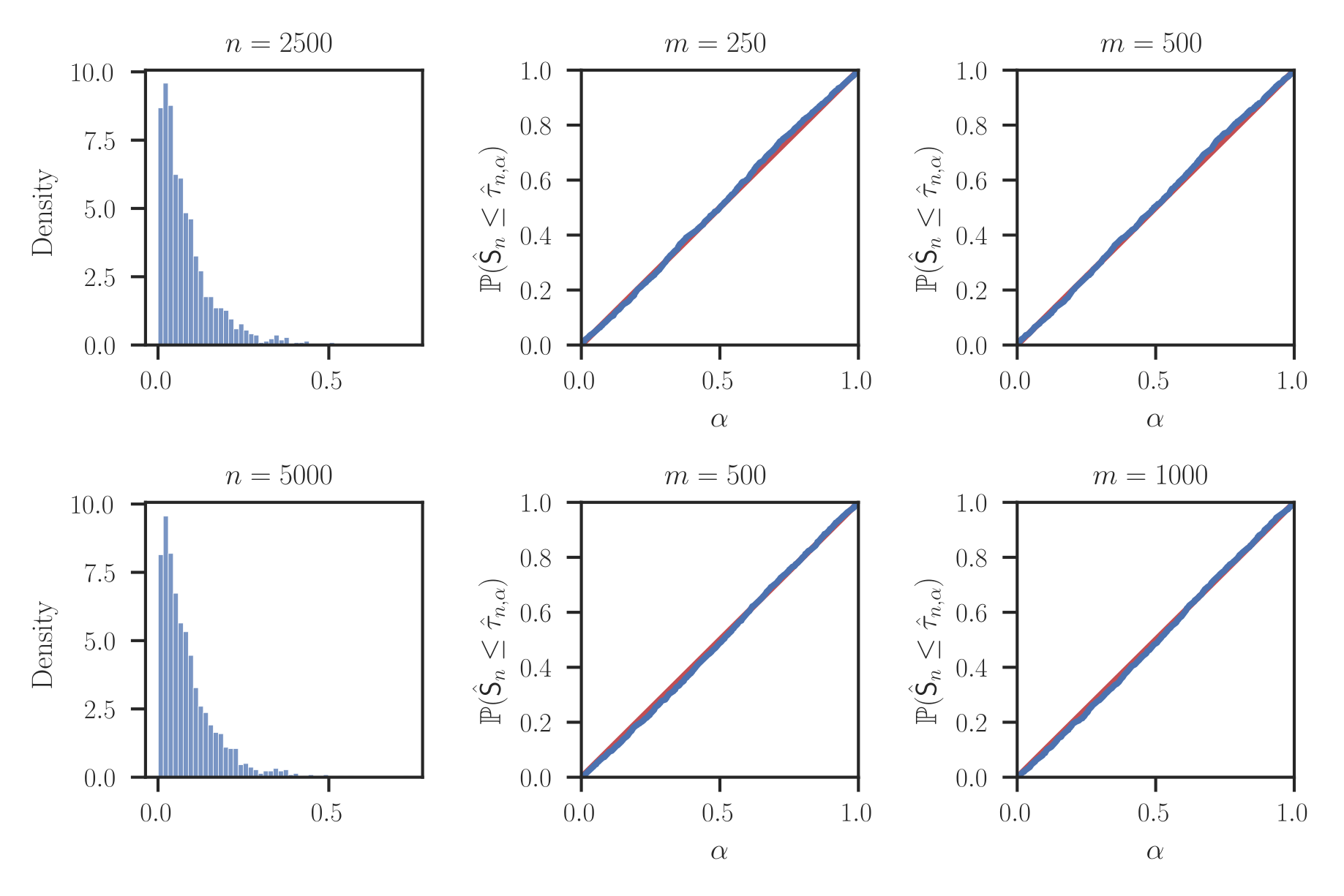}
    \caption{\small The top row consists (from left to right) of a histogram of  $\hat{\mathsf S}_n =n\bar{\mathsf S}_{c,\varepsilon}(\hat \mu_n^{1},\hat \mu_n^{2})$ for $n=2500$, and P-P plots of $\hat{\mathsf S}_n$ generated using the methodology described in \cref{subsec: Numerical Experiments} with $m=0.1n$ and $m=0.2 n$ respectively along with a red $45^{\circ}$ reference line.  The bottom row compiles analogous plots for $n=5000$.}
    \label{fig:SinkhornNull}
\end{figure}

Throughout, the cost function $c$ is the squared Euclidean distance, $\varepsilon=1$, and $\mu^1=\mu^2$ is taken to be the uniform distribution on $(0,1/2)^2$.
Figure \ref{fig:SinkhornNull} consists of two experiments. The first (top row) involves plotting a histogram of $\hat{\mathsf S}_n=n\bar{\mathsf S}_{c,\varepsilon}(\hat \mu_{n}^{1},\hat \mu_{n}^{2})$ for $n=2500$ based on $1500$ repetitions along with corresponding P-P plots for $\hat{\mathsf S}_n$. As the limit distribution $\chi_{\mu}$ in \cref{thm: limit theorem Sinkhorn null} is not defined explicitly, we compute the coverage probabilities, $\mathbb P(\hat{\mathsf S}_n\leq \hat \tau_{n,\alpha})$ for $\alpha\in(0,1)$, where $\hat{\tau}_{n,\alpha}$ is the $\alpha$-quantile of the subsampled distribution. Precisely, for each of the $1500$ repetitions, we construct the subsampled distribution function $\hat F_{m,n}^B$ of $m\bar{\mathsf S}_{c,\varepsilon}(\hat \rho_{m,n}^{1,B},\hat \rho_{m,n}^{2,B})$ for $m=0.1n$ and $m=0.2n$ based on $1000$ repetitions; for each such repetition, we compute the rank of $\hat{\mathsf S}_n$ w.r.t. $\hat F_{m,n}^B$ (i.e., $\hat F_{m,n}^B(\hat{\mathsf S}_n)$), and then approximate $\mathbb P(\hat{\mathsf S}_n\leq \hat \tau_{n,\alpha})$ by  the number of ranks with value less than $\alpha$. The second experiment (bottom row) is performed similarly, but with $n=5000$ rather than $2500$ to see how these results  vary with increasing $n$.

From these experiments, one can see that the empirical distribution of $\hat {\mathsf S}_n$ is observed to be reasonably stable in the finite sample regime, which is consistent with \cref{thm: limit theorem Sinkhorn null}, and the coverage probabilities  $\mathbb P(\hat{\mathsf S}_n\leq \hat \tau_{n,\alpha})$ are close to the $45^\circ$ line uniformly over $\alpha \in (0,1)$, so the $m$-out-of-$n$ bootstrap aproximates well the sampling distribution of $\hat{\mathsf{S}}_n$.

\section{Auxiliary proofs}
\label{sec: auxiliary proofs}

\subsection{Proof of \cref{lem: EOT potential}}
The results are standard (except possibly (iii)), but we include the proof for completeness.

(i). The argument is similar to Proposition 6 in \cite{mena2019statistical}. Pick any pair of EOT potentials $(\varphi_1^\circ,\varphi_2^\circ)$. Update $(\varphi_1^\circ,\varphi_2^\circ)$ as 
\[
\begin{split}
&\varphi_1 (x_1)= -\varepsilon \log \int e^{\frac{\varphi_2^\circ(x_2) - c(x_1,x_2)}{\varepsilon}} \, d\mu^2(x_2), \ x_1 \in 
\calX
, \\
&\varphi_2 (x_2)= -\varepsilon \log \int e^{\frac{\varphi_1(x_1) - c(x_1,x_2)}{\varepsilon}} \, d\mu^1(x_1), \ x_2 \in 
\calX.
\end{split}
\]
The functions $(\varphi_1,\varphi_2)$ are well-defined pointwise by Jensen's inequality.
By construction, $\int e^{\frac{\varphi_1 (x_1') + \varphi_2(x_2) - c(x_1',x_2)}{\varepsilon}} \, d\mu^1(x_1') - 1 = 0$ for all $x_2 \in 
\calX
$. Also, by Jensen's inequality, 
\[
\begin{split}
&\int (\varphi_1 - \varphi_1^\circ) \, d\mu^1 + \int (\varphi_2 - \varphi_2^\circ) \, d\mu^2 \\
&=-\varepsilon \int \log e^{\frac{\varphi_1^\circ - \varphi_1}{\varepsilon}} \, d\mu^1 -\varepsilon \int \log e^{\frac{\varphi_2^\circ - \varphi_2}{\varepsilon}} \, d\mu^2 \\
&\ge -\varepsilon \log \int e^{\frac{\varphi_1^\circ- \varphi_1}{\varepsilon}} \, d\mu^1 -\varepsilon\log  \int e^{\frac{\varphi_2^\circ - \varphi_2}{\varepsilon}} \, d\mu^2  \\
&=-\varepsilon \log \int e^{\frac{\varphi_1^\circ \oplus\varphi_2^\circ-c}{\varepsilon}} d(\mu^1 \otimes \mu^2) - \varepsilon \log \int e^{\frac{\varphi_1 \oplus\varphi_2^\circ-c}{\varepsilon}} d(\mu^1 \otimes \mu^2) \\
&=0, 
\end{split}
\]
so that $(\varphi_1,\varphi_2)$ is a pair of EOT potentials and the inequality above is an equality. In particular, $\int \log e^{\frac{\varphi_2^\circ - \varphi_2}{\varepsilon}} \, d\mu^2 = \log  \int e^{\frac{\varphi_2^\circ - \varphi_2}{\varepsilon}} \, d\mu^2$, and by strict concavity of the logarithm, we have $e^{\frac{\varphi_2^\circ - \varphi_2}{\varepsilon}} = 1$, i.e., $\varphi_2^\circ = \varphi_2$ $\mu^2$-a.e. Thus, 
\[
\int e^{\frac{\varphi_1(x_1) + \varphi_2(x_2') - c(x_1,x_2')}{\varepsilon}} \, d\mu^2(x_2')  = \int e^{\frac{\varphi_1(x_1) + \varphi_2^\circ (x_2') - c(x_1,x_2')}{\varepsilon}} \, d\mu^2(x_2') = 1
\]
for all $x_1 \in 
\calX
$. The other claims are straightforward.

(ii). This follows from the expressions
\[
\begin{split}
&\varphi_1^{\bm{\mu}} (x_1)= -\varepsilon \log \int e^{\frac{\varphi_2^{\bm{\mu}}(x_2) - c(x_1,x_2)}{\varepsilon}} \, d\mu^2(x_2), \ x_1 \in 
\calX
, \\
&\varphi_2^{\bm{\mu}} (x_2)= -\varepsilon \log \int e^{\frac{\varphi_1^{\bm{\mu}}(x_1) - c(x_1,x_2)}{\varepsilon}} \, d\mu^1(x_1), \ x_2 \in 
\calX
.
\end{split}
\]
First, by Lemma 2.1 in \cite{nutz2021entropic}, there exists a version of EOT potentials $(\varphi_1,\varphi_2)$ satisfying \eqref{eq: FOC} with $\| \varphi_1\|_{\infty,\calX} \vee \| \varphi_2 \|_{\infty,\calX} \le 2 \| c \|_{\infty,\calX \times \calX}$. By uniqueness, $\varphi_1^{\bm{\mu}} = \varphi_1 - \frac{1}{2}(\varphi_1(x_1^\circ)-\varphi_2(x_2^\circ))$ and $\varphi_2^{\bm{\mu}} = \varphi_2 + \frac{1}{2}(\varphi_1(x_1^\circ)-\varphi_2(x_2^\circ))$, so $\| \varphi_1^{\bm{\mu}} \|_{\infty,\calX} \vee \| \varphi_2^{\bm{\mu}} \|_{\infty,\calX} \le 3 \| c \|_{\infty,\calX \times \calX}$. 
Derivatives of $\varphi_1^{\bm{\mu}}$ and $\varphi_{2}^{\bm{\mu}}$ can be evaluated by interchanging differentiation and integration, which is guaranteed under the current assumption. See \cite{genevay2019sample,mena2019statistical} for similar arguments.

(iii). For notational convenience, let $\bm{\varphi}_n = \bm{\varphi}^{\bm{\mu}_n}$ and $\bm{\varphi} = \bm{\varphi}^{\bm{\mu}}$. By Part (ii), the Ascoli-Arzel\`{a} theorem, and the diagonal argument, for every subsequence $n'$, there exists a further subsequence $n''$ along which the derivatives $D^k \varphi_{n,i}$ converge in 
$\calC(
\calX
)$ for all $i=1,2$ and $k \in \NN_{0}^d$ with $|k| \le s$, which implies that $\varphi_{n'',i}$ is Cauchy in $\calC^s(
\calX
)$ for $i=1,2$. By completeness of $\calC^s(
\calX
)$, we have $\varphi_{n'',i} \to \bar{\varphi}_i$ in $\calC^s(
\calX
)$ for $i=1,2$. By assumption, $\mu_n^i$ converges weakly to $\mu^i$, which implies that 
$\sup_{f \in \mathrm{BL}_1(
\calX
)}  | \int fd(\mu_n^i - \mu^i)  | \to 0$, where $\mathrm{BL}_1(
\calX
)$ is the class of $1$-Lipschitz functions $f: 
\calX
\to [-1,1]$ (cf. Chapter 1.12 in \cite{van1996weak}). Again, by Part (ii), we see that 
\[
\sup_{n \in \NN} \sup_{(x_1,x_2) \in 
\calX\times\calX}
\left \| \nabla_{x_i} \big (e^{\frac{\varphi_{n,1} (x_1)+ \varphi_{n,2}(x_2) - c(x_1,x_2)}{\varepsilon}}\big) \right \| < \infty.
\]
Hence, for each fixed $x_1 \in 
\calX
$, 
\[
\begin{split}
 \int e^{\frac{\varphi_{n'',1}(x_1) + \varphi_{n'',2}(x_2) - c(x_1,x_2)}{\varepsilon}} \, d\mu^{2}_{n''} (x_2) &= \int e^{\frac{\varphi_{n'',1}(x_1) + \varphi_{n'',2}(x_2) - c(x_1,x_2)}{\varepsilon}} \, d\mu^{2} (x_2) + o(1) \\
&= \int e^{\frac{\bar{\varphi}_1(x_1) + \bar{\varphi}_2(x_2) - c(x_1,x_2)}{\varepsilon}} \, d\mu^{2} (x_2) + o(1),
\end{split}
\]
where the second equality follows as $\| \varphi_{n'',i} - \bar{\varphi}_i\|_{\infty,
\calX
} \to 0$. Since the left-hand side is $\equiv 1$, we conclude that $\int e^{\frac{\bar{\varphi}_1\oplus \bar{\varphi}_2 - c}{\varepsilon}} \, d\mu^{2} \equiv 1$. By symmetry, we also have $\int e^{\frac{\bar{\varphi}_1\oplus \bar{\varphi}_2 - c}{\varepsilon}} \, d\mu^1 \equiv 1$. By construction, $\bar{\varphi}_1 (x_1^\circ) = \bar{\varphi}_2(x_2^\circ)$, so that $\bar{\bm{\varphi}} = \bm{\varphi}$, i.e., $\bm{\varphi}_{n''} \to \bm{\varphi}$ in $\calC^s(
\calX
) \times \calC^s(
\calX
)$. Since the limit does not depend on the choice of subsequence, we have $\bm{\varphi}_n \to \bm{\varphi}$ in $\calC^s(
\calX
) \times \calC^s(
\calX
)$.
\qed

\subsection{Proof of \cref{lem: Psi functional}}
The proof relies on the results from \cite{carlier2020differential}.
We will use the following observation throughout the proof: 
for any continuous functions $f,g$ on $S_i=
\supp(\mu^i)
$, if $f=g$ $\mu^i$-a.e., then $f\equiv g$. Indeed, for every $A \subset 
S_i
$ with $\mu^i$-measure 1, its closure $\overline{A}$ agrees with 
$S_i$, since otherwise 
$
S_i
\setminus \overline{A}$ is a nonempty open set, so $\mu^i (
S_i
\setminus \overline{A}) > 0$, 
which contradicts the assumption that $A$ has $\mu^i$-measure 1. Also, for any continuous function $f$ on $
S_i
$, we have $\| f \|_{L^\infty(\mu^i)} = \| f \|_{\infty,
S_i
}$.

(i). Consider the map $\calT: L^\infty (\mu^1) \times L^\infty(\mu^2) \to L^\infty (\mu^1) \times L^\infty(\mu^2)$ defined by
\[
\calT(\bm{\varphi}) = \left ( \int e^{\frac{\varphi_1 \oplus \varphi_2 - c}{\varepsilon}} \, d\mu^2, \int e^{\frac{\varphi_1 \oplus \varphi_2 - c}{\varepsilon}} \, d\mu^1 \right ).
\]
By Theorem 4.3 in \cite{carlier2020differential}, $\calT$ is injective in the sense that, if $\calT(\bm{\varphi}) = \calT(\tilde{\bm{\varphi}})$ $(\mu^1 \otimes \mu^2)$-a.e., then there exists a constant $a \in \R$ such that $(\tilde{\varphi}_1,\tilde{\varphi}_2) = (\varphi_1+a,\varphi_2-a)$ $(\mu^1 \otimes \mu^2)$-a.e.
If $\bm{\varphi},\tilde{\bm{\varphi}} \in 
\Theta^s
$, then $(\tilde{\varphi}_1,\tilde{\varphi}_2) \equiv (\varphi_1+a,\varphi_2-a)$ on $S$, but because of the normalization $\varphi_1(x_1^\circ) = \varphi_2(x_2^\circ)$ and $\tilde{\varphi}_1(x_1^\circ) =\tilde{\varphi}_2(x_2^\circ)$, we have $a=0$, i.e., $\tilde{\bm{\varphi}} \equiv \bm{\varphi}$.  This shows injectivity of $\Psi_{\bm{\mu}}$. 

To show that the inverse of $\Psi_{\bm{\mu}}$ is continuous at $0$, it suffices to show that
\[
\bm{\varphi}_n \in \Theta^s, \| \Psi_{\bm{\mu}}(\bm{\varphi}_n)  \|_{\DD} \to 0 \Rightarrow \| \bm{\varphi}_n - \bm{\varphi}^{\bm{\mu}}\vert_{S} \|_{\DD} \to 0. 
\]
Recall $\Psi_{\bm{\mu}}(\bm{\varphi}^{\bm{\mu}}\vert_{S}) \equiv 0$ on $S$. Observe that the map $\Psi_{\bm{\mu}}$ makes sense on $\DD$ and is continuous from $\DD$ into $\DD$. Since $\bm{\varphi}_n \in \Theta^s$, by the Ascoli-Arzel\`a theorem, for any subsequence $n'$, there exists a further subsequence $n''$ along which $\bm{\varphi}_{n''} \to \bar{\bm{\varphi}}$ in $\DD$ for some $\bar{\bm{\varphi}} \in \DD$. Since $\Psi_{\bm{\mu}}$ is continuous from $\DD$ into $\DD$, we have $\Psi_{\bm{\mu}}(\bar{\bm{\varphi}}) \equiv 0$. By construction, $\bar{\varphi}_1(x_1^\circ) = \bar{\varphi}_2(x_2^\circ)$, so by \cref{lem: EOT potential} (i), we have $\bar{\bm{\varphi}} \equiv \bm{\varphi}^{\bm{\mu}}\vert_{S}$. Since the limit $\bm{\varphi}^{\bm{\mu}}\vert_{S}$ is independent of the choice of subsequence, we have $\bm{\varphi}_n \to \bm{\varphi}^{\bm{\mu}}\vert_{S}$ in $\DD$.  

(ii). The first claim is straightforward. To show the second claim, it suffices to show that 
\[
\inf_{\bm{h} \in \lin (\Theta^s), \| \bm{h} \|_{\DD} = 1} \| \dot \Psi_{\bm{\mu}} (\bm{h}) \|_{\DD} > 0. 
\]
Consider the map $\dot \calT_{\bm{\mu}}: L^\infty (\mu^1) \times L^\infty (\mu^2) \to L^\infty (\mu^1) \times L^\infty (\mu^2)$ defined by 
\[
\dot \calT_{\bm{\mu}} (\bm{h}) = \left ( \varepsilon^{-1}\int e^{\frac{\varphi_1^{\bm{\mu}} \oplus \varphi_2^{\bm{\mu}} - c}{\varepsilon}} (h_1 \oplus h_2) \, d\mu^2, \varepsilon^{-1}\int e^{\frac{\varphi_1^{\bm{\mu}} \oplus \varphi_2^{\bm{\mu}} - c}{\varepsilon}} (h_1 \oplus h_2) \, d\mu^1  \right ).
\]
Equip $L^\infty (\mu^1) \times L^\infty (\mu^2)$ with a product norm $\| \bm{h} \|_{L^\infty (\bm{\mu})} := \| h_1 \|_{L^\infty(\mu^1)} \vee \| h_2 \|_{L^\infty (\mu^2)}$.  
By Proposition 3.1 in \cite{carlier2020differential},
\[
\inf_{\| \bm{h} \|_{_{L^\infty (\bm{\mu})}}=1, \int h_1 \, d\mu^1 = 0} \| \dot \calT_{\bm{\mu}} (\bm{h}) \|_{L^\infty (\bm{\mu})} > 0.
\]
For $\bm{h} \in \lin (\Theta^s)$, define $\bar{\bm{h}} = (h_1-\int h_1 \, d\mu^1,h_2+\int h_1 \, d\mu^1)$.  Then  $\| \dot \calT_{\bm{\mu}} (\bar{\bm{h}}) \|_{L^\infty (\bm{\mu})} = \| \dot \calT_{\bm{\mu}} (\bm{h}) \|_{L^\infty (\bm{\mu})} =  \| \dot{\Psi}_{\bm{\mu}}(\bm{h}) \|_{\DD}$. It remains to show that 
\[
\inf_{\bm{h} \in \lin (\Theta^s), \| \bm{h} \|_{\DD}=1} \| \bar{\bm{h}} \|_{\DD} > 0. 
\]
Suppose on the contrary that $\inf_{\bm{h} \in \lin (\Theta^s), \| \bm{h} \|_{\DD}=1} \| \bar{ \bm{h}} \|_{\DD}=0$. 
Then, there exists a sequence $\bm{h}_{n}  \in \lin (\Theta^s)$ with $\| \bm{h}_{n} \|_{\DD} = 1$ such that $\| \bar{\bm{h}}_{n} \|_{\DD} \to 0$. Since $\int h_{n,1} \, d\mu^1$ is bounded, there exists a subsequence along which $\int h_{n,1} \, d\mu^1 \to a$ for some $a \in \R$. Along the subsequence, $\|  \bm{h}_{n} - (a,-a) \|_{\DD}\to 0$. However, since $h_{n,1} (x_1^\circ) = h_{n,2}(x_2^\circ)$ by construction, we must have $a=0$, i.e., $\| \bm{h}_{n} \|_{\DD} \to 0$, which contradicts the assumption that $\| \bm{h}_{n} \|_{\DD}=1$. 
\qed

\section{Technical tools}

\subsection{Hadamard differentiability and functional delta method}
\label{sec: functional delta}

In this appendix, we review concepts of Hadamard differentiability and the functional delta methods. Our exposition mostly follows \cite{romisch2004}. Other standard references are \cite{van1996weak,vanderVaart1998asymptotic}.

Let $\mathfrak{D},\mathfrak{E}$ be normed spaces and $\phi: \Theta \subset \mathfrak{D} \to \mathfrak{E}$ be a map. We say that $\phi$ is 
\textit{Hadamard directionally differentiable}  at $\theta \in \Theta$  if there exists a map $\phi'_{\theta}: \mathfrak{T}_{\Theta}(\theta) \to \mathfrak{E}$ such that 
\begin{equation}
\lim_{t \downarrow 0} \frac{\phi(\theta_t) - \phi(\theta)}{t} = \phi_{\theta}'(h)
\label{eq: H derivative}
\end{equation}
for any sequence $(\theta_t)_{t>0} \subset \Theta$ with $t^{-1}(\theta_t -\theta) \to h$ as $t \downarrow 0$, where $\mathfrak{T}_{\Theta}(\theta)$ is the tangent (or adjacent) cone to $\Theta$ at $\theta$,
\[
\mathfrak{T}_{\Theta}(\theta) = \left \{ h \in \mathfrak{D}: h = \lim_{t \downarrow 0} \frac{\theta_t -\theta}{t} \ \text{for some $\theta_t \to \theta$ in $\Theta$}, t \downarrow 0  \right \}.
\]
The derivative $\phi_{\theta}'$ is continuous (cf. Proposition 3.1 in \cite{shapiro1990}) and positively homogeneous (but need not be linear). Furthermore, the tangent cone $\mathfrak{T}_{\Theta}(\theta)$ is closed, and: (i) if $\Theta$ is open, then $\mathfrak{T}_{\Theta}(\theta)$ agrees with $\mathfrak{D}$; and (ii) if $\Theta$ is convex, then $\mathfrak{T}_{\Theta}(\theta)$ agrees with $\overline{\{ t(\vartheta - \theta) : \vartheta \in \Theta, t > 0\}}^{\mathfrak{D}}$ (cf. Chapter 4 in \cite{aubin2009set}).\footnote{Indeed, \cite{romisch2004} defines the Hadamard derivative on the (Bouligand) contingent cone, which is in general slightly bigger than the adjacent cone. For our purpose, this difference is immaterial. Note that both concepts agree when $\Theta$ is convex.}

If \eqref{eq: H derivative} only holds for $h \in \mathfrak{D}_0$ for a subset $\mathfrak{D}_0 \subset \mathfrak{T}_{\Theta}(\theta)$, then we say that $\phi$ is 
Hadamard directionally differentiable  at $\theta \in \Theta$ \textit{tangentially} to $\mathfrak{D}_0$. In that case, the derivative $\phi_{\theta}'$ is defined only on $\mathfrak{D}_0$. Finally, if the derivative $\phi_{\theta}'$ is linear, then we say that $\phi$ is \textit{Hadamard differentiable}  at $\theta$ (tangentially to $\mathfrak{D}_0$ if $\phi_{\theta}'$ is defined only on $\mathfrak{D}_0$). The tangent set $\mathfrak{D}_0$ need not be a vector subspace of $\mathfrak{D}$, so by linearity, we mean that, for any $\alpha_1,\dots,\alpha_J \in \R$ and $h_1,\dots,h_J \in \mathfrak{D}_0$, whenever $\sum_{j=1}^J\alpha_j h_j =0$, it holds that $ \sum_{j=1}^J \alpha_j \phi_{\theta_0}'(h_j) = 0$, which is equivalent to $\phi_{\theta_0}'$ admitting a linear extension to the linear hull of $\mathfrak{D}_0$ by Lemma 2.5.3 in \cite{dudley2014uniform}.

\begin{lemma}[Functional delta method; \cite{romisch2004}]
\label{lem: functional delta method}
Let $\mathfrak{D},\mathfrak{E}$ be normed spaces and $\phi: \Theta \subset \mathfrak{D} \to \mathfrak{E}$ be a map that is Hadamard directionally differentiable at $\theta \in \Theta$ tangentially to a set $\mathfrak{D}_0 \subset \mathfrak{T}_{\Theta}(\theta)$. Let $T_n: \Omega \to \Theta$ be maps such that $r_n (T_n - \theta) \stackrel{d}{\to} T$ for some $r_n \to \infty$ and Borel measurable map $T: \Omega \to \mathfrak{D}$ with values in a separable subset of $\mathfrak{D}_0$. Then the following hold:
(i) $r_n \big(\phi(T_n) - \phi(\theta)\big) \stackrel{d}{\to} \phi_{\theta}'(T)$; 
(ii) If in addition $\Theta$ is convex and $\mathfrak{D}_0 = \mathfrak{T}_{\Theta}(\theta)$, then $r_n \big(\phi(T_n) - \phi(\theta)\big) - \phi_{\theta}'(r_n(T_n-\theta)) \to 0$ in outer probability. 
\end{lemma}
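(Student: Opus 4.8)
The plan is to deduce both parts from the extended continuous mapping theorem (Theorem 1.11.1 in \cite{van1996weak}), which is tailor-made to handle the non-measurability arising because $T_n$ need not be Borel and $\phi$ need not be continuous. For part (i), set $D_n = \{ h \in \mathfrak{D} : \theta + r_n^{-1} h \in \Theta \} = r_n(\Theta - \theta)$ and define $g_n : D_n \to \mathfrak{E}$ by $g_n(h) = r_n\big(\phi(\theta + r_n^{-1} h) - \phi(\theta)\big)$, so that with $X_n := r_n(T_n - \theta)$ (a map $\Omega \to D_n$) one has $g_n(X_n) = r_n\big(\phi(T_n) - \phi(\theta)\big)$. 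By hypothesis $X_n \dconv T$ with $T$ a Borel measurable random element taking values in a separable subset of $\mathfrak{D}_0 \subset \mathfrak{T}_{\Theta}(\theta)$. To invoke the extended continuous mapping theorem with limit map $\phi_{\theta}' : \mathfrak{D}_0 \to \mathfrak{E}$, it suffices to check that $g_n(h_n) \to \phi_{\theta}'(h)$ whenever $h_n \in D_n$ and $h_n \to h$ for some $h \in \mathfrak{D}_0$; writing $t_n = r_n^{-1} \downarrow 0$ and $\theta_n = \theta + t_n h_n \in \Theta$, so that $t_n^{-1}(\theta_n - \theta) = h_n \to h$, this convergence is exactly the defining property of Hadamard directional differentiability of $\phi$ at $\theta$ tangentially to $\mathfrak{D}_0$ in its (equivalent) sequential form. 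The extended continuous mapping theorem then gives $g_n(X_n) \dconv \phi_{\theta}'(T)$, i.e., $r_n(\phi(T_n) - \phi(\theta)) \dconv \phi_{\theta}'(T)$.

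For part (ii), since $\Theta$ is convex, it was recorded before the lemma (Chapter~4 in \cite{aubin2009set}) that $\mathfrak{T}_{\Theta}(\theta) = \overline{\{ t(\vartheta - \theta) : \vartheta \in \Theta,\ t > 0 \}}^{\mathfrak{D}}$; in particular $D_n = r_n(\Theta - \theta) \subset \mathfrak{T}_{\Theta}(\theta) = \mathfrak{D}_0$, so $\phi_{\theta}'$ is defined on every $D_n$ and $\phi_{\theta}'\big(r_n(T_n - \theta)\big)$ is meaningful. I would then apply the extended continuous mapping theorem to the maps $\tilde g_n := g_n - \phi_{\theta}' : D_n \to \mathfrak{E}$ with limit map the constant $0$: if $h_n \in D_n$ and $h_n \to h \in \mathfrak{D}_0$, then $g_n(h_n) \to \phi_{\theta}'(h)$ as in part (i), and $\phi_{\theta}'(h_n) \to \phi_{\theta}'(h)$ by continuity of $\phi_{\theta}'$ on $\mathfrak{D}_0$ (Proposition 3.1 in \cite{shapiro1990}), since $h_n \in \mathfrak{D}_0$; hence $\tilde g_n(h_n) \to 0$. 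Consequently $\tilde g_n(X_n) = r_n(\phi(T_n) - \phi(\theta)) - \phi_{\theta}'\big(r_n(T_n - \theta)\big) \dconv 0$, and since convergence in distribution to a constant implies convergence in outer probability (test against bounded Lipschitz functions vanishing near $0$ and equal to $1$ outside a ball, using outer expectations), the claimed convergence in outer probability follows.

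I do not anticipate a genuinely deep obstacle — this is the classical functional delta method argument — but the points requiring care are the measurability bookkeeping (everything involving $X_n$, $g_n(X_n)$, $\tilde g_n(X_n)$ is non-measurable in general and must be handled through outer expectations and Hoffmann--J{\o}rgensen convergence, which is precisely the setting of the extended continuous mapping theorem) and the elementary observation that the net-style formulation of Hadamard directional differentiability in the text coincides with the sequential condition ``$t_n \downarrow 0$, $h_n \to h \in \mathfrak{D}_0$, $\theta + t_n h_n \in \Theta$ $\Rightarrow$ $t_n^{-1}(\phi(\theta + t_n h_n) - \phi(\theta)) \to \phi_{\theta}'(h)$'' that feeds into the extended continuous mapping theorem. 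One should also note that the positive homogeneity and continuity of $\phi_{\theta}'$ recorded in the text, together with $\mathfrak{D}_0$ being star-shaped about the origin when $\Theta$ is convex, guarantee that all of the above expressions are well defined.
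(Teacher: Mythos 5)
Your proof is correct and follows the standard argument: reduce to the extended continuous mapping theorem (Theorem 1.11.1 in \cite{van1996weak}) with $g_n(h) = r_n(\phi(\theta+r_n^{-1}h)-\phi(\theta))$ on $D_n = r_n(\Theta-\theta)$, check the continuity hypothesis via the sequential form of Hadamard directional differentiability, and for part (ii) shift by $\phi_\theta'$ and use that weak convergence to a constant yields convergence in outer probability. Note, however, that the paper does not give its own proof of this lemma; it is cited directly from \cite{romisch2004} as a technical tool, so there is nothing paper-internal to compare against. Your argument is essentially the one in \cite{romisch2004} (and in Theorem 3.9.4 / Section 3.9 of \cite{van1996weak}). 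Your identification of the two care points is also right: (a) the net formulation of Hadamard directional differentiability ``$\lim_{t\downarrow 0} t^{-1}(\phi(\theta_t)-\phi(\theta))$'' is interchangeable with the sequential form ``$t_n\downarrow 0$, $h_n\to h$, $\theta+t_n h_n\in\Theta$'' fed to the extended CMT; and (b) under convexity, $D_n\subset\{t(\vartheta-\theta):\vartheta\in\Theta,\,t>0\}\subset\mathfrak{T}_\Theta(\theta)=\mathfrak{D}_0$, so $\phi_\theta'(r_n(T_n-\theta))$ is well defined, and continuity of $\phi_\theta'$ on $\mathfrak{D}_0$ (Proposition 3.1 of \cite{shapiro1990}) gives $\tilde g_n(h_n)\to 0$.
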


\begin{remark}
\label{rem: Hadamard differentiability}
Our definition of Hadamard differentiability is slightly different from \cite{van1996weak,vanderVaart1998asymptotic}, in that those references do not require the tangent set $\mathfrak{D}_0$ to be a subset of $\mathfrak{T}_{\Theta}(\theta)$. Our modification is made to be consistent with the definition of Hadamard directional differentability in \cite{romisch2004}. However, this modification is innocuous since, for any sequence $(\theta_t)_{t > 0} \subset \Theta$ with $h := \lim_{t \downarrow 0}t^{-1}(\theta_t-\theta)$ (if exists), we must have $h \in \mathfrak{T}_{\Theta}(\theta)$.
\end{remark}

The proof of \cref{thm: limit theorem Sinkhorn null} relies on the second-order functional delta method, which we describe next. 
We say that a map $\phi: \Theta \subset \mathfrak{D} \to \mathfrak{E}$ (with $\Theta$ being convex) is \textit{second-order Hadamard directionally differentiable} at $\theta \in \Theta$ if it is (first-order) Hadamard directionally differentiable at $\theta$ and there exists a map $\phi_{\theta}'': \mathfrak{T}_{\Theta}(\theta) \to \mathfrak{E}$ such that
\[
\lim_{t \downarrow 0} \frac{\phi (\theta_t) - \phi(\theta) - t\phi_{\theta}'(h_t)}{t^2/2} = \phi_{\theta}''(h)
\]
for any sequence $(\theta_t)_{t>0} \subset \Theta$ with $h_t:=t^{-1}(\theta_t -\theta) \to h$ as $t \downarrow 0$ (note: as $\Theta$ is convex, $h_t \in \mathfrak{T}_{\Theta}(\theta)$, so that $\phi_{\theta}'(h_t)$ is well-defined). The map $\phi_{\theta}''$ is continuous and positively homogeneous of degree $2$. 

\begin{lemma}[Second-order chain rule]
\label{lem:SecondOrderChainRule}
Let $\mathfrak{D},\mathfrak{E},\mathfrak{F}$ be normed spaces, $\Theta \subset \mathfrak{D}$, $\Xi\subset \mathfrak{E}$ be convex, $\phi: \Theta  \to \Xi$ be a map that is twice  Hadamard directionally differentiable at $\theta \in \Theta$, and $\psi:\Xi\to\mathfrak{F}$ be twice  Hadamard directionally differentiable at $\phi(\theta)$ with linear first-order derivative at $\phi(\theta)$ defined on the linear hull of $\mathfrak{T}_{\Xi}(\phi(\theta))$. Then, the composition $\psi\circ\phi:\Theta\to \mathfrak{F}$ is twice  Hadamard directionally differentiable at $\theta$ with 
\[
[\psi\circ \phi]''_{\theta}= \psi''_{\phi(\theta)}\circ\phi'_{\theta}+\psi'_{\phi(\theta)}\circ\phi''_{\theta}.
\]
\end{lemma}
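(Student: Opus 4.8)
The plan is to prove the second-order chain rule (\cref{lem:SecondOrderChainRule}) by a direct Taylor-type expansion, tracking the first- and second-order increments of the inner and outer maps and controlling the error terms. First I would fix $\theta\in\Theta$, a direction $h\in\mathfrak T_\Theta(\theta)$, and a sequence $(\theta_t)_{t>0}\subset\Theta$ with $h_t:=t^{-1}(\theta_t-\theta)\to h$ as $t\downarrow 0$. Set $\eta_t:=\phi(\theta_t)$ and $\eta:=\phi(\theta)\in\Xi$. Since $\Theta$ is convex, $h_t\in\mathfrak T_\Theta(\theta)$, so $\phi'_\theta(h_t)$ is well-defined, and since $\phi$ is twice Hadamard directionally differentiable at $\theta$,
\[
\eta_t-\eta = t\,\phi'_\theta(h_t) + \tfrac{t^2}{2}\big(\phi''_\theta(h)+o(1)\big)\quad\text{in }\mathfrak E,
\]
where the $o(1)$ is in $\mathfrak E$-norm as $t\downarrow 0$. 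The key preliminary observation is that $\Xi$ being convex forces the increment directions $g_t:=t^{-1}(\eta_t-\eta)$ to lie in $\mathfrak T_\Xi(\eta)$, and from the display above $g_t=\phi'_\theta(h_t)+\tfrac t2(\phi''_\theta(h)+o(1))\to \phi'_\theta(h)=:g$ in $\mathfrak E$; by continuity of $\phi'_\theta$ the limit $g$ lies in $\mathfrak T_\Xi(\eta)$ as well. Thus $(\eta_t)$ is an admissible approximating sequence for testing second-order differentiability of $\psi$ at $\eta$ along direction $g$.

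Next I would apply the second-order Hadamard directional differentiability of $\psi$ at $\eta=\phi(\theta)$ with approximating sequence $\eta_t$ and directions $g_t\to g$:
\[
\psi(\eta_t)-\psi(\eta) = t\,\psi'_\eta(g_t) + \tfrac{t^2}{2}\big(\psi''_\eta(g)+o(1)\big)\quad\text{in }\mathfrak F.
\]
Now I use linearity of $\psi'_\eta$ on the linear hull of $\mathfrak T_\Xi(\eta)$ (part of the hypothesis) to split $\psi'_\eta(g_t)=\psi'_\eta(\phi'_\theta(h_t))+\tfrac t2\psi'_\eta(\phi''_\theta(h)+o(1))$. Here I should be a touch careful: $\phi'_\theta(h_t)$, $\phi''_\theta(h)$, and the $o(1)$ remainder all lie in (the linear hull of) $\mathfrak T_\Xi(\eta)$ — the first two because they are limits of increment quotients within $\Xi$, and linearity lets me add and scale them — so the decomposition is legitimate and continuity of the (now linear) $\psi'_\eta$ makes $\psi'_\eta(o(1))=o(1)$ in $\mathfrak F$. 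Substituting and comparing with the definition of $[\psi\circ\phi]''_\theta$, I get
\[
\frac{\psi(\phi(\theta_t))-\psi(\phi(\theta)) - t\,\psi'_\eta(\phi'_\theta(h_t))}{t^2/2}
= \psi'_\eta(\phi''_\theta(h)) + \psi''_\eta(\phi'_\theta(h)) + o(1),
\]
which, since $[\psi\circ\phi]'_\theta = \psi'_\eta\circ\phi'_\theta$ (the first-order chain rule, Lemma 3.9.3 in \cite{van1996weak}, whose hypotheses — first-order Hadamard differentiability of $\phi$ at $\theta$ and of $\psi$ at $\phi(\theta)$ — are subsumed here), yields exactly $[\psi\circ\phi]''_\theta = \psi''_{\phi(\theta)}\circ\phi'_\theta + \psi'_{\phi(\theta)}\circ\phi''_\theta$ as claimed. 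I would close by noting that this limit depends only on $h$ and not on the sequence $(\theta_t)$, and that continuity and positive homogeneity of degree $2$ of $[\psi\circ\phi]''_\theta$ follow from the corresponding properties of $\psi''_{\phi(\theta)}$, $\phi'_\theta$, $\psi'_{\phi(\theta)}$, and $\phi''_\theta$ and the fact that a sum of a degree-$2$ positively homogeneous map and the composition of a linear map with a degree-$2$ positively homogeneous map is again degree-$2$ positively homogeneous.

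The main obstacle I anticipate is bookkeeping the tangent-cone memberships rigorously: one must verify that all the objects fed into $\psi'_\eta$ and $\psi''_\eta$ genuinely lie in $\mathfrak T_\Xi(\phi(\theta))$ (or its linear hull), so that those derivatives are defined there, and that the $o(1)$ remainders remain in the linear hull so that linearity and continuity of $\psi'_\eta$ can be invoked to absorb them. This is where convexity of $\Xi$ and the linearity hypothesis on $\psi'_\eta$ are essential; without them the decomposition $\psi'_\eta(g_t)=\psi'_\eta(\phi'_\theta(h_t))+\cdots$ would not even be meaningful. Everything else is a routine two-term expansion, and no delicate estimates beyond the defining limits of the first- and second-order derivatives are needed.
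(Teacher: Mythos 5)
Your proof is correct and follows essentially the same route as the paper: expand the inner increments $\phi(\theta_t)-\phi(\theta)$ to second order, apply second-order Hadamard directional differentiability of $\psi$ along the path $\eta_t=\phi(\theta_t)$ with directions $g_t=t^{-1}(\eta_t-\eta)\to\phi'_\theta(h)$, and then use linearity of $\psi'_{\phi(\theta)}$ on $\lin(\mathfrak T_\Xi(\phi(\theta)))$ together with its continuity and positive homogeneity to collapse the remainder. The paper does the bookkeeping by subtracting $t\psi'_{\phi(\theta)}(\phi'_\theta(h_t))$ and applying $\psi'_{\phi(\theta)}$ to the single element $\phi(\theta_t)-\phi(\theta)-t\phi'_\theta(h_t)$ of the linear hull, while you substitute the expansion of $g_t$ directly and split term by term; both hinge on the same tangent-cone and linearity facts, and both share the same implicit assumption that $\psi'_{\phi(\theta)}$ can be evaluated at (or continuously extended to) $\phi''_\theta(h)$, which a priori lies only in the closure of $\lin(\mathfrak T_\Xi(\phi(\theta)))$.
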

\begin{proof}
Let $(\theta_t)_{t>0} \subset \Theta$ be such that $h_t:=t^{-1}(\theta_t -\theta) \to h$ in $\mathfrak D$ as $t \downarrow 0$. Observe that $\left(\phi(\theta_t)\right)_{t>0}\subset \Xi$ with $t^{-1}(\phi(\theta_t)-\phi(\theta))\to \phi'_{\theta}(h)$ as $t\downarrow 0$, so that
\[
    \psi(\phi(\theta_t))-\psi(\phi(\theta))=t\psi'_{\phi(\theta)}(t^{-1}(\phi(\theta_t)-\phi(\theta)))+\frac{t^2}{2}\psi''_{\phi(\theta)}(\phi'_{\theta}(h))+o(t^2).  
\]
By linearity  of $\psi'_{\phi(\theta)}$, 
\[
    t\psi'_{\phi(\theta)}(t^{-1}(\phi(\theta_t)-\phi(\theta)))-t\psi'_{\phi(\theta)}(\phi'_{\theta}(h_t))=\psi'_{\phi(\theta)}(\phi(\theta_t)-\phi(\theta)-t\phi'_{\theta}(h_t)). 
\]
It follows from continuity and positive homogeneity of $\psi'_{\phi(\theta)}$ that
\[
\lim_{t \downarrow 0} \frac{\psi(\phi (\theta_t)) - \psi(\phi(\theta)) - t\psi'_{\phi(\theta)}(\phi_{\theta}'(h_t))}{t^2/2} =\psi''_{\phi(\theta)}(\phi'_{\theta}(h))+\psi'_{\phi(\theta)}(\phi''_{\theta}(h)). 
\]
\end{proof}

\begin{lemma}[Second-order functional delta method; \cite{romisch2004}]
\label{lem: second order functional delta method}
Let $\mathfrak{D},\mathfrak{E}$ be normed spaces, $\Theta \subset \mathfrak{D}$ be convex,  and $\phi: \Theta  \to \mathfrak{E}$ be a map that is second-order  Hadamard directionally differentiable at $\theta \in \Theta$. Let $T_n: \Omega \to \Theta$ be maps such that $r_n (T_n - \theta) \stackrel{d}{\to} T$ for some $r_n \to \infty$ and  Borel measurable map $T: \Omega \to \mathfrak{D}$ with values in a separable subset of $\mathfrak{T}_{\Theta}(\theta)$. Then, $r_n^2 \big (\phi (T_n) - \phi(\theta) - \phi_{\theta}'(T_n-\theta)\big) \stackrel{d}{\to} \frac{1}{2} \phi_{\theta}''(T)$ and $r_n^2 \big (\phi (T_n) - \phi(\theta) - \phi_{\theta}'(T_n-\theta) -  \frac{1}{2} \phi_{\theta}''(T_n-\theta)\big) \to 0$ in outer probability. 
\end{lemma}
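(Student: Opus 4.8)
The plan is to reduce the statement to an application of the extended continuous mapping theorem, paralleling the proof of the first-order functional delta method (\cref{lem: functional delta method}). Write $t_n = 1/r_n \downarrow 0$ and $h_n = r_n(T_n - \theta)$, so that $T_n = \theta + t_n h_n$, $h_n \stackrel{d}{\to} T$ in $\mathfrak{D}$, and $h_n$ takes values in $\mathfrak{D}_{t_n} := t_n^{-1}(\Theta - \theta)$ since $T_n \in \Theta$. For $t > 0$ define $g_t : \mathfrak{D}_t \to \mathfrak{E}$ by
\[
g_t(h) = \frac{\phi(\theta + t h) - \phi(\theta) - t\,\phi'_\theta(h)}{t^2/2},
\]
which is legitimate: for $h \in \mathfrak{D}_t$ convexity of $\Theta$ gives $\theta + s h \in \Theta$ for all $s \in (0,t]$, so $\theta + s h \to \theta$ with $s^{-1}((\theta+sh)-\theta) = h$, whence $h \in \mathfrak{T}_\Theta(\theta)$ and $\phi'_\theta(h)$ is defined. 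Set $g_0 := \phi''_\theta$ on $\mathfrak{T}_\Theta(\theta)$. Using positive homogeneity of $\phi'_\theta$ of degree $1$, $\phi'_\theta(T_n - \theta) = t_n \phi'_\theta(h_n)$, hence
\[
r_n^2\big(\phi(T_n) - \phi(\theta) - \phi'_\theta(T_n - \theta)\big) = \tfrac12\, g_{t_n}(h_n).
\]

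Next I would verify the hypotheses of the extended continuous mapping theorem (cf. Theorem 1.11.1 in \cite{van1996weak}) for the family $\{g_{t_n}\}$: if $x_n \in \mathfrak{D}_{t_n}$ and $x_n \to x$ in $\mathfrak{D}$, then $x \in \mathfrak{T}_\Theta(\theta)$ (because $\theta + t_n x_n \to \theta$ and $t_n^{-1}((\theta+t_n x_n)-\theta) = x_n \to x$), and applying the definition of second-order Hadamard directional differentiability along the sequence $\theta_{t_n} = \theta + t_n x_n$ yields $g_{t_n}(x_n) \to \phi''_\theta(x) = g_0(x)$; the definition is stated with a continuous parameter, but it is routine that it is equivalent to this sequential version along any $t_n \downarrow 0$. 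Since $T$ is Borel measurable with values in a separable subset of $\mathfrak{T}_\Theta(\theta)$ and $h_n \stackrel{d}{\to} T$, the extended continuous mapping theorem gives $g_{t_n}(h_n) \stackrel{d}{\to} g_0(T) = \phi''_\theta(T)$, i.e.
\[
r_n^2\big(\phi(T_n) - \phi(\theta) - \phi'_\theta(T_n - \theta)\big) \stackrel{d}{\to} \tfrac12\, \phi''_\theta(T),
\]
which is the first assertion.

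For the second assertion I would invoke positive homogeneity of $\phi''_\theta$ of degree $2$: $\phi''_\theta(T_n - \theta) = \phi''_\theta(t_n h_n) = t_n^2\, \phi''_\theta(h_n)$, so that
\[
r_n^2\Big(\phi(T_n) - \phi(\theta) - \phi'_\theta(T_n - \theta) - \tfrac12\,\phi''_\theta(T_n - \theta)\Big) = \tfrac12\big(g_{t_n}(h_n) - \phi''_\theta(h_n)\big),
\]
and it suffices to prove $g_{t_n}(h_n) - \phi''_\theta(h_n) \to 0$ in outer probability. Applying the extended continuous mapping theorem to the maps $h \mapsto g_{t_n}(h) - \phi''_\theta(h)$, whose limit along any $x_n \to x$ is $g_0(x) - g_0(x) = 0$ (continuity of $g_0 = \phi''_\theta$), we get $g_{t_n}(h_n) - \phi''_\theta(h_n) \stackrel{d}{\to} 0$; since convergence in distribution to a constant is equivalent to convergence in outer probability, this finishes the proof.

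The only subtle point — hence the main obstacle — is the measurability bookkeeping: $\phi$ need not be measurable, so every convergence must be read in the Hoffmann--J\o{}rgensen sense with outer expectations, and one must confirm that the extended continuous mapping theorem and the "shrinking domains" $\mathfrak{D}_{t_n}$ are handled correctly in that setting (they are; cf. Chapter 1.11 in \cite{van1996weak}). An equivalent route that bypasses the extended continuous mapping theorem is to use the almost-sure representation theorem (Theorem 1.10.4 in \cite{van1996weak}) to replace $h_n \stackrel{d}{\to} T$ by maps $\tilde h_n \to \tilde T$ converging outer-almost-surely with $\tilde h_n$ valued in $\mathfrak{D}_{t_n}$, apply the definition of second-order Hadamard differentiability pathwise to obtain $g_{t_n}(\tilde h_n) \to \phi''_\theta(\tilde T)$ and $g_{t_n}(\tilde h_n) - \phi''_\theta(\tilde h_n) \to 0$ outer-a.s., and transfer back; this mirrors the argument used for the first-order delta method in \cite{van1996weak} and adapts verbatim.
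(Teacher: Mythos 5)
Your proof is correct. The paper does not reproduce a proof of this lemma — it simply cites Römisch (2004) — and the argument you give, reducing to the extended continuous mapping theorem over the shrinking domains $\mathfrak{D}_{t_n}=t_n^{-1}(\Theta-\theta)$ together with the positive-homogeneity identities $\phi'_\theta(t_n h_n)=t_n\phi'_\theta(h_n)$ and $\phi''_\theta(t_n h_n)=t_n^2\phi''_\theta(h_n)$ (and the equivalent almost-sure-representation route you also sketch), is the standard one and parallels the first-order functional delta method.
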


\subsection{Hadamard differentiability of $Z$-functional}
\label{sec: Z-functional}

Let $\DD$ be a Banach space, $\Theta$ be an arbitrary nonempty subset of $\DD$, and $\LL$ be another Banach space. Let $Z(\Theta,\LL)$ be the subset of $\ell^\infty(\Theta,\LL)$ consisting of all maps with at least one zero, and let $\phi: Z(\Theta,\LL) \to \Theta$ be a map that assigns one of its zeros of to each $z \in Z(\Theta,\LL)$. Following \cite{van1996weak}, we call $\phi$ the $Z$-functional. 
We say that a map $\Psi: \Theta \subset \D \to \LL$ is Fr\'{e}chet differentiable at $\theta_0 \in \Theta$ if there exists a bounded linear operator $\dot \Psi_{\theta_0}: \lin (\Theta) \to \LL$ such that 
\[
\lim_{\substack{\| h \|_{\D} \to 0\\ \theta_0 + h\in \Theta}} \frac{\| \Psi (\theta_0+h) - \Psi (\theta_0) - \dot\Psi_{\theta_0}(h)\|_{\LL}}{\| h \|_{\DD}} = 0.
\]
The following is taken from  Lemma 3.9.34 in \cite{van1996weak}, with a minor modification to adapt to our definition of Hadamard differentiability; cf. \cref{rem: Hadamard differentiability}.

\begin{lemma}[Hadamard derivative of $Z$-functional]
\label{lem: Z-functional}
Suppose that (i) $\Psi: \Theta \to \LL$ is uniformly norm-bounded, one-to-one, possesses a zero at $\theta_0 \in \Theta$, and has an inverse that is continuous at $0$, and (ii) 
$\Psi$ is Fr\'{e}chet differentiable at $\theta_0 \in \Theta$ with derivative $\dot \Psi_{\theta_0}$ that is one-to-one and such that its inverse is continuous on $\dot \Psi_{\theta_0}(\lin(\Theta))$. 
Then, $\phi$ is Hadamard differentiable at $\Psi$ tangentially to the set 
\[
\begin{split}
\mathcal{Z}_{\Psi} =&\Big \{ z \in \ell^\infty (\Theta,\LL) : z=\lim_{t \downarrow 0} \frac{z_t -\Psi}{t} \ \text{for some} \ z_t \to \Psi \ \text{in} \ Z(\Theta,\LL), \  t \downarrow  0 \Big\} \\
&\bigcap \Big \{ z \in \ell^\infty (\Theta,\LL) : \text{$z$ is continuous at $\theta_0$} \Big \}. 
\end{split}
\]
The derivative is given by $\phi_{\Psi}' (z) = -\dot \Psi_{\theta_0}^{-1} (z(\theta_0))$.
\end{lemma}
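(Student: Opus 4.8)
The plan is to follow the classical implicit-function-type argument for $Z$-functionals (the proof of Lemma 3.9.34 in \cite{van1996weak}), keeping track of rates so as to match the notion of Hadamard differentiability used in \cref{sec: functional delta}. First I would record two preliminary facts. Since $\Psi$ is one-to-one and vanishes at $\theta_0$, the point $\theta_0$ is its \emph{unique} zero, so $\phi(\Psi)=\theta_0$. Moreover $\dot\Psi_{\theta_0}^{-1}$, being continuous (hence bounded) and linear on $\dot\Psi_{\theta_0}(\lin(\Theta))$, extends uniquely to a bounded linear operator on the closure $\overline{\dot\Psi_{\theta_0}(\lin(\Theta))}^{\LL}$; the argument below will show that $z(\theta_0)$ lies in this closure for every $z\in\mathcal Z_\Psi$, so that $\phi_\Psi'(z):=-\dot\Psi_{\theta_0}^{-1}(z(\theta_0))$ is well defined, and it is visibly linear in $z$ (composition of the bounded evaluation $z\mapsto z(\theta_0)$ with a bounded linear operator).

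Now fix $z\in\mathcal Z_\Psi$ together with a sequence $z_t\to\Psi$ in $Z(\Theta,\LL)$ with $\gamma_t:=t^{-1}(z_t-\Psi)\to z$ in $\ell^\infty(\Theta,\LL)$ as $t\downarrow 0$, and set $\theta_t:=\phi(z_t)$, so that $z_t(\theta_t)=0$. Step one is $\theta_t\to\theta_0$: since $\|z_t-\Psi\|_{\ell^\infty(\Theta,\LL)}\to0$, we have $\|\Psi(\theta_t)\|_\LL=\|(\Psi-z_t)(\theta_t)\|_\LL\le\|z_t-\Psi\|_{\ell^\infty(\Theta,\LL)}\to0$, and continuity of $\Psi^{-1}$ at $0$ forces $\theta_t\to\theta_0$. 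Step two, the crux, is the rate $\|\theta_t-\theta_0\|_\DD=O(t)$. From $z_t(\theta_t)=0$, $\Psi(\theta_0)=0$, and Fréchet differentiability of $\Psi$ at $\theta_0$,
\[
0=z_t(\theta_t)=\Psi(\theta_t)+(z_t-\Psi)(\theta_t)=\dot\Psi_{\theta_0}(\theta_t-\theta_0)+(z_t-\Psi)(\theta_t)+o(\|\theta_t-\theta_0\|_\DD).
\]
Applying the bounded inverse of $\dot\Psi_{\theta_0}$ (valid since $\theta_t-\theta_0\in\lin(\Theta)$) gives $\|\theta_t-\theta_0\|_\DD\le C\|\dot\Psi_{\theta_0}(\theta_t-\theta_0)\|_\LL$ for some constant $C$; plugging in the displayed identity and using $\theta_t\to\theta_0$ to make the $o(\|\theta_t-\theta_0\|_\DD)$ term at most $\tfrac1{2C}\|\theta_t-\theta_0\|_\DD$ for small $t$, one absorbs it into the left-hand side to obtain $\|\theta_t-\theta_0\|_\DD\le 2C\|(z_t-\Psi)(\theta_t)\|_\LL\le 2C\|z_t-\Psi\|_{\ell^\infty(\Theta,\LL)}=O(t)$, the last equality because $t^{-1}\|z_t-\Psi\|_{\ell^\infty(\Theta,\LL)}\to\|z\|_{\ell^\infty(\Theta,\LL)}$. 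I expect this absorption step to be the only genuinely delicate point; the rest is bookkeeping.

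Given the $O(t)$ rate, I would divide the displayed identity by $t$: the remainder is $t^{-1}o(\|\theta_t-\theta_0\|_\DD)=o(1)$, and $t^{-1}(z_t-\Psi)(\theta_t)=\gamma_t(\theta_t)\to z(\theta_0)$ because
\[
\|\gamma_t(\theta_t)-z(\theta_0)\|_\LL\le\|\gamma_t-z\|_{\ell^\infty(\Theta,\LL)}+\|z(\theta_t)-z(\theta_0)\|_\LL\to0,
\]
using $\gamma_t\to z$ uniformly, continuity of $z$ at $\theta_0$, and $\theta_t\to\theta_0$. Hence $\dot\Psi_{\theta_0}\big(t^{-1}(\theta_t-\theta_0)\big)\to -z(\theta_0)$, so in particular $-z(\theta_0)\in\overline{\dot\Psi_{\theta_0}(\lin(\Theta))}^{\LL}$, and applying the (extended) continuous inverse yields $t^{-1}(\theta_t-\theta_0)\to-\dot\Psi_{\theta_0}^{-1}(z(\theta_0))$, i.e.\ $t^{-1}(\phi(z_t)-\phi(\Psi))\to-\dot\Psi_{\theta_0}^{-1}(z(\theta_0))$. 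Since this limit is independent of the approximating sequence $(z_t)$ and linear in $z$, $\phi$ is Hadamard differentiable at $\Psi$ tangentially to $\mathcal Z_\Psi$ with derivative $\phi_\Psi'(z)=-\dot\Psi_{\theta_0}^{-1}(z(\theta_0))$, as claimed.
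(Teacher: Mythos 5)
Your proof is correct and follows the same route as the paper, which itself simply cites Lemma 3.9.34 of van der Vaart and Wellner and adds only \cref{rem: Z-functional} to address well-definedness of $\dot\Psi_{\theta_0}^{-1}(z(\theta_0))$; you have reproduced that argument cleanly, including the two points the paper singles out — the $O(t)$ rate for $\|\theta_t-\theta_0\|$ via the absorption step, and the observation that $z(\theta_0)\in\overline{\dot\Psi_{\theta_0}(\lin(\Theta))}^{\LL}$ so the extended inverse applies.
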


\begin{remark}
\label{rem: Z-functional}
It is \textit{a priori} not clear whether $z(\theta_0)$ is in the domain of $\dot \Psi_{\theta_0}^{-1}$, but this follows from the proof of Lemma 3.9.34 in \cite{van1996weak}. Indeed, let  $(z_t)_{t > 0} \subset \ell^\infty (\Theta,\LL)$ be such that $\Psi +tz_t \in Z(\Theta,\LL)$ for $t$ sufficiently small, $z_t \to z$ as $t \downarrow 0$, and  $z$ is continuous at $\theta_0$. Set $\theta_t = \phi (\Psi+tz_t)$. Arguing as in the proof of Lemma 3.9.34 in \cite{van1996weak}, we have $\Psi(\theta_t)=\dot \Psi_{\theta_0} (\theta_t - \theta_0) + R(t)$ with $\| R(t) \|_{\LL} = o(t)$ as $t \to 0$, so that $t^{-1}\{\Psi(\theta_t) - R(t)\} = \dot \Psi_{\theta_0} \big( t^{-1}(\theta_t-\theta_0)\big) \in \dot \Psi_{\theta_0} (\lin (\Theta))$ while $t^{-1}\{\Psi(\theta_t) - R(t)\} = -z_t (\theta_t) - t^{-1}R(t) \to -z (\theta_0)$ in $\LL$. Hence $z(\theta_0) \in \overline{\dot \Psi_{\theta_0} (\lin (\Theta))}^{\LL}$.
\end{remark}

The following lemma concerns the second-order Hadamard derivative for the $Z$-functional, which is used in the proof of \cref{thm: second order H derivative}.

\begin{lemma}[Second-order Hadamard derivative of $Z$-functional]
\label{lem: second order H derivative}
Consider the assumption of the preceding lemma. Assume further that $\Psi$ is twice Fr\'{e}chet differentiable at $\theta_0$, in the sense that there exists a continuous operator $\ddot \Psi_{\theta_0}: \overline{\lin (\Theta)}^{\DD} \to \LL$ positively homogeneous of degree 2 such that
\[
\lim_{\substack{\| h \|_{\D} \to 0 \\ \theta_0+h \in \Theta}} \frac{\| \Psi (\theta_0+h) - \Psi (\theta_0) - \dot\Psi_{\theta_0}(h) - \frac{1}{2} \ddot \Psi_{\theta_0} (h) \|_{\LL}}{\| h \|_{\DD}^2} = 0.
\]
Let $(z_t)_{t > 0} \in \ell^\infty (\Theta,\LL)$ be a sequence of maps such that (i) $\Psi + tz_t \in Z(\Theta,\LL)$ for $t$ sufficiently small, (ii) $z_t \to z$ as $t \downarrow 0$ for some $z$ that is continuous at $\theta_0$, (iii) $z_t (\theta_0) \in \overline{\dot \Psi_{\theta_0}(\lin (\Theta))}^{\LL}$ for $t$ sufficiently small, and (iv) there exists a limit $t^{-1}(z_t(\theta_t) - z_t(\theta_0)) \to \dot z_{\theta_0}$ in $\LL$ as $t \downarrow 0$ with $\theta_t = \phi (\Psi + tz_t)$. Then, we have
\[
\frac{\phi (\Psi+tz_t) - \phi(\Psi)-t\phi_{\Psi}' (z_t)}{t^2/2} \to  -\dot \Psi_{\theta_0}^{-1} \left ( 2 \dot z_{\theta_0} +  \ddot \Psi_{\theta_0} (\phi_{\Psi}'(z)) \right).
\]
\end{lemma}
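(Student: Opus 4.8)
\textbf{Proof proposal for \cref{lem: second order H derivative}.}

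The plan is to mimic the proof of the first-order statement (Lemma 3.9.34 in \cite{van1996weak}, recalled above as \cref{lem: Z-functional}) but carry the Taylor expansion of $\Psi$ one order further. First I would record the basic consequences of the first-order lemma: since $z_t\to z$ with $z$ continuous at $\theta_0$, \cref{lem: Z-functional} gives that $\theta_t:=\phi(\Psi+tz_t)\to\theta_0$ and $t^{-1}(\theta_t-\theta_0)\to \phi'_\Psi(z)=-\dot\Psi_{\theta_0}^{-1}(z(\theta_0))$ in $\DD$; in particular $h_t:=\theta_t-\theta_0$ satisfies $\|h_t\|_\DD=O(t)$. Writing $a_t:=t^{-1}h_t$, we have $a_t\to a:=\phi'_\Psi(z)$. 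The key identity is that $(\Psi+tz_t)(\theta_t)=0$, i.e. $\Psi(\theta_t)=-tz_t(\theta_t)$. I would then insert the second-order Fr\'echet expansion of $\Psi$ at $\theta_0$,
\[
\Psi(\theta_t)=\Psi(\theta_0)+\dot\Psi_{\theta_0}(h_t)+\tfrac12\ddot\Psi_{\theta_0}(h_t)+R(t),\qquad \|R(t)\|_\LL=o(\|h_t\|_\DD^2)=o(t^2),
\]
together with $\Psi(\theta_0)=0$, to obtain
\[
\dot\Psi_{\theta_0}(h_t)=-tz_t(\theta_t)-\tfrac12\ddot\Psi_{\theta_0}(h_t)-R(t).
\]

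Next I would isolate the second-order remainder of $\phi$. By definition $\phi(\Psi+tz_t)-\phi(\Psi)-t\phi'_\Psi(z_t)=h_t-t\phi'_\Psi(z_t)=t(a_t-\phi'_\Psi(z_t))$, and since $\phi'_\Psi$ is linear, $a_t-\phi'_\Psi(z_t)=-\dot\Psi_{\theta_0}^{-1}\big(\dot\Psi_{\theta_0}(a_t)+z_t(\theta_0)\big)$ — here I use that $z_t(\theta_0)\in\overline{\dot\Psi_{\theta_0}(\lin\Theta)}^\LL$ (Condition (iii)) so $\phi'_\Psi(z_t)=-\dot\Psi_{\theta_0}^{-1}(z_t(\theta_0))$ is well defined. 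Dividing the displayed identity for $\dot\Psi_{\theta_0}(h_t)$ by $t$ and rearranging,
\[
\dot\Psi_{\theta_0}(a_t)+z_t(\theta_0)=-\big(z_t(\theta_t)-z_t(\theta_0)\big)-\tfrac12\ddot\Psi_{\theta_0}(t\,a_t)/t-R(t)/t.
\]
Now I apply the hypotheses: $z_t(\theta_t)-z_t(\theta_0)=t\big(\dot z_{\theta_0}+o(1)\big)$ by Condition (iv); $\ddot\Psi_{\theta_0}(t\,a_t)=t^2\ddot\Psi_{\theta_0}(a_t)$ by positive homogeneity of degree $2$, and $\ddot\Psi_{\theta_0}(a_t)\to\ddot\Psi_{\theta_0}(a)$ by its continuity (using $a_t\to a$ in $\overline{\lin\Theta}^\DD$); and $R(t)/t=o(t)$. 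Hence $t^{-1}\big(\dot\Psi_{\theta_0}(a_t)+z_t(\theta_0)\big)\to -\dot z_{\theta_0}-\tfrac12\ddot\Psi_{\theta_0}(a)$ in $\LL$. Since $\dot\Psi_{\theta_0}^{-1}$ is continuous on $\overline{\dot\Psi_{\theta_0}(\lin\Theta)}^\LL$ (extended by continuity), applying it gives
\[
t^{-1}\big(a_t-\phi'_\Psi(z_t)\big)\;\longrightarrow\;-\dot\Psi_{\theta_0}^{-1}\big(2\dot z_{\theta_0}+\ddot\Psi_{\theta_0}(\phi'_\Psi(z))\big)/2\cdot 2,
\]
and multiplying by $2/t$ the quantity $t^{-1}\big(\phi(\Psi+tz_t)-\phi(\Psi)-t\phi'_\Psi(z_t)\big)=a_t-\phi'_\Psi(z_t)$ yields the claimed limit $-\dot\Psi_{\theta_0}^{-1}\big(2\dot z_{\theta_0}+\ddot\Psi_{\theta_0}(\phi'_\Psi(z))\big)$.

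The two points requiring care — and the only places where the argument is not a routine rearrangement — are, first, verifying that all the $\LL$-valued quantities on which $\dot\Psi_{\theta_0}^{-1}$ acts genuinely lie in (the closure of) its domain $\dot\Psi_{\theta_0}(\lin\Theta)$, so that the continuous extension of $\dot\Psi_{\theta_0}^{-1}$ may be applied and commuted with limits; this is exactly what Conditions (iii) and (iv) are engineered to supply, paralleling \cref{rem: Z-functional}. Second, one must confirm that the remainder control $\|R(t)\|_\LL=o(\|h_t\|_\DD^2)$ combines with $\|h_t\|_\DD=O(t)$ to give $R(t)/t^2=o(1)$; this is immediate once the first-order lemma has been invoked to guarantee $\|h_t\|_\DD=O(t)$, which in turn uses that $\dot\Psi_{\theta_0}^{-1}$ is bounded (continuous linear). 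I expect the main obstacle in a fully detailed write-up to be bookkeeping the membership of the intermediate terms in $\overline{\dot\Psi_{\theta_0}(\lin\Theta)}^\LL$ cleanly, but no genuinely new idea beyond the first-order proof is needed.
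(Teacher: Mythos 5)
Your proposal is correct and follows essentially the same route as the paper's proof: both invoke the first-order $Z$-functional lemma to get $\|\theta_t-\theta_0\|_{\DD}=O(t)$ and $t^{-1}(\theta_t-\theta_0)\to\phi'_\Psi(z)$, Taylor-expand $\Psi$ to second order at $\theta_0$ using the defining zero identity $(\Psi+tz_t)(\theta_t)=0$, substitute $\phi'_\Psi(z_t)=-\dot\Psi_{\theta_0}^{-1}(z_t(\theta_0))$, and pass to the limit via positive homogeneity and continuity of $\ddot\Psi_{\theta_0}$, Condition (iv), and continuity of $\dot\Psi_{\theta_0}^{-1}$ on $\overline{\dot\Psi_{\theta_0}(\lin\Theta)}^{\LL}$. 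One small sign slip: the intermediate identity should read $a_t-\phi'_\Psi(z_t)=\dot\Psi_{\theta_0}^{-1}\big(\dot\Psi_{\theta_0}(a_t)+z_t(\theta_0)\big)$ (no leading minus), which is consistent with the limit you then correctly derive.
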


The above definition of second Fr\'{e}chet derivative differs from the standard one (cf. \cite{zeidler2012applied}), but suffices for our purpose.

\begin{proof}
Recall $\theta_t = \phi (\Psi + t z_t)$, i.e., $\Psi (\theta_t) + tz_t (\theta_t) = 0$. Arguing as in the proof of Lemma 3.9.34 in \cite{van1996weak}, we have $\| \theta_t - \theta_0 \| = O(t)$. Thus,
\[
\underbrace{\Psi (\theta_t)}_{-tz_t(\theta_t)} - \underbrace{\Psi (\theta_0)}_{=0} = \dot \Psi_{\theta_0} (\theta_t - \theta_0) + \frac{1}{2} \ddot \Psi_{\theta_0}(\theta_t-\theta_0) + o(t^2).  
\]
Subtracting $-tz_t(\theta_0)$ from both sides, we have 
\[
-t(z_t (\theta_t) - z_t(\theta_0)) = \dot \Psi_{\theta_0} (\theta_t - \theta_0 - t\phi_{\Psi}'(z_t)) + \frac{1}{2}\ddot \Psi_{\theta_0}(\theta_t-\theta_0) + o(t^2).
\]
Since $t^{-1}(\theta_t -\theta_0) \to \phi_{\Psi}'(z)$ by the preceding lemma, we have 
\[
\ddot \Psi_{\theta_0}(\theta_t-\theta_0) = t^2 \ddot \Psi_{\theta_0} (\phi_{\Psi}'(z)) + o(t^2).
\]
Also, by assumption, 
\[
-t(z_t (\theta_t) - z_t(\theta_0)) 
= -t^2 \dot z_{\theta_0}  + o(t^2). 
\]
Conclude that
\[
\frac{\theta_t - \theta_0 - t\phi'_{\Psi}(z_t)}{t^2/2} \to  -\dot \Psi_{\theta_0}^{-1} \left ( 2 \dot z_{\theta_0} +  \ddot \Psi_{\theta_0} (\phi_{\Psi}'(z)) \right).
\]
\end{proof}

\section{Other auxiliary results}

\begin{lemma}[Convergence in $\ell^\infty(B^s)$ implies weak convergence]
\label{lem: convergence determining}
Let $\calX \subset \R^d$ be a compact set that agrees with the closure of its interior. Pick any $s \in \NN$ and set $B^s$ to be the unit ball in $\calC^s(\calX)$. For $\mu_n,\mu \in \calP(\calX)$, if $\mu_n \to \mu$ in $\ell^\infty (B^s)$, then $\mu_n \to \mu$ weakly. 
\end{lemma}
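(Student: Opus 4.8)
The plan is to show that convergence in $\ell^\infty(B^s)$ implies convergence against every $f \in \calC(\calX)$, which by the portmanteau theorem on the compact metric space $\calX$ is equivalent to weak convergence. The key point is a density argument: $\calC^s(\calX)$ is dense in $\calC(\calX)$ with respect to the sup-norm $\| \cdot \|_{\infty,\calX}$, so we can approximate an arbitrary continuous test function by a smooth one and control the error uniformly in $n$.

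First I would fix $f \in \calC(\calX)$ and $\eta > 0$. Since $\calX$ is compact and agrees with the closure of its interior, polynomials (or, e.g., mollifications of $f$) furnish a function $g \in \calC^s(\calX)$ with $\| f - g \|_{\infty,\calX} \le \eta$; a clean reference is the Stone--Weierstrass theorem, noting polynomials lie in $\calC^s(\calX)$ for every $s$. Normalizing, $\tilde g := g / \| g \|_{\calC^s(\calX)} \in B^s$ whenever $g \ne 0$ (and the case $g \equiv 0$ is trivial). Then for any $\mu_n, \mu \in \calP(\calX)$,
\[
\Big| \int f \, d\mu_n - \int f \, d\mu \Big| \le 2 \| f - g \|_{\infty,\calX} + \Big| \int g \, d\mu_n - \int g \, d\mu \Big| \le 2\eta + \| g \|_{\calC^s(\calX)} \, \| \mu_n - \mu \|_{\infty, B^s},
\]
using that $\mu_n - \mu$ acts as a linear functional on $\calC^s(\calX)$ and $|\gamma(g)| \le \| g \|_{\calC^s(\calX)} \| \gamma \|_{\infty,B^s}$. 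Letting $n \to \infty$ and using $\| \mu_n - \mu \|_{\infty,B^s} \to 0$ gives $\limsup_n \big| \int f \, d\mu_n - \int f \, d\mu \big| \le 2\eta$, and since $\eta > 0$ is arbitrary the left-hand side is zero. As $f \in \calC(\calX)$ was arbitrary and $\calX$ is a compact metric space, this is precisely weak convergence $\mu_n \to \mu$.

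There is no real obstacle here; the only mild point to be careful about is that $\int f \, d\mu_n - \int f \, d\mu$ must be interpreted via the identification of $\mu_n, \mu$ with elements of $\ell^\infty(B^s)$ (equivalently, bounded linear functionals on $\calC^s(\calX)$ by the Riesz--Markov representation), which is exactly the embedding $\tau$ introduced in \cref{sec: H derivative of potential}; the estimate above only uses boundedness of this functional on $B^s$. One could alternatively phrase the density step via convolution with a smooth compactly supported kernel after extending $f$ continuously to $\R^d$ (Tietze), which makes transparent why $\calX = \overline{\inte(\calX)}$ is invoked, though for the sup-norm approximation on a compact set Stone--Weierstrass already suffices.
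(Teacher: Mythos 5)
Your proof is correct and follows the same basic strategy as the paper's: identify an approximation of a generic test function by an element of $\calC^s(\calX)$, use the triangle inequality to split into an approximation error and an $\ell^\infty(B^s)$-term, and let $n \to \infty$ followed by the approximation parameter. The only difference is cosmetic: the paper starts from bounded $1$-Lipschitz test functions, extends to $\R^d$ via Kirszbraun--McShane, and mollifies (yielding a smooth, still $1$-Lipschitz approximant with a uniform $O(t)$ error bound), whereas you start from arbitrary $f \in \calC(\calX)$ and invoke Stone--Weierstrass (or mollification) directly. Both choices are standard characterizations of weak convergence on a compact metric space, and your bound $|\gamma(g)| \le \|g\|_{\calC^s(\calX)} \|\gamma\|_{\infty,B^s}$ for $\gamma = \mu_n - \mu$ is exactly the right mechanism. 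One small quibble: the hypothesis $\calX = \overline{\inte(\calX)}$ is not really what licenses the density step (Stone--Weierstrass needs only compactness); rather, it is what makes the space $\calC^s(\calX)$ and its norm well defined in the first place, so that $B^s$ and the estimate $\|g\|_{\calC^s(\calX)} < \infty$ make sense.
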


\begin{proof}
Pick any bounded $1$-Lipschitz function $f$ on $\calX$. By the Kirszbraun-McShane theorem, we may extend $f$ to a $1$-Lipschitz function on $\R^d$, which we denote by the same symbol $f$. Let $K: \R^d \to \R$ be a compactly supported smooth density function and approximate $f$ by $f_t = t^{-d} \int_{\R^d} f(y) K((y-\cdot)/t) dy = \int_{\R^d} f (\cdot + tz)K(z) dz$ for $t > 0$. As $f$ is $1$-Lipschitz, $f_t$ is smooth and $1$-Lipshitz with $\| f - f_t \|_{\infty,\R^d} \le t$. The restriction of $f_t$ to $\calX$ belongs to $\calC^s(\calX)$, so that for any $t > 0$,
\[
\int_{\calX} f \, d\mu_n \le \int_{\calX} f_t \, d\mu_n + t \le \int_{\calX} f \, d\mu + 2t + o(1), \quad n \to \infty. 
\]
The reverse inequality follows similarly, so that we have $\limsup_{n \to \infty}|\int_{\calX} f d(\mu_n-\mu) | \le 2t$. Sending $t \downarrow 0$, we have $\int_{\calX} f \, d\mu_n \to \int_{\calX} f\, d\mu$, implying $\mu_n \to \mu$ weakly. 
\end{proof}
\begin{lemma}[Support of Brownian bridge]
\label{lem: support}
Let $\mu$ be a probability measure on a measurable space $S$ and $\calF \subset L^2 (\mu)$ be a $\mu$-pre-Gaussian class, i.e., there exists a tight $\mu$-Brownian bridge $G_\mu$ in $\ell^\infty (\calF)$. Let 
\[
\mathfrak{M}_\mu =\Big \{ g \, d\mu : \text{$g$ is a bounded measurable function on $S$ with $\mu$-mean zero} \Big \}.
\]
Then $\supp (G_\mu) \subset \overline{\mathfrak{M}_\mu}^{\ell^\infty (\calF)}$.
\end{lemma}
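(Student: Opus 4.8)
The plan is to route the argument through the reproducing kernel Hilbert space (RKHS) of $G_\mu$. Recall that $G_\mu$ is a mean-zero tight Gaussian random variable with values in $\ell^\infty(\calF)$ and covariance kernel $K(f,g) = \Cov_\mu(f,g)$, and that the support of such a variable coincides with the $\ell^\infty(\calF)$-closure of its RKHS $\mathbb{H}$ (cf.\ Lemma 5.1 in \cite{van2008reproducing}). Since $\overline{\mathfrak{M}_\mu}^{\ell^\infty(\calF)}$ is closed in $\ell^\infty(\calF)$, it therefore suffices to establish the inclusion $\mathbb{H} \subseteq \overline{\mathfrak{M}_\mu}^{\ell^\infty(\calF)}$; taking $\ell^\infty(\calF)$-closures then yields $\supp(G_\mu) = \overline{\mathbb{H}}^{\ell^\infty(\calF)} \subseteq \overline{\mathfrak{M}_\mu}^{\ell^\infty(\calF)}$.

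First I would record that $\sigma^2 := \sup_{f\in\calF}\Var_\mu(f) < \infty$, which follows from tightness of $G_\mu$ (hence a.s.\ bounded sample paths) together with the integrability of the supremum of a bounded Gaussian process (Fernique). Writing $\sigma = (\sigma^2)^{1/2}$, for every $g \in L^2(\mu)$ with $\mu(g) = 0$ one has $\sup_{f\in\calF}\big|\int fg\,d\mu\big| = \sup_{f\in\calF}\big|\int (f-\mu(f))\,g\,d\mu\big| \le \sigma\,\|g\|_{L^2(\mu)} < \infty$, so $g\,d\mu$ does define an element of $\ell^\infty(\calF)$; thus $\mathfrak{M}_\mu$ is a genuine linear subspace of $\ell^\infty(\calF)$ and $\overline{\mathfrak{M}_\mu}^{\ell^\infty(\calF)}$ is a closed linear subspace. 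Next I would identify the RKHS: for $g\in\calF$, the section $f\mapsto K(f,g)$ is, as a functional on $\calF$, the signed measure $(g-\mu(g))\,d\mu$, because $\Cov_\mu(f,g) = \int f\,(g-\mu(g))\,d\mu$. If $g$ is bounded this lies in $\mathfrak{M}_\mu$; for general $g\in\calF\subseteq L^2(\mu)$, truncating at level $L$ gives $g_L\in L^\infty(\mu)$ with $\big\|K(\cdot,g) - (g_L-\mu(g_L))\,d\mu\big\|_{\ell^\infty(\calF)} \le \sigma\,\|g-g_L\|_{L^2(\mu)} \to 0$, so $K(\cdot,g)\in\overline{\mathfrak{M}_\mu}^{\ell^\infty(\calF)}$ in every case. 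Hence the pre-RKHS $\mathcal{H}_0 := \lin\{K(\cdot,g):g\in\calF\}$ is contained in $\overline{\mathfrak{M}_\mu}^{\ell^\infty(\calF)}$.

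Finally, $\mathbb{H}$ is the abstract completion of $\mathcal{H}_0$ under $\langle\cdot,\cdot\rangle_{\mathbb{H}}$, and by the reproducing property $|h(f)| = |\langle h, K(\cdot,f)\rangle_{\mathbb{H}}| \le \|h\|_{\mathbb{H}}\,K(f,f)^{1/2} \le \sigma\,\|h\|_{\mathbb{H}}$ for all $f\in\calF$; thus $\mathbb{H}$ embeds continuously into $\ell^\infty(\calF)$, so any $\|\cdot\|_{\mathbb{H}}$-Cauchy sequence in $\mathcal{H}_0$ is $\ell^\infty(\calF)$-Cauchy with the same limit, which lies in the closed set $\overline{\mathfrak{M}_\mu}^{\ell^\infty(\calF)}$. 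This gives $\mathbb{H}\subseteq\overline{\mathfrak{M}_\mu}^{\ell^\infty(\calF)}$, completing the proof. I expect the only delicate points to be making precise the identification of $\mathbb{H}$ as a space of functions on $\calF$ that sits continuously inside $\ell^\infty(\calF)$, and the truncation step handling unbounded $g\in\calF$; in the applications of this lemma $\calF=B^s$ is uniformly bounded, so the truncation is vacuous and $K(\cdot,g)\in\mathfrak{M}_\mu$ outright. As an alternative to invoking the support-equals-closure-of-RKHS characterization, one can use the It\^o--Nisio series $G_\mu = \sum_k \xi_k h_k$ (with $\xi_k$ i.i.d.\ standard normal and $(h_k)$ an orthonormal basis of $\mathbb{H}$), which converges a.s.\ in $\ell^\infty(\calF)$: the partial sums lie in $\mathbb{H}\subseteq\overline{\mathfrak{M}_\mu}^{\ell^\infty(\calF)}$, hence so does the limit $G_\mu$ a.s., and since the set is closed, $\supp(G_\mu)\subseteq\overline{\mathfrak{M}_\mu}^{\ell^\infty(\calF)}$.
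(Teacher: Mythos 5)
Your proposal is correct and follows essentially the same route as the paper's proof: both invoke Lemma 5.1 of \cite{van2008reproducing} to identify $\supp(G_\mu)$ with the $\ell^\infty(\calF)$-closure of the RKHS, both observe that the covariance sections correspond to measures $(g-\mu(g))\,d\mu$, and both use a Cauchy--Schwarz bound of the form $|\int f\,g\,d\mu| \le \sqrt{\Var_\mu(f)}\,\|g\|_{L^2(\mu)}$ to pass from the pre-RKHS to the full RKHS. The only cosmetic differences are that you parameterize the RKHS as the abstract completion of $\lin\{K(\cdot,g):g\in\calF\}$ and appeal to Fernique for $\sup_f\Var_\mu(f)<\infty$, whereas the paper parameterizes it via the Lo\`eve isometry as $\{f\mapsto\E[G_\mu(f)X]:X\in\overline{\lin\{G_\mu(g)\}}^{L^2(\Prob)}\}$ (which sidesteps the ``delicate point'' you flag about realizing the completion as functions) and gets the variance bound directly from total boundedness of $\calF$ under $d_\mu$.
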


\begin{proof}
Let $\calC_u (\calF)$ denote the space of uniformly continuous functions on $\calF$ relative to the pseudometric $d_{\mu}(f,g) = \sqrt{\Var_{\mu}(f-g)}$. Since $\calF$ is $\mu$-pre-Gaussian, $\calF$ is totally bounded for $d_\mu$ (so that $\calC_u (\calF)$ is a closed subspace of $\ell^\infty(\calF)$) and $G_\mu \in \calC_u(\calF)$ a.s. (cf. Example 1.5.10 in \cite{van1996weak}). For every $f \in \calF$ and  $g_1,g_2 \in L^2(\mu)$ with $\mu$-mean zero, 
\[
\int f (g_1-g_2) \, d\mu = \int (f- {\textstyle \int f\, d\mu}) (g_1-g_2) \, d\mu \le \sqrt{\Var_\mu(f)} d_{\mu}(g_1-g_2)
\]
by the Cauchy-Schwarz inequality. Since $\sup_{f \in \calF}\Var_\mu(f) < \infty$ by total boundedness of $\calF$ w.r.t. $d_\mu$, we have that $g \, d\mu \in \overline{\mathfrak{M}_\mu}^{\ell^\infty (\calF)}$ for every $g \in L^2(\mu)$ with mean zero. 

By Lemma 5.1 in \cite{van2008reproducing}, the support $\supp(G_\mu)$ agrees with the $\| \cdot \|_{\infty,\calF}$-closure of the reproducing kernel Hilbert space (RKHS) for $G_\mu$ (think of $G_\mu$ as a zero-mean Gaussian random variable in $\calC_u(\calF)$, which is a separable Banach space). There are two ways to define the RKHS for $G_\mu$; by viewing $G_\mu$ as a stochastic process or as a random variable with values in the Banach space $\calC_u(\calF)$. In this case, however, they both agree; see Theorem 2.1 in \cite{van2008reproducing}. With this in mind, any element of the RKHS for $G_\mu$ is of the form 
\[
f \mapsto \E[ G_\mu(f) X], \ X \in \overline{\lin \{ G_\mu(g) : g \in \calF \}}^{L^2(\Prob)}. 
\]
For $X = \sum_{i=1}^s \alpha_i G_\mu(g_i)$ with $\alpha_i \in \R$ and $g_i \in \calF$, we have 
\[
\begin{split}
\E[G_\mu(f)X] &= \sum_{i=1}^s \alpha_i \E[G_\mu(f)G_\mu(g_i)] = \sum_{i=1}^s \alpha_i \Cov_\mu(f,g_i) \\
&=   \int f \Big (\sum_{i=1}^s\alpha_i(g_i-{\textstyle \int g_i\, d\mu})\Big)\, d\mu,
\end{split}
\]
so that $\E[G_\mu(\cdot)X] \in \overline{\mathfrak{M}_\mu}^{\ell^\infty (\calF)}$. Furthermore, for any $X \in \overline{\lin \{ G_\mu(g) : g \in \calF \}}^{L^2(\Prob)}$, choose $X_n \in \lin \{ G_\mu(g) : g \in \calF \}$ such that $\E[|X_n-X|^2] \to 0$. Then, 
\[
\| \E[G_\mu(\cdot)X_n] - \E[G_\mu(\cdot)X]  \|_{\infty,\calF} \le \sqrt{\sup_{f \in \calF} \Var_\mu(f)} \sqrt{\E[|X_n-X|^2]} \to 0, 
\]
which shows that $\E[G_\mu(\cdot) X] \in \overline{\mathfrak{M}_\mu}^{\ell^\infty (\calF)}$. Conclude that $\supp(G_\mu) \subset \overline{\mathfrak{M}_\mu}^{\ell^\infty (\calF)}$.
\end{proof}

\begin{lemma}[Weak convergence in product space]
\label{lem: product weak limit}
Let $S,T$ be nonempty sets and let $X_n = (X_n(s))_{s \in S},Y_n = (Y_n(t))_{t \in T}$ be sequences of stochastic processes with bounded paths.  Suppose that, marginally, $X_n$ and $Y_n$ converge in distribution  to  tight random variables in $\ell^\infty (S)$ and $\ell^\infty (T)$, respectively. Then, if the finite-dimensional distributions of $(X_n,Y_n)$ converge weakly, i.e., for every $s_1,\dots,s_m \in S, t_1,\dots,t_\ell \in T$, $(X_n(s_1),\dots,X_n(s_m),Y_n(t_1),\dots,Y_n(t_\ell))$ jointly converges in distribution (in $\R^{m+\ell}$), then $(X_n,Y_n) \stackrel{d}{\to} (X,Y)$ in $\ell^\infty (S) \times \ell^\infty (T)$ for some tight limit $(X,Y)$.
\end{lemma}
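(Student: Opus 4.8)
The plan is to reduce the statement to standard weak‑convergence machinery for $\ell^\infty$‑valued processes. First I would identify $\ell^\infty(S)\times\ell^\infty(T)$, endowed with the product norm $\|(x,y)\|=\|x\|_{\infty,S}\vee\|y\|_{\infty,T}$, isometrically with $\ell^\infty(S\sqcup T)$ (disjoint union) via $(x,y)\mapsto x\oplus y$, where $(x\oplus y)|_S=x$ and $(x\oplus y)|_T=y$. Under this identification the pair $Z_n:=(X_n,Y_n)$ becomes a stochastic process with bounded paths indexed by $S\sqcup T$, and its finite‑dimensional distributions at points $u_1,\dots,u_k\in S\sqcup T$ are exactly the joint laws of the corresponding evaluations of $X_n$ and $Y_n$; by hypothesis these converge weakly in Euclidean space. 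So the goal becomes: show that $Z_n$ converges weakly to a tight limit in $\ell^\infty(S\sqcup T)$.

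Next I would establish asymptotic tightness and asymptotic measurability of $Z_n$. Since $X_n$ converges in distribution to the tight limit $X$, Lemma 1.3.8 in \cite{van1996weak} gives that $X_n$ is asymptotically tight and asymptotically measurable; the same holds for $Y_n$. By Lemmas 1.4.3 and 1.4.4 in \cite{van1996weak}, asymptotic tightness and asymptotic measurability pass to the coordinatewise product, so $Z_n=(X_n,Y_n)$ is asymptotically tight and asymptotically measurable in $\ell^\infty(S)\times\ell^\infty(T)\cong\ell^\infty(S\sqcup T)$.

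Then I would conclude by a Prohorov‑type argument. By Theorem 1.3.9 in \cite{van1996weak}, every subsequence of $Z_n$ admits a further subsequence converging in distribution to a tight Borel law on $\ell^\infty(S\sqcup T)$. Any such subsequential limit $Z$ has finite‑dimensional distributions equal to the common weak limits of the finite‑dimensional distributions of $Z_n$. Since a tight Borel probability measure on $\ell^\infty(S\sqcup T)$ is uniquely determined by its finite‑dimensional distributions (the coordinate evaluations separate points and the support of a tight law is separable; cf. the discussion around Theorem 1.5.4 in \cite{van1996weak}), all subsequential limits coincide, hence the whole sequence converges: $(X_n,Y_n)\stackrel{d}{\to}(X,Y)$ in $\ell^\infty(S)\times\ell^\infty(T)$ for some tight limit $(X,Y)$. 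Applying the continuous mapping theorem to the coordinate projections, the marginals of $(X,Y)$ are the laws of $X$ and $Y$, which is consistent with the notation.

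As for the main obstacle: there is no deep obstacle, as every ingredient is a citable fact from \cite{van1996weak}. The only points demanding care are (a) the transfer of asymptotic tightness and asymptotic measurability from each marginal sequence to the joint sequence, and (b) the uniqueness of the tight Borel limit given its finite‑dimensional distributions, which rests on the standard but non‑elementary fact that tight laws on $\ell^\infty$‑type spaces are determined by their finite‑dimensional marginals.
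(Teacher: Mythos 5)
Your proof is correct and structurally parallel to the paper's: both establish joint asymptotic tightness and asymptotic measurability via Lemmas 1.3.8, 1.4.3, and 1.4.4 of \cite{van1996weak}, apply Prohorov's theorem, and then conclude by uniqueness of the tight subsequential limit given its finite-dimensional distributions. The difference lies in how the uniqueness step is handled. You first make the clean observation that $\ell^\infty(S)\times\ell^\infty(T)$ is isometrically $\ell^\infty(S\sqcup T)$ under the max-product norm, which reduces the two-index problem to a single-index one and lets you cite the standard one-space fact that a tight Borel law on an $\ell^\infty$-space is determined by its finite-dimensional marginals. The paper's authors instead avoid the disjoint-union reduction and give a direct argument on the product: since a tight limit concentrates on $\calC_u(S)\times\calC_u(T)$, which is separable, the Borel $\sigma$-field is the product $\sigma$-field, and the cylinder sets generated by coordinate projections form a $\pi$-system generating it; hence the law is determined by its finite-dimensional distributions. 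The paper even remarks that citing Lemma 1.3.12 in \cite{van1996weak} (essentially your route) is ``one way to show the latter,'' before presenting the explicit $\pi$-system argument. So your approach is the shorter citation-driven route the authors explicitly acknowledged and set aside in favour of a self-contained proof; both are sound, and the disjoint-union identification is a nice economy.
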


\begin{proof}
The lemma follows from Prohorov's theorem, upon observing that a tight limit is uniquely determined by the finite-dimensional convergence. One way to show the latter is to apply Lemma 1.3.12 in \cite{van1996weak}. We present another more direct proof.
Let $(X,Y): \Omega \to \ell^\infty (S) \times \ell^\infty (T)$ be a tight (Borel measurable) random variable. Then, $X$ is tight in $\ell^\infty (S)$, so there exits a pseudometric $\rho_S$ on $S$ that makes $S$ totally bounded and such that $X \in \calC_u(S)$ a.s., where $\calC_u(S)$ is the space of $\rho_S$-uniformly continuous functions on $S$ equipped with the sup-norm $\| \cdot \|_{\infty,S}$; see Chapter 1.5 in \cite{van1996weak}. Consider the Borel $\sigma$-field on $\calC_u(S)$. Define $\calC_u(T)$ analogously. Since $\calC_u(S)$ and $\calC_u(T)$ are separable, the Borel $\sigma$-field on $\calC_u(S) \times \calC_u(T)$ (defined w.r.t. the product topology) agrees with the product $\sigma$-field. In turn, the Borel $\sigma$-field on $\calC_u(S)$ agrees with the cylinder $\sigma$-field (i.e., the smallest $\sigma$-field that makes every coordinate projection $f \mapsto f(s)$ measurable).
For $s_1,\dots,s_m \in S$, let $\pi_{s_1,\dots,s_m}^S: \calC_u (S) \to \R^m$ be the projection onto $s_1,\dots,s_m$, i.e., $\pi_{s_1,\dots,s_m}^S (f) = (f(s_1),\dots,f(s_m))$. Define $\pi_{t_1,\dots,t_\ell}^T$ analogously. Then, the collection of sets of the form 
\[
\begin{split}
&[\pi_{s_1,\dots,s_m}^S]^{-1}(A) \times [\pi_{t_1,\dots,t_\ell}^T]^{-1}(B) \subset \calC_u(S) \times \calC_u(T), \\ &s_i \in S, t_j \in T, \ A \subset \R^m, B \subset \R^\ell:  \text{Borel sets}
\end{split}
\]
is a $\pi$-system that generates the Borel $\sigma$-field on $\calC_u(S) \times \calC_u(T)$. Hence, the joint law of $(X,Y)$  is uniquely determined by the collection of the joint laws of random vectors of the form $(X(s_1),\dots,X(s_m),Y(t_1),\dots,Y(t_\ell))$. 

The rest of the proof is standard. Since $X_n$ and $Y_n$ are marginally asymptotically tight and asymptotically measurable (Lemma 1.3.8 in \cite{van1996weak}), $(X_n,Y_n)$ is jointly asymptotically tight and asymptotically measurable (Lemmas 1.4.3 and 1.4.4 in \cite{van1996weak}). By Prohorov's theorem (Theorem 1.3.9 in \cite{van1996weak}), every subsequence has a further subsequence weakly convergent  to a tight law. By finite-dimensional convergence, the weak limit is unique. Hence $(X_n,Y_n) \stackrel{d}{\to} (X,Y)$ in $\ell^\infty (S) \times \ell^\infty (T)$ for some tight limit $(X,Y)$.
\end{proof} 

\bibliographystyle{alpha}
\bibliography{ref}
\end{document}